
\documentclass{amsart}

\usepackage{amsmath,amssymb,amsfonts,amscd}
\usepackage{latexsym}
\usepackage{epic,eepic,epsfig,psfrag}
\usepackage{amscd}
\usepackage{hyperref}
\usepackage{color}
\usepackage{makecell}

\newcommand\ul{\underline}
\newcommand\ov{\overline}
\newcommand\ti{\tilde}
\newcommand\Ti{\widetilde}
\def\dbar{{\overline\partial}}
\def\rd{{\rm d}}
\def\rD{{\rm D}}
\def\rT{{\rm T}}

\def\k{\kappa}

\def\g{\gamma}

\def\s{\sigma}
\def\t{\tau}
\def\i{\iota}

\def\o{\omega}
\def\l{\lambda}
\def\L{\Lambda}
\def\G{\Gamma}
\def\S{\mathhexbox278}

\def\Om{\Omega}

\def\cA{{\mathcal A}}
\def\cB{{\mathcal B}}
\def\cC{{\mathcal C}}

\def\cE{{\mathcal E}}
\def\cF{{\mathcal F}}

\def\cI{{\mathcal I}}

\def\cM{{\mathcal M}}

\def\bM{{\overline{\mathcal M}}}
\def\bX{{{\mathbf X}}}
\def\bW{{{\mathbf W}}}

\def\cO{{\mathcal O}}
\def\cP{{\mathcal P}}
\def\cR{{\mathcal R}}
\def\cS{{\mathcal S}}
\def\cT{{\mathcal T}}
\def\cU{{\mathcal U}}
\def\cV{{\mathcal V}}
\def\cW{{\mathcal W}}
\def\cX{{\mathcal X}}

\def\E{{\mathbb E}}

\def\R{{\mathbb R}}
\def\Q{{\mathbb Q}}

\def\N{{\mathbb N}}
\def\C{{\mathbb C}}
\def\D{{\mathbb D}}
\def\CP{{\mathbb C\mathbb P}}

\def\Z{{\mathbb Z}}

\def\half{{\textstyle{\frac 12}}}
\def\im{{\rm im}\,}

\def\ev{{\rm ev}}

\DeclareMathOperator{\supp}{supp}

\def\la{\langle\,}
\def\ra{\,\rangle}
\def\dvol{\,\rd{\rm vol}}

\def\ev{{\rm ev}}

\def\pr{{\rm pr}}
\def\id{{\rm id}}
\def\Crit{{\rm Crit}}

\newcommand{\less}{{\smallsetminus}}

\newcommand{\leftsub}[2]{{\vphantom{#2}}_{#1}{#2}}

\newtheorem{dfn}{Definition}[section]
\newtheorem{lem}[dfn]{Lemma}

\newtheorem{thm}[dfn]{Theorem}
\newtheorem{rmk}[dfn]{Remark}
\newtheorem{cor}[dfn]{Corollary}

\newtheorem{ass}[dfn]{Assumption}



\hyphenation{mani-fold}
\hyphenation{mani-folds}

   \newcounter{qcounter}

\newenvironment{enumilist}
   { \begin{list} {(\roman{qcounter})\;}{\usecounter{qcounter}
     \setlength{\itemsep}{.5ex} \setlength{\leftmargin}{1ex} } }
   { \end{list} }

\newenvironment{itemlist}
   { \begin{list} {$\bullet$}
         { \setlength{\topsep}{.5ex}  \setlength{\itemsep}{.5ex} \setlength{\leftmargin}{2.5ex} } }
   { \end{list} }

\newcommand\quotient[2]{
        \mathchoice
            {
                \text{\raise1ex\hbox{$#1$}\Big/\lower1ex\hbox{$#2$}}%
            }
            {
                #1\,/\,#2
            }
            {
                #1\,/\,#2
            }
            {
                #1\,/\,#2
            }
    }

\newcommand\quo[2]{
                \text{\raise1ex\hbox{$#1\!$}\big/\lower1ex\hbox{$\!#2$}}
  }

\newcommand\qu[2]{
                \text{\raise.8ex\hbox{$\scriptstyle#1$}/\lower.8ex\hbox{$\scriptstyle#2$}}
  }

\def\qqquad{\qquad\qquad}

\textwidth6in 
\textheight9in 
\voffset=-0.5in
\hoffset=-0.5in

\begin{document}

\author{Benjamin Filippenko, Katrin Wehrheim}

\title{A polyfold proof of the Arnold conjecture}





\begin{abstract}
We give a detailed proof of the homological Arnold conjecture for nondegenerate periodic Hamiltonians on general closed symplectic manifolds $M$ via a direct Piunikhin-Salamon-Schwarz morphism. Our constructions are based on a coherent polyfold description for moduli spaces of pseudoholomorphic curves in a family of symplectic manifolds degenerating from $\CP^1\times M$ to $\C^+ \times M$ and $\C^-\times M$, 
as developed by Fish-Hofer-Wysocki-Zehnder as part of the Symplectic Field Theory package.
To make the paper self-contained we include all polyfold assumptions, describe the coherent perturbation iteration in detail, and prove an abstract regularization theorem for moduli spaces with evaluation maps relative to a countable collection of submanifolds. 

The 2011 sketch of this proof was joint work with Peter Albers, Joel Fish.
\end{abstract}

\maketitle

\section{Introduction}

Let $(M,\omega)$ be a closed symplectic manifold and $H:S^1\times M \to \R$ a periodic Hamiltonian function. It induces a time-dependent Hamiltonian vector field $X_H: S^1\times M\to \rT M$ given by $\omega(X_H(t,x), \cdot)=\rd H(t, \cdot)$.  We denote the set of contractible periodic orbits by
\begin{equation}\label{eq:PH}
\cP(H) := \bigl\{ \g:S^1 \to M \,\big|\, \dot\g(t) = X_H(t,\g(t)) {\rm \,\, and \,\,} \g {\rm \,\, is \,\, contractible} \bigr\}
\end{equation}
and note that periodic orbits can be identified with the fixed points of the time 
$2\pi$ flow $\phi^{2\pi}_H :M\to M$ of $X_H$.
(Here we choose the convention $S^1=\R/2\pi\Z$, i.e.\ period $2\pi$, for ease of notation later on.)
We call this Hamiltonian system nondegenerate if
$\phi^{2\pi}_H \times \id_M$ is transverse to the diagonal and hence cuts out the fixed points transversely. In particular, this guarantees a finite set of periodic orbits.
Arnold \cite{arnold} conjectured in the 1960s that the minimal number of critical points of a Morse function on $M$ is also a lower bound for the number of periodic orbits of a nondegenerate Hamiltonian system as above.
In this strict form, the Arnold conjecture has been confirmed for Riemann surfaces \cite{eliashberg-arnold} and tori \cite{CZ}.
A weaker form is accessible by Floer theory, introduced by Floer \cite{f2,f3} in the 1980s. It constructs a chain complex generated by $\cP(H)$ that can be compared with the Morse complex generated by the critical points of a Morse function.
When Floer homology is well-defined, it is usually independent of the Hamiltonian, and on a compact symplectic manifold can in fact be identified with Morse homology, which is also independent of the Morse function and computes the singular homology.
Using this approach, the following nondegenerate homological form of the Arnold conjecture was first proven by Floer \cite{f1,f4} in the absence of pseudoholomorphic spheres.

\begin{thm} \label{thm:arnold}
Let $(M,\omega)$ be a closed symplectic manifold and $H:S^1\times M \to \R$ a nondegenerate periodic Hamiltonian function. Then
$$ \textstyle
\# \cP(H) \;\geq\;  \sum_{i=0}^{\dim M} \dim H_i(M;\Q).
$$
\end{thm}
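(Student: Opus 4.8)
The plan is to compare the Floer homology of $H$ with the Morse homology of $M$ by a Piunikhin--Salamon--Schwarz (PSS) argument, realized inside a single coherent polyfold so that all boundary matchings are forced by the polyfold stratification rather than verified by hand. Fix a Morse--Smale pair $(f,g)$ on $M$ and work with $\Z/2$ coefficients: this avoids coherent orientations and still suffices, since $\dim_{\Z/2}H_i(M;\Z/2)\geq\dim_\Q H_i(M;\Q)$ for every $i$. First I would define the Floer chain complex $\CF_*(H;\Z/2)$ freely generated by the finite set $\cP(H)$, with differential the mod-$2$ count of isolated regularized Floer cylinders; then PSS chain maps $\Psi\co C_*(f)\to\CF_*(H)$ and $\Phi\co\CF_*(H)\to C_*(f)$ counting isolated regularized spiked half-cylinders; and finally a chain homotopy $\id_{C_*(f)}\simeq\Phi\circ\Psi$. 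Granting these and that $C_*(f)$ computes $H_*(M;\Z/2)$, the relation $\Phi\circ\Psi\simeq\id$ makes $\Psi$ injective on homology, whence
\begin{align*}
\#\cP(H)=\dim_{\Z/2}\CF_*(H;\Z/2)&\;\geq\;\dim_{\Z/2}\HF_*(H;\Z/2)\\
&\;\geq\;\sum_i\dim_{\Z/2}H_i(M;\Z/2)\;\geq\;\sum_i\dim_\Q H_i(M;\Q),
\end{align*}
which is the theorem. So everything reduces to making the four counts well defined and checking $\partial^2=0$, $\Psi\partial=\partial\Psi$, $\Phi\partial=\partial\Phi$, and $\id-\Phi\Psi=\partial K+K\partial$ by identifying the ends of the associated $1$-dimensional moduli spaces.

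I would obtain all of these moduli spaces from one object in the Fish--Hofer--Wysocki--Zehnder SFT polyfold package: the moduli space of stable pseudoholomorphic curves in the family of symplectic manifolds degenerating from $\CP^1\times M$ to the nodal $(\C^+\times M)\cup(\C^-\times M)$, carrying a Hamiltonian term on the cylindrical ends that is tapered off toward the interior and toward the added marked points, the latter equipped with sc-smooth evaluation maps into $M$. By the SFT building structure its compactified strata are exactly the geometrically relevant ones --- Floer cylinders connecting orbits in $\cP(H)$; PSS caps (a plane in $\C^\pm\times M$ asymptotic to an orbit, glued at the marked point to a negative gradient half-trajectory of $f$ from some $W^u(x)$, resp.\ to $W^s(x')$); ``spiked spheres'' over $\CP^1\times M$ carrying two gradient spikes; and their iterated breakings and sphere bubblings --- and, crucially, the boundary faces of each stratum are literally \emph{products} of lower strata fibered over matching orbits or over matching points on the $W^{s/u}$, with these identifications intrinsic to the polyfold.

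The main obstacle, and the step I would carry out in full detail, is the \textbf{coherent regularization}: choosing abstract $\mathrm{sc}^+$-multisection perturbations over this entire tower so that (i) each perturbed space is transverse, hence a compact manifold with boundary and corners of the expected dimension; (ii) the perturbation restricted to a boundary face agrees with the product of the perturbations already chosen on its factors; and \emph{simultaneously} (iii) all evaluation maps are transverse to the \emph{countable} collection of submanifolds $\{W^s(x),W^u(x):x\in\Crit(f)\}$ and the further countably many fiber products of these needed along broken strata. I would run the coherent perturbation iteration upward in total dimension and, within each dimension, inward from the deepest boundary faces: on a face take the product perturbation supplied by induction, extend it to a polyfold neighborhood by the polyfold extension and implicit-function technology, and then perturb further in the interior, invoking the abstract regularization theorem relative to a countable collection of submanifolds to achieve (i) and (iii) at once; a Baire-category argument over the countably many submanifolds and countably many strata keeps the set of admissible perturbations nonempty. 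Compactness of the resulting $0$- and $1$-dimensional spaces is then the usual consequence of SFT compactness together with the energy and index bounds.

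With the coherent regularized moduli spaces in hand the remainder is formal. Counting isolated points defines $\partial,\Psi,\Phi$; each relevant $1$-dimensional space is a compact $1$-manifold whose boundary is the stated product of $0$-dimensional spaces, which gives $\partial^2=0$ and the chain-map identities mod $2$. For $\id\simeq\Phi\circ\Psi$ the homotopy $K$ counts isolated points in the family over the interval of degenerating symplectic manifolds: at the nodal end a configuration is $(\text{cap down})\#(\text{Floer cylinder})\#(\text{cap up})$, hence contributes $\Phi\circ\Psi$; at the $\CP^1$-end the Hamiltonian term has been switched off and the count reduces to the Morse count of a gradient segment with a free marked point between critical points of equal index, which by Morse--Smale transversality is $\id$, while the sphere bubbles appearing there sit in strata that are products of lower-dimensional perturbed spaces and so are empty in the relevant (negative virtual) dimension. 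This establishes the displayed inequalities and proves the theorem.
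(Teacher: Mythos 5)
Your overall architecture (a coherent polyfold over the neck-stretching family, a relative perturbation theorem for countably many submanifolds, reading off the algebra from oriented/mod-2 boundaries of $1$-dimensional perturbed moduli spaces) matches the paper's. But there is a genuine gap at the $\CP^1$-end of your chain homotopy, and it is exactly the point the paper is built to avoid. You claim that at $R=0$ the count ``reduces to the Morse count \dots which is $\id$,'' dismissing sphere contributions because they ``sit in strata \dots of negative virtual dimension.'' That is not the case: a nonconstant $J$-holomorphic sphere in a class $A\neq 0$ with $2c_1(A)+|p_-|-|p_+|=0$, passing through $W^-_{p_-}$ and $W^+_{p_+}$, is an \emph{interior} point of the index-$0$ moduli space $\cM^\iota(p_-,p_+;A)$, not a bubble in a boundary stratum, and after abstract (multivalued) perturbation its count need not vanish. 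Forcing these counts to zero amounts to showing that isolated solutions are constant, which requires perturbations equivariant under the $S^1$ of rotations fixing the two marked points --- precisely the obstructed equivariant transversality problem discussed in Remark~\ref{rmk:equivariant}(ii). The paper's resolution is to \emph{not} prove $SSP\circ PSS\simeq\id$: the $R=0$ end contributes a map $\iota$ that also counts nonconstant spheres, and $\iota$ is shown to be invertible by an upper-triangular argument (Lemmas~\ref{lem:invertibilitymatrixnovikov} and \ref{lem:iota triangular}): constant spheres ($A=0$) give the diagonal at $T^0$ via the fiber product computation of Remark~\ref{rmk:MorseOrient}(ii), while all nonconstant contributions carry $\omega(A)>0$ and hence strictly positive Novikov weight.

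This points to the second problem: the invertibility argument lives over the Novikov field, and your plain $\Z/2$ coefficients give you no filtration with which to run it. Moreover, on a general closed symplectic manifold you need Novikov coefficients anyway, independently of orientations: the sums defining $\partial$, $\Psi$, $\Phi$, $K$ range over all $A\in H_2(M)$ with the appropriate index, and only the energy bounds \eqref{eq:pss-energy}, \eqref{eq:h-energy} together with the finiteness statements in Assumptions~\ref{ass:pss}(ii), \ref{ass:iota,h}(ii) make these sums well defined --- as elements of $\Lambda$, not of $\Z/2$. Using the Novikov field with $\Z/2$ ground coefficients would rescue both issues and still spare you the orientation analysis; but you would then have to replace ``$\Phi\circ\Psi\simeq\id$'' by ``$\Phi\circ\Psi\simeq\iota$ with $\iota$ invertible,'' after which your homological conclusion (or the paper's even more economical version, which never defines the Floer differential and extracts the rank inequality directly from linear independence of the $PSS(c_i)$ in $CF$) goes through.
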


Floer's proof was later extended to general closed symplectic manifolds \cite{HS,Ono,FO,LT}, and in the presence of pseudoholomorphic spheres of negative Chern number requires abstract regularizations of the moduli spaces of Floer trajectories since perturbations of the geometric structures may not yield regular moduli spaces; see e.g.\ \cite{MWsmooth}.
Further generalizations and alternative proofs have been published in the meantime, using a variety of regularization methods. 
The purpose of this note is to provide a general and maximally accessible proof of Theorem~\ref{thm:arnold} -- using an abstract perturbation scheme provided by the polyfold theory of Hofer-Wysocki-Zehnder \cite{HWZbook}, following an approach by Piunikhin-Salamon-Schwarz \cite{PSS} based on \cite{schwarz-thesis}, and building on polyfold descriptions of Gromov-Witten moduli spaces \cite{hwz-gw} as well as their degenerations in Symplectic Field Theory \cite{egh,fh-sft-full}.

%

\begin{rmk} \label{rmk:disclosure} \rm  
Since the polyfold descriptions of SFT moduli spaces \cite{fh-sft}--\cite{fh-sft-full} are not completely published, we formulate them as Assumptions~\ref{ass:pss}, \ref{ass:iota,h}, \ref{ass:iso}.
While these descriptions of four kinds of moduli spaces and their relations involve a lot of structures (bundles, sections, evaluation maps, and compatible immersions from Cartesian products to boundaries), they will be familiar from classical descriptions of moduli spaces of pseudoholomorphic curves. 
Our assumptions in polyfold theoretic terms formalize the well known fact that the moduli spaces have local descriptions in terms of Fredholm sections and gluing theorems, which polyfold theory interprets as global smooth structure within an appropriately generalized differential geometry.
Indeed, transition maps between the natural infinite dimensional local models fail to be classically differentiable for only two reasons which polyfold theory resolves as explained in e.g.\ \cite[\S2]{usersguide} and \cite[\S2.1]{hwz-gw}: 
Actions of reparameterization groups satisfy the new notion of scale-smoothness for maps between Banach spaces. Neighbourhoods of maps with broken or nodal domains are given local polyfold models as the image of a retraction (modulo a finite group action in the case of isotropy), which becomes scale-smooth after adjusting the smooth structure near nodal curves in Deligne-Mumford spaces. 
With this understood, there is little doubt in the existence of polyfold descriptions for moduli spaces. The much more audacious claim of polyfold theory is the existence of an abstract perturbation scheme for moduli spaces that are described as zero set of a scale-smooth section over a polyfold. However, this claim is fully substantiated in \cite{HWZbook}. 
So the goal of this paper is to demonstrate the use of this abstract perturbation scheme once polyfold descriptions for the basic building blocks of moduli spaces are given.

We moreover chose this structure to give an example of how rigorous and transparent proofs can be written at a time when parts of their foundation are unpublished or in question. 
\end{rmk}

To describe our proof,
let $CF=\oplus_{\g\in\cP(H)} \L \la \g \ra$ be the Floer chain group of the Hamiltonian $H$ with coefficients in the Novikov field $\L$ (see \S\ref{sec:Novikov}).
Let $(CM, \rd)$ be the Morse complex with coefficients in $\L$ associated to a Morse function $f:M\to\R$ and a suitable metric on $M$
(see \S\ref{sec:Morse}).
Then we will prove the following in Lemma~\ref{lem:pss-novikov}, Definition~\ref{def:iota,h}, and Lemmas~\ref{lem:iota chain}, \ref{lem:iota triangular}, \ref{lem:h chain}.

\begin{thm} \label{thm:main}
There exist $\L$-linear maps $PSS:CM \to CF$, $SSP: CF \to CM$, $\iota:CM \to CM$, and $h:CM \to CM$ such that the following holds.
\begin{enumilist}
\item
$\iota$ is a chain map, that is $\iota \circ \rd = \rd \circ \iota$.
\item
$\iota$ is a $\L$-module isomorphism.
\item
$h$ is a chain homotopy between $SSP\circ PSS$ and $\iota$, that is
$\iota  - SSP\circ PSS  = \rd \circ h + h \circ \rd$.
\end{enumilist}
\end{thm}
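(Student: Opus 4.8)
The plan is to construct the four maps by counting points in zero-dimensional components of suitably regularized moduli spaces, and to deduce the three properties (i)--(iii) from the structure of the one-dimensional components of the same moduli spaces via the usual ``count the boundary of a 1-manifold equals zero'' argument, carried out in the polyfold framework.

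First I would set up the relevant moduli spaces. The map $PSS$ is defined by counting isolated solutions of the PSS equation: pseudoholomorphic planes in $\C^+\times M$ with a Hamiltonian perturbation that vanishes near the puncture at $+\infty$ (giving Floer asymptotics at a periodic orbit) and such that the evaluation at the origin of the plane lies on the unstable manifold of a critical point of $f$ (coupling to the Morse complex by a half-infinite gradient trajectory). Symmetrically, $SSP$ counts planes in $\C^-\times M$ with a periodic orbit input and a stable-manifold constraint. Then $SSP\circ PSS$ is computed by a gluing/neck-stretching argument: the composition counts configurations consisting of a $(-)$-plane and a $(+)$-plane sharing a periodic orbit asymptotic, which is precisely the count of broken objects living over the boundary stratum of moduli spaces of pseudoholomorphic spheres in the degeneration from $\CP^1\times M$ into $\C^+\times M \cup \C^-\times M$, with two marked points constrained to the stable, resp.\ unstable manifold of critical points of $f$. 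The map $\iota$ is defined by counting isolated pseudoholomorphic spheres in $\CP^1\times M$ (in the unperturbed, or rather coherently perturbed, setting) with two marked points on the stable and unstable manifolds; this is a Morse-theoretic ``continuation-type'' count and its leading order term is the identity with lower-order quantum corrections weighted by Novikov variables. The homotopy $h$ is defined by counting isolated points in the $1$-parameter family of moduli spaces that interpolates between the sphere in $\CP^1\times M$ (defining $\iota$) and the fully degenerated broken configuration (defining $SSP\circ PSS$) -- i.e.\ spheres over the interior of the degeneration parameter interval $[0,\infty]$, with the same two marked point constraints.

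Once the moduli spaces are described as zero sets of the relevant polyfold Fredholm sections, I invoke the abstract regularization theorem (the ``abstract regularization theorem for moduli spaces with evaluation maps relative to a countable collection of submanifolds'' promised in the abstract) to obtain coherent perturbations making all of these moduli spaces -- in dimensions $0$ and $1$, and compatibly across the degeneration -- transverse and cut out as compact weighted branched orbifolds with boundary and corners. Coherence is essential: the perturbation chosen on the $\C^\pm\times M$ ends must be compatible with the one chosen on the degenerating family over $\CP^1\times M$, so that the boundary of the $1$-dimensional $h$-moduli space is exactly $(\text{$\iota$-configurations}) \sqcup (\text{$SSP\circ PSS$-configurations}) \sqcup (\text{configurations with a broken Morse trajectory})$, the last contributing the $\rd\circ h + h\circ\rd$ term. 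Then (iii) is the signed/weighted count of this boundary; similarly (i) is the boundary count of the $1$-dimensional $\iota$-moduli space (broken Morse trajectories at one marked point give $\rd\circ\iota$, at the other give $\iota\circ\rd$). For (ii), I would argue that $\iota$ is upper triangular with respect to the action filtration on $CM$ provided by $f$ -- the leading term (constant sphere, no bubbling) is the identity, and all other contributions come from spheres of positive symplectic area, hence carry a strictly positive power of the Novikov variable -- so $\iota = \id + N$ with $N$ strictly filtration-decreasing, and such a map is invertible over the Novikov field by a geometric series (this is the content of Lemma~\ref{lem:iota triangular} together with the completeness of $\L$).

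The main obstacle, and the part requiring the most care, is the coherence of the perturbations across the degenerating family, together with the attendant transversality for evaluation maps against the countably many submanifolds (all stable and unstable manifolds of all critical points, and their products). This is where the polyfold package and the abstract regularization theorem do the heavy lifting: one must run the perturbation iteration stratum by stratum in the compactified moduli spaces, at each stage extending a perturbation already fixed on lower strata (boundary and corner faces, which include the previously-regularized PSS and SSP moduli spaces) to the next stratum while preserving transversality to the evaluation-map constraints, and one must check that the neck-stretching/gluing analysis identifies the boundary strata of the $1$-dimensional moduli spaces with exactly the fiber products appearing in the algebraic identities. A secondary but genuine technical point is bookkeeping the orientations (or, in the branched-orbifold setting, the rational weights) so that the boundary counts carry the correct signs making (i) and (iii) hold as stated; this I would handle by fixing coherent orientations of the determinant line bundles once and for all at the start, as is standard.
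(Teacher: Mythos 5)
Your proposal follows essentially the same route as the paper: $PSS$, $SSP$, $\iota$, $h$ are defined by counts of coherently perturbed fiber products of (compactified half-infinite) Morse trajectory spaces with SFT/Gromov--Witten polyfolds over the neck-stretching family, the identities (i) and (iii) come from the oriented boundaries of the $1$-dimensional perturbed zero sets, and (ii) comes from the Novikov-filtration triangularity of $\iota$ (constant spheres give the invertible diagonal, all other contributions carry $T^{\omega(A)}$ with $\omega(A)>0$ once the perturbations for $A\neq 0$, $\omega(A)\leq 0$ are chosen supported near the empty zero sets). The only cosmetic difference is that the paper proves invertibility of the resulting matrix by a cofactor-expansion induction (Lemma~\ref{lem:invertibilitymatrixnovikov}) rather than your geometric series, and it must insert explicit sign twists $(-1)^{|p|}$ into $\iota$ and $PSS$ rather than absorbing all signs into an a priori orientation convention.
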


Here we view the Floer chain group $CF$ as a vector space over $\Lambda$ -- not as a chain complex, and in particular do not consider a Floer differential. Thus we are neither constructing  a Floer homology for $H$, nor identifying it with the Morse homology of $f$. However, the algebraic structures in Theorem~\ref{thm:main} suffice to deduce the homological Arnold conjecture for the Hamiltonian $H$ as follows.

\begin{proof}[Proof of Theorem~\ref{thm:arnold}]
Denote the sum of the Betti numbers $k := \sum_{i=0}^{\dim M} \dim H_i(M;\Q)$. Let $(CM_{\Q}, \rd_{\Q})$ be the Morse complex over $\Q$ as defined in \S\ref{sec:Morse}. Then 
by the isomorphism of singular and Morse homology there exist $c_1,\ldots,c_k \in CM_{\Q}$ that are cycles, $\rd_\Q c_i=0$, and linearly independent in the Morse homology over $\Q$.
Since the Morse differential $\rd:CM\to CM$ is given by $\L$-linear extension of $\rd_\Q$ from $CM_{\Q} \subset CM$ the chains $c_1,\ldots,c_k \in CM$ are also cycles $\rd c_i = \rd_\Q c_i = 0$ and linearly independent in the Morse homology over $\L$.
By Theorem~\ref{thm:main}~(i),(ii), $\iota$ induces an isomorphism $H\iota:HM\to HM$ on homology.
This in particular implies that $[\iota(c_1)],\ldots,[\iota(c_k)]\in HM$ are also linearly independent in homology, that is for any $\lambda_1, \ldots,\lambda_k\in\Lambda$ we have
\begin{equation} \label{eq:independence}
 \textstyle
\sum_{i=1}^k \lambda_i \cdot \iota(c_i) \;\in\; \im\rd \qquad\Longrightarrow\qquad \lambda_1=\ldots=\lambda_k = 0 .
\end{equation}

We now show that $PSS(c_1),\ldots, PSS(c_k) \in CF$ are $\Lambda$-linearly independent, proving $\#\cP(H) \geq k$ since the elements of $\cP(H)$ generate $CF$ by definition. This proves the theorem.

Let $\lambda_1, \ldots,\lambda_k\in\Lambda$ be a tuple such that
$$\textstyle
\sum_{i=0}^k \lambda_i \cdot PSS(c_i) = 0.
$$
Then we deduce from Theorem~\ref{thm:main}~(iii) that
\begin{align*}
\textstyle
\sum_{i=0}^k \lambda_i \cdot \iota(c_i)
&\;=\; \textstyle
\sum_{i=0}^k \lambda_i \cdot \bigl(  SSP\bigl(PSS(c_i)\bigr) + \rd h(c_i) + h(\rd c_i) \bigr) \\
&\;=\; \textstyle 
SSP\Bigl( \sum_{i=0}^k \lambda_i \cdot PSS (c_i ) \Bigr)  \;+\;  \sum_{i=0}^k \lambda_i \cdot \rd h(c_i)  \;=\; \textstyle
 \rd  \Bigl( \sum_{i=0}^k \lambda_i \cdot h(c_i)  \Bigr),
\end{align*}
which implies $\l_1 = \ldots = \l_k = 0$ by \eqref{eq:independence}.
\end{proof}

This algebraically minimalistic approach of deducing the homological Arnold conjecture from the existence of maps $PSS$ and $SSP$ whose composition is chain homotopic to an isomorphism on the Morse complex was developed in 2011 discussions of the second author, Peter Albers, and Joel Fish with Mohammed Abouzaid and Thomas Kragh. 
These were prompted by our observation that proofs of ``Floer homology equals Morse homology'' require equivariant transversality which is generally obstructed  -- even for equivariant sections of finite rank bundles.  
Thus our goal was a proof using the least amount of geometric insights or new abstract tools. 
Beyond this we expect the \cite{PSS}-approach to yield an isomorphism between Floer and Morse homology, and spectral invariants \cite{schwarz-spec} on all closed symplectic manifolds, 
using refinements of polyfold theory described in Remark~\ref{rmk:equivariant}.

\smallskip

To maximize accessibility we begin with reviews of the pertinent facts on the Novikov field, \S\ref{sec:Novikov}, and Morse trajectories, \S\ref{sec:Morse}. 
The proof of Theorem~\ref{thm:main} then proceeds by constructing the $PSS$ and $SSP$ maps in \S\ref{sec:PSS} from curves in $\C^\pm \times M$, constructing the isomorphism $\iota$ and chain homotopy $h$ in \S\ref{sec:iota,h} from curves in $\CP^1 \times M$ and its degeneration into $\C^-\times M$ and $\C^+ \times M$, and proving their algebraic relations in \S\ref{sec:algebra} by constructing coherent perturbations. 
We give a detailed account of these iterative constructions in the proofs of Lemma~\ref{lem:iota chain} and \ref{lem:h chain}. While these results should be contained in \cite{fh-sft-full}, neck-stretching is not addressed in \cite{fh-sft}, and it seemed timely to give the proof in a case whose structure is vastly simplified by the absence of trivial cylinders compared with \cite[\S 3.5]{fh-sft}.
To strike a balance between technical details and maximal accessibility, we have clearly labeled all such technical work. Readers willing to view polyfold theory as a black box can save 20 pages by skipping these parts. 
For readers new to polyfold theory we provide in Appendix~\ref{sec:polyfold} a summary of all notions and facts that are necessary for the present application. Here we moreover establish in Theorem~\ref{thm:transversality} a relative perturbation result that should be of independent interest: It allows one to bring moduli spaces with an evaluation map into general position to a countable collection of submanifolds. 
We combine this result with \cite{Ben-fiber} to construct polyfold descriptions of the \cite{PSS} moduli spaces as fiber products of SFT moduli spaces with the Morse trajectory spaces constructed in \cite{W-Morse}.

\begin{rmk} \rm \label{rmk:equivariant}
(i)
There are essentially two approaches to the general Arnold conjecture as stated in Theorem~\ref{thm:arnold}. The first -- developed by \cite{f4} and used verbatim in \cite{HS,Ono,FO,LT} -- is to establish the independence of Floer homology from the Hamiltonian function, and to identify the Floer complex for a $\cC^2$-small $S^1$-invariant Hamiltonian $H:M\to\R$ with the Morse complex for $H$. This requires $S^1$-equivariant transversality to argue that isolated Floer trajectories must be $S^1$-invariant, hence Morse trajectories.
A conceptually transparent construction of equivariant and transverse perturbations -- under transversality assumptions at the fixed point set which are met in this setting -- can be found in \cite{zhengyi-quotient}, assuming a polyfold description of Floer trajectories. 

\medskip\noindent(ii)
The second approach to Theorem~\ref{thm:arnold} by \cite{PSS} is to construct a direct isomorphism between the Floer homology of the given Hamiltonian and the Morse homology for some unrelated Morse function. Two chain maps $PSS: CM \to CF$, $SSP: CF \to CM$ between the Morse and Floer complexes are constructed from moduli spaces of once punctured perturbed holomorphic spheres with one marking evaluating to the unstable resp.\ stable manifold of a Morse critical point, and with the given Hamiltonian perturbation of the Cauchy-Riemann operator on a cylindrical neighbourhood of the puncture. Then gluing and degeneration arguments are used to argue that both $PSS\circ SSP $ and $SSP\circ PSS$ are chain homotopic to the identity, and hence $SSP$ is the inverse of $PSS$ on homology.
However, sphere bubbling can obstruct these arguments: In the first chain homotopy it creates an ambiguity in the choice of nodal gluing when the intermediate Morse trajectory shrinks to zero length. 
(We expect to be able to avoid this by arguing that ``index 1 solutions generically avoid codimension 2 strata'' -- another classical fact in differential geometry that should generalize to polyfold theory.)
The second chain homotopy is as claimed in Theorem~\ref{thm:main}~(iii) but with $\iota=\id$, which requires arguing that the only isolated holomorphic spheres with two marked points evaluating to an unstable and stable manifold are constant.
This again requires $S^1$-equivariant transversality (which we expect to be able to achieve with the techniques in \cite{zhengyi-quotient}). 

\medskip\noindent(iii)
Theorem~\ref{thm:main} is proven by following the  \cite{PSS}-approach as above but avoiding the use of new polyfold technology such as equivariant or strata-avoiding perturbations. In particular, $\iota$ is the map that results from counting holomorphic spheres that intersect an unstable and stable manifold; its invertibility is deduced from an ``upper triangular'' argument.

\medskip\noindent(iv)
The techniques in this paper -- combining existing perturbation technology with the polyfold descriptions of SFT moduli spaces -- would also allow one to define the Floer differential, prove $\rd^2=0$, establish independence of Floer homology from the Hamiltonian (and other geometric data), and prove that $PSS$ and $SSP$ are chain maps. 
Then the chain homotopy between $SSP\circ PSS$ and the isomorphism $\iota$ implies that $PSS$ is injective and $SSP$ surjective on homology. However, proving that $PSS$ and $SSP$ are isomorphisms 
on 
homology, or directly identifying the Floer complex of a small $S^1$-invariant Hamiltonian with its Morse complex, requires the techniques discussed in (ii).

Moreover, a proof of independence of Floer homology from the choice of abstract perturbation would require a study of the algebraic consequences of self-gluing Floer trajectories in expected dimension $-1$ during a homotopy of perturbations, as developed in the $A_\infty$-setting in \cite{jiayong}.
\end{rmk}

\noindent
{\it 
We thank Peter Albers and Joel Fish for helping develop the outline of this project -- and Edi Zehnder for asking the initial question.  
The project was further supported by various discussions with Mohammed Abouzaid, Helmut Hofer, Thomas Kragh, Kris Wysocki, and Zhengyi Zhou.
Crucial financial support was provided by NSF grants DMS-1442345 and DMS-1708916. 
}

\section{The Novikov field}
\label{sec:Novikov}

We use the following Novikov field $\Lambda$ associated to the symplectic manifold $(M,\omega)$. Let $H_2(M)$ denote integral homology and consider the map $\o : H_2(M) \rightarrow \mathbb{R}$ given by the pairing $\o(A) := \langle \o,A \rangle$ for $A \in H_2(M)$. The image of this pairing is a finitely generated additive subgroup of the real numbers denoted
$$\G \,:=\; \im \o \;=\; \o(H_2(M)) \;\subset\; \mathbb{R}.$$
The \emph{Novikov field} $\Lambda$ is the set of formal sums
$$\textstyle
\lambda = \sum_{r \in \G} \lambda_{r} T^{r},$$
where $T$ is a formal variable, with rational coefficients $\lambda_r \in \mathbb{Q}$ which satisfy the finiteness condition
$$
\forall c\in \R \qquad 
\# \{ r \in \G \,\, | \,\, \lambda_r \neq 0,\, r \leq c \} < \infty .
$$
The multiplication is given by
$$\textstyle
\lambda \cdot \mu \;=\;
\left( \sum_{r \in \G} \lambda_r T^{r} \right) 
 \cdot \left( \sum_{s \in \G} \mu_s T^{s}  \right)
 \;:=\; \sum_{t \in \G} \left(
 \sum_{r+s=t} 
 \lambda_r \mu_s  \right) T^{t} . 
$$

This defines a field $\Lambda$ by \cite[Thm.4.1]{HS} and the discussion preceding the theorem in \cite[\S4]{HS}, the key being that $\G$ is a finitely generated subgroup of $\mathbb{R}$.

We will moreover make use of the following generalization of the invertibility of triangular matrices with nonzero diagonal entries.

\begin{lem} \label{lem:invertibilitymatrixnovikov}
Let $M=(\lambda^{ij})_{1\leq i,j \leq \ell} \in\Lambda^{\ell\times \ell}$ be a square matrix with entries $\lambda^{ij}\in\Lambda$ in the Novikov field. Suppose that $\lambda^{ij} = \sum_{r\in \G, r\geq 0} \lambda^{ij}_r T^r$ with $\lambda^{ij}_0 = 0$ for $i\neq j$ and $\lambda^{ii}_0 \neq 0$. Then $M$ is invertible.
\end{lem}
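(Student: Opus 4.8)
The plan is to use that $\Lambda$ has already been shown to be a field, so that it suffices to prove $\det M\neq 0$ in $\Lambda$, and to pin down the lowest-order coefficient of $\det M$ by means of the natural valuation on $\Lambda$. First I would introduce $v\colon\Lambda\less\{0\}\to\G$, $v(\lambda):=\min\{r\in\G : \lambda_r\neq 0\}$; this minimum exists precisely because of the finiteness condition in the definition of $\Lambda$, which forces the support of any $\lambda\neq 0$ to have a smallest element. The one property I need is multiplicativity, $v(\lambda\mu)=v(\lambda)+v(\mu)$ for $\lambda,\mu\neq 0$: the coefficient of $T^{v(\lambda)+v(\mu)}$ in $\lambda\mu$ is $\sum_{r+s=v(\lambda)+v(\mu)}\lambda_r\mu_s=\lambda_{v(\lambda)}\mu_{v(\mu)}$, the only surviving term since $r\ge v(\lambda)$ and $s\ge v(\mu)$ together with $r+s=v(\lambda)+v(\mu)$ force $r=v(\lambda)$, $s=v(\mu)$, and this product is nonzero because $\Q$ is an integral domain. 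Translating the hypotheses: every entry $\lambda^{ij}$ is either $0$ or has $v(\lambda^{ij})\ge 0$; for $i\neq j$ it is $0$ or has $v(\lambda^{ij})>0$ (equivalently $\lambda^{ij}_0=0$); and each diagonal entry satisfies $v(\lambda^{ii})=0$ with $\lambda^{ii}_0\neq 0$.

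Next I would expand $\det M=\sum_{\sigma\in S_\ell}\mathrm{sgn}(\sigma)\prod_{i=1}^\ell\lambda^{i\sigma(i)}$ and compute its $T^0$-coefficient. The $\sigma=\id$ summand $\prod_i\lambda^{ii}$ has $v(\prod_i\lambda^{ii})=\sum_i v(\lambda^{ii})=0$ by multiplicativity, and its $T^0$-coefficient is $\prod_i\lambda^{ii}_0\neq 0$. For $\sigma\neq\id$ choose $i_0$ with $\sigma(i_0)\neq i_0$; then $\lambda^{i_0\sigma(i_0)}$ is either $0$ — so the summand is $0$ — or satisfies $v(\lambda^{i_0\sigma(i_0)})>0$, in which case $v(\prod_i\lambda^{i\sigma(i)})=\sum_i v(\lambda^{i\sigma(i)})>0$ since no factor has negative valuation; in all cases the $T^0$-coefficient of the $\sigma$-summand vanishes. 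Since extracting the $T^0$-coefficient is $\Q$-linear, the $T^0$-coefficient of $\det M$ equals that of the $\id$-summand, namely $\prod_{i=1}^\ell\lambda^{ii}_0\neq 0$. Hence $\det M\neq 0$, so it is a unit of the field $\Lambda$, and $M$ is invertible, explicitly $M^{-1}=(\det M)^{-1}\,\mathrm{adj}(M)$ from the adjugate identity $M\cdot\mathrm{adj}(M)=(\det M)\,I$, valid over any commutative ring.

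I do not anticipate a genuine obstacle here; the only points requiring a little care are the well-definedness of $v$ (where the finiteness condition enters) and its multiplicativity (where $\Q$ being a domain enters). An equivalent repackaging, using exactly the same two facts, would be to observe that $\Lambda_{\ge 0}:=\{\lambda\in\Lambda : \lambda=\sum_{r\ge 0}\lambda_r T^r\}$ is a local ring with maximal ideal $\{\lambda\in\Lambda_{\ge 0} : \lambda_0=0\}$ and residue field $\Q$, that $M$ has all its entries in $\Lambda_{\ge 0}$ and reduces modulo the maximal ideal to $\mathrm{diag}(\lambda^{11}_0,\dots,\lambda^{\ell\ell}_0)\in\Q^{\ell\times\ell}$, which is invertible, and then to invoke that a square matrix over a local ring is invertible if and only if its reduction to the residue field is.
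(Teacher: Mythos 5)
Your proof is correct. It reaches the same key fact as the paper -- that the $T^0$-coefficient of $\det(M)$ equals $\prod_{i=1}^\ell \lambda^{ii}_0 \neq 0$, whence $\det(M)\neq 0$ and invertibility follows from $\Lambda$ being a field -- but by a different route: the paper argues by induction on $\ell$ via cofactor expansion along the first row, whereas you expand $\det(M)$ as a sum over permutations and use the valuation $v(\lambda)=\min\{r\in\G : \lambda_r\neq 0\}$ (whose well-definedness via the finiteness condition, and whose multiplicativity via $\Q$ being a domain, you correctly isolate as the only two inputs) to kill the $T^0$-coefficient of every non-identity summand. You also correctly handle the degenerate case where an off-diagonal entry is exactly $0$, so $v$ is undefined there. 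Your approach avoids the induction entirely and makes the mechanism more transparent; your closing reformulation -- $M$ has entries in the local ring $\Lambda_{\geq 0}$ and reduces to an invertible diagonal matrix over the residue field $\Q$ -- is the cleanest conceptual packaging of the statement, at the cost of invoking (or proving) the standard fact about matrices over local rings. The paper's inductive cofactor argument is more pedestrian but entirely self-contained.
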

\begin{proof}
Since $\Lambda$ is a field, invertibility of $M$ is equivalent to $\det(M) \neq 0$. Write $\det(M) = \sum_{r \in \G} \mu_rT^r \in \Lambda$ for some $\mu_r \in \mathbb{Q}$. It suffices to show that $\mu_0 \neq 0$.

We proceed by induction on the size of the matrix $M$. In the $\ell=1$ base case, when $M$ is a $1 \times 1$ matrix $M = [\lambda^{11}]$, we have $\det(M) = \lambda^{11}=\sum_{r\in \G} \mu_r T^r$ with $\mu_r=\lambda^{11}_r$ so $\mu_0 = \lambda^{11}_0 \neq 0$.

Now suppose that $M$ is size $\ell \times \ell$ for some $\ell > 1$ and inductively assume that, for any size $(\ell - 1) \times (\ell-1)$ matrix $N$ satisfying the hypotheses of the lemma, we have $\det(N) = \sum_{r \in \G} \mu^N_r T^r$ with $\mu^N_0 \neq 0$. For $1 \leq j \leq 
\ell$, let $C_{1 j}$ denote the matrix obtained by deleting the first row and $j$-th column of $M$. Then $N := C_{11}$ is an $(\ell - 1) \times (\ell-1)$ matrix that satisfies the hypotheses of the lemma, and the cofactor expansion of the determinant yields
$$\textstyle
\det(M) \;=\; \lambda^{11}  \det(N) \;+\; \sum_{j=2}^\ell (-1)^{1+j} \lambda^{1j}  \det(C_{1j}).
$$
By hypothesis, all entries of $M$ are of the form $\lambda^{ij} = \sum_{r \geq 0} \lambda^{ij}_r T^r$. Since the determinants $\det(N)$ and $\det(C_{1j})$ are polynomials of those entries, they are of the same form -- with zero coefficients for $T^r$ with $r < 0$. Since we moreover have $\lambda^{1j}_0 = 0$ for $j \geq 2$ by hypothesis, it follows that the constant term (i.e. the coefficient on $T^0$) of $\lambda^{1j}\det(C_{1j})$ is $0$. Hence the constant term of $\det(M)=\sum \mu_r T^r$ is $\mu_0 = \lambda_0^{11} \cdot \mu_0^N$, where $\mu_0^N \neq 0$ by induction and $\lambda_0^{11} \neq 0$ by hypothesis. This implies $\det(M)= \mu_0 + \ldots \neq 0$ and thus finishes the proof.
\end{proof}

\section{The Morse complex and half-infinite Morse trajectories}
\label{sec:Morse}

This section reviews the construction of the Morse complex as well as the compactified spaces of half-infinite Morse trajectories which will appear in all our moduli spaces.

\subsection{Euclidean Morse-Smale pairs}

The Morse complex can be constructed for any Morse-Smale pair of function and metric on a closed smooth manifold $M$ (and more general spaces). However, we will also work with half-infinite Morse trajectories, and to obtain natural manifold with boundary and corner structures on these, we will restrict ourselves to the following special setting.

\begin{dfn} \label{def:ems}
A {\bf Euclidean Morse-Smale pair} on a closed manifold $M$ is a pair $(f,g)$ consisting of a smooth function $f\in\cC^\infty(M,\R)$ and a Riemannian metric $g$ on $M$ satisfying a normal form and transversality condition as follows.
\begin{enumerate}
\item
For every critical point $p\in{\rm Crit}(f)$ of index $|p|\in\N_0$ there exists a local chart $\phi$ to a neighbourhood of $0\in\R^n$ such that
\begin{align*}
\phi^* f  (x_1,\ldots,x_n) &\;=\; f(p) - \half( x_1^2 + \ldots + x_{|p|}^2 ) + \half ( x_{|p|+1}^2 + \ldots + x_n^2 ) ,\\
\phi^* g &\;=\; \rd x_1 \otimes \rd x_1 + \ldots + \rd x_n \otimes \rd x_n .
\end{align*}
\item
For every pair of critical points $p,q\in{\rm Crit}(f)$ the intersection of unstable and stable manifolds is transverse, $W^-_p \pitchfork W^+_q $.
\end{enumerate}
\end{dfn}

\begin{rmk}\rm  \label{rmk:Euclidean}
Euclidean Morse-Smale pairs exist on every closed manifold, and for any given Morse function. Indeed, given any Morse function $f$ and metric $g$, 
there are arbitrarily $C^0$-small perturbations $g'$ of $g$ in any neighborhood of the critical points of $f$ such that $(f,g')$ satisfies Definition~\ref{def:ems}(i); see e.g.\ \cite[Prp.1]{bh}. Furthermore, any $L^2$-generic perturbation $g''$ of $g'$ on annuli around the critical points yields a pair $(f,g'')$ that additionally satisfies Definition~\ref{def:ems}(ii) and hence is a Euclidean Morse-Smale pair; see e.g.\ \cite[Prp.2]{bh} or \cite[Prp.2.24]{schwarz}.
\end{rmk}

\subsection{The Morse complex} \label{ssec:CM}

For distinct critical points $p_-\neq p_+\in{\rm Crit}(f)$ the space of unbroken Morse trajectories (which are necessarily nonconstant) is
\begin{align}
\cM(p_-,p_+) &:= \bigl\{ \t:\R\to M \,\big|\, \dot{\t}=-\nabla f(\t) ,
\lim_{s\to\pm\infty}\t(s)=p_\pm  \bigr\} / \R  \label{eq:inftraj} \\  \nonumber
&\cong \bigl(W^-_{p_-}\cap W^+_{p_+}\bigr) / \R
\quad\cong\; W^-_{p_-}\cap W^+_{p_+} \cap f^{-1}(c) .
\end{align}
It is canonically identified with the intersection of unstable and stable manifold modulo the $\R$-action given by the flow of $-\nabla f$, or their intersection with a level set for any regular value $c\in (f(p_+),f(p_-))$. Both formulations equip it with a canonical smooth structure of dimension $|p_-| - |p_+| - 1$, see e.g.\ \cite[\S2.4.1]{schwarz}.
Moreover, any choice of orientation of the unstable manifolds $W^-_{p}$ for all $p\in{\rm Crit}(f)$ 
induces orientations on the trajectory spaces $\cM(p_-,p_+)$ by e.g.\ \cite[\S3.4]{Weber}. 
Then the Morse chain complex of $(f,g)$ is obtained by counting (with signs induced by the orientations) the zero dimensional spaces of unbroken trajectories, 
\begin{equation} \label{eq:MorseDiff}
CM_\Q := \bigoplus_{p\in {\rm Crit}(f)} \Q \la p \ra , \qquad\qquad
d_\Q \; \la p_- \ra := \sum_{ |p_+|= |p_-|-1 }  \# \cM(p_-,p_+) \; \la p_+ \ra  .
\end{equation}
It computes the singular homology of $M$; see e.g.\ \cite[\S4.3]{schwarz}.
More precisely, the Morse complex is graded 
$CM_\Q=\bigoplus_{i=0,\ldots,\dim M} C_i M$ by Morse indices $C_i M=\bigoplus_{|p|=i} \Q \la p \ra$, and with $d_i:= d_\Q|_{C_iM}$ we have $H_i(M;\Q) \cong \ker d_i / \im d_{i+1}$. 

The PSS and SSP morphisms will be constructed on the Morse complex with coefficients in the Novikov field $\Lambda$ from Section~\ref{sec:Novikov}, 
\begin{equation} \label{eq:CM}
 \textstyle
CM \;=\; CM_\Lambda \,:=\; CM_\Q \otimes \Lambda \;=\;  \bigoplus_{p\in {\rm Crit}(f)} \Lambda \la p \ra ,
\end{equation}
with differential $d=d_\Lambda$ the $\Lambda$-linear extension of $d_\Q$ (defined as above on generators). 
This complex is naturally graded with differential of degree 1, 
\begin{equation} \label{eq:CMgraded}
 \textstyle
C_*M \;=\; \bigoplus_{i=0}^{\dim M} C_i M, \qquad C_iM = \bigoplus_{|p|=i} \Lambda \la p \ra , \qquad
d:C_i M\to C_{i-1}M. 
\end{equation}

\subsection{Compactified spaces of Morse trajectories} \label{ssec:Morse}

Our construction of moduli spaces will also make use of the following spaces of half-infinite unbroken Morse trajectories for $p_\pm\in{\rm Crit}(f)$
\begin{align*}
\cM(M,p_+) &\,:=\; \bigl\{ \t: \phantom{-} [0,\infty) \to M \,\big|\, \dot{\t}=-\nabla f(\t), \lim_{s\to\infty\phantom{-}}\t(s)=p_+  \bigr\}  , \\
\cM(p_-,M) &\,:=\; \bigl\{ \t: (-\infty,0] \to M \,\big|\, \dot{\t}=-\nabla f(\t), \lim_{s\to-\infty}\t(s)=p_-  \bigr\}  .
\end{align*}
These will be equipped with smooth structures of dimension
$\dim \cM(M,p_+) = \dim M - |p_+|$ resp.\ $\dim \cM(p_-,M) = |p_-|$ by the evaluation maps
$$
\ev : \cM(M,p_+) \to M ,  \quad \t \mapsto \t(0),  \qqquad
\ev : \cM(p_-,M) \to M , \quad  \t \mapsto \t(0) ,
$$
which identify the trajectory spaces with the unstable and stable manifolds 
$\cM(M,p_+) \cong W^+_{p_+}$ resp.\ $\cM(p_-,M) \cong W^-_{p_-}$. 
Note that these spaces contain constant trajectories at a critical point, $\{\tau\equiv p_+\} \in \cM(M,p_+)$ and $\{\tau\equiv p_-\} \in \cM(p_-,M)$.
To compactify these trajectory spaces in a manner compatible with Morse theory, we cannot simply take the closure of the unstable or stable manifold $W^\pm_{p_\pm}\subset M$, but must add broken trajectories involving the bi-infinite Morse trajectories.
The bi-infinite trajectories from \eqref{eq:inftraj} which appear in such a compactification are always nonconstant, i.e.\ between distinct critical points $p_-\neq p_+$. So, unlike constant half-infinite length trajectories, our constructions will not involve constant bi-infinite trajectories, and we simplify subsequent notation by setting $\cM(p,p):=\emptyset$ for all $p\in{\rm Crit}(f)$. 
With that we first introduce spaces of $k$-fold broken half- or bi-infinite Morse trajectories for $k\in\N_0$ and $p_\pm\in\Crit(f)$,  
\begin{align}
\bM(M,p_+)_k &:= \textstyle
\bigcup_{p_1,\ldots, p_k\in{\rm Crit}(f)}
\cM(M,p_1)\times \cM(p_1,p_2) \ldots \times \cM(p_k,p_+), \nonumber \\
\bM(p_-,M)_k &:= \textstyle
\bigcup_{p_1,\ldots, p_k\in{\rm Crit}(f)}
\cM(p_-,p_1)\times \cM(p_1,p_2) \ldots \times \cM(p_k,M), \label{eq:bMk}\\
\bM(p_-,p_+)_k &:= \textstyle
\bigcup_{p_1,\ldots, p_k\in{\rm Crit}(f)}
\cM(p_-,p_1)\times \cM(p_1,p_2) \ldots \times \cM(p_k,p_+).  \nonumber
\end{align}
Now the compactifications of the spaces of half- or bi-infinite Morse trajectories are given by 
$$ 
\bM(M,p_+) := \bigcup_{k\in\N_0} \bM(M,p_+)_k , \quad
\bM(p_-,M) :=  \bigcup_{k\in\N_0} \bM(p_-,M)_k , \quad
\bM(p_-,p_+) := \bigcup_{k\in\N_0} \bM(p_-,p_+)_k , 
$$
with topology given by the Hausdorff distance between the images of the broken or unbroken trajectories. Compactness of these spaces is proven analogously to the bi-infinite Morse trajectory spaces in e.g.\ \cite[Prp.3]{bh}, using \cite[Lemma~3.5]{W-Morse}. Moreover, \cite[Lemma~3.3]{W-Morse} shows that the evaluation maps extend continuously to
\begin{align} \label{eval}
\ev \,:\; \bM(M,p_+) \to M , \quad &\bigl(\t_0,[\t_1], \phantom{\ell} \ldots \phantom{\ell} ,[\t_k]) \mapsto \t_0(0) ,  \\
\ev \,:\; \bM(p_-,M) \to M , \quad & \bigl([\t_0],\ldots,[\t_{k-1}],\t_k)  \mapsto \t_k(0) . \nonumber
\end{align}
Smooth structures on these spaces are obtained by the following variation of a folk theorem, which is proven in \cite{W-Morse}, using techniques similar to those of \cite{bh} for the bi-infinite trajectory spaces.

\begin{thm} \label{thm:Morse}
Let $(f,g)$ be a Euclidean Morse-Smale pair and $p_\pm\in\Crit(f)$.
Then $\bM(M,p_+)$, $\bM(p_-,M)$, and $\bM(p_-,p_+)$ are compact, separable metric spaces and carry the structure of a smooth manifold with corners of dimension 
$\dim\bM(M,p_+)=\dim M - |p_+|$, $\dim\bM(p_-,M)=|p_-|$, and $\dim\bM(p_-,p_+)=|p_-|-|p_+|-1$.
Their $k$-th boundary stratum is $\partial_k\bM(\ldots)=\bM(\ldots)_k$. 
Moreover, the evaluation maps \eqref{eval} are smooth.
\end{thm}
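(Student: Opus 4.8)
The plan is to prove the statement by induction on the index difference — more precisely, by treating the bi-infinite trajectory spaces $\bM(p_-,p_+)$ first (these are the classical compactified moduli spaces of Morse theory, already handled in \cite{bh}), and then bootstrapping from them to the half-infinite spaces $\bM(M,p_+)$ and $\bM(p_-,M)$. For the half-infinite case, the key observation is that the Euclidean normal form in Definition~\ref{def:ems}(i) makes the flow near each critical point explicitly linear, so that in a Morse chart around $p_+$ the unstable/stable splitting is orthogonal and the time-$T$ flow acts diagonally by $e^{\pm T}$ on the two factors. This is precisely what allows one to build boundary-and-corner charts: near a point of $\bM(M,p_+)_k$ one writes a broken trajectory $(\tau_0,[\tau_1],\dots,[\tau_k])$ in terms of the interior coordinates on each unbroken piece $\cM(p_i,p_{i+1})\cong W^-_{p_i}\cap W^+_{p_{i+1}}\cap f^{-1}(c_i)$ together with $k$ gluing parameters $\rho_1,\dots,\rho_k\in[0,\epsilon)$ recording the (exponentially rescaled) "transition time" spent near each intermediate critical point $p_i$. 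The parameter $\rho_i=0$ corresponds to the broken configuration, $\rho_i>0$ to a nearby unbroken one; the Euclidean normal form guarantees that the gluing map is not merely continuous but smooth in $(\rho_1,\dots,\rho_k)$ jointly with the interior coordinates, with the transversality hypothesis (ii) ensuring the relevant intersections are cut out cleanly and the implicit function theorem applies uniformly.

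First I would record the dimension count: $\dim\cM(M,p_+)=\dim W^+_{p_+}=\dim M-|p_+|$ via the evaluation-map identification already stated in \S\ref{ssec:Morse}, and then additivity of dimension along a broken chain, $\dim\cM(M,p_1)+\sum_{i=1}^{k-1}\dim\cM(p_i,p_{i+1})+\dim\cM(p_k,p_+)=(\dim M-|p_1|)+\sum(|p_i|-|p_{i+1}|-1)+(|p_k|-|p_+|)=\dim M-|p_+|-k$, confirming that $\bM(M,p_+)_k$ has codimension $k$ and hence is a candidate for the $k$-th corner stratum; the analogous computations handle the other two families. Second, I would establish compactness and metrizability: the Hausdorff-distance topology on images makes these spaces subsets of the (compact, metrizable) hyperspace of closed subsets of $M$, and the standard Morse-theoretic compactness argument — broken-trajectory limits exist and are themselves (possibly more) broken trajectories — is carried out as in \cite[Prp.3]{bh}, invoking \cite[Lemma~3.5]{W-Morse} for the needed $\cC^0$-convergence estimate. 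Third, I would construct the smooth atlas: interior charts come directly from the manifold structures on $W^\pm_p$ and the regular-level-set slices; boundary charts near a $k$-broken configuration are built by the gluing construction above, and one checks that overlaps of two such charts (differing in which intermediate slices $f^{-1}(c_i)$ are used, or in the choice of gluing profile) are smooth — again this is where the Euclidean normal form does the work, since it replaces the usual exponential-decay estimates (which would only give $\cC^0$ or finite regularity) by exact linearity near the critical points. Finally, smoothness of the extended evaluation maps \eqref{eval} is immediate in interior charts (it is just $\tau_0\mapsto\tau_0(0)$, a restriction of the smooth identification with $W^+_{p_+}$), and in boundary charts it factors through the smooth gluing coordinates since $\ev$ only depends on the first unbroken piece $\tau_0$, whose parametrization varies smoothly with the gluing parameters.

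The main obstacle I expect is verifying that the gluing/transition maps are smooth \emph{up to and including the corner}, jointly in all $k$ gluing parameters and the interior coordinates — i.e.\ that the natural coordinates genuinely furnish a $\cC^\infty$ manifold-with-corners structure rather than merely a topological manifold with a stratification. In a general Morse–Smale setting one only gets weaker regularity from the hyperbolic-fixed-point linearization (Sternberg/Hartman-type normal forms are at best finitely smooth), which is exactly why Definition~\ref{def:ems} imposes the Euclidean normal form: it trivializes the local flow and converts the gluing analysis into an essentially algebraic computation with the maps $\rho\mapsto(\rho\cdot v, w)$ and rescalings thereof. Making this precise — defining the gluing profile, checking that the transition between "long finite trajectory near $p_i$" and "broken at $p_i$" is smooth in $\rho_i$ at $\rho_i=0$, and that distinct gluing charts are compatibly smooth — is the technical heart of the argument and is carried out in detail in \cite{W-Morse}; here I would give the statement and the structure of the induction, and refer to \cite{W-Morse} for the chart-compatibility verifications, noting that the corresponding bi-infinite case is \cite[Prp.3]{bh}.
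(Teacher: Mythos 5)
Your proposal is correct and follows essentially the same route as the paper, which itself gives no proof of this theorem but defers compactness to \cite[Prp.3]{bh} together with \cite[Lemma~3.5]{W-Morse}, continuity of the evaluation maps to \cite[Lemma~3.3]{W-Morse}, and the smooth manifold-with-corners structure --- i.e.\ the gluing analysis whose smoothness at the corner is exactly what the Euclidean normal form of Definition~\ref{def:ems} buys, as you correctly identify --- to \cite{W-Morse}. The only quibble is a dropped $-1$ in the last summand of your dimension count (one has $\dim\cM(p_k,p_+)=|p_k|-|p_+|-1$), which does not affect your correct conclusion that $\bM(M,p_+)_k$ has codimension $k$.
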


For reference, we recall the definition of a manifold with (boundary and) corners and its strata.

\begin{dfn} \label{def:corners}
A {\bf smooth manifold with corners} of dimension $n\in\N_0$ is a second countable Hausdorff space $M$ together with a maximal atlas of charts $\phi_\iota:M\supset U_\iota \to V_\iota\subset [0,\infty)^n$ (i.e.\ homeomorphisms between open sets such that $\cup_\iota U_\iota = M$) whose transition maps are smooth.

For $k=0,\ldots,n$ the $k$-th boundary stratum $\partial_k M$ is the set of all $x\in M$ such that for some (and hence every) chart the point $\phi_\iota(x)\in [0,\infty)^n$ has $k$ components equal to $0$.
\end{dfn}

\begin{rmk} \rm \label{rmk:MorseOrient}
\begin{enumilist}
\item
To orient the Morse trajectory spaces in Theorem~\ref{thm:Morse} we fix a choice of orientation on each unstable manifold $W^-_{p}\cong \cM(p,M)$ for $p\in{\rm Crit}(f)$, and orient $W^+_p\cong\cM(M,p)$ such that $\rT_p M = \rT_p W^- \oplus \rT_p W^+$ induces the orientation on $M$ given by the symplectic form. 
This also induces orientations on $\cM(p_-,p_+)= W^-_{p_-}\cap  W^+_{p_+} / \R$ that are coherent (by e.g.\ \cite[\S3.4]{Weber}) in the sense that the top strata of the oriented boundaries of the compactified Morse trajectory spaces are products $\partial_1\bM(\cdot, \cdot)=\bigcup_{q\in{\rm Crit}(f)} o(\cdot,q,\cdot) \cM(\cdot,q)\times \cM(q,\cdot)$ with  universal signs $o(\cdot,q,\cdot)=\pm 1$. We compute the relevant cases: For $\cM(M,q)\times \cM(q,p_+)\hookrightarrow\partial_1\bM(M,p_+)$ with $\dim\cM(q,p_+)=0$ the sign is $o(M,q,p_+)=(-1)^{|p_+|+1}$. 
Indeed, a point in $\cM(q,p_+)$ is positively oriented if $\rT W^-_q \cong \la -\nabla f \ra \times {\rm N} W^+_{p_+}$. Here we identify ${\rm N}_{p_+} W^+_{p_+} \cong \rT_{p_+} W^-_{p_+}$, and the outer normal direction is represented by $ \nabla f$, so that the sign arises from 
\begin{align*}
 \rT W^-_{p_+} \times \rT W^+_{p_+} 
&\;\cong\;  \rT W^-_q \times \rT W^+_q 
\;\cong\; 
\la -\nabla f \ra \times {\rm T} W^-_{p_+} \times \rT W^+_q \\
&\;\cong\; 
 \rT W^-_{p_+} \times  \la (-1)^{1+ |p_+|} \nabla f \ra \times \rT W^+_q \times \rT \cM(q,p_+) .
\end{align*}
Similarly, for $\cM(p_-,q)\times \cM(q,M)\hookrightarrow\partial_1\bM(p_-,M)$ with $\dim \cM(p_-,q) = 0$ the sign is $o(p_-,q,M)=+1$ since $-\nabla f$ is an outer normal and $\rT W^-_{p_-} \cong \la -\nabla f \ra \times \rT W^-_{q}$ when $\rT \cM(p_-,q)=+ \{0\}$.

\item
For computational purposes in \S\ref{ssec:iso} we determine the fiber products of the compactified Morse trajectory spaces of critical points $p_-,p_+\in\Crit(f)$ with the same Morse index $|p_-|=|p_+|$, 
\begin{align*}
\bM(p_-,M) \leftsub{\ev}{\times}_\ev \bM(M,p_+)
&\;=\;
\left\{ (\ul\tau^- , \ul\tau^+ ) \in  \bM(p_-,M) \times \bM(M,p_+)
\,\left|\, \ev(\ul\tau^-)= \ev(\ul\tau^+) \right. \right\} \\
&\;=\; \begin{cases}
\quad \emptyset &; p_-\neq p_+ , \\
(\tau^-\equiv p_-, \tau^+\equiv p_+) &; p_-= p_+ .
\end{cases}
\end{align*}
To verify this recall that the compactifications $\bM(p_-,M)$ and $\bM(M,p_+)$ are constructed in \eqref{eq:bMk} via broken flow lines involving bi-infinite Morse trajectories in $\cM(p_i,p_{i+1})$, which are (defined to be) nonempty only for $|p_i|>|p_{i+1}|$. So we have
$\cM(p_-,p_1)\times \ldots \times\cM(p_k,M)\subset \bM(p_-,M)$ only for $|p_k|<|p_-|$
and $\cM(M,p_1)\times \ldots \times\cM(p_k,p_+)\subset \bM(M,p_+)$ only for $|p_1|>|p_+|$, 
and thus the image of the evaluation maps are contained in unions of unstable/stable manifolds
$$ \textstyle
\ev(\bM(p_-,M)) \;\subset\; W^-_{p_-} \cup \bigcup_{|q_-|<|p_-|} W^-_{q_-},
\qquad
\ev(\bM(M,p_+)) \;\subset\; W^+_{p_+} \cup \bigcup_{|q_+|>|p_+|} W^+_{q_+} . 
$$
Since the intersections $W^-_{q_-}\cap W^+_{q_+}$ are transverse by the Morse-Smale condition, they can be nonempty only for $|q_-|+ \dim M - |q_+| \geq \dim M$. So this intersection is empty whenever $|q_+|>|q_-|$. 
Thus for $|q_-|<|p_-|=|p_+|<|q_+|$ in the above images we have empty intersections $W^-_{q_-}\cap W^+_{q_+}=\emptyset$ as well as $W^-_{q_-}\cap W^+_{p_+}=\emptyset$ and $W^-_{p_-}\cap W^+_{q_+}=\emptyset$. 
This proves 
$\ev(\bM(p_-,M))\cap \ev(\bM(M,p_+)) =  W^-_{p_-} \cap W^+_{p_+}$, 
and for $p_-\neq p_+$ this intersection is empty by transversality in \eqref{eq:inftraj}. 
Lastly, for $p_\pm=p$ we have 
$W^-_{p} \cap W^+_{p} = \{p\}$ since gradient flows do not allow for nontrivial self-connecting trajectories. 
This proves $\bM(p,M) \leftsub{\ev}{\times}_\ev \bM(M,p) = \{ (p,p) \}$.
\end{enumilist}
\end{rmk}

\section{The PSS and SSP maps}
\label{sec:PSS}

In this section we construct the PSS and SSP morphisms in Theorem~\ref{thm:main} between Morse and Floer complexes.
As in the introduction, we fix a closed symplectic manifold $(M,\omega)$ and a smooth function $H:S^1\times M \to \R$. This induces a time-dependent Hamiltonian vector field $X_H:S^1\to \G(\rT M)$, which we assume to be nondegenerate. Thus it has a finite set of contractible periodic orbits, denoted by $\cP(H)$ as in \eqref{eq:PH}. 
We moreover pick a Morse function $f:M\to\R$ and denote its -- again finite -- set of critical points by ${\rm Crit}(f)$. Then we will work with the Floer and Morse complexes over the Novikov field from Section~\ref{sec:Novikov}, 
$$
CF=\oplus_{\g\in\cP(H)} \L \la \g \ra, \qquad\qquad CM=\oplus_{p\in {\rm Crit}(f)} \L \la p \ra , 
$$
and construct the $\L$-linear maps $PSS:CM \to CF$, $SSP: CF \to CM$ from moduli spaces which we introduce in \S\ref{ssec:moduli}. We provide these moduli spaces with a compactification and polyfold description in \S\ref{ssec:poly}, and in \S\ref{ssec:construct} rigorously construct the PSS/SSP map by using polyfold perturbations to obtain well defined (but still choice dependent) counts of compactified-and-perturbed moduli spaces. 

\subsection{The Piunikhin-Salamon-Schwarz moduli spaces} \label{ssec:moduli}

To construct the moduli spaces, we need to make further choices as follows.
\begin{itemlist}
\item 
Let $J$ be an $\omega$-compatible almost complex structure on $M$. 

Then the Cauchy-Riemann operator on maps $u: \Sigma \to M$ parametrized by a Riemann surface $\Sigma$ with complex structure $j$
is $\dbar_J u := \frac 12 \bigl( \rd u + J(u) \circ \rd u \circ j\bigr)\in\Om^{0,1}(\Sigma,u^*\rT M)$.  
\item
Let $g$ be a metric on $M$ such that $(f,g)$ is a Euclidean Morse-Smale pair as in Definition~\ref{def:ems}. It exists by Remark~\ref{rmk:Euclidean}. 
\item
Let $\beta: [0,\infty) \to [0,1]$ be a smooth cutoff function with $\beta|_{[0,1]}\equiv 0$, $\beta'\geq 0$, and $\beta|_{[e,\infty)}\equiv 1$. 

Then we define the anti-holomorphic vector-field-valued 1-form
$Y_H\in \Om^{0,1}(\C, \G(\rT M))$ in polar coordinates 
$$
Y_H(re^{i\theta},x) := \tfrac 12 \beta(r) \bigl( J X_H(\theta,x) \, r^{-1} \rd r + X_H(\theta,x) \, \rd\theta  \bigr) .
$$
In the notation of \cite[\S8.1]{MS}, we have $Y_H=-(X_{H_\beta})^{0,1}$ given by the anti-holomorphic part of the 1-form with values in Hamiltonian vector fields $X_{H_\beta}$ which arises from the 1-form with values in smooth functions $H_\beta\in\Om^1(\C,\cC^\infty(M))$ given by 
$H_\beta (r e^{i\theta}) = \beta(r) H (\theta,\cdot) \rd\theta$.

The vector-field-valued 1-form $Y_H$ encodes the Floer equation on both the positive cylindrical end $\{z\in \C\,|\, |z|\geq e\}\cong [1,\infty)\times S^1$ and the negative end $\{|z|\geq e\}\cong (-\infty,-1]\times S^1$ (where $\beta\equiv 1$) as follows: 
The reparametrization $v(s,t):= u(e^{\pm(s+it)})$ of a map $u: \C \to M$ satisfies the Floer equation $(\partial_s + J \partial_t) v(s,t) = J X_H(t,v(s,t))$ iff $\dbar_J u (z) = Y_H(z,u(z))$. 

\item
For each $\g \in \cP(H)$,  fix a smooth disk
$u_{\g} : D^2 \rightarrow M$ with $u_{\g}|_{\partial D^2}(e^{it}) = \g(t)$. 

We denote the oriented complex plane by $\mathbb{C}^{+}:=(\mathbb{C}, i)=\C$, and denote its reversed complex structure and orientation by $\mathbb{C}^{-}:=(\mathbb{C}, - i)$.
Then for $u : \C^{\pm} \rightarrow M$ with $\lim_{R\rightarrow \infty} u(Re^{\pm i t}) = \g(t)$, denote by $u \# u_{\g} : \CP^1 \rightarrow M$ the continuous map given by gluing $u$ to $u_{\gamma}^{\pm}$ (where the $\pm$ denotes the orientation of $D^2$). By abuse of language, we will call $A:=[u \# u_{\g}] = (u \# u_{\g})_*[\CP^1] \in H_2(M)$ the homology class represented by $u$.
Moreover, we denote by $\Ti u_\g: D^2 \to D^2\times M$ the graph of $u_\g$. Then the graph $\Ti u : \C \rightarrow \C\times M, z\mapsto (z,u(z))$ glues with $\Ti u_\g^{\pm}$ to a continuous map representing $[\Ti u \# \Ti u_{\g}] = \Ti A:= [\CP^1] + A \in H_2(\CP^1\times M)$, or more precisely $\Ti A = [\CP^1]\times[{\rm pt}] + [{\rm pt}]\times A$.
Now the condition $[v \# \Ti u_{\g}] = \Ti A$ makes sense for other maps $v : \C \rightarrow \C\times M$ with the same asymptotic behaviour, and we say $v$ represents $\Ti A$. 
In fact, we will suppress the notation $\Ti A$ and label spaces with $A$ -- as this specifies the topological type of $v$.
\end{itemlist}

\medskip\noindent
Given such choices, the (choice-dependent) morphisms $PSS:CM \to CF$ and $SSP: CF \to CM$
will be constructed from the following moduli spaces for critical points $p\in{\rm Crit}(f)$, periodic orbits $\g\in\cP(H)$, and $A \in H_2(M)$
\begin{align*}
\cM(p,\g; A) &:= \bigl\{  u : \C^+ \to M \;\big|\; u(0)\in W^-_p , \; \dbar_J u = Y_H(u) \phantom{\ell}, \;\lim_{\scriptscriptstyle R\to\infty\phantom{-} } u(Re^{it})=\g(t) , \; [u \# u_{\g}] =A  \bigr\} , \\
\cM(\g,p; A) &:= \bigl\{ u : \C^- \to M \;\big|\; u(0)\in W^+_p , \; \dbar_J u = Y_H(u), \;\lim_{\scriptscriptstyle R\to\infty} u(Re^{-it})=\g(t) , \; [u \# u_{\g} ]=A \bigr\}.
\end{align*}
Each of these moduli spaces can be described as the zero set of a Fredholm section
$\dbar_J - Y_H : \cB_\pm \to \cE_\pm$. Here the Banach manifolds $\cB_\pm$ are given by a weighted Sobolev closure of the set of smooth maps $u: \C^{\pm} \to M$ representing the homology class $A$ with point constraint $u(0) \in W^\mp_p$ and satisfying a decay condition $\lim_{R \rightarrow \infty}u(Re^{\pm it})=\g(t)$, but not necessarily satisfying the perturbed Cauchy-Riemann equation $\dbar_J u = Y_H(u)$.
Then $\dbar_J - Y_H$ is a Fredholm section of index
\begin{align}\label{PSS index}
I(p,\g;A) &= \phantom{-} CZ(\g) + 2c_1(A) - \tfrac{\dim M}{2} + |p| , \\
I(\g,p;A) &= -CZ(\g) + 2c_1(A) + \tfrac{\dim M}{2}  - |p| ,  \nonumber
\end{align}
where $CZ(\g)$ is the Conley-Zehnder index with respect to a trivialization of $u_{\g}^*\rT M$ as in e.g.\ \cite{schwarz-thesis}, $c_1(A)$ is the first Chern class of $(TM,J)$ paired with $A$, and $|p|$ is the Morse index of $p \in {\rm Crit}(f)$.

If the moduli spaces were compact oriented manifolds, then we could define $PSS$ (and analogously $SSP$) by a signed count of the index $0$ solutions,
$$\textstyle 
PSS \la p \ra := \# \cM(p,\g; A) \cdot T^{\o(A)} \la \g \ra , 
$$
where the sum is over $\g\in \cP(H)$ and $A\in H_2(M)$ with $I(p,\g;A)=0$. 
In many cases -- if sphere bubbles of negative Chern number can be excluded -- this compactness and regularity can be achieved by a geometric perturbation of the equation, e.g.\ in the choice of almost complex structure. 
In general, obtaining well defined ``counts'' of the moduli spaces requires an abstract regularization scheme. We will use polyfold theory to replace ``$\# \cM(p,\g; A)$'' by a count of $0$-dimensional perturbed moduli spaces. In the presence of sphere bubbles with nontrivial isotropy, the perturbations will be multi-valued, yielding rational counts.

\begin{rmk} \rm \label{rmk:pss-energy}
Compactness, or rather Gromov-compactifications, of the moduli spaces $\cM(p,\g; A)$ and $\cM(\g,p; A)$ will result from energy estimates \cite[Remark~8.1.7]{MS} for solutions of $\dbar_J u = Y_H(u)$,
\begin{equation} \label{eq:pss-energy}
\textstyle
E(u) \,:=\; \frac 12 \int_\C |\rd u + X_{H_\beta}(u) |  \;\leq\; 
\int_\C u^*\omega + \|R_{H_\beta}\| \;\leq\; \omega([u\#u_\g]) + K . 
\end{equation}
Here the curvature
$R_{H_\beta} \dvol_\C = \rd H_\beta + \tfrac 12 { H_\beta\wedge H_\beta } =  \beta' \,  H \, \rd r \wedge \rd \theta$ has finite Hofer norm
$$\textstyle
\| R_{H_\beta}\| = \int_C ( \max R_{H_\beta} - \max R_{H_\beta} ) = \int_0^\infty \int_{S^1} |\beta'(r)| ( \max_{x\in M} H(\theta,x) -  \min_{x\in M} H(\theta,x) ) \,  \rd\theta \, \rd r
$$
since $\beta'$ has compact support in $[1,e]$. 
Since moreover $\cP(H)$ is a finite set, we obtain the above estimate with a finite constant
$K := \| R_{H_\beta} \| +\max_{\g\in\cP(H)} \int_{D^2} u_\gamma^* \omega$. 
Thus the energy of the perturbed pseudoholomorphic maps in each of our moduli spaces will be bounded since we fix $[u\#u_\g]=A$. 

Now SFT-compactness \cite{sft-compactness} asserts that for any $C>0$ the set of solutions of bounded energy $\{  u : \C \to M \,| \, \dbar_J u = Y_H(u) , \lim_{\scriptscriptstyle R\to\infty} u(Re^{\pm it})=\g(t) , E(u)\leq C  \}$ is compact up to breaking and bubbling.
This compactness will be stated rigorously in polyfold terms in Assumption~\ref{ass:iota,h}~(ii). 
\end{rmk}

\subsection{Polyfold description of moduli spaces} \label{ssec:poly}

We will obtain a polyfold description for the moduli spaces in \S\ref{ssec:moduli} by a fiber product construction motivated by the natural identifications
\begin{equation}\label{eq:Mfiber}
\cM(p,\g; A) \cong \cM(p,M) \leftsub{\ev}\times_{\ev} \cM^- (\g; A), 
\qquad
\cM(\g,p; A) \cong \cM^+ (\g; A) \leftsub{\ev}\times_{\ev} \cM(M,p) .
\end{equation}
This couples the half-infinite Morse trajectory spaces from \S\ref{ssec:Morse} with a space of perturbed pseudoholomorphic maps
\begin{align}
\cM^\pm(\g; A) & := \bigl\{ u : \C^{\pm} \to M  \;\big|\;  \dbar_J u = Y_H(u) , \; \lim_{\scriptscriptstyle R\to\infty} u(Re^{\pm it})=\g(t) , \; [u\# u_\g]=A \bigr\},  \label{HOLmod}
\end{align} 
via the evaluation maps \eqref{eval} and
\begin{align}
\label{HOLeval} 
\ev &: \cM^\pm(\g; A) \to M, \quad u \mapsto u(0). 
\end{align}
More precisely, the general approach to obtaining counts or more general invariants from moduli spaces such as \eqref{eq:Mfiber} is to replace them by compact manifolds -- or more general `regularizations' which still carry `virtual fundamental classes').
Polyfold theory offers a universal regularization approach after requiring a compactification $\cM(\ldots)\subset\bM(\ldots)$ of the moduli space and a description of the compact moduli space $\bM(\ldots)=\sigma^{-1}(0)$ as zero set of a sc-Fredholm section $\sigma:\cB(\ldots)\to\cE(\ldots)$ of a strong polyfold bundle. For an introduction to the language \cite{HWZbook} used here see Appendix \S\ref{sec:polyfold}.

The Morse trajectory spaces are compactified and given a smooth structure in Theorem~\ref{thm:Morse}. The Gromov compactification and perturbation theory for \eqref{HOLmod} will be achieved by identifying theses spaces with moduli spaces that appear in Symplectic Field Theory (SFT) as introduced in \cite{egh}, compactified in \cite{sft-compactness, CM-SFT-compactness}, and given a polyfold description in \cite{fh-sft-full}. 
Here we identify $u:\C\to M$ with the map to its graph $\Ti u : \C \to \C\times M, z \mapsto (z,u(z))$ as in \cite[\S8.1]{MS} to obtain a homeomorphism (in appropriate topologies) $\cM^\pm(\g; A) \cong \Ti\cM^\pm_{\rm SFT}(\Ti \g; A)/{\rm Aut}(\C^\pm)$ 
to an SFT moduli space for the symplectic cobordism\footnote{For definitions of these notions see \cite[\S2]{CM-SFT-compactness}. 
For $\C\times M$ the positive symplectization end is $\R^+\times S^1\times M \to \C\times M, (r, \theta , x) \mapsto ( e^{r+i\theta}, x)$. After reversing orientation on $\C$ there is an analogous negative end $\R^-\times S^1\times M \hookrightarrow \C^-\times M$.} $\C^\pm\times M$ between $\emptyset$ and $S^1\times M$. Here $S^1 \times M$ is equipped with the stable Hamiltonian structure
$(\pm\rd t,\omega + dH_t \wedge dt)$ whose Reeb field $\pm \partial_t + X_{H_t}$ has simply covered Reeb orbits\footnote{Here we have implicitly chosen asymptotic markers that fix a parametrization of each Reeb orbit.} given by the graphs $\tilde{\g} : t \mapsto (\pm t,\g(t))$ of the 
periodic orbits $\g \in \cP(H).$
Moreover, ${\rm Aut}(\C^\pm)$ is the action of biholomorphisms $\phi:\C\to\C$ by reparametrization $v\mapsto v\circ\phi$ on the SFT space for an almost complex structure $\Ti J_H^\pm$ on $\C^\pm\times M$ induced by $J$, $X_H$, and $j=\pm i$ on $\C^\pm$, 
$$
\widetilde\cM^\pm_{\rm SFT}(\Ti\g; A) := \bigl\{ v : \C^\pm \to \C^\pm\times M  \;\big|\;  \dbar_{\Ti J^\pm_H} v = 0 , \; v(Re^{\pm it}) \sim \Ti\g_R(t) , \;  [v\#\Ti u_\g]=[\CP^1] + A \bigr\}   .
$$
More precisely, the asymptotic requirement is $d_{\C\times M}\bigl( v(Re^{\pm i(t + t_0)}) , \Ti\g_R(t) \bigr) \to 0$ for some $t_0 \in S^1$ as $R\to \infty$ for the graphs $\Ti\g_R(t) = ( Re^{\pm it} , \g(t) )$ of the orbit $\g$ parametrized by $S^1 \cong \{|z|=R\}\subset\C^\pm$.

To express the evaluation \eqref{HOLeval} in SFT terms note that a holomorphic map in the given homology class intersects the holomorphic submanifold $\{0\}\times M$ in a unique point\footnote{For solutions in $\widetilde\cM^\pm_{\rm SFT}(\Ti\g; A)$ this follows from 
$\pr_{\C^\pm}\circ v : \C^\pm \to \C^\pm$ being an entire function with a pole of order $1$ at infinity (prescribed by the asymptotics). For ${\Ti J^\pm_H}$-holomorphic curves in the compactification, it follows from positivity of intersections, see e.g.\ \cite[Prop.7.1]{CM}.
}, so we can fix the point $0\in\C^\pm$ in the domain where this intersection occurs and rewrite the moduli space $\cM^\pm(\g; A) \cong  \bigl\{ v \in \widetilde\cM^\pm_{\rm SFT}(\Ti\g; A) \;\big|\; v(0)\in \{0\}\times M \bigr\}/{\rm Aut}(\C^\pm,0)$ with a slicing condition and quotient by the biholomorphisms which fix $0\in\C^\pm$.  
Thus we rewrite \eqref{eq:Mfiber} into the fiber products over $\C^\pm\times M$
\begin{align}\label{eq:MfiberSFT}
\cM(p,\g; A) & \;\cong\; \cM(p,M) \; \leftsub{\{0\}\times \ev}\times_{\ev^+} \; \cM^+_{\rm SFT} (\g; A), \\
\cM(\g,p; A) &\;\cong\; \cM^-_{\rm SFT} (\g; A) \; \leftsub{\ev^-}\times_{\{0\}\times\ev} \; \cM(M,p)  \nonumber
\end{align}
using evaluation maps on the SFT moduli space with one marked point
\begin{align}
\ev^\pm \;:\;  \cM^\pm_{\rm SFT}(\g; A) \,:=\; \quo{\widetilde\cM^\pm_{\rm SFT}(\Ti\g; A)}{{\rm Aut}(\C^\pm,0)} 
\;\to\; \C^\pm \times M, \qquad [v] \;\mapsto\;  v(0) .  \label{SFTeval}
\end{align} 
Now we will obtain a polyfold description of the PSS/SSP moduli spaces \eqref{eq:MfiberSFT} by the slicing construction of \cite{Ben-fiber} applied to polyfold descriptions of the SFT-moduli spaces $\cM^\pm_{\rm SFT}(\Ti\g; A)$ (compactified as space of pseudoholomorphic buildings with one marked point). 
This result is outlined in \cite{fh-sft}, but to enable a self-contained proof of our results, we formulate it as assumption, where we use
$$
\overline{\C^\pm} \,:=\; \C^\pm \cup S_1 \; \cong\; \{z\in\C^\pm  \,|\, |z|\leq 1\} 
$$
as target factor for a simplified evaluation map, as explained in the following remark.

\begin{rmk}\rm \label{rmk:evaluation drama}
Note that the compactified moduli space $\bM^\pm_{\rm SFT}(\g; A)$ -- in view of the noncompact target $\C^\pm \times M$ -- contains broken curves $\ul v : \Sigma = \C^\pm \sqcup \R\times S^1 \sqcup \ldots \sqcup \R\times S^1 \to \Sigma\times M$. We do not need a precise description of this compactification (beyond the fact that it exists and is cut out by a sc-Fredholm section), but it affects the formulation of the evaluation maps $[\ul v , z_0] \mapsto \ul v(z_0)$ for a marked point $z_0\in\Sigma$ that $\ul v$ might map to a cylinder factor $\R\times S^1 \times M \subset \Sigma \times M$. 
We will simplify the resulting sc$^\infty$ evaluation with varying target -- being developed in \cite{fh-sft-full} -- to a continuous evaluation map
$\overline\ev^\pm: \bM^\pm_{\rm SFT}(\g; A) \to \overline{\C^\pm}$ into the compactified target $\overline{\C^\pm}$. 

For that purpose we topologize $\overline{\C^\pm}\cong\{ |z|\leq 1 \}$ as a disk via a diffeomorphism $\C^\pm \to  \{|z|< 1\}$, $r e^{i\theta} \mapsto f(r) e^{i\theta}$ induced by a diffeomorphism $f:[0,\infty) \to [0,1)$ that is the identity near $0$, and its extension to a homeomorphism $\overline{\C^\pm}\to\{|z|\leq 1\}$ via $S^1=\qu{\R}{2\pi\Z} \to \{ |z|=1\}, \theta \mapsto e^{\pm i\theta}$. 
Then for any marked point $z_0\in\R\times S^1$ on a cylinder we project the evaluation $\ul v(z_0)\in\R\times S^1\times M$ to $S^1\times M= \partial\,\ov{\C^\pm}\times M$ by forgetting the $\R$-factor. The resulting simplified evaluation map will be unchanged and thus still sc$^\infty$ when restricted to the open subset $(\overline\ev^\pm)^{-1}(\C^\pm\times M)$ of the ambient polyfold -- as stated in (iii) below. This open subset inherits a scale-smooth structure, and still contains some broken curves -- just not those on which the marked point leaves the main component. 
This suffices for our purposes since the fiber product construction uses the evaluation map only in an open set of curves $[\ul v , z_0]$ with $\ul v(z_0)\approx 0 \in \C^\pm$.
\end{rmk}

In Assumption~\ref{ass:pss}, Remark~\ref{rmk:ass}, and Lemma~\ref{lem:PSS poly} we introduce some of the polyfolds under construction in \cite{fh-sft-full} and their expected properties. To describe these objects we introduce a significant amount of notation. A summary of the types of curves in each polyfold and subsets thereof is displayed in Table~\ref{tab:asspolyfolds} for the reader's convenience.

\begin{ass} \label{ass:pss}
There is a collection of oriented sc-Fredholm sections of strong polyfold bundles $\s_{\rm SFT}: \cB^\pm_{\rm SFT}(\g; A)\to \cE^\pm_{\rm SFT}(\g;A)$ and continuous maps ${\ov\ev^\pm: \cB^\pm_{\rm SFT}(\g;A)\to \overline{\C^\pm} \times M}$, indexed by $\g\in\cP(H)$ and $A\in H_2(M)$, with the following properties.
\begin{enumilist}
\item
The sections have Fredholm index $\text{\rm ind}(\s_{\rm SFT}) = CZ(\g) + 2c_1(A) + \tfrac{\dim  M}{2} + 2$ on $\cB^+_{\rm SFT}(\g; A)$, resp.\ $\text{\rm ind}(\s_{\rm SFT}) = - CZ(\g) + 2c_1(A) + \tfrac{\dim M}{2} + 2$ on $\cB^+_{\rm SFT}(\g; A)$. 
\item
Each zero set
$\bM^\pm_{\rm SFT}(\g; A):=\s_{\rm SFT}^{-1}(0)$ is compact, and given any $C\in\R$ there are only finitely many $A\in H_2(M)$ with $\omega(A)\leq C$ and nonempty zero set $\s_{\rm SFT}^{-1}(0)\cap \cB^\pm_{\rm SFT}(\g; A)\neq\emptyset$. 
\item 
The sections $\s_{\rm SFT}$ have tame sc-Fredholm representatives in the sense of \cite[Def.5.4]{Ben-fiber}, and the evaluation maps $\ov\ev^\pm$ restrict on the open subsets
$\cB^{\pm,\C}_{\rm SFT}(\g;A) := (\ov\ev^\pm)^{-1}(\C^\pm\times M) \subset \cB^\pm_{\rm SFT}(\g;A)$ to sc$^\infty$ maps $\ev^\pm : \cB^{\pm,\C}_{\rm SFT}(\g;A) \to \C^\pm\times M$, 
which are $\s_{\rm SFT}$-compatibly submersive in the sense of Definition~\ref{def:submersion}. Finally, this open subset contains the interior,
$\partial_0\cB^\pm_{\rm SFT}(\g;A)\subset\cB^{\pm,\C}_{\rm SFT}(\g;A)$.
\end{enumilist}
\end{ass}

\begin{rmk} \label{rmk:ass} \rm  
\begin{enumilist}
\item
The polyfolds, bundles, and sections in Assumption~\ref{ass:pss} are constructed for a closely analogous situation (considering curves in $\R\times Q$, with e.g.\ $Q=S^1\times M$) in \cite[\S 3]{fh-sft}, so -- while not needed for our proof -- we state the following properties for intuition:

\smallskip\noindent
{\it
Equivalence classes under reparametrization of ${\rm Aut}(\C^\pm,0)$ of smooth maps $v: \C^\pm \to \C^\pm\times M$ that satisfy $v(Re^{\pm it})=\bigl(Re^{\pm it}, \g(t)\bigr)$ for sufficiently large $R>1$ and represent the class $[v\#\Ti u_\g]= [\CP^1] + A$ form a dense subset $\cB^\pm_{\rm dense}(\g;A)\subset \cB^\pm_{\rm SFT}(\g;A)$ contained in the interior. 
On this subset, the section is $\s_{\rm SFT}([v]) = [(v, \dbar_{\Ti J_H^\pm} v)]$
and $\ov\ev^\pm([v])$ is evaluation as in \eqref{SFTeval}.
The intersection of $\s_{\rm SFT}^{-1}(0)$ with this dense subset is contained in the moduli space $\cM^\pm_{\rm SFT}(\g; A)$ from \eqref{SFTeval}.  The full moduli space $\cM^\pm_{\rm SFT}(\g; A)$ is obtained by enlarging $\cB^\pm_{\rm dense}(\g;A)$ to include equivalence classes with $\sup_{t\in S^1} d_{\C\times M}\bigl( v(Re^{\pm it}) , (Re^{\pm it}, \g(t)) \bigr)\to 0$ as $R\to\infty$.  However, only classes with specific exponential decay of this quantity and related derivatives are contained in $\cB^\pm_{\rm SFT}(\g;A)$.
}
\smallskip

\item The sc-smooth structure, sc-Fredholm property, and compactness is stated in \cite[Thm.3.4]{fh-sft}. The proof of polyfold and bundle structure outlined in \cite[\S 7--11]{fh-sft} extends the construction of Gromov-Witten polyfolds in \cite{hwz-gw} by local models for punctures and neck-stretching from \cite[\S 3]{fh-2}, using the implanting method in \cite[\S 3,\S 5]{fh-1}. These constructions automatically satisfy the tameness assumed in (iii). 
The nonlinear Fredholm property needs to be proven globally -- in close analogy to \cite{hwz-gw}. The Fredholm index stated in (i) is computed in a local chart, where the linearized section coincides with a restriction of the classical linearized Cauchy-Riemann operator to a local slice to the reparametrization action. The compactness properties follow from SFT-compactness of the moduli spaces \cite{sft-compactness} since the topology on the polyfolds given in \cite[\S 3.4]{fh-sft} generalizes the notion of SFT-convergence. 
Orientations are constructed in \cite[\S 15]{fh-sft}. 
Sc-smoothness of the evaluation maps is proven analogously to \cite[Thm.1.8]{hwz-gw}, and their submersion property in (iii), which is used to construct fiber products in Lemma~\ref{lem:PSS poly}, is proven as in \cite[Ex.5.1]{Ben-fiber}.

\item
We also expect the existence of a direct polyfold description of the moduli space \eqref{HOLmod} in terms of a collection of sc-Fredholm sections $\s: \cB^\pm(\g; A)\to \cE^\pm(\g;A)$ with the same indices, and submersive sc$^\infty$ maps ${\ev^\pm: \cB^\pm(\g;A)\to M}$ with the following simplified properties. 

\smallskip\noindent
{\it
The smooth maps $u: \C \to M$ which equal $u(Re^{\pm it})= \g(t)$ for sufficiently large $R>1$ and represent the class $A$ form a dense subset of $\cB^\pm(\g; A)$ that is contained in the interior. On this subset, the section is $\s(u) =  \dbar_{J} u - Y_H(u)$, and the evaluation is $\ev^\pm(u)=u(0)$.
The intersection of $\s^{-1}(0)$ with this dense subset is contained in the moduli space $\cM^\pm(\g; A)$ from \eqref{HOLmod}.  
The full moduli space $\cM^\pm(\g; A)$ is obtained by enlarging the dense subset to include maps with $\sup_{t\in S^1} d_{M}\bigl( u(Re^{\pm it}) , \g(t) \bigr)\to 0$ as $R\to\infty$.  However, only maps with specific exponential decay of this quantity and related derivatives are contained in $\cB^\pm(\g;A)$.
}

\smallskip

While such a construction should follow from the same construction principles as in \cite{fh-sft}, there is presently no writeup beyond \cite{W-Fred}, which proves the Fredholm property in a model case. Alternatively, one could abstractly obtain this construction from restricting the setup in Assumption~\ref{ass:pss} to subsets consisting of maps of the form $v(z)=(z, u(z))$. 
Thus there would be no harm in using this property as intuitive guide for following our work with the abstract setup. 
\end{enumilist}
\end{rmk}

Given one or another polyfold description of the naturally identified moduli spaces \eqref{HOLmod} or \eqref{SFTeval} and corresponding evaluation maps, we will now extend the identifications \eqref{eq:Mfiber} or  \eqref{eq:MfiberSFT} to a fiber product construction of polyfolds which will contain these PSS/SSP moduli spaces. For $p\in{\rm Crit}(f)$, $\g\in\cP(H)$, and $A\in H_2(M)$ we define the topological spaces
\begin{align}
\tilde{\cB}^+(p,\g; A) &:= \bigl\{ (\ul{\t},\ul{v})\in\bM(p,M) \times \cB^+_{\rm SFT}(\g;A)  \,\big|\, ( 0 , \ev(\ul{\t}) ) = \ov\ev^+(\ul{v}) \bigr\} \nonumber\\
&\;= \bigl\{ (\ul{\t},\ul{v})\in\bM(p,M) \times \cB^{+,\C}_{\rm SFT}(\g;A)  \,\big|\, ( 0 , \ev(\ul{\t}) ) = \ev^+(\ul{v}) \bigr\}, \label{eq:Bfiber} \\
\tilde{\cB}^-(\g,p; A) &:= \bigl\{ (\ul{v},\ul{\t})\in  \cB^-_{\rm SFT}(\g;A) \times \bM(M,p)  \,\big|\, (0 , \ev(\ul{\t})) = \ov\ev^-(\ul{v}) \bigr\}  \nonumber \\
&\;= \bigl\{ (\ul{v},\ul{\t})\in  \cB^{-,\C}_{\rm SFT}(\g;A) \times \bM(M,p)  \,\big|\, (0 , \ev(\ul{\t})) = \ev^-(\ul{v}) \bigr\}.  \nonumber
\end{align}
We will use \cite{Ben-fiber} to equip these spaces with natural polyfold structures and show that the pullbacks of the sections $\sigma_{\rm SFT}$ by the projections to $\cB^\pm_{\rm SFT}(\g;A)$ yield sc-Fredholm sections whose zero sets are compactifications of the PSS/SSP moduli spaces. 
This will require a shift in levels which is of technical nature as each $m$-level $\cB_m\subset\cB$ contains the dense ``smooth level'' $\cB_\infty\subset\cB_m$, which itself contains the moduli space $\bM=\s^{-1}(0)\subset\cB_\infty$; see Remark~\ref{rmk:levels}.

\begin{lem} \label{lem:PSS poly}
For any $p\in{\rm Crit}(f)$, $\g\in\cP(H)$, and $A\in H_2(M)$ there exist open subsets $\cB^+(p,\g; A)  \subset \tilde{\cB}^+(p,\g; A)_1$ and $\cB^-(\g,p; A) \subset \tilde{\cB}^-(\g,p; A)_1$ which contain the smooth levels $\tilde{\cB}^\pm(\ldots;A)_\infty$ of the fiber products \eqref{eq:Bfiber} and inherit natural polyfold structures. 
The smooth level of their interior is\footnote{
Here we can only make statements about the smooth level because we do not know what points of other levels are included in the fiber products. This is sufficient for applications as the zero set of any sc-Fredholm section (and its admissible perturbations) is contained in the smooth level.
}
\begin{align*}
\partial_0\cB^+(p,\g; A)_\infty &\;=\; \cM(p,M) \; \leftsub{\{0\}\times\ev}{\times_{\ev^+}} \; \partial_0\cB_{\rm SFT}^{+,\C}(\g; A)_\infty  , \qqquad\\
\partial_0\cB^-(\g,p; A)_\infty &\;=\; \partial_0\cB_{\rm SFT}^{-,\C}(\g; A)_\infty \; \leftsub{\ev^-}{\times_{\{0\}\times\ev}}\;  \cM(M,p)  .
\end{align*}
Moreover, pullback of the sc-Fredholm sections of strong polyfold bundles $\sigma^\pm_{\rm SFT}: \cB^\pm_{\rm SFT}(\g;A) \to \cE^\pm_{\rm SFT}(\g; A)$ under the projection $\cB^\pm(\ldots; A) \to \cB^\pm_{\rm SFT}(\ldots; A)$ induces sc-Fredholm sections of strong polyfold bundles $\s^+_{(\g,p; A)}: \cB^+(\g,p;A) \to \cE^+(\g,p;A)$ resp.\  $\s^-_{(p,\g; A)}: \cB^-(p,\g;A) \to \cE^-(p,\g;A)$ of index $I(p,\g; A)$ resp.\ $I(\g,p; A)$ given in \eqref{PSS index}. 
Their zero sets contain\footnote{
As in Remark~\ref{rmk:ass}, this identification is stated for intuition and will ultimately not be used in our proofs. }
the moduli spaces from \S\ref{ssec:moduli},
\begin{align*}
{\s^+_{(p,\g; A)}}\!\!^{-1}(0) &\;=\; \bM(p,M) \;\leftsub{\{0\}\times\ev}{\times_{\ov\ev^+}} \; {\s_{\rm SFT}^+}^{-1}(0)  \;\supset\; \cM(p,\g; A), 
\qquad\\
{\s^-_{(\g,p; A)}}\!\!^{-1}(0) &\;=\;  {\s_{\rm SFT}^-}^{-1}(0)\; \leftsub{\ov\ev^-}{\times_{\{0\}\times\ev}} \; \bM(M,p) \;\supset\; \cM(\g,p; A). 
\end{align*}
Finally, each zero set ${\s^\pm_{(\ldots;A)}}\!\!^{-1}(0)$ is compact, and given any $p\in\Crit(f)$, $\g\in\cP(H)$, and $C\in\R$, there are only finitely many $A\in H_2(M)$ with  $\omega(A)\leq C$ and nonempty zero set ${\s^\pm_{(\ldots;A)}}\!\!^{-1}(0)\neq\emptyset$. 
\end{lem}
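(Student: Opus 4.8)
The plan is to derive all three assertions -- existence of the polyfold structures, the sc-Fredholm property and index of the pulled-back sections, and the description and compactness of the zero sets -- from the fiber-product (slicing) theorem of \cite{Ben-fiber}, fed with the data of Assumption~\ref{ass:pss} on the SFT side and of Theorem~\ref{thm:Morse} on the Morse side. First I would record that the topological fiber products in \eqref{eq:Bfiber} are unchanged if the factor $\cB^\pm_{\rm SFT}(\g;A)$ is replaced by the open subset $\cB^{\pm,\C}_{\rm SFT}(\g;A)=(\ov\ev^\pm)^{-1}(\C^\pm\times M)$, since the defining constraint forces $\ov\ev^\pm(\ul v)=(0,\ev(\ul\tau))\in\C^\pm\times M$; on this open subset Assumption~\ref{ass:pss}(v) supplies an sc$^\infty$ map $\ev^\pm$ that is $\s_{\rm SFT}$-compatibly submersive in the sense of Definition~\ref{def:submersion}, and $\s_{\rm SFT}$ has a tame sc-Fredholm representative. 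On the Morse side, Theorem~\ref{thm:Morse} gives that $\bM(p,M)$ (resp.\ $\bM(M,p)$) is a compact smooth manifold with corners on which $\ev$, hence also $\ul\tau\mapsto(0,\ev(\ul\tau))\in\C^\pm\times M$, is smooth. These are precisely the inputs required by \cite{Ben-fiber}.

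Next I would feed this into the fiber-product theorem of \cite{Ben-fiber}. As a fiber product of a polyfold with a finite-dimensional manifold along an sc$^\infty$ submersion, it carries a natural polyfold structure only after a one-level shift -- the scale-calculus bookkeeping recalled in Remark~\ref{rmk:levels}, which leaves the smooth level unchanged -- and only on an open neighbourhood $\cB^\pm(\ldots;A)\subset\tilde\cB^\pm(\ldots;A)_1$ of the smooth level $\tilde\cB^\pm(\ldots;A)_\infty$, since the points of \eqref{eq:Bfiber} on intermediate scale levels are not controlled. Because $\C^\pm\times M$ is boundaryless, the corner-compatibility of submersive fiber products (part of the $\s_{\rm SFT}$-compatible submersivity package of Definition~\ref{def:submersion}) identifies the $0$-th boundary stratum of the fiber product at the smooth level with the fiber product of the $0$-th strata; combined with $\partial_0\bM(p,M)=\cM(p,M)$ and $\partial_0\bM(M,p)=\cM(M,p)$ from Theorem~\ref{thm:Morse}, this yields the displayed formulas for $\partial_0\cB^\pm(\ldots;A)_\infty$.

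For the sections, the pullback of $\s_{\rm SFT}^\pm$ under the projection to the SFT factor is, by \cite{Ben-fiber}, an sc-Fredholm section of a strong polyfold bundle (oriented, via Remark~\ref{rmk:MorseOrient} and the orientations in Assumption~\ref{ass:pss}), with index equal to that of $\s_{\rm SFT}^\pm$ shifted by $\dim\bM(p,M)-\dim(\C^\pm\times M)$, resp.\ $\dim\bM(M,p)-\dim(\C^\pm\times M)$. Using $\dim\bM(p,M)=|p|$, $\dim\bM(M,p)=\dim M-|p|$, $\dim(\C^\pm\times M)=\dim M+2$, and the Fredholm indices from Assumption~\ref{ass:pss}(i), this is a one-line computation producing $I(p,\g;A)$ resp.\ $I(\g,p;A)$ as in \eqref{PSS index}. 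For the zero set I would use that the zero set of any sc-Fredholm section lies in the smooth level, so $(\s^\pm)^{-1}(0)$ is the honest fiber product of $\bM(p,M)$ (resp.\ $\bM(M,p)$) with $(\s_{\rm SFT}^\pm)^{-1}(0)$ along $\{0\}\times\ev$ and $\ov\ev^\pm$; the inclusion of $\cM(p,\g;A)$ resp.\ $\cM(\g,p;A)$ then follows from \eqref{eq:Mfiber} and \eqref{eq:MfiberSFT} restricted to the dense subsets of smooth maps described in Assumption~\ref{ass:pss}(iii),(iv). Compactness is immediate: $\bM(p,M)$ is compact by Theorem~\ref{thm:Morse}, $(\s_{\rm SFT}^\pm)^{-1}(0)$ by Assumption~\ref{ass:pss}(ii), and the fiber product is a closed subset of their product since $\{0\}\times\ev$ and $\ov\ev^\pm$ are continuous; finiteness of the relevant $A$ with $\omega(A)\leq C$ is inherited from Assumption~\ref{ass:pss}(ii), because nonemptiness of $(\s^\pm)^{-1}(0)$ forces that of $(\s_{\rm SFT}^\pm)^{-1}(0)$.

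The step I expect to be the main obstacle is not any of the above bookkeeping but the precise matching of our situation to the hypotheses of \cite{Ben-fiber} in the presence of boundary and corners: one must ensure that slicing at $\{0\}\in\C^\pm$ simultaneously with the fiber product over $M$ is covered -- for instance by first passing to the codimension-two subpolyfold $(\ev^\pm)^{-1}(\{0\}\times M)$ and then forming the submersive fiber product over $M$ with the Morse manifold -- and one must track the one-level shift carefully enough that the interior description at the smooth level and the identification of the zero set come out exactly as stated. All genuinely analytic input (submersivity and tameness of $\ev^\pm$ and $\s_{\rm SFT}$) is packaged into Assumption~\ref{ass:pss}(v), so it is not re-proved here.
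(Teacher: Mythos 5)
Your proposal is correct and follows essentially the same route as the paper: restrict to the open subset $\cB^{\pm,\C}_{\rm SFT}(\g;A)$ where the evaluation is sc$^\infty$ and $\s_{\rm SFT}$-compatibly submersive, apply \cite[Cor.7.3]{Ben-fiber} with the one-level shift, and read off the interior, index, zero set, and compactness exactly as you describe. The one step you defer as ``the main obstacle'' --- matching the simultaneous slicing at $0\in\C^\pm$ and the fiber product over $M$ to the hypotheses of \cite{Ben-fiber} --- is handled in the paper in a single stroke by showing that the product map $\ev_0\times\ev^\pm$ is $S_{\rm SFT}$-compatibly transverse to the diagonal in $(\C^\pm\times M)\times(\C^\pm\times M)$, taking $\{0\}\times L$ as the required sc-complement, where $L$ is the complement witnessing submersivity of $\ev^\pm$.
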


\begin{proof} 
We will follow \cite[Cor.7.3]{Ben-fiber} to construct the PSS polyfold, bundle, and sc-Fredholm section $\s^+_{p,\g;A}$ in detail, and note that the construction of the SSP section $\s^-_{\g,p;A}$ is analogous.

Consider an ep-groupoid representative $\cX = (X,{\bf X})$ of the polyfold $\cB^+_{\rm SFT}(\g;A)$ with source and target maps denoted $s,t : {\bf X} \rightarrow X$ together with a strong bundle $P : W \rightarrow X$ over the $M$-polyfold $X$ and a structure map $\mu : {\bf X} \leftsub{s}\times_{P} W \rightarrow X$ such that the pair $(P,\mu)$ is a strong bundle over $\cX$ representing the polyfold bundle $\cE^+_{\rm SFT}(\g;A) \rightarrow \cB^+_{\rm SFT}(\g;A)$. In addition, consider a sc-Fredholm section functor $S_{\rm SFT} : X \rightarrow W$ of $(P,\mu)$ that represents $\s^+_{\rm SFT}$. The ep-groupoid $\cX$ and the bundle $(P,\mu)$ are tame, since they represent a tame polyfold and a tame bundle, respectively. Moreover, $S_{\rm SFT}$ is a tame sc-Fredholm section in the sense of \cite[Def.5.4]{Ben-fiber} by Assumption~\ref{ass:pss}(iii). 

We view the Morse moduli space $\bM(p,M)$ as the object space of an ep-groupoid with morphism space another copy of $\bM(p,M)$ and with unit map a diffeomorphism; that is, the only morphisms are the identity morphisms. The unique rank-$0$ bundle over $\bM(p,M)$ is a strong bundle in the ep-groupoid sense, and the zero section of this bundle is a tame sc-Fredholm section functor.
Next, note that $\tilde{\cB}^+(p,\g; A) \subset \bigl\{ (\ul{\t},\ul{v})\in\bM(p,M) \times |X|  \,|\, \ov\ev^+(\ul{v})\in \{0\}\times M \bigr\} \subset \bM(p,M) \times |X^\ev|$
is represented within the open subset $X^\ev:=(\ov\ev^+)^{-1}(\C \times M)\subset X$ and the corresponding full ep-subgroupoid $\cX^\ev$ of $\cX$, which represent the open subset $\cB^{+,\C}_{\rm SFT}(\g,A)\subset |X|$, 
and by Assumption~\ref{ass:pss}(iii) the restricted evaluation $\ev^+ : X^\ev \rightarrow \C\times M$ is sc$^\infty$ and $S_{\rm SFT}$-compatibly submersive (see Definition~\ref{def:submersion}). 
Denote by $\ev_0: \bM(p,M) \rightarrow \C\times M, \ul\tau\mapsto (0,\ev(\ul\tau))$ the product of the trivial map to $0\in\C$ and the Morse evaluation map. 
We claim that the product map $\ev_0 \times \ev^+ : \bM(p,M) \times X^\ev \rightarrow (\C\times M) \times (\C\times M)$ is $S_{\rm SFT}$-compatibly transverse to the diagonal $\Delta \subset (\C \times M) \times (\C\times M)$. 
Indeed, given $(\ul{\t},\ul{v}) \in (\ev_0 \times \ev^+)^{-1}(\Delta)$ let
$L \subset \rT_{\ul{v}}^RX^{\ev}$ be a sc-complement of the kernel of the linearization of $\ev^+$ at some $\ul{v} \in X^{\ev}_{\infty}$ that satisfies the conditions for $S_{SFT}$-compatible submersivity
in Definition~\ref{def:submersion} w.r.t.\ a coordinate change $\psi^{ev}$ on a chart of $X^{\ev}$. 
Then the subspace $\{0\} \times L \subset \rT^R_{\ul{\t}}\overline{\cM}(p,M) \times \rT^R_{\ul{v}}X^{\ev}$ satisfies the conditions for $S_{\rm SFT}$-compatible transversality of $\ev_0 \times \ev^+$ with $\Delta$ at $(\ul{\t},\ul{v})$ w.r.t.\ the product change of coordinates $\id\times\psi^\ev$ in a product chart on the Cartesian product $\overline{\cM}(p,M) \times X^{\ev}$.
(See \cite[Lem.7.1,~7.2]{Ben-fiber} for a discussion of the sc-Fredholm property on Cartesian products.)

Next, note that $\bM(p,x) \leftsub{\ev_0}{\times_{\ev^+}} X^\ev_{\infty}$ represents the smooth level of the fiber product topological space $\tilde{\cB}^+(p,\g; A)$. 
So \cite[Cor.7.3]{Ben-fiber} yields an open neighbourhood $X' \subset \bM(p,M) \leftsub{\ev_0}{\times_{\ev^+}} X^\ev_1$ containing the smooth level $\bM(p,x) \leftsub{\ev_0}{\times_{\ev^+}} X^\ev_{\infty}$ such that the full subcategory $\cX' := (X',{\bf X}')$ of $\bM(p,M) \times \cX^\ev_1$ is a tame ep-groupoid and the pullbacks of $(P,\mu)$ and $S_{\rm SFT}$ to $\cX'$ are a tame bundle and tame sc-Fredholm section.
Here we used the fact that the smooth level $\bM(p,x)_\infty=\bM(p,x)$ of any finite dimensional manifold is the manifold itself; see Remark~\ref{rmk:levels}.

The tame ep-groupoid $\cX'$ yields the claimed polyfold $\cB^+(p,\g;A):=|\cX'|$, and similarly the pullbacks of $(P,\mu)$ and $S_{\rm SFT}$ through the projection $X' \rightarrow X_1$ define the claimed bundle and sc-Fredholm section $\s^+_{(p,\g;A)} : \cB^+(p,\g;A) \rightarrow \cE^+(p,\g;A)$. The identification of the interior $\partial_0\cB^+(p,\g;A)_\infty$ follows from the degeneracy index formula $d_{\cX'}(x_1,x_2) = d_{\bM(p,M)}(x_1) + d_{\cX}(x_2)$ in \cite[Cor.7.3]{Ben-fiber} and the interior of the Morse trajectory spaces $\partial_0 \bM(p,M) = \cM(p,M)$ from Theorem~\ref{thm:Morse}. 

The index formula in \cite[Cor.7.3]{Ben-fiber} yields $\text{\rm ind}(\s^+_{(p,\g;A)}) = \text{\rm ind}(\s_{\rm SFT}) + |p| - \dim(\C\times M) = I(p,\g;A)$ since $\dim \bM(p,M) = |p|$ and $\text{\rm ind}(\s_{\rm SFT}) = CZ(\g) + 2c_1(A) + \frac 12 \dim M + 2$. 

Finally, the zero set ${\s^+_{(p,\g;A)}}^{-1}(0)$ is 
the fiber product of the zero sets as claimed, as these are contained in the smooth level, and 
the restriction to $\ov\ev^{-1}(\{0\}\times M)$ already restricts considerations to the domain $X^\ev$ from which the fiber product polyfold is constructed. 
Moreover, ${\s^+_{(p,\g;A)}}^{-1}(0)$ is compact as in \cite[Cor.7.3]{Ben-fiber}, since both $\bM(p,M)$ and ${\s^+_{\rm SFT}}^{-1}(0)$ are compact
and both $\ev_0$ and $\ov\ev^+$ are continuous. The final statement then follows from Assumption~\ref{ass:pss}(ii).
\end{proof}

\begin{table}
\centering
 \begin{tabular}{||c | c | c ||}  
 \hline
 Notation & Description & Definition\\ [0.5ex] 
 \hline\hline
 $\cB^\pm_{\rm dense}(\g;A)$  & \thead{elements are equivalence classes under reparameterization by ${\rm Aut}(\C^\pm,0)$ \\ of smooth maps $v: \C^\pm \to \C^\pm\times M$ that satisfy $v(Re^{\pm it})=\bigl(Re^{\pm it}, \g(t)\bigr)$ \\ for sufficiently large $R>1$ and represent the class $[v\#\Ti u_\g]= [\CP^1] + A$ }
 & Remark~\ref{rmk:ass} \\
 \hline
$\cB^\pm_{\rm SFT}(\g; A) $ &  \thead{
a polyfold with dense subset $\cB^\pm_{\rm dense}(\g;A)$, which contains the \\ SFT-compactification $\bM^\pm_{\rm SFT}(\g; A)$ 
of the moduli space in \eqref{SFTeval}} 
& \thead{Assumption~\ref{ass:pss}, \\ Remark~\ref{rmk:ass}} \\ 
 \hline
$\cB^{\pm,\C}_{\rm SFT}(\g;A)$  &  \thead{the open subset of $\cB^\pm_{\rm SFT}(\g; A)$ containing the curves whose \\ evaluation   at 
a marked point lands in $\mathbb{C}^{\pm} \times M$
 rather than \\ in a broken off cylinder $\R\times S^1\times M$; see Remark~\ref{rmk:evaluation drama} }
 & \thead{Assumption~\ref{ass:pss}(iii)
} \\
 \hline
$\tilde{\cB}^+(p,\g; A)$  & \thead{elements are pairs of a half-infinite broken Morse trajectory \\ starting from the critical point $p$ and a curve in $\cB^{+,\C}_{\rm SFT}(\g;A)$, \\ 
whose evaluation agrees with the end point of the Morse trajectory}
&  \eqref{eq:Bfiber} \\
 \hline
$ \tilde{\cB}^-(\g,p; A)$ & \thead{elements are pairs of a half-infinite broken Morse trajectory \\ ending at the critical point $p$ and a curve in $\cB^{-,\C}_{\rm SFT}(\g;A)$, whose \\ 
evaluation agrees with the starting point of the Morse trajectory} 
 & \eqref{eq:Bfiber}\\
 \hline
 $\cB^+(p,\g; A)$  & \thead{open subset of $\tilde{\cB}^+(p,\g; A)_1$ containing $\cM(p,\g; A)$  \\ over which the section $\s^+_{(p,\g;A)}$ is sc-Fredholm \\
 (possibly smaller than $\tilde{\cB}^+(p,\g; A)_1$ due to shrinking in \cite[Cor.7.3]{Ben-fiber})}& Lemma~\ref{lem:PSS poly}\\
 \hline
$ \cB^-(\g,p; A)$ & \thead{open subset of $\tilde{\cB}^-(\g,p; A)_1$ containing $\cM(\g,p; A)$ \\ over which the section $\s^-_{(p,\g;A)}$ is sc-Fredholm}  & Lemma~\ref{lem:PSS poly} \\ [1ex] 
 \hline
\end{tabular}
\caption{
Summary of the polyfolds and their subsets introduced in this section
}
\label{tab:asspolyfolds}
\end{table}

\subsection{Construction of the morphisms} \label{ssec:construct}

To construct the $\L$-linear maps PSS and SSP in Theorem~\ref{thm:main} with relatively compact notation we index all moduli spaces from \S\ref{ssec:moduli} by the two sets 
\begin{align*}
\cI^+ &\,:=\; \bigl\{ \alpha=(p,\g;A) \,\big|\, 
p\in{\rm Crit}(f) , \g\in\cP(H) , A\in H_2(M) \bigr\}, \\
\cI^- &\,:=\;  \bigl\{ \alpha=(\g,p;A) \,\big|\, p\in{\rm Crit}(f) , \g\in\cP(H) , A\in H_2(M) \bigr\} .
\end{align*}
To simplify notation we then denote $\cI:=\cI^-\cup\cI^+$ and drop the superscripts from the polyfolds $\cB(\alpha)=\cB^\pm(\alpha)$. 
Since Lemma~\ref{lem:PSS poly} provides each moduli space $\cM(\alpha)$ for $\alpha\in \cI$ with a compactification and polyfold description $\cM(\alpha)\subset\sigma_{\alpha}^{-1}(0)$, we can apply \cite[Theorems~18.2,18.3,18.8]{HWZbook} to obtain admissible regularizations of the moduli spaces, and counts of the 0-dimensional perturbed solution spaces \cite[\S15.4]{HWZbook}, in the following sense. Here we denote by $\Q^+:=\Q\cap[0,\infty)$ the groupoid with only identity morphisms.

\begin{cor}  \label{cor:regularize}
\begin{enumilist}
\item
For every $\alpha\in \cI$, choice of neighbourhood of the zero sets $\sigma_{\alpha}^{-1}(0)\subset \cV_{\alpha}\subset \cB(\alpha)$, and choice of sc-Fredholm section functor $S_{\alpha}: \cX_{\alpha} \to \cW_{\alpha}$ representing $\sigma_{\alpha}|_{\cV_{\alpha}}$, there exists a pair $(N_\alpha,\cU_\alpha)$ controlling compactness in the sense of Definition~\ref{def:control} with $|S_{\alpha}^{-1}(0)|\subset |\cU_{\alpha}| \subset \cV_{\alpha}$. 

For $\alpha\in\cI$ with $\sigma_\alpha^{-1}(0)=\emptyset$ we can choose $\cU_{\alpha}=\emptyset$.
\item
For every collection $(N_{\alpha}, \cU_{\alpha})_{\alpha\in \cI}$ of 
pairs controlling compactness, there exists a collection $\underline\kappa = \bigl( \kappa_{\alpha}:\cW_{\alpha}\to\Q^+\bigr)_{\alpha \in\cI}$ of $(N_{\alpha}, \cU_{\alpha})$-admissible sc$^+$-multisections in the sense of \cite[Definitions~13.4,15.5]{HWZbook}
that are in general position relative to $(S_\alpha)_{\alpha\in\cI}$ in the sense that each pair $(S_\alpha,\kappa_\alpha)$ is in general position as per \cite[Def.15.6]{HWZbook}. 

Here admissibility in particular implies $\kappa_\alpha\circ S_\alpha |_{\cX_\alpha\less\cU_\alpha} \equiv 0$ and thus $\kappa_\alpha\circ S_\alpha\equiv 0$ when $\sigma_\alpha^{-1}(0)=\emptyset$.
\item
Every collection $\underline\kappa$ of admissible sc$^+$-multisections in general position  from (ii) induces a collection of compact, tame, branched ep$^+$-groupoids
$\bigl(\kappa_{\alpha}\circ S_{\alpha}: \cX_{\alpha} \to \Q^+\bigr)_{\alpha\in\cI}$. In particular, each perturbed zero set 
$$
Z^{\underline\kappa}(\alpha)\,:=\; \bigl|\{x\in X_{\alpha} \,|\, \kappa_{\alpha}(S_{\alpha}(x))>0\} \bigr|  \;\subset\; |\cU_\alpha| _\infty \;\subset\; |\cX_{\alpha}|_\infty  \;\cong\; \cB(\alpha)_\infty 
$$ 
is compact, contained in the smooth level, 
and carries the structure of a weighted branched orbifold of dimension $I(\alpha)$ as in \eqref{PSS index}. 
Moreover, the inclusion in $|\cU_\alpha|$ and general position of $\ul\kappa$ implies that for $I(\alpha)<0$ or $\sigma_\alpha^{-1}(0)=\emptyset$ the perturbed zero set $Z^{\underline\kappa}(\alpha)=\emptyset$ is empty. 
\item
For $\alpha\in\cI$ with Fredholm index $I(\alpha)=0$ and $\kappa_\alpha: \cW_{\alpha}\to\Q^+$ as in (ii) the perturbed zero set is contained in the interior $Z^{\underline\kappa}(\alpha)\subset\partial_0\cB(\alpha)_\infty$ and yields a well defined count 
$$\textstyle
\# Z^{\underline\kappa}(\alpha) \,:=\; \sum_{|x|\in Z^{\underline\kappa}(\alpha)} 
\; o_{\s_\alpha}(x) \; \kappa_{\alpha}(S_{\alpha}(x))
\;\;\in\;\Q .
$$ 
Here $o_{\s_\alpha}(x)\in \{\pm 1\}$ is determined by the orientation of $\s_\alpha$ as in \cite[Thm.6.3]{HWZbook}.
If $|\cU_\alpha|\cap\partial\cB(\alpha)=\emptyset$ then this count is independent of the choice of admissible sc$^+$-multisection $\k_\alpha$. 
\item
For every $\alpha\in\cI$ with Fredholm index $I(\alpha)=1$ and $\kappa_\alpha: \cW_{\alpha}\to\Q^+$ as in (ii) the boundary of the perturbed zero set is given by its intersection with the first boundary stratum of the polyfold, 
$$
\partial Z^{\underline\kappa}(\alpha) \;=\;  Z^{\underline\kappa}(\alpha)  \cap \partial_1\cB(\alpha)_\infty . 
$$
With orientations $o_{\s_\alpha|_\partial\cB(\alpha)}(x)\in \{\pm 1\}$ induced by the boundary restriction $\sigma_\alpha|_{\cB(\alpha)}$ this implies
$$\textstyle
\# \partial Z^{\underline\kappa}(\alpha) \,=\; \sum_{|x|\in \partial Z^{\underline\kappa}(\alpha)} 
\; o_{\s_\alpha|_{\partial\cB(\alpha)}}(x)\;\kappa_{\alpha}(S_{\alpha}(x))
\;\;=\; 0. 
$$ 
\end{enumilist}
\end{cor}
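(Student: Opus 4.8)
The plan is to deduce all of (i)--(v) by feeding the polyfold descriptions supplied by Lemma~\ref{lem:PSS poly} into the abstract regularization package of \cite[Thms.~18.2, 18.3, 18.8]{HWZbook}, statement by statement. By Lemma~\ref{lem:PSS poly}, for each $\alpha\in\cI$ the moduli space $\cM(\alpha)$ is contained in the zero set $\sigma_\alpha^{-1}(0)$ of an oriented sc-Fredholm section $\sigma_\alpha:\cB(\alpha)\to\cE(\alpha)$ of a strong polyfold bundle of Fredholm index $I(\alpha)$ as in \eqref{PSS index}, this zero set is compact, and for any $C\in\R$ only finitely many $\alpha$ of a given type with $\omega(A)\le C$ have nonempty zero set. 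Since moreover $\cI=\cI^+\cup\cI^-$ is countable and no compatibility between distinct $\alpha$ is imposed at this stage, it suffices to treat the sections $\sigma_\alpha$ one at a time.

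For (i), apply \cite[Thm.~18.2]{HWZbook} to each $\sigma_\alpha$: compactness of its zero set, together with the chosen neighbourhood $\cV_\alpha$ and the representative $S_\alpha$, yields a pair $(N_\alpha,\cU_\alpha)$ controlling compactness in the sense of Definition~\ref{def:control} with $|S_\alpha^{-1}(0)|\subset|\cU_\alpha|\subset\cV_\alpha$, and one may evidently take $\cU_\alpha=\emptyset$ when $\sigma_\alpha^{-1}(0)=\emptyset$. For (ii), given such pairs, \cite[Thm.~18.3]{HWZbook} (resting on \cite[Thms.~15.4, 15.5]{HWZbook}) produces, for each $\alpha$ separately, an $(N_\alpha,\cU_\alpha)$-admissible sc$^+$-multisection $\kappa_\alpha$ in general position relative to $S_\alpha$ in the sense of \cite[Def.~15.6]{HWZbook}; admissibility forces $\kappa_\alpha\circ S_\alpha$ to vanish outside $\cU_\alpha$, in particular identically when $\cU_\alpha=\emptyset$. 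For (iii), the regularization theorem \cite[Thms.~18.3, 18.8]{HWZbook} equips each perturbed zero set $Z^{\underline\kappa}(\alpha)$ with the structure of a compact, tame, weighted branched ep$^+$-groupoid whose realization is a weighted branched orbifold of dimension $\ind(\sigma_\alpha)=I(\alpha)$, contained in the smooth level $\cB(\alpha)_\infty$ as the zero sets of sc-Fredholm sections and their admissible perturbations always are; since a weighted branched orbifold of negative dimension is empty and $Z^{\underline\kappa}(\alpha)\subset|\cU_\alpha|$ is empty when $\cU_\alpha=\emptyset$, the final emptiness assertions follow.

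For (iv) and (v) one additionally uses the boundary structure of $Z^{\underline\kappa}(\alpha)$: by the regularization theorem the general-position perturbed zero set is transverse to every boundary stratum $\partial_k\cB(\alpha)$, so a $0$-dimensional $Z^{\underline\kappa}(\alpha)$ avoids $\partial\cB(\alpha)$ altogether and lies in $\partial_0\cB(\alpha)_\infty$, while a $1$-dimensional $Z^{\underline\kappa}(\alpha)$ meets only the first boundary stratum, giving $\partial Z^{\underline\kappa}(\alpha)=Z^{\underline\kappa}(\alpha)\cap\partial_1\cB(\alpha)_\infty$. In the $0$-dimensional case $Z^{\underline\kappa}(\alpha)$ is then a finite set of points carrying rational weights $\kappa_\alpha(S_\alpha(x))$ and signs $o_{\sigma_\alpha}(x)\in\{\pm1\}$ determined by the orientation of $\sigma_\alpha$ via \cite[Thm.~6.3]{HWZbook}, so the weighted signed count $\#Z^{\underline\kappa}(\alpha)\in\Q$ of \cite[\S15.4]{HWZbook} is well defined; when $|\cU_\alpha|\cap\partial\cB(\alpha)=\emptyset$, any two admissible choices of $\kappa_\alpha$ are joined by an admissible homotopy whose perturbed zero set is a compact oriented weighted $1$-dimensional branched cobordism contained in the interior, so the two counts agree by the cobordism invariance of \cite[Ch.~18]{HWZbook}. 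In the $1$-dimensional case of (v), $Z^{\underline\kappa}(\alpha)$ is a compact oriented weighted $1$-dimensional branched orbifold with boundary $Z^{\underline\kappa}(\alpha)\cap\partial_1\cB(\alpha)_\infty$, and the signed weighted count of its boundary vanishes — the weighted branched analogue of the elementary fact that the oriented boundary of a compact oriented $1$-manifold has total count $0$ — which gives $\#\partial Z^{\underline\kappa}(\alpha)=0$ once the boundary points are counted with the boundary orientations $o_{\sigma_\alpha|_{\partial\cB(\alpha)}}(x)$.

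The work is thus almost entirely one of matching hypotheses to the cited results; the only point demanding genuine care is the orientation bookkeeping -- verifying that the orientation of $\sigma_\alpha$ produced by Lemma~\ref{lem:PSS poly} is exactly the one with respect to which \cite[Thm.~6.3]{HWZbook} yields the signs $o_{\sigma_\alpha}$, and that the boundary orientation on $\partial_1\cB(\alpha)$ used in (v) is the induced one -- together with the observation (already flagged in the footnotes to Lemma~\ref{lem:PSS poly}) that (iii)--(v) are statements about the smooth level and the interior, the uncontrolled non-smooth levels of the fiber-product polyfolds being disjoint from $\sigma_\alpha^{-1}(0)$ and from every admissible perturbed zero set and hence irrelevant.
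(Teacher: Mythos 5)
Your proposal is correct and matches the paper's treatment: the Corollary is stated there without a separate proof, being justified exactly as you do by applying \cite[Theorems~18.2, 18.3, 18.8]{HWZbook} and the counting formalism of \cite[\S15.4]{HWZbook} to each section $\sigma_\alpha$ from Lemma~\ref{lem:PSS poly} individually, with general position forcing the index-$0$ and index-$1$ perturbed zero sets into the interior and first boundary stratum respectively. Your added remarks on the smooth level and orientation bookkeeping are consistent with the footnotes and Remark~\ref{rmk:PSS orient} in the paper.
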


\begin{rmk} \label{rmk:PSS orient} \rm
\begin{enumilist}
\item
The statements in (iv) and (v) of Corollary~\ref{cor:regularize} require orientations of the sections $\s_\alpha$ for $\alpha\in\cI$. By the fiber product construction in Lemma~\ref{lem:PSS poly} they do indeed inherit orientations from the orientations of the Morse trajectory spaces in Remark~\ref{rmk:MorseOrient}, the orientations of $\s_{\rm SFT}^\pm$ given in Assumption~\ref{ass:pss}, and an orientation convention for fiber products. 

In practice, we will construct the perturbations $\ul\k$ in Corollary~\ref{cor:regularize} by pullback of perturbations $\ul \lambda=(\lambda^\pm_{\g,A})_{\g\in\cP, A\in H_2(M)}$ of the oriented SFT-sections $\s_{\rm SFT}^\pm$. Thus it suffices to specify the orientations of the regularized zero sets, which is implicit in their identification with transverse fiber products of oriented spaces over the oriented manifold $M$, 
\begin{align*}
Z^{\ul\k}(p,\g; A) \;=\; \bM(p,M) \; \leftsub{\ev_0}{\times_{\ev^+}} \; Z^{\ul\l}(\g;A)  , \qquad
Z^{\ul\k}(\g,p; A) \;=\; Z^{\ul\l}(\g; A) \; \leftsub{\ev^-}{\times_{\ev_0}}\;  \bM(M,p)  .
\end{align*}
Orientations of the boundary restrictions in (v) are then induced by the orientations of $Z^{\ul\k}(\alpha)$, via oriented isomorphisms of the tangent spaces $\R\nu(z) \times \rT_z \partial Z^{\ul\k}(\alpha) \cong  \rT_z  Z^{\ul\k}(\alpha)$, where $\nu(z)\in  \rT_z  Z^{\ul\k}(\alpha)$ is an exterior normal vector at $z\in \partial Z^{\ul\k}(\alpha)$. 

\item
Note that the counts in part (iv) of this Corollary may well depend on the choice of the multi-valued perturbations $\kappa_\alpha$ -- unless the ambient polyfold has no boundary, $\partial\cB(\alpha)=\emptyset$. 
Indeed, although the moduli space $\cM(\alpha)$ is expected to have dimension $0$, it may not be cut out transversely from the ambient polyfold $\cB(\alpha)$, and moreover it may not be compact. Assumption~\ref{ass:pss} provides an inclusion in a compact set $\cM(\alpha)\subset\sigma_{\alpha}^{-1}(0)$, and the perturbation theory for sc-Fredholm sections of strong bundles then associates to $\sigma_{\alpha}^{-1}(0)$ a perturbed zero set $Z^{\underline\kappa}(\alpha)\subset \cB(\alpha)$ with weight function $\kappa_\alpha\circ S_\alpha:  Z^{\underline\kappa}(\alpha) \to \Q\cap (0,\infty)$. This process generally adds points on the boundary $\sigma_{\alpha}^{-1}(0)\less \cM(\alpha)\subset \cB(\alpha) \less \partial_0\cB(\alpha)$, which may or may not persist under variations of the perturbation $\kappa_\alpha$. 
\end{enumilist}
\end{rmk}

The following construction of morphisms will depend on the choices of perturbations and orientation convention (see the previous remark) as well as geometric data fixed in \S\ref{ssec:moduli}, and possibly the choice of polyfold construction in Assumption~\ref{ass:pss} and ep-groupoid representation in Remark~\ref{rmk:polyfolds}. 
The algebraic properties in Theorem~\ref{thm:main} will be achieved in \S\ref{sec:algebra} -- for any given choice of geometric data -- by particular choices of ep-groupoids and perturbations $\underline\kappa^\pm$, and an overall sign adjustment.

\begin{dfn} \label{def:PSS}
Given collections $\underline\kappa^\pm=( \kappa^\pm_{\alpha} )_{\alpha \in\cI^\pm}$ of admissible sc$^+$-multisections in general position as in Corollary~\ref{cor:regularize}, we define the maps $PSS_{\underline\kappa^+}:CM \to CF$ and $SSP_{\underline\kappa^-}: CF \to CM$ to be the $\Lambda$-linear extension of
$$
PSS_{\underline\kappa^+} \la p \ra := \hspace{-5mm} \sum_{\begin{smallmatrix}
\scriptstyle  \g, A  \\ \scriptscriptstyle I(p,\g;A)=0  \end{smallmatrix} }  \hspace{-4mm}
\# Z^{\underline\kappa^+}(p,\g;A)  \cdot T^{\o(A)} \la \g \ra , \qquad
SSP_{\underline\kappa^-} \la \g \ra := \hspace{-5mm}\sum_{\begin{smallmatrix}
\scriptstyle  p, A \\ \scriptscriptstyle I(\g,p;A)=0  \end{smallmatrix} }  \hspace{-4mm}
\# Z^{\underline\kappa^-}(\g,p;A) \cdot T^{\o(A)} \la p \ra.
$$
\end{dfn}

\begin{lem} \label{lem:pss-novikov}
The maps $PSS_{\underline\kappa^+}:CM \to CF$ and $SSP_{\underline\kappa^-}: CF \to CM$ in Definition~\ref{def:PSS} are well defined, i.e.\ the coefficients take values in the Novikov field $\Lambda$ defined in \S\ref{sec:Novikov}.
\end{lem}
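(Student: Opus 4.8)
The plan is to show that for each fixed $p\in\Crit(f)$ (resp.\ each fixed $\g\in\cP(H)$), only finitely many of the coefficients $\#Z^{\underline\kappa^+}(p,\g;A)$ (resp.\ $\#Z^{\underline\kappa^-}(\g,p;A)$) below any given symplectic energy threshold $c\in\R$ are nonzero, so that the formal sum defining $PSS_{\underline\kappa^+}\la p\ra$ (resp.\ $SSP_{\underline\kappa^-}\la\g\ra$) satisfies the finiteness condition of the Novikov field $\Lambda$. The rational coefficients $\lambda_r\in\Q$ are immediate from Corollary~\ref{cor:regularize}(iv), so the only thing to verify is the Novikov finiteness. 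By $\Lambda$-linearity it suffices to check this on the generators $\la p\ra$ and $\la\g\ra$.

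The key steps, in order, are as follows. First I would fix $p$ and $c\in\R$ and organize the coefficients of $PSS_{\underline\kappa^+}\la p\ra$ by the exponent $\o(A)=\langle\o,A\rangle$; I need to bound the number of $A\in H_2(M)$ with $\o(A)\le c$ that contribute a nonzero term. A term is nonzero only if it comes from a count $\#Z^{\underline\kappa^+}(p,\g;A)$, which by Corollary~\ref{cor:regularize}(iv) requires $I(p,\g;A)=0$, and such counts vanish whenever $\s^+_{(p,\g;A)}{}^{-1}(0)=\emptyset$ (again Corollary~\ref{cor:regularize}(iv), via the inclusion of $Z^{\underline\kappa}$ in $|\cU_\alpha|$ and general position). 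So it suffices to control the set of pairs $(\g,A)$ with $\o(A)\le c$, $I(p,\g;A)=0$, and $\s^+_{(p,\g;A)}{}^{-1}(0)\neq\emptyset$. Now I invoke the last sentence of Lemma~\ref{lem:PSS poly}: for the fixed $p$, any $\g\in\cP(H)$, and the threshold $c$, there are only finitely many $A\in H_2(M)$ with $\o(A)\le c$ and nonempty zero set $\s^+_{(p,\g;A)}{}^{-1}(0)\neq\emptyset$. Since $\cP(H)$ is a finite set (nondegeneracy of $H$, as recalled in \S\ref{ssec:moduli}), taking the union over the finitely many $\g\in\cP(H)$ of these finite sets of classes $A$ shows that only finitely many pairs $(\g,A)$ with $\o(A)\le c$ contribute, hence only finitely many of the coefficients indexed by $r=\o(A)\le c$ are nonzero. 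Collecting like powers of $T$ (finitely many $A$ can share the same value $\o(A)=r$, and the corresponding rational counts add), this is exactly the Novikov finiteness condition, so $PSS_{\underline\kappa^+}\la p\ra\in\Lambda\la\g\ra$ summed over $\g$, i.e.\ lies in $CF=\bigoplus_\g \Lambda\la\g\ra$. The argument for $SSP_{\underline\kappa^-}\la\g\ra$ is verbatim the same with the roles of $p$ and $\g$ interchanged, using that $\Crit(f)$ is finite and the corresponding finiteness statement in Lemma~\ref{lem:PSS poly} for $\s^-_{(\g,p;A)}$.

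The only genuine content is the compactness-and-finiteness input, and that has already been packaged into the final sentence of Lemma~\ref{lem:PSS poly} (which in turn traces back to Assumption~\ref{ass:pss}(ii), the SFT energy estimate \eqref{eq:pss-energy}, and SFT compactness). So I do not expect a real obstacle here; the proof is essentially bookkeeping. The one point requiring a little care is making sure the passage from ``finitely many nonempty \emph{unperturbed} zero sets below energy $c$'' to ``finitely many nonzero \emph{perturbed} counts below energy $c$'' is legitimate: this is precisely guaranteed by the admissibility of the multisections $\underline\kappa$ (Corollary~\ref{cor:regularize}(i),(ii)), which forces $Z^{\underline\kappa}(\alpha)\subset|\cU_\alpha|$ with $|\cU_\alpha|$ contained in an arbitrarily small neighbourhood of $\sigma_\alpha^{-1}(0)$ — in particular $Z^{\underline\kappa}(\alpha)=\emptyset$ whenever $\sigma_\alpha^{-1}(0)=\emptyset$, which is stated explicitly in Corollary~\ref{cor:regularize}(i),(ii),(iii). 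I would state that implication once at the start and then run the finiteness count.
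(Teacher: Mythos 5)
Your proposal is correct and follows essentially the same route as the paper's proof: the Novikov finiteness is reduced to the final finiteness statement of Lemma~\ref{lem:PSS poly} (only finitely many $A$ with $\o(A)\leq c$ and $\s_\alpha^{-1}(0)\neq\emptyset$), combined with the guarantee from Corollary~\ref{cor:regularize} that admissible perturbations yield $\#Z^{\underline\kappa}(\alpha)=0$ whenever $\sigma_\alpha^{-1}(0)=\emptyset$. The only cosmetic difference is that the paper checks the condition for each fixed pair $(p,\g)$ separately rather than taking a union over the finitely many $\g\in\cP(H)$, which changes nothing.
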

\begin{proof}
To prove that $PSS_{\underline\kappa^+}$ is well defined we need to check finiteness of the following set for any $p\in\Crit(f)$, $\g\in\cP(H)$, and $c\in\R$,
$$\textstyle
 \Bigl\{ r\in \omega(H_2(M)) \cap (-\infty,c]  \;\Big|\;  \sum_{\begin{smallmatrix}
\scriptstyle  A\in H_2(M) \\ \scriptstyle \omega(A)=r
 \end{smallmatrix}} 
 \# Z^{\underline\kappa^+}(p,\g;A) \neq 0  \Bigr\}  .
$$
Here $\o : H_2(M) \rightarrow \mathbb{R}$ is given by pairing with the symplectic form on $M$, and recall from Lemma~\ref{lem:PSS poly} that there are only finitely many homology classes $A\in H_2(M)$ with $\omega(A) \leq c$ and $\sigma_\alpha^{-1}(0)\neq\emptyset$.
On the other hand, the perturbations $\ul\kappa^+$ were chosen in Corollary~\ref{cor:regularize} (iii),(iv) so that $\# Z^{\underline\kappa^+}(\ldots; A) = 0$ whenever $\sigma_\alpha^{-1}(0)=\emptyset$. Thus there are in fact only finitely many $A\in H_2(M)$ with $\omega(A) \leq c$ and $\# Z^{\underline\kappa^+}(\ldots; A) \neq 0$, which proves the required finiteness.
The proof for $SSP_{\underline\kappa^-}$ is analogous. 
\end{proof}

\section{The chain homotopy maps}
\label{sec:iota,h}

In this section we construct $\L$-linear maps $\iota:CM \to CM$ and $h:CM \to CM$ on the Morse complex over the Novikov field $\Lambda$ given in \eqref{eq:CM}, which appear in Theorem~\ref{thm:main}. 
For that purpose we again fix a choice of geometric data as in \S\ref{ssec:moduli} to construct moduli spaces in \S\ref{ssec:iota-mod} and \S\ref{ssec:h-mod}. We equip these with polyfold descriptions in \S\ref{ssec:polyh}, and define the maps $\iota, h$ for admissible regular choices of perturbations in Definitions~\ref{def:iota,h}. 
To obtain the algebraic properties claimed in Theorem~\ref{thm:main}~(i)--(iii) we will then construct particular ``coherent'' choices of perturbations in \S\ref{sec:algebra}.

\subsection{Moduli spaces for the isomorphism $\boldsymbol{\iota}$}\label{ssec:iota-mod}

We will construct $\iota:CM \to CM$ from the following moduli spaces for critical points $p_-,p_+\in{\rm Crit}(f)$,  $A \in H_2(M)$, using the almost complex structure $J$ and the unstable/stable manifolds (see \S\ref{ssec:Morse}) of the Morse-Smale pair $(f,g)$ chosen in \S\ref{ssec:moduli}, 
\begin{equation} \label{iotamod}
\cM^\iota(p_-,p_+; A) := \bigl\{ u: \CP^1 \to M \,\big|\, u([1:0])\in W^-_{p_-} , \; u([0:1])\in W^+_{p_+} , \; \dbar_J u = 0 , \; [u]=A \bigr\} . 
\end{equation}
Note that a cylinder acts on this moduli space by reparametrization with biholomorphisms of $\CP^1$ that fix the two points $[1:0],[0:1]$. However, we do not quotient out this symmetry so describe these moduli spaces as the zero set of a Fredholm section over a Sobolev closure of the set of smooth maps $u: \CP^1 \to M$ in the homology class $[u]=A$ satisfying the point constraints $u([1:0])\in W^-_{p_-}$ and $u([0:1])\in W^+_{p_+}$.
This determines the Fredholm index as 
\begin{equation}\label{iota index}
I^\iota(p_-,p_+;A) \; =\;  2c_1(A) + |p_-| - |p_+|.
\end{equation}
As in \S\ref{ssec:poly} we will obtain a compactification and polyfold description of this moduli space by identifying it with a fiber product of Morse trajectory spaces and a space of pseudoholomorphic curves, in this case the space of parametrized $J$-holomorphic spheres with evaluation maps for $z_0\in\CP^1$, 
$$
\ev_{z_0} \,:\; \cM(A) := \bigl\{  u : \CP^1 \rightarrow  M \,\big|\, \dbar_J u =0  , [u]=A \bigr\} 
\;\to\; M , \qquad u \mapsto u(z_0). 
$$ 
With this we can describe the moduli space \eqref{iotamod} as fiber product with the half-infinite Morse trajectory spaces from \S\ref{ssec:Morse}, using $z_0^+:=[1:0]$ and $z_0^-:=[0:1]$
\begin{equation}\label{iota-fiber}
\cM^\iota(p_-,p_+; A) \; \cong\;  \cM(p_-,M) \; \leftsub{\ev}{\times}_{\ev_{z^+_0}} \; \cM(A) \; \leftsub{\ev_{z^-_0}}{\times}_{\ev} \; \cM(M,p_+).
\end{equation}
Note here that we are not working with a Gromov-Witten moduli space, as we do not quotient by ${\rm Aut}(\CP^1)$. 
This is due to the chain homotopy in Theorem~\ref{thm:main}~(iii), which will result from identifying a compactification of $\cM(A)$ with a boundary of the neck-stretching moduli space $\cM_{\rm SFT}(A)$ in \eqref{SFTstretchmod} that appears in Symplectic Field Theory \cite{egh}.
For that purpose we identify a solution $u:\CP^1\to M$ with the map to its graph $\Ti u : \CP^1 \to \CP^1 \times M, z \mapsto (z,u(z))$ as in \cite[\S8.1]{MS}. This yields is a bijection (and homeomorphism in appropriate topologies) 
$$
\cM(A) \;\cong\; \frac{\widetilde\cM_{\rm GW}([\CP^1]+ A) := \bigl\{ v : \CP^1 \to \CP^1\times M  \;\big|\;  \dbar_{\Ti J} v = 0 , \;  [v]=[\CP^1]+ A \bigr\}}{{\rm Aut}(\CP^1)}
$$
between the Cauchy-Riemann solution space for $M$ and the Gromov-Witten moduli space  for $\CP^1\times M$ in class $[\CP^1] + A$ for the split almost complex structure $\Ti J:=i \times J$ on $\CP^1\times M$. To transfer the evaluation maps at $z^+_0=[1:0]$ and $z_0^-=[0:1]$ we keep track of these as (unique) marked points mapping to $\{z_0^\pm\}\times M$ and thus replace \eqref{iota-fiber} by a fiber product over $\CP^1\times M$, 
\begin{equation}\label{iota-fiber-SFT}
\cM^\iota(p_-,p_+; A) \; \cong\;  \cM(p_-,M) \; \leftsub{\{z^+_0\}\times\ev}{\times}_{\ev^+} \; \cM_{\rm GW}(A) \; \leftsub{\ev^-}{\times}_{\{z^-_0\}\times\ev} \; \cM(M,p_+).
\end{equation}
This uses the evaluation maps from a Gromov-Witten moduli space with two marked points,
\begin{equation}
\ev^\pm \;:\; \cM_{\rm GW}(A) := \quotient{\widetilde\cM_{\rm GW}([\CP^1]+ A)}{{\rm Aut}(\CP^1,z_0^-,z_0^+)}  
\;\to\; \CP^1\times M , \qquad
[v] \;\mapsto\;  v(z_0^\pm)  ,  \label{eq:GWeval}
\end{equation}
where ${\rm Aut}(\CP^1,z_0^-,z_0^+)$ denotes the set of biholomorphisms $\phi:\CP^1\to\CP^1$ which fix $\phi(z_0^\pm)=z_0^\pm$. 
The polyfold setup in \cite[Theorems~1.7,1.10,1.11]{hwz-gw} for Gromov-Witten moduli spaces now provides a strong polyfold bundle $\cE_{\rm GW}(A)\to \cB_{\rm GW}(A)$, and oriented sc-Fredholm section $\s_{\rm GW}: \cB_{\rm GW}(A)\to \cE_{\rm GW}(A)$ that cuts out a compactification $\bM_{\rm GW}(A)=\s_{\rm GW}^{-1}(0)$ of $\cM_{\rm GW}(A)$.
Here a dense subset of the base polyfold $\cB_{\rm GW}(A)$ consists of ${\rm Aut}(\CP^1,z_0^-,z_0^+)$-orbits of smooth maps $v: \CP^1 \to \CP^1\times M$ in the homology class $[v]=[\CP^1]+A$, which implicitly carries the two marked points $z_0^\pm\in\CP^1$. Nodal curves in $\bM_{\rm GW}(A)$ then explicitly come with the data of two marked points on their domain. 
On the dense subset the section is given by $\s_{\rm GW}([v]) =[(v,\dbar_{\Ti J} v)]$. 
The setup in \cite[Theorem~1.8]{hwz-gw} moreover provides sc$^\infty$ evaluation maps $\ev^\pm : \cB_{\rm GW}(A) \to \CP^1\times M$ at the marked points, which on the dense subset are given by $\ev^\pm([v])=v(z^\pm_0)$. 

Thus we have given each factor in the fiber product \eqref{iota-fiber-SFT} a compactification\footnote{The term 'compactification' applied to spaces of pseudoholomorphic curves is always to be understood as Gromov-compactification, as $\cM_{\rm GW}(A)\subset \bM_{\rm GW}(A)$ may not be dense.} that is either a manifold with corners given by the compactified Morse trajectory spaces in Theorem~\ref{thm:Morse}, or the compact zero set $\bM_{\rm GW}(A)=\s_{\rm GW}^{-1}(0)$ of a sc-Fredholm section. 
In \S\ref{ssec:polyh} we will combine the polyfold description of the Gromov-compactification of \eqref{eq:GWeval} with an abstract construction of fiber products in polyfold theory \cite{Ben-fiber} to obtain compactifications and polyfold descriptions of the moduli spaces. Then the construction of $\iota:CM\to CM$ proceeds as in \S\ref{ssec:construct}. The algebraic properties of $\iota$ in Theorem~\ref{thm:main}~(i) and (ii) will follow from the boundary stratifications of the Morse trajectory spaces $\bM(p_-,M)$ and $\bM(M,p_+)$ since the ambient polyfold $\cB_{\rm GW}(A)$ has no boundary. However, this requires specific ``coherent'' choices of perturbations in \S\ref{sec:algebra}.

\begin{rmk} \rm \label{rmk:iota-energy}
Gromov-compactifications of the moduli spaces $\cM^\iota(p_-,p_+; A)$ will result from the energy identity \cite[Lemma~2.2.1]{MS} for solutions of $\dbar_J u = 0$,
\begin{equation} \label{eq:iota-energy}
\textstyle
E(u) \,:=\; \frac 12 \int_\C |\rd u |^2  \;=\; \int_{\CP^1} u^*\omega \;=\; \omega([u])  . 
\end{equation}
This fixes the energy of solutions on each solution space $\cM(A)$, and Gromov compactness asserts that $\{u : \CP^1 \to M \,| \, \dbar_J u =0 , E(u)\leq C  \}$ is compact up to bubbling for any $C>0$.

Another consequence of \eqref{eq:iota-energy} is that for $\omega(A)\leq 0$ we have no solutions $\cM(A)=\emptyset$ except for $A=0\in H_2(M)$ when the solution space is the space of constant maps 
$$
\cM(0) \;=\; \{ u\equiv x \,|\, x\in M\} \;\simeq\; M, 
$$
which is compact and cut out transversely. 

Translated to graphs in $\CP^1\times M$ with two marked points, this means $\bM_{\rm GW}(0)\simeq \CP^1\times\CP^1\times M$ by adding two marked points in the domain. That is, $(z^-, z^+,x)\in \CP^1\times\CP^1\times M$ corresponds to the (equivalence class of) graphs $\Ti u_x: z\mapsto (z,x)$ with two marked points $z^-,z^+\in\CP^1$. For $z^-\neq z^+$ this tuple can be reparametrized to the fixed marked points $z^-_0,z^+_0\in\CP^1$ and then represents an ${\rm Aut}(\CP^1,z^-_0,z^+_0)$-orbit. 
For $z^-=z^+$ the tuple $(z^-, z^+,x)$ corresponds to a stable map in $\bM_{\rm GW}(0)$, given by the graph $\Ti u_x$ with a node at $z^-=z^+$ attached to a constant sphere with two distinct marked points. 
This will be stated in polyfold terms in Assumption~\ref{ass:iota,h}~(ii). 
\end{rmk}

\subsection{Moduli spaces for the chain homotopy $\boldsymbol{h}$} \label{ssec:h-mod}

To construct the moduli spaces from which we will obtain $h:CM \to CM$, we again use the almost complex structure $J$ and Morse-Smale pair $(f,g)$ chosen in \S\ref{ssec:moduli}. In addition, we fixed an anti-holomorphic vector-field-valued 1-form $Y_H\in \Om^{0,1}(\C, \G(\rT M))$ that arises from the fixed Hamiltonian function $H:S^1\times M\to\R$ and a choice of smooth cutoff function $\beta: [0,\infty) \to [0,1]$ with $\beta|_{[0,1]}\equiv 0$, $\beta'\geq 0$, and $\beta|_{[e,\infty)}\equiv 1$.
Gluing this 1-form to another copy of $Y_H$ over $\C^-$ with neck length $R>0$ in exponential coordinates yields the anti-holomorphic vector-field-valued 1-form $Y_H^R\in \Om^{0,1}(\CP^1, \G(\rT M))$ that vanishes near $[1:0],[0:1]$ and on $\CP^1\less\{[1:0],[0:1]\} = \{ [1:r e^{i\theta}] \,|\, (r,\theta)\in (0,\infty) \times S^1 \}$ is given by 
$$
Y^R_H([1:re^{i\theta}],x) := \tfrac 12 \beta_R(r) \bigl( J X_H(\theta,x) \, r^{-1} \rd r + X_H(\theta,x) \, \rd\theta  \bigr) .
$$
Here $\beta_R(r):= \beta(re^{\frac R2}) \beta(r^{-1} e^{\frac R2})$ is a smooth cutoff function $\beta_R: (0,\infty) \to [0,1]$ that is identical to $1$ on $[e^{1-\frac{R}{2}} , e^{\frac{R}{2}-1}]$ and identical to $0$ on $(0,e^{-\frac R2}) \cup (e^{\frac R2},\infty)$.  
Now perturbing the Cauchy-Riemann operator on $\CP^1$ by $Y_H^R$ yields the following moduli spaces for critical points $p_-,p_+\in{\rm Crit}(f)$,  $A \in H_2(M)$, and $R \in [0,\infty)$,
$$
\cM_R(p_-,p_+; A) := \bigl\{ u: \CP^1 \to M \;\big|\; u([1:0])\in W^-_{p_-} , \; u([0:1])\in W^+_{p_+} , \;\dbar_{J} u = Y_H^R(u) , \; [u]=A \bigr\}, 
$$
and we will construct $h$ from their union
\begin{equation} \label{hmod} \textstyle
\cM(p_-,p_+;A) := \bigsqcup_{R \in [0,\infty)} \cM_R(p_-,p_+;A).
\end{equation}

\begin{rmk} \rm 
Each vector-field-valued 1-form $Y_H^R=-(X_{H^R_\beta})^{0,1}$ is in the notation of \cite[\S8.1]{MS} induced from the 1-form with values in smooth functions $H^R_\beta\in\Om^1(\CP^1,\cC^\infty(M))$ given by $H^R_\beta (r e^{i\theta}) = \beta_R(r) H (\theta,\cdot) \rd\theta$.
It is constructed so that it has the following properties:
\begin{enumilist}
\item
For $R=0$ we have $Y^0_H\equiv 0$ so that the moduli space $\cM_0(p_-,p_+; A)=\cM^\iota(p_-,p_+; A)$ is the same moduli space \eqref{iotamod} from which $\iota$ will be constructed.

\item
The restriction of any solution $u\in\cM_R(p_-,p_+;A)$ to the middle portion $\{[1:z]\in\CP^1 \,|\, e^{1-\frac R2} < |z|< e^{\frac R2-1}\}\cong (1-\frac R2,\frac R2-1)\times S^1$ satisfies the Floer equation $\partial_s v + J \partial_t v = J X_H(t,v)$  after reparametrization $v(s,t):= u([1:e^{s+it}])$.

\item
The shifts $u_-(z):=u([1: e^{- \frac R2} z])$ and $u_+(z):=u([e^{\frac R2} z : 1])=u([1: e^{\frac R2} z^{-1} ])$ of any solution $u\in\cM_R(p_-,p_+;A)$, restricted to $\{z\in \C \,|\, |z| < e^{R-1} \}$,
satisfy $\dbar_{J} u_\pm = Y_H(u_\pm)$ as in the PSS/SSP moduli spaces in \S\ref{ssec:moduli}. 
\end{enumilist}
\end{rmk}

The moduli space $\cM(p_-,p_+;A)$ is the zero set of a Fredholm section over a Banach manifold $[0,\infty) \times \cB$, where $\cB$ is the same Sobolev closure as in \S\ref{ssec:iota-mod} of the set of smooth maps $u: \C\mathbb{P}^1 \to M$ in the homology class $[u]=A$ satisfying the point constraints $u([1:0])\in W^-_{p-}$ and  $u([0:1]) \in W^+_{p_+}$.
Restricted to $\{0\}\times\cB$ this is the Fredholm section that cuts out $\cM^\iota(p_-,p_+;A)$ in \eqref{iotamod} with $\Ti J_H^0=\Ti J$. 
This determines the Fredholm index as
\begin{equation}\label{h index}
I(p_-,p_+;A) \, :=\; I^\iota(p_-,p_+;A) + 1 \; =\; 2c_1(A) + |p_-| - |p_+| + 1.
\end{equation}
Towards a compactification and polyfold description of these moduli spaces we again -- as in \S\ref{ssec:poly}, \S\ref{ssec:iota-mod}, \cite[\S8.1]{MS} -- identify a solution $u:\CP^1\to M$ with the map to its graph. 
Moreover, we again fix marked points $z_0^+=[1:0]$, $z_0^-=[0:1]$ to implement evaluation maps to express the conditions $u(z_0^\mp)\in W^\pm_{p_\pm}$. 
This yields a homeomorphism (in appropriate topologies) between the moduli space \eqref{hmod} and the fiber product over $\CP^1\times M$ with the half-infinite Morse trajectory spaces from \S\ref{ssec:Morse},
\begin{equation}\label{h-fiber}
\cM(p_-,p_+; A) \; \cong\;  \cM(p_-,M) \; \leftsub{\{z_0^+\}\times\ev}{\times}_{\ev^+} \; \cM_{\rm SFT}(A) \; \leftsub{\ev^-}{\times}_{\{z_0^-\}\times\ev} \; \cM(M,p_+).
\end{equation}
Compared with \eqref{iota-fiber-SFT} this replaces the Gromov-Witten moduli space in \eqref{eq:GWeval} with a family of moduli spaces for almost complex structures $\Ti J_H^R$ on $\CP^1\times M$ arising from $Y_H^R$ for $R\in[0,\infty)$, 
\begin{equation}
 \cM_{\rm SFT}(A) \,:=\;  \bigsqcup_{R \in [0,\infty)} \quo{\bigl\{ v: \CP^1 \to \CP^1\times M  \;\big|\;  \dbar_{\Ti J^R_H} v = 0 , [v]=[\CP^1]+ A \bigr\}}{{\rm Aut}(\CP^1,z_0^-,z_0^+)}  .  \label{SFTstretchmod}
\end{equation}
Here, again, we implicitly include the two marked points $z_0^\pm\in\CP^1$. 
Then, for $R\to\infty$, the degeneration of the PDE $ \dbar_{\Ti J^R_H} v = 0$ is the ``neck stretching''\footnote{Strictly speaking, $R \in [0,2]$ parametrizes a family of Gromov-Witten moduli spaces for varying almost complex structure. At $R = 2$, the manifold $S^1 \times M$ with its stable Hamiltonian structure (see \S\ref{ssec:poly}) embeds as a stable hypersurface in $\CP^1 \times M$. Then $R \in [2,\infty)$ parametrizes the SFT neck-stretching.} considered more generally in Symplectic Field Theory \cite{egh}. The evaluation maps from \eqref{eq:GWeval} directly generalize to
\begin{equation}
\ev^\pm\;:\; \cM_{\rm SFT}(A)  
\;\to\; \CP^1\times M , \qquad
[v] \;\mapsto\;  v(z_0^\pm)  . \label{SFTstretchEval}
\end{equation}
Now, as in \S\ref{ssec:iota-mod}, each factor in the fiber product \eqref{h-fiber} has natural compactifications -- either the compactified Morse trajectory spaces from Theorem~\ref{thm:Morse}, or the compact zero set $\bM_{\rm SFT}(A)=\s_{\rm SFT}^{-1}(0)$ of a sc-Fredholm section that we will introduce in \S\ref{ssec:polyh}. Combined with the construction of fiber products in polyfold theory \cite{Ben-fiber} this will yield compactifications and polyfold descriptions of the moduli spaces \eqref{hmod}, and the construction of $h:CM\to CM$ then again proceeds as in \S\ref{ssec:construct}. 
Establishing the algebraic properties in Theorem~\ref{thm:main} relating $h$ with $\iota$ and $SSP\circ PSS$ will moreover require an in-depth discussion of the boundary stratification of the polyfold domains $\cB_{\rm SFT}(A)$ of these sections, and ``coherent'' choices of perturbations in \S\ref{sec:algebra}.

\begin{rmk} \rm \label{rmk:h-energy}
Gromov-compactifications of the moduli spaces $\cM(p_-,p_+; A)$ will result from energy estimates \cite[Remark~8.1.7]{MS} for solutions of $\dbar_J u = Y_H^R(u)$,
\begin{equation} \label{eq:h-energy}
\textstyle
E_R(u) \,:=\; \frac 12 \int_{\CP^1} |\rd u + X_{H^R_\beta}(u) |  \;\leq\; 
\int_{\CP^1} u^*\omega + \|R_{H^R_\beta}\| \;=\; \omega([u]) + 2 \|H(\theta,\cdot)\| . 
\end{equation}
Here
$R_{H^R_\beta} \dvol_{\CP^1} = \rd H^R_\beta + \tfrac 12 { H^R_\beta\wedge H^R_\beta } =  \beta_R' \,  H \, \rd r \wedge \rd \theta$ has uniformly bounded Hofer norm
$$\textstyle
\| R_{H^R_\beta}\|  \;=\; \int_{\CP^1} ( \max R_{H^R_\beta} - \max R_{H^R_\beta} )  \;=\; \int_0^\infty \int_{S^1} |\beta_R'(r)| \|H(\theta,\cdot)\| \, \rd\theta \, \rd r \;=\; 2 \|H(\theta,\cdot)\|, 
$$
where $\|H(\theta,\cdot)\|:= \max_{x\in M} H(\theta,x) -  \min_{x\in M} H(\theta,x)$ and 
$\beta_R\in\cC^\infty((0,\infty),[0,1])$ is constant except  
\begin{align*}
\beta_R|_{[e^{-\frac R2}, e^{1-\frac{R}{2}}]} \,:\; r \mapsto \beta(r e^{\frac R2}) 
\quad\text{with}\quad & \tfrac{\rd}{\rd r} \beta_R \geq 0 \quad\text{and}\quad\textstyle
\int_{e^{-\frac R2}} ^{e^{1-\frac{R}{2}}} \bigl| \tfrac{\rd}{\rd r} \beta_R \bigr|\, \rd r = \beta(e) - \beta(1) = 1 , \\
\beta_R|_{[e^{\frac{R}{2}-1},e^{\frac R2}]} \,:\; r \mapsto  \beta(r^{-1} e^{\frac R2})
\quad\text{with}\quad & \tfrac{\rd}{\rd r} \beta_R \leq 0 \quad\text{and}\quad\textstyle
\int_{e^{\frac R2 - 1}} ^{e^{\frac{R}{2}}} \bigl| \tfrac{\rd}{\rd r} \beta_R \bigr|\, \rd r = - \bigl( \beta(1) - \beta(e) \bigr) = 1 .
\end{align*}
This proves \eqref{eq:h-energy}, and thus establishes energy bounds on the perturbed pseudoholomorphic maps in each of our moduli spaces, where we fix $[u]=A$. 
Now SFT-compactness \cite{sft-compactness} asserts that for any $C>0$ the set of solutions of bounded energy $\bigsqcup_{R\in[0,\infty)}\{  u : \CP^1 \to M \,| \, \dbar_J u = Y^R_H(u) , E_R(u)\leq C  \}$ is compact up to breaking and bubbling.
This compactness will be stated rigorously in polyfold terms in Assumption~\ref{ass:iota,h}~(ii). 
\end{rmk}

\subsection{Construction of the morphisms} \label{ssec:polyh}

In this section we construct the $\L$-linear maps $\iota:CM \to CM$ and $h:CM \to CM$ analogously to \S\ref{ssec:construct} by first obtaining compactifications and polyfold descriptions for the moduli spaces in \S\ref{ssec:iota-mod} and \S\ref{ssec:h-mod} as in \S\ref{ssec:poly}.
This construction is motivated by the fiber product descriptions of the moduli spaces in \eqref{iota-fiber-SFT}, \eqref{h-fiber}, which couple Morse trajectory spaces from \S\ref{ssec:Morse} with moduli spaces of pseudoholomorphic curves in $\CP^1\times M$ via evaluation maps \eqref{eq:GWeval}, \eqref{SFTstretchEval}. Polyfold descriptions of these moduli spaces and their properties are stated in the following Assumption~\ref{ass:iota,h} for reference, with proofs in \cite{hwz-gw} resp.\ outlined in \cite{fh-sft}.
A summary of the types of curves in each polyfold and subsets thereof is displayed in Table~\ref{tab:asspolyfolds2}.
Here we formulate the evaluation map in the context of neck stretching, as explained in the following remark, using a splitting of the sphere as topological space with smooth structures on the complement of the equator
$$
\overline{\CP^1_\infty} \,:=\; \overline{\C^+} \cup_{S^1} \overline{\C^-} \;\cong\; 
\C^+ \sqcup S^1 \sqcup \C^- , 
$$
using the topologies and smooth structures on $\ov{\C^\pm} = \C^\pm \sqcup S^1 \cong \{z\in\C^\pm \,|\, |z|\leq 1\}$ from Remark~\ref{rmk:evaluation drama}. 

\begin{rmk}\rm \label{rmk:drama2}
(i)
Recall from \S\ref{ssec:iota-mod} that we denote by $\cB_{\rm GW}(A)$ a Gromov-Witten polyfold of curves in class $[\CP^1] + A\in H_2(\CP^1\times M)$ with $2$ marked points. These are determined by $A\in H_2(M)$ as we model graphs of maps $\CP^1\to M$, but should not be confused with a polyfold of curves in $M$. In particular, $\cB_{\rm GW}(A)$ never contains constant maps and hence is well defined for $A=0$. 
The properties of the Gromov-Witten moduli spaces for $\o(A)\leq 0$ are spelled out abstractly in Assumption~\ref{ass:iota,h}(ii) below; for the geometric meaning see Remark~\ref{rmk:iota-energy}.

\medskip\noindent (ii)
The SFT polyfolds $\cB_{\rm SFT}(A)$ will similarly describe curves in class $[\CP^1] + A$ in a neck stretching family of targets $(\CP^1_R \times M)_{R\in[0,\infty]}$ as in \cite[\S3.4]{sft-compactness}, given by 
$$
\CP^1_R := \quotient{ D_+ \sqcup E_R \sqcup D_- }{\sim_R} \qquad\text{with}\qquad
E_R = \begin{cases} 
[-R,R]\times S^1 &; R<\infty, \\
[0,\infty)\times S^1 \sqcup (-\infty,0]\times S^1  &; R=\infty. 
\end{cases}
$$
Here we identify the boundaries of the closed unit disks $D_\pm = \{z\in\C  \,|\, |z|\leq 1\}$ with the boundary components of the necks $E_R$ via 
$$
\partial D_\pm \in e^{i\theta}  \;\sim_R\;
\left. \begin{cases} 
(\pm R, e^{\pm i\theta}) &; R<\infty \\
(0_\pm, e^{\pm i\theta}) &; R=\infty  
\end{cases} \right\}
\in \partial E_R , 
$$
where we denote $0_+:= 0 \in [0,\infty)$ and $0_-:= 0 \in (-\infty,0]$ so that $\partial E_\infty = \{0_+\}\times S^1 \sqcup \{0_-\}\times S^1$. 
To describe convergence and evaluation maps we also embed each $\CP^1_R \subset \overline{\CP^1_\infty}=\C^+\sqcup S^1 \sqcup \C^-$ by 
\begin{align*}
\quotient{D_+ \sqcup [-R,0)\times S^1}{\sim_R} \;\cong\; \quotient{D_+ \sqcup [0,\infty)\times S^1}{\sim_\infty}  \; & =: \,  \C^+  ,  \\
\quotient{D_- \sqcup (0,R]\times S^1}{\sim_R} \;\cong\; \quotient{D_- \sqcup (-\infty,0] \times S^1}{\sim_\infty}  &=:\, \C^- ,
\qquad
E_R \supset \{0\}\times S^1 \;\cong\; S^1 \subset  \overline{\CP^1_\infty} . 
\end{align*}
For $R=0$ this is to be understood as $\CP^1_0=\qu{D_+\sqcup D_-}{\partial D_+\sim\partial D_-}$ with $D_\pm \less  \partial D_\pm \cong \C^\pm$, and for all $R<\infty$ we view the resulting homeomorphism $\CP^1_R\cong\ov{\CP^1_\infty}\cong\CP^1$ as identifying the standard marked points $\CP^1\ni z_0^+=[1:0] \cong 0\in \C^+$ and $\CP^1\ni z_0^-=[0:1] \cong 0\in \C^-$. 
When these embeddings are done via linear shifts $[-R,-1) \cong [0,R-1)$ and $(1,R] \cong (1-R,0]$ extended by a smooth family of diffeomorphisms $[-1,0)\cong [R-1,\infty)$ and $(0,1] \cong (-\infty, 1-R]$, 
then the pullback of the almost complex structures $\Ti J_H^R$ on $\CP^1_R\times M$ 
converges for $R\to\infty$ in $\cC^\infty_{\rm loc}\bigl((\ov{\CP^1_\infty}\less S^1)\times M\bigr)$ to the almost complex structures $\Ti J_H^+, \Ti J_H^-$ on $\C^+ \times M \;\sqcup\; \C^-\times M = \CP^1_\infty \times M \subset \ov{\CP^1_\infty} \times M$, 
which are used in the construction of the PSS and SSP moduli spaces in \S\ref{ssec:poly}.
Moreover, this allows us to extend the evaluation maps from \eqref{SFTstretchEval} to continuous maps $\ov\ev^\pm : \bM_{\rm SFT}(A) \to \ov{\CP^1_\infty}\times M$ on the compactified SFT moduli space. At $R=\infty$ this involves pseudoholomorphic buildings in $\C^+\times M \;\sqcup\; \R\times S^1\times M \ldots  \;\sqcup\; \R\times S^1\times M  \;\sqcup\; \C^-\times M$, and for any marked point with evaluation into a cylinder $\R\times S^1\times M$ we project the result to $S^1\times M\subset \ov{\CP^1_\infty}\times M$ by forgetting the $\R$-component. 

Finally, this formulation with $\ov{\CP^1_\infty}=\ov{\C^+}\cup_{S^1}\ov{\C^-}$ will allow us to compare the evaluation at $R=\infty$ with the product of the evaluations 
$\ov\ev^\pm : \bM^\pm_{\rm SFT}(\g;A) \to \ov{\C^\pm}\times M$
constructed in Remark~\ref{rmk:evaluation drama}. 
While this will be stated rigorously only in Assumption~\ref{ass:iso}~(iii)(c), note here that we should expect three top boundary strata of an ambient polyfold at $R=\infty$, corresponding to the distribution of marked points on the curves in $\C^+\times M \;\sqcup\; \C^-\times M$. 
For the fiber product construction, only the boundary components with one marked point in each factor are relevant -- in fact only those with marked points near $z_0^+\cong 0 \in \C^+$ and $z_0^-\cong 0 \in \C^-$. Thus we will work with the open subset $(\ov\ev^+)^{-1}(\C^+\times M)\cap (\ov\ev^-)^{-1}(\C^-\times M)$ where the two evaluations for any $R\in[0,\infty]$ are constrained to take values in the open sets given by $\C^\pm \subset \ov{\CP^1_\infty}$. 
 \end{rmk}

\begin{ass} \label{ass:iota,h}
There is a collection of oriented sc-Fredholm sections of strong polyfold bundles
$\s_{\rm GW}: \cB_{\rm GW}(A)\to \cE_{\rm GW}(A)$ and
 $\s_{\rm SFT}: \cB_{\rm SFT}(A)\to \cE_{\rm SFT}(A)$ indexed by $A\in H_2(M)$, sc$^\infty$ maps $\ov\ev^\pm: \cB_{\rm GW}(A)\to \CP^1\times M$, and continuous maps $\ov\ev^\pm: \cB_{\rm SFT}(A)\to \ov{\CP^1_\infty}\times M$ with the properties:
\begin{enumilist}
\item  
The sections have Fredholm indices $\text{\rm ind}(\s_{\rm GW}) = 2c_1(A) + \dim M + 4$ on $\cB_{\rm GW}(A)$ resp.\ $\text{\rm ind}(\s_{\rm SFT}) = 2c_1(A) + \dim M + 5$ on $\cB_{\rm SFT}(A)$.
\item
Each zero set $\bM_{\rm GW}(A):=\s_{\rm GW}^{-1}(0)$ and $\bM_{\rm SFT}(A):=\s_{\rm SFT}^{-1}(0)$ is compact, and given any $C\in\R$ there are only finitely many $A\in H_2(M)$ with nonempty zero set $\bM_{\rm GW}(A)\neq\emptyset$ resp.\ $\bM_{\rm SFT}(A)\neq\emptyset$. 
Moreover, for $\omega(A)\leq 0$ we have $\bM_{\rm GW}(A)=\emptyset$ except for $A=0\in H_2(M)$ when $\s_{\rm GW}|_{\cB_{\rm GW}(0)}\pitchfork 0$ is in general position with zero set $\bM_{\rm GW}(0)\simeq\CP^1\times\CP^1\times M$ identified by 
$$
 \cB_{\rm GW}(0) \;\supset\; \s_{\rm GW}^{-1}(0) \;=\; \bM_{\rm GW}(0) \;\overset{\ov\ev^+\times\ov\ev^-}{\longrightarrow}\; \bigl\{ (z^+,x, z^-, x) \,\big|\, z^-,z^+\in\CP^1, x\in M \bigr\} . 
$$
\item
The polyfolds $\cB_{\rm GW}(A)$ have no boundary, $\partial\cB_{\rm GW}(A)=\emptyset$.  
For $\cB_{\rm SFT}(A)$ there is a natural inclusion $[0,\infty) \times\cB_{\rm GW}(A) \subset \cB_{\rm SFT}(A)$ that covers the interior $\partial_0\cB_{\rm SFT}(A) = (0,\infty) \times\cB_{\rm GW}(A)$ and identifies the boundary $\partial \cB_{\rm SFT}(A)$ to consist of the disjoint sets 
$\{0\} \times\cB_{\rm GW}(A)$ and $\lim_{R\to\infty} \{R\} \times\cB_{\rm GW}(A)$ of $\cB_{\rm SFT}(A)$. 
Moreover, this inclusion identifies the section $\sigma_{\rm GW}$ and evaluation maps $\ov\ev^\pm$ with the restricted section $\s_{\rm SFT}|_{\{0\} \times\cB_{\rm GW}(A)}$ and evaluations $\ov\ev^\pm|_{\{0\} \times\cB_{\rm GW}(A)}$.
(A description of the relevant $R=\infty$ parts of the boundary $\partial \cB_{\rm SFT}(A)$ is given in Assumption~\ref{ass:iso}.)

\item The sections $\s_{\rm GW}$ and $\s_{\rm SFT}$ have tame sc-Fredholm representatives in the sense of \cite[Def.5.4]{Ben-fiber}. 
The product of evaluation maps $\ov\ev^+\times\ov\ev^-: \cB_{\rm GW}(A) \to \CP^1\times M\times \CP^1\times M$ is $\s_{\rm GW}$-compatibly submersive in the sense of Definition~\ref{def:submersion}. On the open subset
$$
\cB^{+,-}_{\rm SFT}(A) \,:=\; (\ov\ev^+)^{-1}(\C^+\times M)\cap  (\ov\ev^-)^{-1}(\C^-\times M) \;\subset\;
\cB_{\rm SFT}(A)
$$
the evaluation maps $\ov\ev^\pm: \cB_{\rm SFT}(A) \to \ov{\CP^1_\infty}\times M$
restrict to a $\s_{\rm SFT}$-compatibly submersive map
\begin{equation}\label{eq:SFTevalB}
\ev^+\times \ev^- \,:\; 
\cB^{+,-}_{\rm SFT}(A)  \; \to \;  \C^+\times M\times \C^-\times M  .
\end{equation}
On this domain intersected with $\{0\}\times\cB_{\rm GW}(A)\subset\partial_1\cB_{\rm SFT}(A)$, this map coincides with the Gromov-Witten evaluations $\ov\ev^+\times \ov\ev^-$ viewed as maps
$$ 
\ev^+\times \ev^-  \,:\;   \cB^{+,-}_{\rm GW}(A)  \; \to \;  \C^+\times M\times \C^-\times M , 
$$
where we identify $\C^+\sqcup\C^- = \CP^1 \less S^1$ and restrict to the domain
$$
\{0\}\times \cB^{+,-}_{\rm GW}(A) \,:=\;
\bigl(\{0\}\times\cB_{\rm GW}(A)\bigr) \cap \cB^{+,-}_{\rm SFT}(A) 
\;=\; \{0\}\times \bigl( (\ov\ev^+)^{-1}(\C^+\times M)\cap  (\ov\ev^-)^{-1}(\C^-\times M) \bigr) .
$$
\end{enumilist}
\end{ass}

\begin{rmk} \label{rmk:ass2} \rm  
\begin{enumilist}
\item
While not needed for our proof, we state the following properties for intuition:

\smallskip\noindent
{\it
The ${\rm Aut}(\CP^1,z_0^-,z_0^+)$-orbits of smooth maps $v: \CP^1 \to \CP^1\times M$ which represent the class $[\CP^1]+ A$ form a dense subset $\cB_{\rm dense}(A)\subset \cB_{\rm GW}(A)$. On this subset, the section is given by $\s_{\rm GW}([v]) =  [(v,\dbar_{\Ti J} v)]$.
Moreover, $[0,\infty)\times \cB_{\rm dense}(A) \subset \cB_{\rm SFT}(A)$ is a dense subset that intersects the boundary $\partial\cB_{\rm SFT}(A)$ exactly in $\{0\}\times \cB_{\rm dense}(A)$, and on which the section is given by $\s_{\rm SFT}(R,[v]) = [(v, \dbar_{\Ti J_H^R} v )]$. 
On these dense subsets, $\ov\ev^\pm([v])$ resp.\ $\ov\ev^\pm(R,[v])$ is the evaluation as in \eqref{SFTstretchEval}.

The intersection of the zero sets with the dense subsets 
$\s_{\rm GW}^{-1}(0)\cap \cB_{\rm dense}(A) \cong \cM_{\rm GW}(A)$
and
$\s_{\rm SFT}^{-1}(0)\cap [0,\infty) \times \cB_{\rm dense}(A) \cong \cM_{\rm SFT}(A)$ 
are naturally identified with the Gromov-Witten moduli space \eqref{eq:GWeval} and
SFT moduli space in \eqref{SFTstretchmod}.
}
\smallskip

\item

The polyfold description $\s_{\rm GW}: \cB_{\rm GW}(A)\to \cE_{\rm GW}(A)$ is developed for the homology classes $[\CP^1]+ A\in H_2(\CP^1\times M)$ in \cite{hwz-gw}, with the submersion property shown in \cite[Ex.5.1]{Ben-fiber}. The properties for $\omega(A)\leq 0$ in 
Assumption~\ref{ass:iota,h}~(ii) 
follow from the fact that nonconstant pseudoholomorphic curves have positive symplectic area, and linear Cauchy-Riemann operators on trivial bundles (arising from linearization at constant maps) are surjective. 
The construction of $\s_{\rm SFT}$ starts by recognizing that the family of almost complex manifolds in Remark~\ref{rmk:drama2}~(ii) for $R<\infty$ is equivalent to a degeneration of the almost complex structure on $\CP^1\times M$ along the equator $S^1\subset\CP^1$. This can be described by an $R$-dependent bundle and section over $[0,\infty)\times \cB_{\rm GW}(A)$. The construction for $R\to\infty$ then proceeds analogous to \cite[\S 3]{fh-sft}, with buildings consisting of a top and bottom floor curve in $\C^\pm\times M$ and intermediate floors given by curves in $\R\times S^1\times M$. Thus 
Assumption~\ref{ass:iota,h}~(iii) 
and the compatibility with $\cB_{\rm GW}(A)$ in (iv) hold by construction.
The polyfold and bundle structure are again obtained as in \cite{fh-sft} by extending the constructions in \cite{hwz-gw} with local models for punctures and neck-stretching from \cite[\S 3]{fh-2}, using the implanting method in \cite[\S 3,\S 5]{fh-1}. The remaining properties are proven as outlined in Remark~\ref{rmk:ass}~(ii). 
\end{enumilist}
\end{rmk}

Given any such polyfold descriptions of the moduli spaces of pseudoholomorphic curves, we now extend the fiber product descriptions of the moduli spaces 
$$
\cM^{(\iota)}(p_-,p_+; A) \;\cong\;  \cM(p_-,M)\; \leftsub{\{z_0^+\}\times\ev}{\times}_{\ev^+} \cM_{\rm GW / SFT}(A) \leftsub{\ev^-}{\times}_{\{z_0^-\}\times\ev} \;\cM(M,p_+)
$$ 
in \S\ref{ssec:iota-mod} and \S\ref{ssec:h-mod} to obtain ambient polyfolds which contain compactifications of the moduli spaces. 
Towards this we define for each $p_-,p_+\in{\rm Crit}(f)$ and $A\in H_2(M)$ the topological spaces
\begin{align*}
\tilde{\cB}^\iota(p_-,p_+; A) &\,:=\; \left\{ (\ul{\t}_-, \ul{v} ,\ul{\t}_+) \in \bM(p_-,M) \times \, \cB_{\rm GW}(A) \, \times \bM(M,p_+)  \,\big|\,
( z_0^\pm , \ev(\ul{\t}_\pm) ) = \ov\ev^\pm(\ul{v})  \;  \right\}  \\
&\,\phantom{:}=\; \left\{ (\ul{\t}_-, \ul{v} ,\ul{\t}_+) \in \bM(p_-,M) \times \, \cB^{+,-}_{\rm GW}(A) \, \times \bM(M,p_+)  \,\big|\, ( 0 , \ev(\ul{\t}_\pm) ) = \ev^\pm(\ul{v})  \;  \right\},  \\
\tilde{\cB}(p_-,p_+; A) &\,:=\; \left\{ (\ul{\t}_-, \ul{w},\ul{\t}_+)\in \bM(p_-,M) \times \cB_{\rm SFT}(A)\times \bM(M,p_+)  \,\big|\,
( z_0^\pm ,\ev(\ul{\t}_\pm) )  = \ov\ev^\pm(\ul{w}) \right\} \\
&\,\phantom{:}=\; \left\{ (\ul{\t}_-, \ul{w},\ul{\t}_+)\in \bM(p_-,M) \times \cB^{+,-}_{\rm SFT}(A)\times \bM(M,p_+)  \,\big|\,  (0,\ev(\ul{\t}_\pm) )  = \ev^\pm(\ul{w})  \right\}, 
\end{align*}
where the last equality stems from the identification at the end of Remark~\ref{rmk:drama2}~(ii).
Then the abstract fiber product constructions in \cite{Ben-fiber} will be used as in Lemma~\ref{lem:PSS poly} to obtain the following polyfold description for compactifications of the moduli spaces in \S\ref{ssec:iota-mod} and \S\ref{ssec:h-mod}.

\begin{lem} \label{lem:iota,h poly}
Given any $p_-,p_+\in{\rm Crit}(f)$ and $A\in H_2(M)$, there exist open subsets $\cB^\iota(p_-,p_+; A) \subset \tilde{\cB}^\iota(p_-,p_+; A)_1$ and $\cB(p_-,p_+; A) \subset \tilde{\cB}(p_-,p_+; A)_1$ which contain the smooth levels $\tilde{\cB}^{(\iota)}(p_-,p_+;A)_\infty$ of the fiber products and inherit natural polyfold structures with smooth level of the interior
\begin{align*}
\partial_0\cB^\iota(p_-,p_+; A)_\infty &\;=\; \cM(p_-,M) \; \leftsub{\{z_0^+\}\times\ev}{\times}_{\ev^+} \;  \cB^{+,-}_{\rm GW}(A)_\infty \; \leftsub{\ev^-}{\times}_{\{z_0^-\}\times\ev} \;  \cM(M,p_+) ,  \\
\partial_0\cB(p_-,p_+; A)_\infty &\;=\; \cM(p_-,M) \; \leftsub{\{z_0^+\}\times\ev}{\times}_{\ev^+} \;  \partial_0\cB^{+,-}_{\rm SFT}(A)_\infty \; \leftsub{\ev^-}{\times}_{\{z_0^-\}\times\ev} \; \cM(M,p_+)  , 
\end{align*}
and a scale-smooth inclusion
$$
\phi_\iota \,:\; \cB^\iota(p_-,p_+; A) \;\hookrightarrow\; \cB(p_-,p_+; A), \qquad
(\ul{\t}_-, \ul{v},\ul{\t}_+) \;\mapsto\; (\ul{\t}_-, 0,\ul{v},\ul{\t}_+) . 
$$
Moreover, pullback of the sections and bundles $\sigma_{\rm GW/SFT}: \cB_{\rm GW/SFT}(A) \to \cE_{\rm GW/SFT}(A)$ under the projection $\cB(p_-,p_+; A) \to \cB_{\rm GW/SFT}(A)$ induces sc-Fredholm sections of strong polyfold bundles $\s_{(p_-,p_+; A)}: \cB(p_-,p_+;A) \to \cE(p_-,p_+;A)$ of index $I(p_-,p_+; A)$ as in \eqref{h index} and
$\s^\iota_{(p_-,p_+; A)}: \cB^\iota(p_-,p_+;A) \to \cE^\iota(p_-,p_+;A)$ of index $I^\iota(p_-,p_+; A)=I(p_-,p_+; A)-1$ as in \eqref{iota index}.
Further, these are related via the inclusion $\phi_\iota$ by natural orientation preserving identification 
$\s^\iota_{(p_-,p_+; A)}\cong\phi_\iota^*\s_{(p_-,p_+; A)}$. 

The zero sets of these sc-Fredholm sections contain\footnote{
As in Remark~\ref{rmk:ass}, this identification is stated for intuition and will ultimately not be used in our proofs.}
the moduli spaces from \S\ref{ssec:iota-mod} and \S\ref{ssec:h-mod}, 
\begin{align*}
{\s^\iota_{(p_-,p_+; A)}}^{-1}(0) &\;=\;
\bM(p_-,M) \; \leftsub{\{z_0^+\}\times\ev}{\times}_{\ov\ev^+} \;  \s_{\rm GW}^{-1}(0) 
\; \leftsub{\ov\ev^-}{\times}_{\{z_0^-\}\times\ev} \;  \bM(M,p_+) 
\;\supset\; \cM(p_-,p_+; A),  \\
{\s_{(p_-,p_+; A)}}^{-1}(0) 
&\;=\; 
\bM(p_-,M) \; \leftsub{\{z_0^+\}\times\ev}{\times}_{\ov\ev^+} \;  \s_{\rm SFT}^{-1}(0) \; \leftsub{\ov\ev^-}{\times}_{\{z_0^-\}\times\ev} \; \bM(M,p_+) 
\;\supset\; \cM^\iota(p_-,p_+; A) .
\end{align*}
Finally, each zero set $\s^{(\iota)}_{(p_-,p_+; A)}\,\!\! ^{-1}(0)$ is compact,
and given any $p_\pm\in\Crit(f)$ and $C\in\R$, there are only finitely many $A\in H_2(M)$ with $\omega(A)\leq C$ and nonempty zero set ${\s^{(\iota)}_{(p_-,p_+; A)}} \,\!\! ^{-1}(0)\neq\emptyset$. 
\end{lem}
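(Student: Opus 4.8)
The plan is to run exactly the argument of Lemma~\ref{lem:PSS poly}, but now with \emph{two} fiber product steps — one for each half-infinite Morse trajectory factor $\bM(p_-,M)$ and $\bM(M,p_+)$ — built on top of the polyfold descriptions $\s_{\rm GW}$ (for $\iota$) and $\s_{\rm SFT}$ (for $h$) supplied by Assumption~\ref{ass:iota,h}. For the $h$-case I would first fix an ep-groupoid representative $\cX$ of $\cB_{\rm SFT}(A)$, a strong bundle representing $\cE_{\rm SFT}(A)$, and a tame sc-Fredholm section functor $S_{\rm SFT}$ representing $\s_{\rm SFT}$; these are tame by Assumption~\ref{ass:iota,h}(vi). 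Restricting to the full sub-ep-groupoid over the open subset $\cB^{+,-}_{\rm SFT}(A)$ makes the joint evaluation $\ev^+\times\ev^-$ an sc$^\infty$, $\s_{\rm SFT}$-compatibly submersive map into $(\C^+\times M)\times(\C^-\times M)$, again by Assumption~\ref{ass:iota,h}(vi). As in Lemma~\ref{lem:PSS poly} I would view $\bM(p_-,M)$ and $\bM(M,p_+)$ as object spaces of ep-groupoids with only identity morphisms, carrying the unique rank-$0$ strong bundle with its zero section as a tame sc-Fredholm section; recall from Theorem~\ref{thm:Morse} that $\partial_0\bM(p_\pm,\cdot)=\cM(p_\pm,\cdot)$, $\dim\bM(p_-,M)=|p_-|$, $\dim\bM(M,p_+)=\dim M-|p_+|$, and that these are finite dimensional so equal their own smooth level.

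I would then apply \cite[Cor.7.3]{Ben-fiber} twice. First, with $\ev_0^-:\bM(p_-,M)\to\C^+\times M$, $\ul\tau_-\mapsto(0,\ev(\ul\tau_-))$ (using $z_0^+\cong 0\in\C^+$ from Remark~\ref{rmk:drama2}), observe that $\ev_0^-\times\ev^+$ is $S_{\rm SFT}$-compatibly transverse to the diagonal in $(\C^+\times M)^2$ — the compatible sc-complement being $\{0\}\times L$ for $L$ an sc-complement of $\ker D\ev^+$, exactly as in Lemma~\ref{lem:PSS poly} — and obtain a tame ep-groupoid $\cX'$ with pulled-back tame bundle and tame sc-Fredholm section on an open neighbourhood of the smooth level of the fiber product $\bM(p_-,M)\,\leftsub{\ev_0^-}{\times_{\ev^+}}\,\cB^{+,-}_{\rm SFT}(A)_1$. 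Crucially, the pulled-back $\ev^-$ on $\cX'$ is still sc$^\infty$ and $S_{\rm SFT}$-compatibly submersive: this is why Assumption~\ref{ass:iota,h}(vi) is phrased for the joint map $\ev^+\times\ev^-$ rather than for the two evaluations separately. Second, with $\ev_0^+:\bM(M,p_+)\to\C^-\times M$, $\ul\tau_+\mapsto(0,\ev(\ul\tau_+))$, I would apply \cite[Cor.7.3]{Ben-fiber} again to $\ev^-\times\ev_0^+$ and the diagonal in $(\C^-\times M)^2$, producing the tame ep-groupoid $\cX''$ whose realization is $\cB(p_-,p_+;A)$, with bundle and sc-Fredholm section $\s_{(p_-,p_+;A)}$ the iterated pullbacks. (Alternatively one can do both steps at once via the partial diagonal $\Delta\times\Delta\subset(\C^+\times M)^2\times(\C^-\times M)^2$ and a simultaneous-fiber-product version of \cite[Cor.7.3]{Ben-fiber}.) The $\iota$-case is identical with $\cB_{\rm GW}(A)$ in place of $\cB_{\rm SFT}(A)$; since $\cB^{+,-}_{\rm GW}(A)=\cB^{+,-}_{\rm SFT}(A)\cap(\{0\}\times\cB_{\rm GW}(A))$ is a boundary face of $\cB^{+,-}_{\rm SFT}(A)$ on which the evaluations agree (Assumption~\ref{ass:iota,h}(iii),(vi)), the fiber product construction is functorial along this inclusion and produces the scale-smooth inclusion $\phi_\iota:(\ul\tau_-,\ul v,\ul\tau_+)\mapsto(\ul\tau_-,0,\ul v,\ul\tau_+)$ together with the oriented identification $\s^\iota_{(p_-,p_+;A)}\cong\phi_\iota^*\s_{(p_-,p_+;A)}$.

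The remaining bookkeeping is as in Lemma~\ref{lem:PSS poly}. Applying the index formula of \cite[Cor.7.3]{Ben-fiber} twice gives $\text{\rm ind}(\s_{(p_-,p_+;A)})=\text{\rm ind}(\s_{\rm SFT})+|p_-|+(\dim M-|p_+|)-2\dim(\CP^1\times M)=(2c_1(A)+\dim M+5)+|p_-|+\dim M-|p_+|-2(\dim M+2)=2c_1(A)+|p_-|-|p_+|+1=I(p_-,p_+;A)$, and likewise $\text{\rm ind}(\s^\iota_{(p_-,p_+;A)})=2c_1(A)+|p_-|-|p_+|=I^\iota(p_-,p_+;A)=I(p_-,p_+;A)-1$. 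The degeneracy index formula $d_{\cX''}=d_{\bM(p_-,M)}+d_{\cX}+d_{\bM(M,p_+)}$, together with $\partial_0\bM(p_\pm,\cdot)=\cM(p_\pm,\cdot)$, $\partial\cB_{\rm GW}(A)=\emptyset$, and $\partial_0\cB_{\rm SFT}(A)=(0,\infty)\times\cB_{\rm GW}(A)$ from Assumption~\ref{ass:iota,h}(ii),(iii), yields the stated descriptions of $\partial_0\cB^{(\iota)}(p_-,p_+;A)_\infty$ after intersecting with the open $\cB^{+,-}$-subsets. The zero sets are the claimed iterated fiber products since these lie in the smooth level and the construction already restricts considerations to the $\cB^{+,-}$-domains; compactness follows because $\bM(p_\pm,\cdot)$ and $\s_{\rm GW/SFT}^{-1}(0)$ are compact and all evaluation maps are continuous, and the finiteness in $A$ (for fixed $p_\pm$ and $C$) follows from Assumption~\ref{ass:iota,h}(ii), since a nonempty fiber product zero set projects onto a nonempty $\s_{\rm GW/SFT}^{-1}(0)$.

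The main obstacle I anticipate is the second fiber product step: after the first application of \cite[Cor.7.3]{Ben-fiber} the sc-structure and corner geometry of $\cX'$ near its boundary have changed, so one must verify that the pulled-back $\ev^-$ remains sc$^\infty$ and, more delicately, $S_{\rm SFT}$-compatibly submersive in the precise sense of Definition~\ref{def:submersion}, with an explicitly exhibited compatible sc-complement adapted to the coordinate changes produced by the first step. Making this precise — i.e.\ tracking how a compatible sc-complement for $\ev^+\times\ev^-$ restricts to one for $\ev^-$ on the fiber product (or, equivalently, setting up a single-step fiber product against $\Delta\times\Delta$) — is where the real work lies. The index computation, the interior stratification, the compactness and finiteness statements, and the orientation identification under $\phi_\iota$ are then routine consequences of \cite{Ben-fiber} and Theorem~\ref{thm:Morse}, exactly paralleling Lemma~\ref{lem:PSS poly}.
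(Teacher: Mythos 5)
Your proposal is essentially correct, and all of the bookkeeping (index formula, degeneracy index, interior stratification, compactness, finiteness in $A$, and the identification $\s^\iota_{(p_-,p_+;A)}\cong\phi_\iota^*\s_{(p_-,p_+;A)}$ via $\pr_{\rm GW}=\pr_{\rm SFT}\circ\phi_\iota$) matches the paper. The one substantive difference is that the paper does \emph{not} iterate \cite[Cor.7.3]{Ben-fiber} twice: it runs the construction exactly as in Lemma~\ref{lem:PSS poly} in a \emph{single} step, taking the Cartesian product $\bM(p_-,M)\times\cX^{\ev}\times\bM(M,p_+)$ and checking that $\ev_0^+\times(\ev^+\times\ev^-)\times\ev_0^-$ is $S_{\rm SFT}$-compatibly transverse to $\Delta\times\Delta$, with the compatible sc-complement $\{0\}\times L\times\{0\}$ supplied directly by the joint $\s_{\rm SFT}$-compatible submersivity of $\ev^+\times\ev^-$ in Assumption~\ref{ass:iota,h}(vi). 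This is precisely why that assumption is stated for the product map, as you observe yourself — but the consequence you did not draw is that the ``main obstacle'' you anticipate (re-verifying that the pulled-back $\ev^-$ is still sc$^\infty$ and $S$-compatibly submersive on the intermediate fiber product $\cX'$, whose charts and corner structure have been altered by the first step) simply never arises in the paper's argument. In other words, your parenthetical ``alternative'' is the intended proof, and promoting it to your main route eliminates the only genuinely delicate point in your write-up; if you insist on the iterated version you would indeed owe the reader the extra verification you flag, which is not needed.
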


\begin{proof}
The inclusion $\phi_{\i}$ is sc$^\infty$ since the map $\cB_{\rm GW}(A) \hookrightarrow \cB_{\rm SFT}(A), \ul{v} \mapsto (0,\ul{v})$ is a sc$^\infty$ inclusion by Assumption~\ref{ass:iota,h}~(iii). 
Apart from further relations involving $\phi_\iota$, the proof is directly analogous to the fiber product construction in Lemma~\ref{lem:PSS poly}, using Assumption~\ref{ass:iota,h} -- in particular the sc$^\infty$ and $\s_{\rm SFT}$-compatibly submersive evaluation map \eqref{eq:SFTevalB} on the open subset $\cB^{+,-}_{\rm SFT}(A)\subset \cB_{\rm SFT}(A)$. 
This yields polyfold structures on open sets $\cB^{\i}(p_-,p_+;A)\subset \tilde{\cB}^\iota(p_-,p_+; A)_1$ and $\cB(p_-,p_+;A)\subset \tilde{\cB}(p_-,p_+; A)_1$ as well as the pullback sc-Fredholm sections $\s_{(p_-,p_+;A)}=\pr_{\rm SFT}^*\s_{\rm SFT}$ and $\s^\iota_{(p_-,p_+;A)}=\pr_{\rm GW}^*\s_{\rm GW}$ under the projections $\pr_{\rm GW/SFT} : \cB^{(\i)}(p_-,p_+;A) \rightarrow \cB_{\rm GW/SFT}(A)$.
Here we have $\pr_{\rm GW} = \pr_{\rm SFT} \circ \phi_{\i}$, so the bundle $\cE^\iota(p_-,p_+;A)=\pr_{\rm GW}^*\cE_{\rm GW}(A)$ and section $\s^\iota_{(p_-,p_+;A)}=\pr_{\rm GW}^*\s_{\rm GW}$ are naturally identified with the pullback bundle $\phi_\iota^*\cE(p_-,p_+;A) = \pr_{\rm GW}^*\cE_{\rm SFT}(A)|_{\{0\}\times\cB_{\rm GW}(A)}$ and section 
$\phi_\iota^*\s_{(p_-,p_+;A)} = \pr_{\rm GW}^*\s_{\rm SFT}|_{\{0\} \times\cB_{\rm GW}(A)}$ using Assumption~\ref{ass:iota,h}~(iii). 
Finally, the index of the induced section $\s_{(p_-,p_+;A)}$, and similarly of $\s^\iota_{(p_-,p_+;A)}$, is computed by \cite[Cor.7.3]{Ben-fiber} as
\begin{align*}
\text{\rm ind}(\s_{(p_-,p_+;A)}) &\;=\; 
\text{\rm ind}(\s_{\rm SFT}) + \dim \bM(p_-,M) + \dim \bM(M,p_+) - 2 \dim(\CP^1\times M) \\
&\;=\; 2c_1(A) + \dim M + 5 + |p_-| + \dim M - |p_+| -4 - 2 \dim M  \\
&\;=\; 2c_1(A) + |p_-|  - |p_+| + 1
\;=\; I(p_-,p_+;A) .
\end{align*}

\vspace{-5mm}

\end{proof}

\begin{table}
\centering
 \begin{tabular}{||c | c | c ||}  
 \hline 
 Notation & Description & Definition\\ [0.5ex] 
 \hline\hline
$\cB_{\rm dense}(A)$   & \thead{elements are equivalence classes under reparameterization by \\ ${\rm Aut}(\CP^1,z_0^-,z_0^+)$ of smooth maps $\CP^1 \to \CP^1\times M$ in class $[\CP^1] + A$ }
 & \thead{Remark~\ref{rmk:ass2} (i)} \\
 \hline
$\cB_{\rm GW}(A)$  &  \thead{a polyfold with dense subset $\cB_{\rm dense}(A)$, which contains the \\ Gromov-compactification $\bM_{\rm GW}( A)$ of the moduli space in \eqref{eq:GWeval}} 
& \thead{Assumption~\ref{ass:iota,h} }\\ 
 \hline
$\cB_{\rm SFT}(A)$  &  \thead{a polyfold with dense subset $[0,\infty)\times \cB_{\rm dense}(A)$, which contains the \\ SFT-compactification $\bM_{\rm SFT}( A)$ of the moduli space in \eqref{SFTstretchmod} }
 & \thead{Assumption~\ref{ass:iota,h}
} \\
 \hline
$\cB_{\scriptscriptstyle \rm GW / SFT}^{+,-}(A)$ & \thead{the open subsets of $\cB^\pm_{\rm \scriptscriptstyle GW / SFT}(A)$ containing the curves whose \\ evaluation at two marked point lands in $\mathbb{C}^{\pm} \times M$; see Remark~\ref{rmk:drama2}}
& \thead{Assumption~\ref{ass:iota,h} (iv)} \\
 \hline
$\tilde{\cB}^\iota(p_-,p_+; A)$  & \thead{elements are triples of two half-infinite broken Morse trajectories from \\ $p_-$ and to $p_+$  and a curve in $\cB_{\rm GW}(A)$ whose evaluations at the \\ marked points agrees with the endpoints of the Morse trajectories} & \thead{before Lemma~\ref{lem:iota,h poly}}\\
 \hline
 $\tilde{\cB}(p_-,p_+; A)$& \thead{elements are triples of two half-infinite broken Morse trajectories from \\ $p_-$ and to $p_+$  and a curve in $\cB_{\rm SFT}(A)$ whose evaluations at the \\ marked points agrees with the endpoints of the Morse trajectories} & \thead{before Lemma~\ref{lem:iota,h poly}}\\
 \hline
$\cB^\iota(p_-,p_+; A)$ & \thead{open subset of $\tilde{\cB}^\iota(p_-,p_+; A)_1$ containing $\cM^\iota(p_-,p_+; A)$ \\ over which $\s^\iota_{(p_-,p_+;A)}$ is sc-Fredholm }  & \thead{Lemma~\ref{lem:iota,h poly}} \\ [1ex] 
 \hline
 $\cB(p_-,p_+; A)$ & \thead{open subset of $\tilde{\cB}(p_-,p_+; A)_1$ containing $\cM(p_-,p_+; A)$ \\ over which $\s_{(p_-,p_+;A)}$ is sc-Fredholm }  & \thead{Lemma~\ref{lem:iota,h poly}} \\ [1ex] 
 \hline
\end{tabular}
\caption{Summary of the polyfolds and their subsets introduced in this section
}
\label{tab:asspolyfolds2}
\end{table}

Given this compactification and polyfold description of the moduli spaces $\cM(\alpha)\subset\sigma_{\alpha}^{-1}(0)$ and $\cM^\iota(\alpha)\subset {\sigma^\iota_{\alpha}}^{-1}(0)$ for all tuples in the indexing set 
$$
\cI\,:=\; \bigl\{ \alpha=(p_-,p_+;A) \,\big|\, p_-,p_+\in\Crit(f), A\in H_2(M) \bigr\} ,
$$
we can again apply \cite[Theorems~18.2,18.3,18.8]{HWZbook} to the sc-Fredholm sections $\sigma_\alpha$ and $\sigma_\alpha^\iota$ and 
obtain Corollary~\ref{cor:regularize} verbatim for these collections of moduli spaces. 
In \S\ref{sec:algebra} we will moreover make use of the fact that $\sigma_\alpha^\iota=\phi_\iota^*\sigma_\alpha$ arises from restriction of $\sigma_\alpha$, so admissible perturbations of $\sigma_\alpha$ pull back to admissible perturbations of $\sigma_\alpha^\iota$.
For now, we choose perturbations independently and thus as in Definition~\ref{def:PSS} obtain perturbation-dependent, and not yet algebraically related, $\Lambda$-linear maps.

\begin{dfn} \label{def:iota,h}
Given admissible sc$^+$-multisections
$\underline\kappa=( \kappa_{(p_-,p_+;A)} )_{p_\pm\in\Crit(f), A\in H_2}$
in general position to $(\sigma_{(p_-,p_+;A)} )$ and 
$\underline\kappa^\iota=( \kappa^\iota_{(p_-,p_+;A)} )_{p_\pm\in\Crit(f), A\in H_2}$
in general position to $(\sigma^\iota_{(p_-,p_+;A)} )$ as in Corollary~\ref{cor:regularize}, 
we define the maps $h_{\ul\kappa}:CM \to CM$ and  $\iota_{\ul\kappa^\iota}:CM \to CM$ to be the $\Lambda$-linear extensions of
$$
h_{\underline\kappa} \la p_- \ra := \hspace{-4mm} \sum_{\begin{smallmatrix}
\scriptstyle  p_+, A  \\ \scriptscriptstyle I(p_-,p_+;A)=0 
\end{smallmatrix} }  \hspace{-4mm}
\# Z^{\underline\kappa}(p_-,p_+;A)  \cdot T^{\o(A)} \la p_+ \ra
, \qquad
\iota_{\underline\kappa^\iota} \la p_- \ra := \hspace{-4mm} \sum_{\begin{smallmatrix}
\scriptstyle  p_+, A  \\ \scriptscriptstyle I^\iota(p_-,p_+;A)=0 
\end{smallmatrix} }  \hspace{-4mm}
\# Z^{\underline\kappa^\iota}(p_-,p_+;A)  \cdot T^{\o(A)} \la p_+ \ra . 
$$
\end{dfn}

The proof that the coefficients of these maps lie in the Novikov field $\Lambda$ is verbatim the same as Lemma~\ref{lem:pss-novikov}, based on the compactness properties in Lemma~\ref{lem:iota,h poly}.

\begin{rmk} \label{rmk:iota,h orient} \rm
The determination in Corollary~\ref{cor:regularize} of $\# Z^{\underline\kappa}(p_-,p_+;A), \# Z^{\underline\kappa^\iota}(p_-,p_+;A) \in \Q$ that is used in Definition~\ref{def:iota,h} requires an orientation of the sections $\s_{(p_-,p_+;A)}$ and $\s^\iota_{(p_-,p_+;A)}$. As in Remark~\ref{rmk:PSS orient} this is determined via the fiber product construction in Lemma~\ref{lem:iota,h poly} from the orientations of the Morse trajectory spaces in Remark~\ref{rmk:MorseOrient}~(i) and the orientations of $\s_{\rm GW}, \s_{\rm SFT}$ given in Assumption~\ref{ass:iota,h}. 
In practice, we will construct the perturbations $\ul\k,\ul\k^\iota$ by pullback of perturbations $\ul \lambda=(\lambda_A)_{A\in H_2(M)}$ of the SFT-sections $\s_{\rm SFT}$ and their restriction $\ul \lambda^\iota$ to $\{0\}\times\cB_{\rm GW}(A)\subset\partial \cB_{\rm SFT}(A)$. So we can specify the orientations of the regularized zero sets by expressing them as transverse fiber products of oriented spaces over $\CP^1\times M$ or $\C^\pm \times M$, 
\begin{align*}
Z^{\ul\k^{\iota}}(p_-,p_+;A) &\;=\; \bM(p_-,M) \; \leftsub{\ev_0^+}{\times}_{\ov\ev^+} \;  Z^{\ul\l^\iota}(A) 
\; \leftsub{\ov\ev^-}{\times}_{\ev_0^-} \;  \bM(M,p_+) , \\
&\;=\; \bM(p_-,M) \; \leftsub{\ev_0^+}{\times}_{\ev^+} \;  \bigl( Z^{\ul\l^\iota}(A) \cap \cB^{+,-}_{\rm GW}(A) \bigr) \; \leftsub{\ev^-}{\times}_{\ev_0^-} \;  \bM(M,p_+) , \\
Z^{\ul\k}(p_-,p_+;A) &\;=\; \bM(p_-,M) \; \leftsub{\ev_0^+}{\times}_{\ev^+} \;  \bigl( Z^{\ul\l}(A) \cap \cB^{+,-}_{\rm SFT}(A) \bigr) \; \leftsub{\ev^-}{\times}_{\ev_0^-} \;  \bM(M,p_+) , 
\end{align*}
using $\ov\ev^\pm: \cB_{\rm GW}(A)\to\CP^1\times M$ resp.\ 
$\ev^\pm: \cB^{+,-}_{\rm GW/SFT}(A)\to\C^\pm\times M$
and the Morse evaluations
$\ev_0^\pm: \bM(\ldots) \to \CP^1\times M, \ul\tau\mapsto (z_0^\pm, \ev(\ul\tau) )$
resp.\ 
$\ev_0^\pm: \bM(\ldots) \to \C^\pm\times M, \ul\tau\mapsto (0, \ev(\ul\tau) )$.
%
\end{rmk}

\section{Algebraic relations via coherent perturbations}
\label{sec:algebra}

In this section we prove parts (i)--(iii) of Theorem~\ref{thm:main}, that is the algebraic properties which relate the maps $PSS:CM\to CF$, $SSP:CF\to CM$ constructed in \S\ref{sec:PSS}, and the maps $\iota:CM\to CM$, $h:CM\to CM$ constructed in \S\ref{sec:iota,h}.
More precisely, we will make so-called ``coherent'' choices of perturbations
in \S\ref{ssec:chain}, \S\ref{ssec:iso}, and \S\ref{ssec:homotopy} which guarantee that 
(i) $\iota$ is a chain map, (ii) $\iota$ is a $\L$-module isomorphism, and (iii) $h$ is a chain homotopy between the composition $SSP\circ PSS$ and $\iota$. 

\subsection{Coherent polyfold descriptions of moduli spaces}  \label{ssec:coherent}

The general approach to obtaining not just counts as discussed in \S\ref{ssec:poly} but well-defined algebraic structures from moduli spaces of pseudoholomorphic curves is to replace them by compact manifolds with boundary and corners (or generalizations thereof which still carry `relative virtual fundamental classes') in such a manner that their boundary strata are given by Cartesian products of each other.
In the context of polyfold theory, this requires a description of the compactified moduli spaces $\bM(\alpha)=\sigma_\alpha^{-1}(0)$ as zero sets of a ``coherent collection'' of sc-Fredholm sections $\bigl(\sigma_\alpha:\cB(\alpha)\to\cE(\alpha)\bigr)_{\alpha\in\cI}$ of strong polyfold bundles. 
Here ``coherence'' indicates a well organized identification of the boundaries $\partial\cB(\alpha)$ with unions of Cartesian products of other polyfolds in the collection $\cI$, which is compatible with the bundles and sections.

As a first example, the moduli spaces $\cM^\iota(p_-,p_+;A)$ in \S\ref{ssec:iota-mod} which yield the map $\iota:CM \to CM$ are given polyfold descriptions $\s^\iota_{(p_-,p_+; A)}: \cB^\iota(p_-,p_+;A) \to \phi_\iota^* \cE(p_-,p_+;A)$ in Lemma~\ref{lem:iota,h poly} that arise as fiber products with polyfolds $\cB_{\rm GW}(A)$ without boundary. Thus their coherence properties stated below follow from properties of the fiber product in \cite{Ben-fiber} and the boundary stratification of the Morse trajectory spaces in Theorem~\ref{thm:Morse}. 
We state this result to illustrate the notion of coherence. The full technical statement -- on the level of ep-groupoids and including compatibility with bundles and sections -- can be found in the second bullet point of Lemma~\ref{lem:iota chain}.

\begin{lem} \label{lem:iota boundary}
For any $p_\pm\in{\rm Crit}(f)$ and $A\in H_2(M)$ the smooth level of the first boundary stratum of the fiber product $\cB^\iota(p_-,p_+; A)$ in Lemma~\ref{lem:iota,h poly} is naturally identified with
$$
\partial_1\cB^\iota(p_-,p_+; A)_\infty \;\cong\; \bigcup_{q\in{\rm Crit}(f)} \cM(p_-,q) \times \partial_0\cB^\iota(q,p_+; A)_\infty
\quad
\sqcup  \bigcup_{q\in{\rm Crit}(f)} \partial_0\cB^\iota(p_-,q; A)_\infty \times \cM(q,p_+).
$$
\end{lem}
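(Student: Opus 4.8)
The plan is to reduce the statement to the degeneracy index formula for iterated polyfold fiber products from \cite[Cor.7.3]{Ben-fiber}, combined with the explicit boundary stratification of the Morse trajectory spaces from Theorem~\ref{thm:Morse}. The key structural input is that $\cB^\iota(p_-,p_+;A)$ is built in Lemma~\ref{lem:iota,h poly} as an iterated fiber product of $\bM(p_-,M)$, the open subset $\cB^{+,-}_{\rm GW}(A)\subset\cB_{\rm GW}(A)$, and $\bM(M,p_+)$, and that $\cB_{\rm GW}(A)$ has \emph{no boundary} by Assumption~\ref{ass:iota,h}~(iii); hence $\cB^{+,-}_{\rm GW}(A)$ has no boundary either and its degeneracy index vanishes identically. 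Thus the Gromov--Witten factor contributes nothing to the corner structure, and all of the boundary of $\cB^\iota(p_-,p_+;A)$ comes from the two Morse factors.

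First I would spell out, at the level of the ep-groupoid representatives used in the proof of Lemma~\ref{lem:iota,h poly}, that applying the formula $d_{\cX'}(x_1,x_2)=d_{Y}(x_1)+d_{\cX}(x_2)$ of \cite[Cor.7.3]{Ben-fiber} twice gives, for a point $(\ul\tau_-,\ul v,\ul\tau_+)$ of the fiber product, $d(\ul\tau_-,\ul v,\ul\tau_+)=d_{\bM(p_-,M)}(\ul\tau_-)+d_{\bM(M,p_+)}(\ul\tau_+)$. It follows that $\partial_1\cB^\iota(p_-,p_+;A)_\infty$ is the set of smooth-level points for which exactly one of the two Morse factors sits in its own first boundary stratum and the other in its interior; the two resulting pieces — a ``$\partial_1\bM(p_-,M)$-part'' and a ``$\partial_1\bM(M,p_+)$-part'' — are disjoint, since $d=1$ cannot be attained with both Morse factors on their boundaries.

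Next I would substitute the Morse data from Theorem~\ref{thm:Morse} and \eqref{eq:bMk}: $\partial_1\bM(p_-,M)=\bigcup_{q\in\Crit(f)}\cM(p_-,q)\times\cM(q,M)$ and $\partial_0\bM(M,p_+)=\cM(M,p_+)$, and symmetrically. Then I would rearrange the two resulting fiber products. The point is that the fiber-product constraints only see the evaluation maps $\ev\colon\bM(p_-,M)\to M$ and $\ev\colon\bM(M,p_+)\to M$ of \eqref{eval}, which on a broken trajectory depend only on the component adjacent to the free endpoint; so on $\cM(p_-,q)\times\cM(q,M)$ the constraint involves only the $\cM(q,M)$-factor, the break factor $\cM(p_-,q)$ splits off, and the remaining fiber product is exactly $\partial_0\cB^\iota(q,p_+;A)_\infty$ by the interior formula in Lemma~\ref{lem:iota,h poly}. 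This yields $\bigcup_q\cM(p_-,q)\times\partial_0\cB^\iota(q,p_+;A)_\infty$ for the first part, and the symmetric computation (now the evaluation factors through the first trajectory component of $\bM(M,p_+)$) yields $\bigcup_q\partial_0\cB^\iota(p_-,q;A)_\infty\times\cM(q,p_+)$ for the second, proving the lemma.

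The main obstacle is bookkeeping rather than conceptual. One must track that $\cB^\iota(p_-,p_+;A)$ is only an \emph{open subset} of $\tilde\cB^\iota(p_-,p_+;A)_1$ containing the smooth level, so the identifications are asserted only on the smooth level and only over the open sets $\cB^{+,-}_{\rm GW}(A)$ on which the evaluation maps are sc$^\infty$ and $\sigma_{\rm GW}$-compatibly submersive; and one must check that the associativity and commutativity of the polyfold fiber product used to split off the Morse break factor $\cM(p_-,q)$ (resp.\ $\cM(q,p_+)$) is compatible with the degeneracy stratifications — this requires tracing through the product-chart constructions in \cite{Ben-fiber} as in the proof of Lemma~\ref{lem:PSS poly}, rather than arguing purely set-theoretically.
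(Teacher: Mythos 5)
Your proposal is correct and follows essentially the same route as the paper's proof: the additivity of the degeneracy index from \cite[Cor.7.3]{Ben-fiber}, the vanishing of the Gromov--Witten contribution since $\cB_{\rm GW}(A)$ is boundaryless, and the substitution of $\partial_1\bM(p_\mp,M)$ from Theorem~\ref{thm:Morse} followed by splitting off the break factor using compatibility of the evaluation maps. The bookkeeping caveats you raise (smooth levels, open subsets, associativity of the fiber product) are exactly the points the paper's proof also handles, mostly implicitly.
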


\begin{proof}
By the fiber product construction \cite[Cor.7.3]{Ben-fiber} of $\cB^{\i}(p_-,p_+;A)$ in Lemma~\ref{lem:iota,h poly}, the degeneracy index
 satisfies $d_{\cB^{\i}(p_-,p_+;A)}(\ul{\t}^-,\ul{v},\ul{\t}^+) = d_{\bM(p_-,M)}(\ul{\t}^-) + d_{\cB_{\rm GW}(A)}(\ul{v}) + d_{\bM(M,p_+)}(\ul{\t}^+)$, and the smooth level is 
 $\cB^{\i}(p_-,p_+;A)_\infty =
 \bM(p_-,M) \; \leftsub{\{z_0^-\}\times\ev}{\times}_{\ev^-} \;  \cB_{\rm GW}^{+,-}(A)_\infty \; \leftsub{\ev^+}{\times}_{\{z_0^+\}\times\ev} \;  \bM(M,p_+)$.
The polyfold $\cB_{\rm GW}(A)$ and its open subset $\cB_{\rm GW}^{+,-}(A)$ are boundaryless by Assumption~\ref{ass:iota,h}~(iii), which means $d_{\cB_{\rm GW}(A)}=d_{\cB_{\rm GW}^{+,-}(A)} \equiv 0$. Hence we have $d_{\cB^{\i}(p_-,p_+;A)}(\ul{\t}^-,\ul{v},\ul{\t}^+) = 1$ if and only if $\ul{\t}^- \in \partial_1 \bM(p_-,M)$ and $\ul{\t}^+ \in \partial_0 \bM(M,p_+)$ or the other way around. These two cases are disjoint but analogous, so it remains to show that the first case consists of points in the union $\bigcup_{q\in{\rm Crit}(f)} \cM(p_-,q) \times \partial_0\cB^\iota(q,p_+; A)_{\infty}$. For that purpose recall the identification $\partial_1 \bM(p_-,M)= \bigcup_{q\in{\rm Crit}(f)} {\cM}(p_-,q) \times {\cM}(q,M)$ in Theorem~\ref{thm:Morse}, which is compatible with the evaluation $\ev: {\cM}(p_-,q) \times {\cM}(q,M) \to M, (\tau_1,\tau_2) \mapsto \ev(\tau_2)$ by construction, and thus
\begin{align*}
&
\partial_1\bM(p_-,M) \; \leftsub{\{z_0^-\}\times\ev}{\times}_{\ev^-} \;  \cB_{\rm GW}^{+,-}(A)_\infty \; \leftsub{\ev^+}{\times}_{\{z_0^+\}\times\ev} \;  \partial_0\bM(M,p_+) \\
&\quad =
\Bigl(\textstyle \bigcup_{q\in{\rm Crit}(f)} {\cM}(p_-,q) \times {\cM}(q,M) \Bigr) \; \leftsub{\{z_0^-\}\times\ev}{\times}_{\ev^-} \;  \cB_{\rm GW}^{+,-}(A)_\infty \; \leftsub{\ev^+}{\times}_{\{z_0^+\}\times\ev} \;  \cM(M,p_+) \\
&\quad =
\textstyle \bigcup_{q\in{\rm Crit}(f)} {\cM}(p_-,q) \times  \Bigl( {\cM}(q,M) \; \leftsub{\{z_0^-\}\times\ev}{\times}_{\ev^-} \;  \cB_{\rm GW}^{+,-}(A)_\infty \; \leftsub{\ev^+}{\times}_{\{z_0^+\}\times\ev} \;  \cM(M,p_+) \Bigr) \\
&\quad = \textstyle \bigcup_{q\in{\rm Crit}(f)} \cM(p_-,q) \times \partial_0\cB^\iota(q,p_+; A)_{\infty}
\end{align*}
Here we also used the identification of the interior smooth level in Lemma~\ref{lem:iota,h poly}. 
\end{proof}

Next, the polyfold description in Lemma~\ref{lem:iota,h poly} for the moduli spaces  $\cM(p_-,p_+;A)$ in \S\ref{ssec:h-mod}, which yield the map $h:CM \to CM$, are obtained as fiber products of the Morse trajectory spaces with polyfold descriptions $\s_{\rm SFT}: \cB_{\rm SFT}(A)\to \cE_{\rm SFT}(A)$ of SFT moduli spaces given in \cite{fh-sft-full}.
We will state as assumption only those parts of their coherence properties that are relevant to our argument in \S\ref{ssec:homotopy} for the chain homotopy $\iota - SSP\circ PSS = \rd \circ h + h \circ \rd$. Here the contributions to $\rd \circ h + h \circ \rd$ will arise from boundary strata of the Morse trajectory spaces, whereas $\iota - SSP\circ PSS$ arises from the following identification of the boundary of the polyfold $\cB^{+,-}_{\rm SFT}(A)$, which is given as open subset of $\cB_{\rm SFT}(A)$ in Assumption~\ref{ass:iota,h}~(iv).
\footnote{See also the end of Remark~\ref{rmk:drama2}~(ii) for the motivation of $\cB^{+,-}_{\rm SFT}(A)$ as open subset that intersects the boundary strata
$\lim_{R\to\infty} \{R\} \times\cB_{\rm GW}(A)\subset\partial\cB_{\rm SFT}(A)$ in the buildings which have one marked point in each of the components mapping to $\C^\pm\times M$, and no marked points mapping to intermediate cylinders $\R\times S^1\times M$.}

\begin{rmk} \rm \label{rmk:face}
In the following we will use the word ``face'' loosely for Cartesian products of polyfolds such as 
$\cF=\cB^+_{\rm SFT} (\g;A_+)\times \cB^-_{\rm SFT} (\g; A_-)$ and their immersions into the boundary of another polyfold such as $\partial\cB^{+,-}_{\rm SFT}(A)$. We also refer to the image of the immersion $\cF \hookrightarrow \partial\cB^{+,-}_{\rm SFT}(A)$ as a face of $\cB^{+,-}_{\rm SFT}(A)$. Compared with the formal definition of faces in \cite[Definitions~2.21,11.1,16.13]{HWZbook}, ours are disjoint unions of faces. 
They describe the interaction between the moduli spaces - roughly speaking:
\begin{enumilist}
\item
The $R\to\infty$ boundary parts of $\cB_{\rm SFT}(A)$ in which the marked points separate are covered by immersions of products of the PSS and SSP polyfolds. This structure arises from generalizing the SFT compactification in \cite{sft-compactness} to buildings of not necessarily holomorphic maps. The parts of the boundary described here are given by buildings whose top and bottom floors are given by maps to $\C^\pm\times M$ and intermediate floors given by maps to $\R\times S^1\times M$. The immersions then arise from stacking a building in $\cB^+_{\rm SFT}(\g;A_+)$ (with top floor in $\C\times M$) on top of a building in $\cB^-_{\rm SFT}(\g;A_-)$ (with bottom floor in $\C^-\times M$). Here a lack of injectivity arises at buildings with middle floors in $\R\times S^1\times M$ from ambiguity in splitting such building into two parts. 

\item
The immersions restrict to a disjoint cover of the top boundary stratum of $\cB^{+,-}_{\rm SFT}(A)$ by embeddings. This restriction is given by the buildings with a single floor -- guaranteeing injectivity by avoiding the ambiguous middle floors in $\R\times S^1\times M$. 
\item
The immersions are compatible -- simply by construction -- with the evaluation maps, bundles, and sections for the boundary components at $R=\infty$, and the boundary $\cB_{\rm GW}(A)$ at $R=0$. 
\end{enumilist}

\end{rmk}

\begin{ass}\label{ass:iso}
The collection of oriented sc-Fredholm sections of strong polyfold bundles 
$\s^\pm_{\rm SFT}: \cB^{\pm}_{\rm SFT}(\g; A)\to \cE^\pm_{\rm SFT}(\g;A)$, 
$\s_{\rm GW}: \cB_{\rm GW}(A)\to \cE_{\rm GW}(A)$, 
$\s_{\rm SFT}: \cB_{\rm SFT}(A)\to \cE_{\rm SFT}(A)$ 
for $\g\in\cP(H)$ and $A\in H_2(M)$ 
together with the evaluation maps $\ov\ev^\pm: \cB^{\pm}_{\rm SFT}(\g;A)\to \ov{\C^\pm} \times M$, 
$\ov\ev^\pm: \cB_{\rm GW}(A)\to \CP^1 \times M$, 
$\ov\ev^\pm: \cB_{\rm SFT}(A)\to \ov{\CP^1_\infty} \times M$, 
and their sc$^\infty$ restrictions on open subsets, 
$\ev^\pm: \cB^{\pm,\C}_{\rm SFT}(\g;A)\to \C^\pm \times M$,  
$\ev^\pm: \cB^{+,-}_{\rm GW/SFT}(A)\to \C^\pm \times M$ 
from Assumptions~\ref{ass:pss}, \ref{ass:iota,h} has the following coherence properties.
\begin{enumilist}
\item For each $\g \in \cP(H)$ and $A_-,A_+ \in H_2(M)$ such that $A_- + A_+ = A$, there is a sc$^\infty$ immersion
$$
l_{\g,A_{\pm}} \,:\; \cB^+_{\rm SFT}(\g;A_+) \times \cB^-_{\rm SFT}(\g;A_-) \;\rightarrow\; \partial \cB_{\rm SFT}(A)
$$
whose restriction to the interior $\partial_0\cB^+_{\rm SFT}(\g;A_+) \times \partial_0\cB^-_{\rm SFT}(\g;A_-)\subset \cB^{+,\C}_{\rm SFT}(\g;A_+) \times \cB^{-,\C}_{\rm SFT}(\g;A_-)$ is an embedding into the boundary of the open subset $\cB^{+,-}_{\rm SFT}(A) \subset \cB_{\rm SFT}(A)$.
They map into the limit set $\lim_{R\to\infty} \{R\} \times\cB_{\rm GW}(A)$ from Assumption~\ref{ass:iota,h}(iii), so cover most of the boundary\footnote{
The extra boundary faces of $\cB_{\rm SFT}(A)$ arise from both marked points mapping to the same component in the $R\to\infty$ neck stretching limit. These will not be relevant to our construction of coherent perturbations.
}
$$
\partial\cB_{\rm SFT}(A) \;\supset\; \{0\}\times \cB_{\rm GW}(A)  \;\; \sqcup \;\;
\textstyle\bigcup_{\begin{smallmatrix}
\scriptscriptstyle \g \in \cP(H)\\
\scriptscriptstyle A_- + A_+ = A\\
  \end{smallmatrix}} l_{\g,A_{\pm}}\bigl( \cB^{+}_{\rm SFT}(\g;A_+) \times \cB^{-}_{\rm SFT}(\g;A_-) \bigr).
$$

\item
The union of the images $l_{\g,A_{\pm}}\bigl(\cB^{+,\C}_{\rm SFT}(\g;A_+) \times \cB^{-,\C}_{\rm SFT}(\g;A_-)\bigr)\subset\partial \cB^{+,-}_{\rm SFT}(A)$ for all admissible choices of $\g,A_{\pm}$ is the intersection of $\cB^{+,-}_{\rm SFT}(A)$ with $\lim_{R\to\infty} \{R\} \times\cB_{\rm GW}(A)\subset\partial \cB_{\rm SFT}(A)$, i.e.\
\begin{align*} \textstyle
\partial\cB^{+,-}_{\rm SFT}(A) &\;=\; \{0\}\times \cB^{+,-}_{\rm GW}(A)  \;\; \sqcup \;\;
\partial^{R=\infty}\cB^{+,-}_{\rm SFT}(A), \\
\text{where}\qquad\quad
\partial^{R=\infty}\cB^{+,-}_{\rm SFT}(A) &\;=\; \textstyle
\bigcup_{\begin{smallmatrix}
\scriptscriptstyle \g \in \cP(H)\\
\scriptscriptstyle A_- + A_+ = A\\
  \end{smallmatrix}} l_{\g,A_{\pm}}\bigl( \cB^{+,\C}_{\rm SFT}(\g;A_+) \times \cB^{-,\C}_{\rm SFT}(\g;A_-) \bigr).
\end{align*}
When restricted to the interiors, this yields a disjoint cover of the top boundary stratum,
$$ \textstyle
\partial_1\cB^{+,-}_{\rm SFT}(A) \;=\; \{0\}\times \cB^{+,-}_{\rm GW}(A)  \;\; \sqcup \;\;
\bigsqcup_{\begin{smallmatrix}
\scriptscriptstyle \g \in \cP(H)\\
\scriptscriptstyle A_- + A_+ = A\\
  \end{smallmatrix}} l_{\g,A_\pm}\bigl( \partial_0\cB^+_{\rm SFT} (\g;A_+)\times \partial_0\cB^-_{\rm SFT} (\g; A_-) \bigr). 
$$

\item The immersions $l_{\g,A_\pm}$ are compatible with the evaluation maps, bundles, and sections -- as required for the construction \cite{fh-sft-full} of coherent perturbations for SFT, that is:
\begin{itemize}
\item[(a)]
The boundary restriction of the evaluation maps $\ov\ev^\pm|_{\{0\}\times \cB_{\rm GW}(A)\subset\partial\cB_{\rm SFT}(A)}$ coincides with the Gromov-Witten evaluation maps $\ov\ev^\pm:\cB_{\rm GW}(A)\to \ov{\CP^1_\infty}$, and the same holds for their sc$^\infty$ restriction 
$\ev^+\times \ev^-|_{\{0\}\times \cB^{+,-}_{\rm GW}(A)\subset\partial\cB^{+,-}_{\rm SFT}(A)}
= \ev^+\times \ev^- : \cB^{+,-}_{\rm GW}(A) \to \C^+ \times M\times \C^-\times M$ with values in $\C^\pm\subset\ov{\CP^1_\infty}=\C^+\sqcup S^1 \sqcup \C^-$. 
The restriction of $\ov\ev^\pm:\cB_{\rm SFT}(A)\to \ov{\CP^1_\infty}$ to each boundary face $\im l_{\g,A_\pm}\subset\partial\cB_{\rm SFT}(A)$ takes values in $\ov{\C^\pm}\subset \ov{\CP^1_\infty}$, and its pullback under $l_{\g,A_\pm}$ coincides with  
$\ov\ev^\pm: \cB^\pm_{\rm SFT} (\g;A_\pm) \to \ov{\C^\pm}\times M$. 
Moreover, pullback of the restricted sc$^\infty$ evaluations $\ev^+\times\ev^-:\cB^{+,-}_{\rm SFT}(A)\to \C^+\times M\times\C^-\times M$ under $l_{\g,A_\pm}$ coincides with 
$\ev^+\times\ev^-: \cB^{+,\C}_{\rm SFT} (\g;A_+)\times \cB^{-,\C}_{\rm SFT} (\g; A_-) \to \C^+\times M \times  \C^-\times M$. 
\item[(b)]
The restriction of $\sigma_{\rm SFT}$ to $\cF=\{0\}\times \cB_{\rm GW}(A) \subset \partial\cB_{\rm SFT}(A)$ equals to $\sigma_{\rm GW}$ via a natural identification $\cE_{\rm SFT}(A)|_\cF \cong \cE_{\rm GW}(A)$. This identification reverses the orientation of sections. 
\item[(c)]
The restriction of $\sigma_{\rm SFT}$ to each face
$\cF=\cB^+_{\rm SFT} (\g;A_+)\times \cB^-_{\rm SFT} (\g; A_-)  \subset \partial\cB_{\rm SFT}(A)$ is related by pullback to $\s^+_{\rm SFT}\times \s^-_{\rm SFT}= \sigma_{\rm SFT}\circ l_{\g,A_\pm}$ via a natural identification $l_{\g,A_\pm}^*\cE_{\rm SFT}(A) \cong \cE^+_{\rm SFT} (\g; A_+) \times  \cE^-_{\rm SFT} (\g; A_-)$. This identification preserves the orientation of sections.
\end{itemize}
\end{enumilist}
\end{ass}

\subsection{Coherent perturbations for chain map identity} \label{ssec:chain}

In this section we prove Theorem~\ref{thm:main}~(i), that is we construct $\iota_{\ul\kappa^\iota}$ in Definition~\ref{def:iota,h} as a chain map on the Morse complex \eqref{eq:CM} with differential $d:CM\to CM$ given by \eqref{eq:MorseDiff}. 
This requires the following construction of the perturbations $\ul\kappa^\iota$ that is coherent in the sense that it is compatible with the boundary identifications of the polyfolds $\cB^\iota(p_-,p_+;A)$ in Lemma~\ref{lem:iota boundary}. 
Here we will indicate smooth levels by adding $\infty$ as superscript -- denoting e.g.\ $\cX^{\iota,\infty}_{p_-,p_+; A}$ as the smooth level of an ep-groupoid representing $\cB^\iota(p_-,p_+;A)_\infty$. 

\begin{lem} \label{lem:iota chain}
There is a choice of $( \kappa^\iota_\alpha )_{\alpha\in\cI}$ in Corollary~\ref{cor:regularize} for $\cI=\{(p_-,p_+;A)\,|\, p_\pm\in\Crit(f), A\in H_2(M) \}$ that is coherent
w.r.t.\ the identifications in Lemma~\ref{lem:iota boundary} in the following sense. 
\begin{itemlist}
\item
Each $\kappa_{\alpha}^\iota:\cW^\iota_{\alpha}\to\Q^+$ for $\alpha\in\cI$ is an admissible sc$^+$-multisection of a strong bundle $P_\alpha: \cW^\iota_{\alpha} \to \cX^\iota_{\alpha}$ that is in general position to a sc-Fredholm section functor $S^\iota_{\alpha}: \cX^\iota_{\alpha} \to \cW^\iota_{\alpha}$ which represents $\sigma^\iota_{\alpha}|_{\cV_{\alpha}}$ on an open neighbourhood $ \cV_{\alpha}\subset \cB^\iota(\alpha)$ of the zero set $\sigma_{\alpha}^{-1}(0)$. 
\item
The identification of top boundary strata in Lemma~\ref{lem:iota boundary} holds for the representing ep-groupoids, 
$$\textstyle
\partial_1\cX^{\iota,\infty}_{p_-,p_+; A} \;\cong\; \bigcup_{q\in{\rm Crit}(f)} \cM(p_-,q) \times \partial_0\cX^{\iota,\infty}_{q,p_+; A} \quad \sqcup \quad \bigcup_{q\in{\rm Crit}(f)} \partial_0\cX^{\iota,\infty}_{p_-,q; A} \times \cM(q,p_+) , 
$$
and the oriented section functors $S^\iota_\alpha: \cX^\iota_\alpha \to \cW^\iota_\alpha$ are compatible with these identifications in the sense that the restriction of $S^\iota_{p_-,p_+; A}$ to any face $\cF^\infty_{(p_-,q_-),\alpha'} := \cM(p_-,q_-) \times \partial_0\cX^{\iota,\infty}_{\alpha'} \subset  \partial_1\cX^{\iota,\infty}_{p_-,p_+; A}$ resp.\ 
$\cF^\infty_{\alpha',(q_+,p_+)}:= \partial_0\cX^{\iota,\infty}_{\alpha'} \times \cM(q_+,p_+)\subset  \partial_1\cX^{\iota,\infty}_{p_-,p_+; A}$ for another $\alpha'\in\cI$ coincides on the smooth level with the pullback $S^\iota_\alpha|_{\cF^\infty}=\pr_{\cF}^*S^\iota_{\alpha'}|_{\cF^\infty}$ of $S^\iota_{\alpha'}$ via the projection 
$\pr_{\cF}:  \cF=\cF_{(p_-,q_-),\alpha'} := \cM(p_-,q_-) \times \partial_0\cX^{\iota}_{\alpha'}  \to \cX^{\iota}_{\alpha'}$ 
resp.\ 
$\pr_{\cF}:  \cF=\cF_{\alpha',(q_+,p_+)}:= \partial_0\cX^{\iota}_{\alpha'} \times \cM(q_+,p_+)
\to \cX^{\iota}_{\alpha'}$. 
\item
Each restriction $\kappa_{\alpha}^\iota|_{P_\alpha^{-1}(\cF^\infty)}$ to a face 
$\cF^\infty=\cF^\infty_{(p_-,q_-),\alpha'}$ resp.\ $\cF^\infty=\cF^\infty_{\alpha',(q_+,p_+)}$ is given by pullback $\kappa_{\alpha}^\iota|_{P_\alpha^{-1}(\cF^\infty)}=\kappa^\iota_{\alpha'}\circ \pr_\cF^*$ via the identification $P_\alpha^{-1}(\cF^\infty)\cong \pr_\cF^*\cW^\iota_{\alpha'}|_{\partial_0\cX^{\iota,\infty}_{\alpha'}}$ and natural map $\pr_\cF^*: \pr_\cF^*\cW^\iota_{\alpha'} \to \cW^\iota_{\alpha'}$.
\end{itemlist}
For any such choice of $\underline\kappa^\iota=( \kappa^\iota_\alpha )_{\alpha\in\cI}$, the resulting map $\iota_{\underline\kappa^\iota}:CM \to CM$  in Definition~\ref{def:iota,h} satisfies
$\iota_{\underline\kappa^\iota} \circ \rd + \rd \circ \iota_{\underline\kappa^\iota} = 0$.
By setting $\iota\la p\ra :=(-1)^{|p|} \iota _{\underline\kappa^\iota}\la p\ra$ 
we then obtain a chain map $\iota:C_*M\to C_*M$, that is $\iota \circ \rd = \rd \circ \iota$. 
\end{lem}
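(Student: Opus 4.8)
The plan is to derive the chain map identity from a count of the boundary points of the one-dimensional perturbed moduli spaces $Z^{\ul\kappa^\iota}(p_-,p_+;A)$ with $I^\iota(p_-,p_+;A)=1$, using Corollary~\ref{cor:regularize}~(v) together with the coherent boundary identification in the second and third bullet points. First I would fix $p_-\in\Crit(f)$, a class $A\in H_2(M)$ with $2c_1(A)+|p_-|-|p_+|=1$, and combine Corollary~\ref{cor:regularize}~(v) with Lemma~\ref{lem:iota boundary}: the perturbed zero set $Z^{\ul\kappa^\iota}(p_-,p_+;A)$ is a compact oriented weighted branched $1$-manifold, so its signed weighted boundary count vanishes; on the other hand this boundary is $Z^{\ul\kappa^\iota}(p_-,p_+;A)\cap \partial_1\cB^\iota(p_-,p_+;A)_\infty$, which by Lemma~\ref{lem:iota boundary} decomposes as a disjoint union over $q\in\Crit(f)$ of $\cM(p_-,q)\times Z^{\ul\kappa^\iota}(q,p_+;A)$ on the one side and $Z^{\ul\kappa^\iota}(p_-,q;A)\times\cM(q,p_+)$ on the other. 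Here the coherence of the section functors and the pullback property of the multisections $\kappa^\iota_\alpha$ on the faces $\cF^\infty$ are exactly what identifies the contribution of each boundary face with a product of a Morse trajectory count and a lower-dimensional perturbed moduli count, so that the weighted count of each face factors as $\#\cM(p_-,q)\cdot\#Z^{\ul\kappa^\iota}(q,p_+;A)$ resp.\ $\#Z^{\ul\kappa^\iota}(p_-,q;A)\cdot\#\cM(q,p_+)$.

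Next I would translate the geometric identity into the algebraic one. Summing over all $A$ with $\omega(A)\le c$ (only finitely many contribute, by Lemma~\ref{lem:iota,h poly}) and packaging the counts with the Novikov weights $T^{\omega(A)}$, the vanishing boundary count becomes
\begin{equation*}
\sum_{q,A} \#\cM(p_-,q)\,\#Z^{\ul\kappa^\iota}(q,p_+;A)\,T^{\omega(A)}
\;+\;
\sum_{q,A} \#Z^{\ul\kappa^\iota}(p_-,q;A)\,\#\cM(q,p_+)\,T^{\omega(A)}
\;=\;0
\end{equation*}
for each $p_-,p_+$, where the index constraints force $\omega(A)$ on the same class in both terms and the Morse counts appear with coefficient $T^0$ because bi-infinite Morse trajectories carry no homology class. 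Reading off the definitions of $\rd$ in \eqref{eq:MorseDiff} and of $\iota_{\ul\kappa^\iota}$ in Definition~\ref{def:iota,h}, the first sum is the coefficient of $\langle p_+\rangle$ in $\rd\circ\iota_{\ul\kappa^\iota}\langle p_-\rangle$ and the second is the coefficient of $\langle p_+\rangle$ in $\iota_{\ul\kappa^\iota}\circ\rd\langle p_-\rangle$ (or vice versa), so the identity reads $\rd\circ\iota_{\ul\kappa^\iota}+\iota_{\ul\kappa^\iota}\circ\rd=0$ after being careful that the two boundary-face families are the two summands and not, say, the same summand twice. The final normalization $\iota\langle p\rangle:=(-1)^{|p|}\iota_{\ul\kappa^\iota}\langle p\rangle$ turns the anticommutation into honest commutation: since $\rd$ lowers $|p|$ by one, conjugating by the sign operator $(-1)^{|p|}$ flips the sign of one of the two compositions, converting $\rd\iota_{\ul\kappa^\iota}+\iota_{\ul\kappa^\iota}\rd=0$ into $\rd\iota=\iota\rd$.

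The genuinely delicate step is the sign bookkeeping: one must verify that the orientation Corollary~\ref{cor:regularize}~(v) puts on $\partial Z^{\ul\kappa^\iota}$ agrees — face by face — with the product orientation coming from the fiber-product structure in Lemma~\ref{lem:iota,h poly} and the boundary sign $o(p_-,q,M)=+1$ resp.\ $o(M,q,p_+)=(-1)^{|p_+|+1}$ computed in Remark~\ref{rmk:MorseOrient}~(i), together with the outward-normal convention in Remark~\ref{rmk:PSS orient}~(i). Concretely, the sign with which the face $\cM(p_-,q)\times Z^{\ul\kappa^\iota}(q,p_+;A)$ enters differs from the sign of the face $Z^{\ul\kappa^\iota}(p_-,q;A)\times\cM(q,p_+)$ by a factor depending on the Morse indices, and it is precisely this discrepancy that both produces the relative sign making $\rd\iota_{\ul\kappa^\iota}+\iota_{\ul\kappa^\iota}\rd=0$ rather than a commutator, and is absorbed by the $(-1)^{|p|}$ rescaling. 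I would carry this out by tracking the exterior-normal direction through the fiber product of oriented spaces over $M$, using that $\cM(p_-,q)$ appears as a $0$-dimensional factor in one case and $\cM(q,p_+)$ as a $0$-dimensional factor in the other, so the Koszul reordering signs differ exactly by the parity of $\dim Z^{\ul\kappa^\iota}(\cdot,\cdot;A)$, which the index formula \eqref{iota index} ties to the Morse indices. Everything else — compactness, finiteness of the relevant classes $A$, the identification of faces — is already supplied by Lemma~\ref{lem:iota,h poly}, Lemma~\ref{lem:iota boundary}, and the coherence bullet points, so no new analytic input is needed.
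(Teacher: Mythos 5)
Your argument for the second half of the lemma — deducing $\iota_{\underline\kappa^\iota}\circ\rd+\rd\circ\iota_{\underline\kappa^\iota}=0$ from Corollary~\ref{cor:regularize}~(v), the boundary decomposition of Lemma~\ref{lem:iota boundary}, and the coherence bullet points, then converting the anticommutator into a commutator via the $(-1)^{|p|}$ rescaling — matches the paper's strategy. (One caveat on signs: in the paper's computation \eqref{eq:orient-iota} the two families of faces end up entering with the \emph{same} sign, because the relative factor $(-1)^{|p_-|+|p_+|+1}$ is $+1$ once you use $I^\iota(\alpha)=1$; it is this agreement of signs, not a discrepancy, that produces the anticommutator identity. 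Your description of a "discrepancy\ldots producing the relative sign" has the logic inverted, though you correctly land on the anticommutator as the output.)

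The genuine gap is that you never construct the coherent perturbations; you only use them. The lemma is first and foremost an existence statement — "there is a choice of $(\kappa^\iota_\alpha)$ that is coherent" — and your closing remark that the coherence bullet points are "already supplied" treats the hypothesis as if it were an input rather than the main thing to be produced. The paper's construction has two essential ingredients you omit. First, coherence across all faces simultaneously is achieved not by choosing each $\kappa^\iota_\alpha$ separately and matching them on overlaps, but by defining every $\kappa^\iota_{(p_-,p_+;A)}:=\lambda^{\scriptscriptstyle\rm GW}_A\circ\pr_\alpha^*$ as the pullback of a \emph{single} multisection $\lambda^{\scriptscriptstyle\rm GW}_A$ on the Gromov--Witten bundle under the fiber-product projection; coherence then follows from the compatibility $\pr_\alpha|_{\cF^\infty}=\pr_{\alpha'}\circ\pr_\cF$ of the projections. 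Second — and this is the step that cannot be waved through — general position of $S^{\scriptscriptstyle\rm GW}_A+\lambda^{\scriptscriptstyle\rm GW}_A$ does \emph{not} by itself imply general position of the pulled-back pair $(S^\iota_\alpha,\kappa^\iota_\alpha)$ on the fiber product: one additionally needs the evaluation map $\ov\ev^-\times\ov\ev^+$ restricted to the perturbed zero set $Z^{\lambda^{\scriptscriptstyle\rm GW}_A}$ to be transverse to every product $\{z_0^-\}\times W^-_{p_-}\times\{z_0^+\}\times W^+_{p_+}$ of unstable and stable manifolds. This is exactly what Theorem~\ref{thm:transversality} provides, and it is the reason that theorem is proved in the appendix; without invoking it (or an equivalent relative transversality result for countably many submanifolds), the claim that the $\kappa^\iota_\alpha$ are in general position — which underpins your entire boundary count — is unjustified.
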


\begin{proof} 
We will first assume the claimed coherence and discuss the algebraic consequences up to signs, then construct the coherent data, and finally use this construction to compute the orientations.  

\medskip
\noindent
{\bf Construction of chain map:}
Assuming $\iota_{\underline\kappa^\iota} \circ \rd + \rd \circ \iota_{\underline\kappa^\iota} = 0$, recall that $d$ decreases the degree on the Morse complex \eqref{eq:CMgraded} by $1$. Thus $\iota:C_*M\to C_*M$ defined as above satisfies for any $q\in{\rm Crit}(f)$ 
\begin{align*}
( \iota \circ \rd - \rd \circ \iota ) \la q \ra
 \;=\;
(-1)^{|q|-1} \iota_{\underline\kappa^\iota} ( \rd \la q \ra )  -  \rd (  (-1)^{|q|} \iota_{\underline\kappa^\iota} \la q \ra ) 
 \;=\;
(-1)^{|q|-1} \bigl( \iota_{\underline\kappa^\iota} \circ \rd  +  \rd \circ \iota_{\underline\kappa^\iota} \bigr) \la q \ra  
\;=\;0 . 
\end{align*}
By $\Lambda$-linearity this proves $\iota\circ \rd = \rd\circ\iota$ on $C_*M$.

\medskip
\noindent
{\bf Proof of identity:}
To prove $\iota_{\underline\kappa^\iota} \circ \rd + \rd \circ \iota_{\underline\kappa^\iota} = 0$ note that both $\iota_{\underline\kappa^\iota}$ and $\rd$ are $\L$-linear, so the claimed identity is equivalent to the collection of identities $(\iota_{\underline\kappa^\iota} \circ \rd) \la p_-\ra + (\rd \circ \iota_{\underline\kappa^\iota}) \la p_-\ra = 0$ for all generators $p_-\in{\rm Crit}(f)$. 
That is we wish to verify 
$$
\sum_{\begin{smallmatrix}
\scriptstyle q, p_+, A  \\ \scriptscriptstyle I^\iota(q,p_+;A)=0  \\ \scriptscriptstyle |q|=|p_-|-1
 \end{smallmatrix} }
\hspace{-4mm}
\#\cM(p_-,q) \cdot \#Z^{\ul\kappa^\iota}(q,p_+; A) \cdot T^{\o(A)} \la p_+ \ra
\; + \; \hspace{-5mm}
\sum_{\begin{smallmatrix}
\scriptstyle  q, p_+, A  \\ \scriptscriptstyle I^\iota(p_-,q;A)=0   \\ \scriptscriptstyle |p_+|=|q|-1
 \end{smallmatrix} }
 \hspace{-4mm}
\#Z^{\ul\kappa^\iota}(p_-,q; A) \cdot \#\cM(q,p_+) \cdot T^{\o(A)} \la p_+ \ra 
\; = 0.
$$
Here, by the index formula \eqref{iota index}, both sides can be written as sums over $p_+\in\Crit(f)$ and $A\in H_2(M)$ for which $I^\iota(p_-,p_+;A)=1$. Then it suffices to prove for any such pair $\alpha=(p_-,p_+;A)$ with $I^\iota(\alpha)=1$
\begin{equation} \label{iota claim} \textstyle
\sum_{|q|=|p_-|-1}  \#\cM(p_-,q) \cdot \#Z^{\ul\kappa^\iota}(q,p_+; A) \;\; + \;\;
\sum_{|q|=|p_+|+1} \#Z^{\ul\kappa^\iota}(p_-,q; A) \cdot \#\cM(q,p_+) \;\; = \;\; 0 .
\end{equation}
This identity will follow by applying Corollary~\ref{cor:regularize}~(v) to the sc$^+$-multisection $\kappa_{\alpha}:\cW^\iota_{\alpha}\to\Q^+$.
Its perturbed zero set is a weighted branched $1$-dimensional orbifold $Z^{\ul\kappa^\iota}(\alpha)$,  
whose boundary is given by the intersection with the smooth level\footnote{
Here and in the following we suppress indications of the smooth level, as the perturbed zero sets automatically lie in the smooth level; see Remark~\ref{rmk:levels}.
} 
of the top boundary stratum $\partial_1\cB^\iota(\alpha) \cap \cV_{\alpha} = |{\partial_1\cX^\iota_{\alpha}}|$. 
By coherence (and with orientations discussed below) this boundary is
\begin{align*}
& \partial Z^{\ul\kappa^\iota}(\alpha) \;=\; Z^{\ul\kappa^\iota}(\alpha) \cap |\partial_1\cX^\iota_{\alpha} | \\
& \;=\; \textstyle 
\bigcup_{q\in{\rm Crit}(f)} Z^{\ul\kappa^\iota}(\alpha) \cap  \bigl(\cM(p_-,q) \times |\partial_0\cX^\iota_{q,p_+; A}| \bigr)
\;\sqcup\; 
\bigcup_{q\in{\rm Crit}(f)} Z^{\ul\kappa^\iota}(\alpha) \cap \bigl( |\partial_0\cX^\iota_{p_-,q; A}| \times \cM(q,p_+)\bigr) \\
& \;=\; \textstyle 
\bigcup_{q\in{\rm Crit}(f)} \cM(p_-,q) \times \bigl( Z^{\ul\kappa^\iota}(q,p_+; A) \cap  |\partial_0\cX^\iota_{q,p_+; A}| \bigr) \\
&\qquad\qquad\qquad\qquad\qquad\qquad\qquad\qquad\qquad\quad
\textstyle
\sqcup\; \bigcup_{q\in{\rm Crit}(f)}  \bigl( Z^{\ul\kappa^\iota}(p_-,q; A) \cap |\partial_0\cX^\iota_{p_-,q; A}| \bigr) \times \cM(q,p_+) , \\
& \;=\; \textstyle 
\bigcup_{|q|=|p_-|-1} \cM(p_-,q) \times Z^{\ul\kappa^\iota}(q,p_+; A) \quad
\sqcup \quad  \bigcup_{|q|=|p_+|+1}  Z^{\ul\kappa^\iota}(p_-,q; A) \times \cM(q,p_+) .
\end{align*}
Here the first summand of the third identification on the level of object spaces, 
\begin{align*}
& \bigl\{([\tau],x) \in \cM(p_-,q) \times \partial_0 X_{q,p_+; A} \subset \partial_1 X_{\alpha} \,\big|\, \kappa_{\alpha}(S^\iota_{\alpha}([\tau],x))>0 \bigr\} \\
&\cong 
 \bigl\{([\tau],x) \in \cM(p_-,q) \times \partial_0 X_{q,p_+; A} \,\big|\, \kappa_{q,p_+; A}(S^\iota_{q,p_+; A}(x)) >0 \bigr\} \\
&= 
\cM(p_-,q) \times \bigl\{ x \in \partial_0 X_{q,p_+; A} \,\big|\, \kappa_{q,p_+; A}(S^\iota_{q,p_+; A}(x)) >0  \bigr\}, 
\end{align*}
follows if we assume coherence of sections and multisections on the faces $\cF_{(p_-,q),\alpha'}\subset \partial_1 \cX^\iota_\alpha$, 
$$
\kappa_{\alpha}(S^\iota_{\alpha} ([\tau],x)) 
\;=\; \kappa_{\alpha}(S^\iota_{q,p_+; A}(x))
\;=\; \kappa_{q,p_+; A}(S^\iota_{q,p_+; A}(x)) .
$$
The second summand is identified similarly by assuming coherence on the faces $\cF_{\alpha',(q_-,p_+)}\subset \partial_1 \cX^\iota_\alpha$.

Finally, the fourth identification in $\partial Z^{\ul\kappa^\iota}(\alpha)$ for $\alpha=(p_-,p_+;A)$ with $I^\iota(\alpha)=1$ follows from index and regularity considerations as follows. Corollary~\ref{cor:regularize}~(iii),(iv) guarantees that the perturbed solution spaces $Z^{\ul\kappa^\iota}(\alpha')$ are nonempty only for Fredholm index $I^\iota(\alpha')\geq0$, and for $I^\iota(\alpha')=0$ are contained in the interior, $Z^{\ul\kappa^\iota}(\alpha')\subset\partial_0\cB(\alpha')$.
The Morse trajectory spaces $ \cM(p_-,q)$ resp.\ $\cM(q,p_+)$ are nonempty only for $|p_-|-|q|\geq 1$ resp.\ $|q|- |p_+| \geq 1$, so the perturbed solution spaces in the Cartesian products have Fredholm index \eqref{iota index}
$$
I^\iota(q,p_+; A) = 2c_1(A) + |q| - |p_+|   = I^\iota(p_-,p_+;A) + |q| - |p_-| =  1 + |q| - |p_-| \leq 0,
$$
and analogously $I^\iota(p_-,q; A) = I^\iota(p_-,p_+; A) + |p_+|-|q| \leq 0$. By the above regularity of the perturbed solution spaces this implies that the unions on the left hand side of the fourth identification are over $|q|=|p_-|-1$ resp.\ $|q|=|p_+|+1$ as in \eqref{iota claim}, 
and for these critical points we have the inclusions $Z^{\ul\kappa^\iota}(q,p_+; A) \subset \partial_0\cB(q,p_+; A)$ and $Z^{\ul\kappa^\iota}(p_-,q; A) \subset \partial_0\cB(p_-,q; A)$ that verify the equality. 

This finishes the identification of the boundary $\partial Z^{\ul\kappa^\iota}(\alpha)$. Now Corollary~\ref{cor:regularize}~(v) asserts that the sum of weights over this boundary is zero -- when counted with signs that are induced by the orientation of $Z^{\ul\kappa^\iota}(\alpha)$. So in order to prove the identity \eqref{iota claim} we need to compare the boundary orientation of $\partial Z^{\ul\kappa^\iota}(\alpha)$ with the orientations on the faces. We will compute the relevant signs  in \eqref{eq:orient-iota} below, after first making coherent choices of representatives $S^\iota_\alpha: \cX^\iota_\alpha \to \cW^\iota_\alpha$ of the oriented sections $\s^\iota_\alpha$, and 
constructing coherent sc$^+$-multisections $\kappa_{\alpha}^\iota:\cW^\iota_{\alpha}\to\Q^+$ for ${\alpha\in\cI}$. 

\medskip
\noindent
{\bf Coherent ep-groupoids, sections, and perturbations:}
Recall that the fiber product construction in Lemma~\ref{lem:iota,h poly} defines each bundle $\cW^\iota_\alpha=\pr_\alpha^*\cW^{\scriptscriptstyle \rm GW}_A$ for $\alpha=(p_-,p_+;A)\in\cI$ as the pullback of a strong bundle $\cW^{\scriptscriptstyle \rm GW}_A\to\cX^{\scriptscriptstyle \rm GW}_A$ under a projection of ep-groupoids -- with abbreviated notation 
$\ev_0^\pm:=\{z_0^\pm\}\times\ev :\bM(\ldots) \to \CP^1\times M$ --
$$ 
\pr_{p_-,p_+;A}\, : \;  
\cX^\iota_{p_-,p_+;A} \;=\; \bM(p_-,M) \; \leftsub{\ev^-_0}{\times}_{\ov\ev^-} \; \cX^{\scriptscriptstyle \rm GW}_A \; \leftsub{\ov\ev^+}{\times}_{\ev^+_0} \; \bM(M,p_+) \;\;\longrightarrow\;\;\cX^{\scriptscriptstyle \rm GW}_A  . 
$$ 
Moreover, the section $S^\iota_\alpha= S^{\scriptscriptstyle \rm GW}_A\circ \pr_\alpha$ is induced by the section $S^{\scriptscriptstyle \rm GW}_A:\cX^{\scriptscriptstyle \rm GW}_A\to\cW^{\scriptscriptstyle \rm GW}_A$ which cuts out the Gromov-Witten moduli space $\bM_{\rm GW}(A)=|(S^{\scriptscriptstyle \rm GW}_A)^{-1}(0)|$.
Then the identification of the top boundary stratum proceeds exactly as the proof of Lemma~\ref{lem:iota boundary}.
Coherence of the bundles and sections follows from coherence of the projections $\pr_\alpha: \cX^\iota_\alpha \to \cX^{\scriptscriptstyle \rm GW}_A$ in the sense that $\pr_\alpha|_{\cF^\infty}= \pr_{\alpha'}\circ \pr_{\cF}$ 
for all smooth levels of faces $\cF\supset \cF^\infty\subset \partial_1\cX^\iota_\alpha$ and their projections
$\pr_{\cF}:  \cF=\cF_{(p_-,q_-),\alpha'} \to \cX^\iota_{\alpha'}$ resp.\ $\pr_{\cF}:  \cF=\cF_{\alpha',(q_-,p_+)} \to \cX^\iota_{\alpha'}$. 
For example, the face $\cF=\cF_{(p_-,q_-),(q_-,p_+;A)}$ with $\cF^\infty\subset  \partial_1\cX^\iota_{p_-,p_+;A}$ identifies
\begin{align*}
\bigl( [\tau], (\tau_-,[\ul v],\tau_+) \bigr) &\;\in\; \cF^\infty_{(p_-,q_-),(q_-,p_+;A)} 
\;=\; \cM(p_-,q_-) \times \partial_0\cX^{\iota,\infty}_{q_-,p_+;A}  \\
&\;=\; \cM(p_-,q_-) \times \cM(q_-,M) \; \leftsub{\ev^-_0}{\times}_{\ov\ev^-} \; \cX^{\scriptscriptstyle \rm GW, \infty}_A \; \leftsub{\ov\ev^+}{\times}_{\ev^+_0} \; \cM(M,p_+) \\
\text{with}\qquad
\bigl( ( [\tau], \tau_-) ,[\ul v],\tau_+ \bigr)
&\;\in\; \bM(p_-,M)_1 \; \leftsub{\ev^-_0}{\times}_{\ov\ev^-} \; \cX^{\scriptscriptstyle \rm GW , \infty}_A \; \leftsub{\ov\ev^+}{\times}_{\ev^+_0} \; \cM(M,p_+)  \;\subset\; \partial_1\cX^{\iota,\infty}_{p_-,p_+;A} , 
\end{align*}
and $\pr_{p_-,p_+;A}\bigl( ( [\tau], \tau_-) ,[\ul v],\tau_+ \bigr) = [\ul v]\in \cX^{\scriptscriptstyle \rm GW}_A$ 
coincides with 
$(\pr_{q_-,p_+;A}\circ \pr_\cF) \bigl( [\tau], (\tau_-,[\ul v],\tau_+) \bigr)
= \pr_{q_-,p_+;A} (\tau_-,[\ul v],\tau_+ ) =  [\ul v]\in \cX^{\scriptscriptstyle \rm GW}_A$.
Now any choice of sc$^+$-multisections $(\l^{\scriptscriptstyle \rm GW}_A:\cW^{\scriptscriptstyle \rm GW}_A\to\Q^+)_{A\in H_2(M)}$ induces a coherent collection of sc$^+$-multisections $\bigl(\kappa^\iota_\alpha:= \l^{\scriptscriptstyle \rm GW}_A\circ \pr_\alpha^* : \pr_\alpha^*\cW^{\scriptscriptstyle \rm GW}_A\to\Q^+\bigr)_{\alpha\in\cI}$ by composition with the natural maps $\pr_\alpha^*: \pr_\alpha^*\cW^{\scriptscriptstyle \rm GW}_A \to \cW^{\scriptscriptstyle \rm GW}_A$ covering $\pr_\alpha: \cX^\iota_\alpha \to \cX^{\scriptscriptstyle \rm GW}_A$. 
Indeed, $\pr_\alpha|_{\cF}= \pr_{\alpha'}\circ \pr_{\cF}$ lifts to $\pr_\alpha^*|_{P_\alpha^{-1}(\cF^\infty)}= \pr_{\alpha'}^*\circ \pr_{\cF}^*$ so that
$$
\kappa_{\alpha}^\iota|_{P_\alpha^{-1}(\cF^\infty)}
\;=\;  \l^{\scriptscriptstyle \rm GW}_A\circ \pr_\alpha^*|_{P_\alpha^{-1}(\cF^\infty)}
\;=\; \l^{\scriptscriptstyle \rm GW}_A\circ \pr_{\alpha'}^*\circ \pr_{\cF}^*
\;=\;\kappa^\iota_{\alpha'}\circ \pr_\cF^* . 
$$

\medskip
\noindent
{\bf Construction of admissible Gromov-Witten perturbations:}
It remains to choose the sc$^+$-multisections $(\l^{\scriptscriptstyle \rm GW}_A:\cW^{\scriptscriptstyle \rm GW}_A\to\Q^+)_{A\in H_2(M)}$ so that the induced coherent collection $\ul\kappa^\iota=\bigl(\l^{\scriptscriptstyle \rm GW}_A\circ \pr_\alpha^* \bigr)_{\alpha\in\cI}$ is admissible and in general position. To do so, for each $A \in H_2(M)$ we apply Theorem~\ref{thm:transversality} to the sc-Fredholm section functor $S^{\scriptscriptstyle\rm GW}_A : \cX^{\scriptscriptstyle\rm GW}_A \rightarrow \cW^{\scriptscriptstyle\rm GW}_A$, the sc$^\infty$ submersion $\ov\ev^- \times \ov\ev^+ : \cX^{\scriptscriptstyle\rm GW}_A \rightarrow \CP^1\times M \times \CP^1\times M$, and the collection of Cartesian products of stable and unstable manifolds $\{z_0^-\}\times W_{p_-}^- \times \{z_0^+\}\times W_{p_+}^+$ for all pairs of critical points $p_-,p_+ \in \Crit(f)$.

After fixing a pair controlling compactness $(N_A,\cU_A)$ for each $A\in H_2(M)$, Theorem~\ref{thm:transversality} yields $(N_A,\cU_A)$-admissible sc$^+$-multisections $\l^{\scriptscriptstyle\rm GW}_A : \cW^{\scriptscriptstyle\rm GW}_A \rightarrow \mathbb{Q}^+$ in general position to $S_A^{\scriptscriptstyle\rm GW}$ for each $A\in H_2(M)$. Moreover, they can be chosen such that restriction of evaluations to the perturbed zero set 
$\ov\ev^- \times \ov\ev^+: Z^{\l_A^{\scriptscriptstyle\rm GW}}\to \CP^1\times M\times \CP^1 \times M$ is transverse to all of the products of unstable and stable submanifolds $\{z_0^-\}\times W_{p_-}^- \times \{z_0^+\}\times W_{p_+}^+$ for $p_-,p_+\in\Crit(f)$. 
Note that these embedded submanifolds cover the images of all evaluation maps on the compactified Morse trajectory spaces $\ev_0^-\times \ev_0^+: \bM(p_-,M)\times \bM(M,p_+) \to \CP^1\times M\times \CP^1\times M$, by construction of the evaluations $\ev:\bM(\ldots)\to M$ in \eqref{eval}, which determine $\ev_0^\pm(\ul\tau)=(z_0^\pm,\ev(\ul\tau))$. 
Thus we obtain transverse fiber products 
$\bM(p_-,M) \; \leftsub{\ev^-_0}{\times}_{\ov\ev^-} Z^{\l_A^{\scriptscriptstyle\rm GW}} \leftsub{\ov\ev^+}{\times}_{\ev^+_0} \; \bM(M,p_+)$ for every $\alpha\in\cI$. 
This translates into the pullbacks $\k^\iota_\alpha = \l^{\scriptscriptstyle \rm GW}_A\circ \pr_\alpha^*$ being in general position to the pullback sections $S^\iota_\alpha$ for $\alpha\in\cI$. Moreover, $\k^\iota_\alpha$ is admissible with respect to a pullback of $(N_A,\cU_A)$, so the perturbed zero set is a compact weighted branched orbifold for each $\alpha=(p_-,p_+;A)$, 
$$
``\bigl|(S^\iota_\alpha + \kappa^\iota_\alpha)^{-1}(0)\bigr|\text{''} \;=\;
Z^{\ul\kappa^\iota}(\alpha) \;=\;
\bM(p_-,M) \; \leftsub{\ev^-_0}{\times}_{\ov\ev^-} Z^{\l_A^{\scriptscriptstyle\rm GW}} \leftsub{\ov\ev^+}{\times}_{\ev^+_0} \; \bM(M,p_+) . 
$$
This finishes the construction of coherent perturbations.

\medskip
\noindent
{\bf Computation of orientations:} 
 To prove the identity \eqref{iota claim} it remains to compute the effect of the orientations in Remark~\ref{rmk:iota,h orient} on the algebraic identity in Corollary~\ref{cor:regularize}~(v) that arises from the boundary $\partial Z^{\ul\kappa^\iota}(\alpha)$ of the 1-dimensional weighted branched orbifolds arising from regularization of the moduli spaces with index $I^\iota(\alpha)=I^\iota(p_-,p_+;A)=1$. Here $Z^{\l_A^{\scriptscriptstyle\rm GW}}$ is of even dimension and has no boundary since the Gromov-Witten polyfolds in Assumption~\ref{ass:iota,h} have no boundary, and the index of $\s_{\rm GW}$ is even. 
For the Morse trajectory spaces, the boundary strata are determined in Theorem~\ref{thm:Morse}, with relevant orientations computed in Remark~\ref{rmk:MorseOrient}. Thus for $I^\iota(\alpha)=|p_-|-|p_+| +2c_1(A)=1$ we can compute orientations -- at the level of well defined finite dimensional tangent spaces at a solution; in whose neighbourhood the evaluation maps are guaranteed to be scale-smooth -- 
\begin{align}
\partial Z^{\ul\kappa^\iota}(\alpha) &\;=\;
\partial_1\bM(p_-,M) \; \leftsub{\ev}{\times}_{\ev}\; Z^{\l_A^{\scriptscriptstyle\rm GW}}\; \leftsub{\ev}{\times}_{\ev} \; \partial_0\bM(M,p_+)  \nonumber\\
&\quad\sqcup\;
(-1)^{\dim \bM(p_-,M)} \;
\partial_0\bM(p_-,M) \; \leftsub{\ev}{\times}_{\ev} \; Z^{\l_A^{\scriptscriptstyle\rm GW}}\; \leftsub{\ev}{\times}_{\ev} \; \partial_1\bM(M,p_+) \nonumber \\
&\;=\;
\bigl( \textstyle\bigsqcup_{q\in\Crit f} \cM(p_-,q) \times \cM(q,M) \bigr) \; \leftsub{\ev}{\times}_{\ev}\; Z^{\l_A^{\scriptscriptstyle\rm GW}} \;\leftsub{\ev}{\times}_{\ev} \; \cM(M,p_+) \label{eq:orient-iota} \\
&\quad\sqcup\;
(-1)^{|p_-| + |p_+|+1}
\cM(p_-,M) \; \leftsub{\ev}{\times}_{\ev} \; Z^{\l_A^{\scriptscriptstyle\rm GW}}\; \leftsub{\ev}{\times}_{\ev} \; \bigl( \textstyle \bigsqcup_{q\in\Crit f} \cM(M,q) \times \cM(q,p_+) \bigr) \nonumber\\
&\;=\;
\textstyle\bigsqcup_{q\in\Crit f} \cM(p_-,q) \times Z^{\ul\k^\iota}(q,p_+;A)
\;\;\sqcup\;\;  \bigsqcup_{q\in\Crit f} Z^{\ul\k^\iota}(p_-,q;A) \times \cM(q,p_+) .  \nonumber
\end{align}
Here the signs in the first equality arise from the ambient Cartesian product 
$\partial ( \bM_-\times Z \times\bM_+) \subset (-1)^{\dim(\bM_-\times Z)}  \bM_-\times Z \times\bM_+$; in the second equality we used Remark~\ref{rmk:MorseOrient}; and in the final equality we use $|p_-|+|p_+|+1 \equiv I^\iota(\alpha)=1 \equiv 0$ modulo 2.
This finishes the computation of the oriented boundaries $\partial Z^{\ul\kappa^\iota}(\alpha)$ for $I^\iota(\alpha)=1$ that proves \eqref{iota claim} and thus yields a chain map.  
\end{proof}

\subsection{Admissible perturbations for isomorphism property} \label{ssec:iso}

In this section we prove Theorem~\ref{thm:main}~(ii), i.e.\ construct $\iota=(-1)^*\iota_{\ul\kappa^\iota}:C_*M\to C_*M$ in Definition~\ref{def:iota,h} and Lemma~\ref{lem:iota chain} as a $\L$-module isomorphism on the chain complex $CM=CM_\L$ over the Novikov field as in \eqref{eq:CM}. 
This requires a construction of the perturbations $\ul\kappa^\iota$ that preserves the properties of the zero sets in Remark~\ref{rmk:iota-energy} for nonpositive symplectic area $\omega(A)\leq 0$. 

\begin{lem} \label{lem:iota triangular}
The coherent collection of sc$^+$-multisections $\ul\kappa^\iota$ in Lemma~\ref{lem:iota chain} can be chosen such that 
$\#Z^{\ul\kappa^\iota}(p_-,p_+;A)=0$ for $A\in H_2(M)\less\{0\}$ with $\o(A)\leq 0$, or for $A=0$ and $p_-\neq p_+$, and $\#Z^{\ul\kappa^\iota}(p,p;0)\neq 0$. 
As a consequence, $\iota=(-1)^*\iota_{\ul\kappa^\iota} : CM_{\L} \to CM_{\L}$ is a $\L$-module isomorphism.
\end{lem}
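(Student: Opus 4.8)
The plan is to first establish the vanishing claims about $\#Z^{\ul\kappa^\iota}(p_-,p_+;A)$ geometrically, then deduce the isomorphism property algebraically via the triangular matrix criterion of Lemma~\ref{lem:invertibilitymatrixnovikov}. For the vanishing, I would invoke Theorem~\ref{thm:transversality} exactly as in the proof of Lemma~\ref{lem:iota chain}, but additionally require that the admissible sc$^+$-multisections $\l^{\scriptscriptstyle\rm GW}_A$ be chosen so that for $\o(A)\le 0$ with $A\ne 0$ the perturbed zero set $Z^{\l^{\scriptscriptstyle\rm GW}_A}$ remains empty (as $\bM_{\rm GW}(A)=\emptyset$ by Assumption~\ref{ass:iota,h}(ii) and admissibility keeps $Z^{\l}\subset|\cU_A|$ which can be chosen empty), and for $A=0$ that $\l^{\scriptscriptstyle\rm GW}_0$ is simply the zero multisection, legitimate since $\s_{\rm GW}|_{\cB_{\rm GW}(0)}\pitchfork 0$ is already transverse with $Z^{\l^{\scriptscriptstyle\rm GW}_0}=\bM_{\rm GW}(0)\simeq\CP^1\times\CP^1\times M$.

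For $\o(A)\le0$, $A\ne0$: since $Z^{\l^{\scriptscriptstyle\rm GW}_A}=\emptyset$, the fiber product $Z^{\ul\kappa^\iota}(p_-,p_+;A)=\bM(p_-,M)\;\leftsub{\ev^-_0}{\times}_{\ov\ev^-}Z^{\l^{\scriptscriptstyle\rm GW}_A}\leftsub{\ov\ev^+}{\times}_{\ev^+_0}\;\bM(M,p_+)$ is empty, so $\#Z^{\ul\kappa^\iota}(p_-,p_+;A)=0$. For $A=0$: $Z^{\ul\kappa^\iota}(p_-,p_+;0)$ is identified via $\ov\ev^+\times\ov\ev^-$ with the set of $(z^+,x,z^-,x)$ subject to $z^+=z_0^+$, $x\in W^-_{p_-}$, $z^-=z_0^-$, $x\in W^+_{p_+}$ — in the $R=0$ Gromov--Witten picture this is exactly the transverse fiber product $\bM(p_-,M)\;\leftsub{\ev}{\times}_{\ev}\;\{u\equiv x\}\;\leftsub{\ev}{\times}_{\ev}\;\bM(M,p_+)$, which by Remark~\ref{rmk:MorseOrient}(ii) equals $\bM(p_-,M)\;\leftsub{\ev}{\times}_{\ev}\;\bM(M,p_+)$, and this is empty for $p_-\ne p_+$ and the single point $(p,p)$ for $p_-=p_+=p$ (using $I^\iota(p,p;0)=0$ so the count is $0$-dimensional). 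Hence $\#Z^{\ul\kappa^\iota}(p,p;0)=o_{\s}(p,p)=\pm1\ne0$; after the sign adjustment $\iota\la p\ra=(-1)^{|p|}\iota_{\ul\kappa^\iota}\la p\ra$ one still has the diagonal coefficient nonzero.

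Now for the algebraic conclusion: by $\L$-linearity and the grading, $\iota$ preserves $C_iM$ (indeed $I^\iota(p_-,p_+;A)=0$ forces $2c_1(A)=|p_+|-|p_-|$, and for $|p_-|=|p_+|$ the contributions come only from $A$ with $c_1(A)=0$; the terms with $\o(A)>0$ contribute to positive powers of $T$, while $\o(A)=0$ forces $A=0$ by the definition of $\G$ together with the vanishing just established for $\o(A)<0$). So in the ordered basis $\{p:|p|=i\}$ the matrix of $\iota$ on each $C_iM$ has entries $\lambda^{pq}\in\L$ whose lowest-order term is the $A=0$ contribution: the off-diagonal entries have $\lambda^{pq}_0=0$ (by the $A=0$, $p\ne q$ vanishing), and the diagonal entries have $\lambda^{pp}_0=(-1)^{|p|}\#Z^{\ul\kappa^\iota}(p,p;0)\ne0$; all $T$-exponents appearing are $\ge0$ since $\o(A)\ge0$ on every contributing class. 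This is precisely the hypothesis of Lemma~\ref{lem:invertibilitymatrixnovikov}, so the matrix of $\iota|_{C_iM}$ is invertible over $\L$ for each $i$, whence $\iota:CM_\L\to CM_\L$ is a $\L$-module isomorphism.

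The main obstacle I anticipate is the bookkeeping in the last paragraph — specifically confirming that for $|p_-|=|p_+|$ the only homology class contributing to the $T^0$-coefficient is $A=0$, which needs both the definition of $\G=\o(H_2(M))$ (so $\o(A)=0$ need not imply $A=0$ in general) and the established vanishing $\#Z^{\ul\kappa^\iota}(p_-,p_+;A)=0$ for $\o(A)<0$ plus a separate argument that classes with $\o(A)=0$ but $A\ne0$ and $c_1(A)=0$ either don't contribute or can be arranged not to via the choice of perturbation. In fact the cleanest route is to observe that $Z^{\ul\kappa^\iota}(p_-,p_+;A)$ is a fiber product with $Z^{\l^{\scriptscriptstyle\rm GW}_A}$, and for any $A$ with $\o(A)=0$, $A\ne 0$ one still has $\bM_{\rm GW}(A)=\emptyset$ only if $\o(A)<0$ — so one must either strengthen the choice of $\l^{\scriptscriptstyle\rm GW}_A$ to kill these (possible since such spheres, if they existed, would be nonconstant holomorphic spheres of zero energy, hence none exist by the energy identity \eqref{eq:iota-energy}, giving $\bM_{\rm GW}(A)=\emptyset$ for all $A$ with $\o(A)\le0$, $A\ne0$ after all). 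This resolves the obstacle: the energy identity forces $\bM_{\rm GW}(A)=\emptyset$ for every $A\ne0$ with $\o(A)\le0$, so only $A=0$ contributes to $T^0$, completing the triangularity argument.
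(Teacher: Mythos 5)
Your geometric input matches the paper's proof: empty compactness-controlling neighbourhoods $\cU_A$ for $A\neq 0$ with $\o(A)\leq 0$ (forcing $Z^{\l^{\scriptscriptstyle\rm GW}_A}=\emptyset$ and hence vanishing counts), the trivial multisection for $A=0$ (legitimate by Assumption~\ref{ass:iota,h}~(ii)), and the reduction of $\#Z^{\ul\kappa^\iota}(p_-,p_+;0)$ to the Morse fiber product of Remark~\ref{rmk:MorseOrient}~(ii). Your worry about classes with $\o(A)=0$, $A\neq 0$ is also correctly resolved — the energy identity puts these inside the already-established vanishing. One small omission: for $A=0$ you should check that the trivial multisection is an \emph{allowed} choice in Lemma~\ref{lem:iota chain}, i.e.\ that the evaluation $\ov\ev^-\times\ov\ev^+$ on $\bM_{\rm GW}(0)\simeq\CP^1\times\CP^1\times M$ is transverse to $\{z_0^-\}\times W^-_{p_-}\times\{z_0^+\}\times W^+_{p_+}$; this is where the Morse--Smale condition enters, and you only use it implicitly when asserting the fiber product is transverse.

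The genuine gap is in the final algebraic step. Your claim that ``$\iota$ preserves $C_iM$'' is false: $I^\iota(p_-,p_+;A)=0$ only forces $2c_1(A)=|p_+|-|p_-|$, and nothing prevents nonzero counts $\#Z^{\ul\kappa^\iota}(p_-,p_+;A)$ for $A$ with $c_1(A)\neq 0$ and $\o(A)>0$ — these contribute off-graded terms (with positive powers of $T$) to $\iota\la p_-\ra$. Consequently $\iota$ is not block-diagonal with respect to the Morse grading, and invertibility of each would-be block $\iota|_{C_iM}$ does not imply invertibility of $\iota$; indeed ``the matrix of $\iota|_{C_iM}$'' is not even well defined since the image leaves $C_iM$. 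The repair is immediate and is what the paper does: take the single matrix $(\lambda^{ij})$ of $\iota$ in the full basis $\Crit(f)=\{p_1,\ldots,p_\ell\}$ under an arbitrary total order. Your computations show every entry has only nonnegative $T$-powers (counts vanish for $\o(A)<0$), every off-diagonal entry — graded or not — has vanishing $T^0$-coefficient (only $A=0$ contributes to $T^0$, and that requires $p_-=p_+$), and every diagonal entry has nonzero $T^0$-coefficient. Lemma~\ref{lem:invertibilitymatrixnovikov} is stated for exactly this structure (it needs no triangularity of the off-diagonal part), so it applies directly to the full matrix and gives the isomorphism. The grading plays no role and should be dropped from the argument.
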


\begin{proof}
The sc$^+$-multisections $\ul\kappa^\iota$ in Lemma~\ref{lem:iota chain} are obtained from choices of sc$^+$-multisections $(\kappa_A:\cW^\iota_A\to\Q^+)_{A\in H_2(M)}$ that are in general position to sc-Fredholm sections $S_A:\cX^{\scriptscriptstyle \rm GW}_A\to\cW_A$ which cut out the Gromov-Witten moduli space $\bM_{\rm GW}(A)=|S_A^{-1}(0)|$, and such that moreover the evaluation maps restricted to the perturbed zero sets, $\ov\ev^-\times\ov\ev^+: Z({\kappa_A}) \to \CP^1\times M\times \CP^1\times M$ are transverse to the unstable and stable manifolds $\{z^-_0\}\times W^-_{p_-}\times \{z^+_0\}\times W^+_{p_+}\subset \CP^1\times M\times \CP^1\times M$ for any pair of critical points $p_-,p_+\in\Crit(f)$. 

We will first consider $\alpha=(p_-,p_+;A)\in\cI$ for nontrivial homology classes $A\in H_2(M)\less\{0\}$ with nonpositive symplectic area $\o(A)\leq 0$. Recall from Remark~\ref{rmk:iota-energy} that these moduli spaces are empty $|S_A^{-1}(0)|=\emptyset$, so as in Corollary~\ref{cor:regularize} we can choose empty neighbourhoods $\emptyset =|\cU_A|\subset |\cX^{\scriptscriptstyle \rm GW}_A|$ to control compactness. Then the perturbed zero set 
$Z({\kappa_A})=|\{ x\in X_A \,|\, \kappa_A(S_A(x)) >0 \}| \subset |\cU_A|$ is forced to be empty, i.e.\ $\kappa_A\circ S_A \equiv 0$. This is an allowed choice in Lemma~\ref{lem:iota chain} since evaluation maps from an empty set are trivially transverse to any submanifold. 
This choice induces for any $p_\pm\in\Crit(f)$ in $\alpha=(p_-,p_+;A)$ an induced sc$^+$-multisection $\kappa^\iota_\alpha =\kappa_A\circ \pr_\alpha^*:\cW^\iota_\alpha\to\Q^+$. Its perturbed zero set is 
$$
Z^{\kappa^\iota_\alpha}(\alpha) = \bigl| \bigl\{ (\ul\tau^- , x , \ul\tau^+ ) \in  X_{\alpha} \,\big|\, 
\kappa^\iota_{\alpha}\bigl( S^\iota_\alpha (\ul\tau^- , x , \ul\tau^+ ) \bigr) >0 
\bigr\} \bigr| =\emptyset
$$
since the coherence in Lemma~\ref{lem:iota chain} implies
$\kappa^\iota_{\alpha}\circ S^\iota_{\alpha} = \kappa_A\circ \pr_\alpha^* \circ S^\iota_{\alpha} = \kappa_A\circ S_A \circ \pr_\alpha \equiv 0$, or more concretely 
$\kappa^\iota_{(p_-,p_+;A)}\bigl( S^\iota_{p_-,p_+;A} (\ul\tau^- , x , \ul\tau^+ ) \bigr)
= \kappa_A(S_A(x)) = 0$. 
Thus we have ensured vanishing counts $\#Z^{\ul\kappa^\iota}(p_-,p_+;A)=0$ for $A\in H_2(M)\less\{0\}$ with $\o(A)\leq 0$ whenever $I^\iota(p_-,p_+;A)=0$.

Next we consider $A=0\in H_2(M)$ and recall from Remark~\ref{rmk:iota-energy} and Assumption~\ref{ass:iota,h}~(ii) that the Gromov-Witten moduli space $\bM_{\rm GW}(0) = Z(\kappa_0)$ is already compact and transversely cut out. Thus the trivial sc$^+$-multisection 
$\kappa_0:\cW_0\to \Q^+$, given by $\kappa_0(0_x)=1$ on zero vectors $0_x\in (\cW_0)_x$ and $\kappa_0|_{(\cW_0)_x \less \{0_x\}} \equiv 0$, is an admissible sc$^+$-multisection in general position to $S_0:\cX^{\scriptscriptstyle \rm GW}_0\to \cW_0$. 
Recall moreover that the evaluation maps on the unperturbed zero set are 
$$
\ov\ev^-\times \ov\ev^+ : Z(\k_0)\simeq \CP^1\times\CP^1\times M \to \CP^1\times M \times\CP^1\times M, \qquad (z^-,z^+,x)\mapsto (z^-,x,z^+,x) . 
$$ 
In the $\CP^1$-factors this is submersive so transverse to the fixed points $(z_0^-,z_0^+)\in\CP^1\times\CP^1$. 
In the $M$-factors this is the diagonal map, which is transverse to the unstable and stable manifolds $W^-_{p_-}\times W^+_{p_+}\subset M\times M$ for any pair $p_-,p_+\in\Crit(f)$ by the Morse-Smale condition on the metric on $M$ chosen in \S\ref{sec:Morse}. 
Thus the trivial multisection $\kappa_0$ is in fact an allowed choice in Lemma~\ref{lem:iota chain}. 
Now with this choice, the tuples $(p_-,p_+;0)\in\cI$ for which we need to compute 
\begin{align*}
\#Z^{\ul\kappa^\iota}(p_-,p_+;0) & \;=\; 
\#\ \left| \left\{ (\ul\tau^- , [v] , \ul\tau^+ ) \in  \bM(p_-,M) \times Z({\kappa_0}) \times \bM(M,p_+)
\, \big|\, \bigl( z_0^\pm, \ev(\ul\tau^\pm) \bigr) = \ov\ev^\pm([v])  \right\} \right| \\
&\;\cong\;  \#\ \left| \left\{ (\ul\tau^- , \ul\tau^+ ) \in  \bM(p_-,M) \times \bM(M,p_+)
\,\left|\, \ev(\ul\tau^-)= \ev(\ul\tau^+) \right. \right\} \right|
\end{align*}
are those with $0=I^\iota(p_-,p_+;0)= 2 c_1(0) + |p_-| - |p_+|$, i.e.\ $|p_-|=|p_+|$. 
These are the fiber products identified in Remark~\ref{rmk:MorseOrient}~(ii) as either empty or a one point set,
\begin{align*}
\bM(p_-,M) \leftsub{\ev}{\times}_\ev \bM(M,p_+)
\;=\; \begin{cases}
\quad \emptyset &; p_-\neq p_+ , \\
(\tau^-\equiv p_-, \tau^+\equiv p_+) &; p_-= p_+ . 
\end{cases}
\end{align*}
Thus we have counts $\#Z^{\ul\kappa^\iota}(p_-,p_+;0)=0$ for $p_-\neq p_+$ and 
$\#Z^{\ul\kappa^\iota}(p,p;0) \neq 0$ for each $p\in\Crit(f)$.

Finally, we will use these computations of $\#Z^{\ul\kappa}(p_-,p_+;A)$ for $\o(A)\leq 0$ to prove that the resulting map $\iota:=(-1)^* \iota_{\ul\kappa^\iota} : CM_{\L} \to CM_{\L}$ is a $\L$-module isomorphism. For that purpose we choose an arbitrary total order of the critical points $\Crit(f)=\{p_1, \ldots, p_\ell\}$ and for $i,j\in\{1,\ldots,\ell\}$ denote the coefficients of 
$\i(\langle p_j \rangle) = \sum_{i=1}^\ell \lambda^{ij} \langle p_i \rangle$ by $\lambda^{ij}  \in \Lambda$.
We claim that the $(\ell \times \ell)$-matrix with entries $\lambda^{ij}=\sum_{r\in\G} \lambda^{ij}_r T^r$ satisfies the conditions of Lemma~\ref{lem:invertibilitymatrixnovikov}. To check this recall that we have by construction in Definition~\ref{def:iota,h} and change of signs in Lemma~\ref{lem:iota chain}
$$ \textstyle
\lambda^{ij}_r \;=\; \sum_{\begin{smallmatrix}
 A \in H_2(M) , \o(A)=r \\  I^\iota(p_j,p_i;A)=0  \end{smallmatrix} }  
\;(-1)^{|p_j|} \;\#Z^{\ul\k^\iota}(p_j,p_i; A) . 
$$
For $r<0$ we obtain $\lambda^{ij}_r=0$ since each coefficient $\#Z^{\ul\k^\iota}(p_j,p_i; A)=0$ vanishes for $\o(A)=r<0$. 
For $r=0$ and $i\neq j$ we also have $\lambda^{ij}_0=0$ since $\#Z^{\ul\k^\iota}(p_j,p_i; A)=0$ also holds for $\o(A)=0$ and $p_j\neq p_i$. Finally, for $r=0$ and $i=j$ we use $\#Z^{\ul\k^\iota}(p_j,p_i; A)=0$ for $A\neq 0$ with $\o(A)=0$ to compute $\lambda^{ii}_0 = \#Z^{\ul\k^\iota}(p_j,p_i; 0) \neq 0$.  
This confirms that Lemma~\ref{lem:invertibilitymatrixnovikov} applies, and thus $\i\cong (\lambda_{ij})_{1\leq i,j\leq\ell}$ is invertible. This finishes the proof.
\end{proof}

\subsection{Coherent perturbations for chain homotopy} \label{ssec:homotopy}

In this section we prove Theorem~\ref{thm:main}~(iii) by constructing $h_{\ul\kappa}: CM \rightarrow CM$ in Definition~\ref{def:iota,h} as a chain homotopy between $SSP_{\ul\kappa^+} \circ PSS_{\ul\kappa^-}$ and $\iota_{\ul\kappa^\iota}$ from Definitions~\ref{def:PSS},\ref{def:iota,h}, with appropriate sign adjustments as in Lemma~\ref{lem:iota chain}. 
This requires a coherent construction of perturbations $\ul\kappa,\ul\kappa^\iota,\ul\kappa^-,\ul\kappa^+$ over the indexing sets 
\begin{align*}
\cI=\cI^\iota &\,:=\;  \bigl\{ \alpha=(p_-,p_+,A) \,\big|\, p_-,p_+\in{\rm Crit}(f) , A\in H_2(M) \bigr\} ,\\
\cI^+ &\,:=\; \bigl\{ \alpha=(p,\g,A) \,\big|\, p\in{\rm Crit}(f) , \g\in\cP(H) , A\in H_2(M) \bigr\}, \\
\cI^- &\,:=\;  \bigl\{ \alpha=(\g,p,A) \,\big|\, p\in{\rm Crit}(f) , \g\in\cP(H) , A\in H_2(M) \bigr\} . \end{align*}
Here we will use notation from Lemma~\ref{lem:Cartesian} for Cartesian products of multisections.

\begin{lem} \label{lem:h chain}
There is a choice of $\ul\kappa^+=(\kappa^+_\alpha)_{\alpha\in\cI^+},\ul\kappa^-=(\kappa^-_\alpha)_{\alpha\in\cI^-},\ul\kappa^\iota=(\kappa^\iota_\alpha)_{\alpha\in\cI},\ul\kappa=(\kappa_\alpha)_{\alpha\in\cI}$ in Definitions~\ref{def:PSS},\ref{def:iota,h} that is coherent in the following sense. 
\begin{enumilist}
\item
Each $\kappa_{\alpha}^{\cdots}:\cW_{\alpha}^{\cdots}\to\Q^+$ for $\alpha\in\cI^+\sqcup\cI^-\sqcup\cI^\iota\sqcup\cI$ is an admissible sc$^+$-multisection of a strong bundle $P^{\cdots}_\alpha: \cW^{\cdots}_{\alpha} \to \cX^{\cdots}_{\alpha}$ that is in general position to a sc-Fredholm section functor $S^{\cdots}_{\alpha}: \cX^{\cdots}_{\alpha} \to \cW^{\cdots}_{\alpha}$ which represents $\sigma^{\cdots}_{\alpha}|_{\cV^{\cdots}_{\alpha}}$ on an open neighbourhood $ \cV^{\cdots}_{\alpha}\subset \cB^{\cdots}(\alpha)$ of the zero set ${\sigma^{\cdots}_{\alpha}}^{-1}(0)$. 
The tuple $\ul\k^\iota=(\kappa^\iota_\alpha)_{\alpha\in\cI^\iota}$ satisfies the conclusions of Lemma~\ref{lem:iota chain} and \ref{lem:iota triangular}.
\item
The smooth level of the first boundary stratum of $\cX_{p_-,p_+, A}$ for every $(p_-,p_+,A)\in\cI$ is naturally identified -- on the level of object spaces, and compatible with morphisms -- with 
\begin{align}\textstyle  \nonumber
\partial_1\cX_{p_-,p_+, A}^\infty &\;\cong\; 
\partial_0\cX^{\iota,\infty}_{p_-,p_+, A} \quad \sqcup \bigcup_{\g\in\cP(H), A=A_-+A_+} 
 \partial_0\cX^{+,\infty}_{p_-,\g, A_+} \times  \partial_0\cX^{-,\infty}_{\g,p_+, A_-} \\
&\qquad\sqcup\quad 
\bigcup_{q\in{\rm Crit}(f)} \cM(p_-,q) \times \partial_0\cX^\infty_{q,p_+, A} \quad \sqcup \quad \bigcup_{q\in{\rm Crit}(f)} \partial_0\cX^\infty_{p_-,q, A} \times \cM(q,p_+) , 
 \label{eq:homotopy bdy}
\end{align}
and the oriented section functors $S_\alpha^{\cdots}$ are compatible with these identifications in the sense that the restriction of $S_{p_-,p_+, A}$ to any of these faces $\cF^\infty\subset \partial_1\cX^\infty_{p_-,p_+, A}$ is given by pullback $S_{p_-,p_+, A}|_{\cF^\infty}=\pr_\cF^*S_\cF$ of another sc-Fredholm section of a strong bundle over an ep-groupoid $S_\cF: \cX_\cF\to\cW_\cF$ given by 
$S_{q,p_+, A}$, $S_{p_-,q, A}$, $S^\iota_{p_-,p_+, A}$, resp.\ 
$$
S_\cF= S^+_{p_-,\g, A_+} \times S^-_{\g,p_+, A_-} \,:\; 
\cX^+_{p_-,\g, A_+} \times \cX^-_{\g,p_+, A_-}  \;\to\; \cW^+_{p_-,\g, A_+} \times \cW^-_{\g,p_+, A_-} 
$$
via the projection $\pr_\cF: \cF\to \cX_\cF$ given by the natural maps
\begin{align*}
\cM(p_-,q) \times \partial_0\cX_{q,p_+, A} &\;\to\;\cX_{q,p_+, A} ,\qquad
\partial_0\cX^\iota_{p_-,p_+, A}  \;\to\; \cX^\iota_{p_-,p_+, A}, \\
\partial_0\cX_{p_-,q, A} \times \cM(q,p_+) &\;\to\; \cX_{p_-,q, A}, \qquad
\partial_0\cX^+_{p_-,\g, A_+} \times  \partial_0\cX^-_{\g,p_+, A_-}  \;\to\;  \cX^+_{p_-,\g, A_+} \times  \cX^-_{\g,p_+, A_-} .
\end{align*}

\item
Each restriction $\kappa_{\alpha}|_{P_\alpha^{-1}(\cF^\infty)}$ for $\alpha=(p_-,p_+,A)\in\cI$ to one of the faces $\cF^\infty\subset\partial_1\cX_\alpha$ is given via the identification $P_\alpha^{-1}(\cF^\infty)\cong \pr_\cF^*\cW_\cF|_{\partial_0\cX_\cF}$ and natural map $\pr_\cF^*: \pr_\cF^*\cW_\cF \to \cW_\cF$ by 
$$
\kappa_{\alpha}|_{P_\alpha^{-1}(\cF^\infty)} \;=\; 
\begin{cases}
\kappa_{q,p_+, A} \circ \pr_\cF^*
&\qquad\text{for}\;\; \cF=\cM(p_-,q) \times \partial_0\cX_{q,p_+, A} , \\
\kappa_{p_-,q, A} \circ \pr_\cF^*
&\qquad\text{for}\;\; \cF=\partial_0\cX_{p_-,q, A} \times \cM(q,p_+) ,\\
 \kappa^\iota_{p_-,p_+, A} \circ \pr_\cF^*
&\qquad\text{for}\;\; \cF=\partial_0\cX^\iota_{p_-,p_+, A}  , \\
(\kappa^+_{p_-,\g, A_+}\cdot \kappa^-_{\g,p_+, A_-}) \circ \pr_\cF^*  
&\qquad\text{for}\;\; \cF=\partial_0\cX^+_{p_-,\g, A_+} \times  \partial_0\cX^-_{\g,p_+, A_-} .\end{cases}
$$
\end{enumilist}
For any such choice of $\underline\kappa^\iota=( \kappa^\iota_\alpha )_{\alpha\in\cI}$,
the resulting maps $PSS_{\underline\kappa^+}, SSP_{\underline\kappa^-}, \iota_{\underline\kappa^\iota}, h_{\underline\kappa}$ in Definitions~\ref{def:PSS}, \ref{def:iota,h} satisfy
$(-1)^{|p|}\iota_{\underline\kappa^\iota}\la p \ra  =  (-1)^{|p|} SSP_{\underline\kappa^-} \bigl( PSS_{\underline\kappa^+} \la p \ra \bigr) 
+ h_{\underline\kappa} ( \rd \la p \ra ) + \rd ( h_{\underline\kappa} \la p \ra )$,
where $\rd$ is the Morse differential from \S\ref{sec:Morse}.
By setting $\iota\la p\ra :=(-1)^{|p|} \iota_{\underline\kappa^\iota}\la p\ra$ as in Lemma~\ref{lem:iota chain},  $PSS\la p\ra :=(-1)^{|p|} PSS_{\underline\kappa^+}\la p\ra$, $SSP:=SSP_{\underline\kappa^-}$, and $h:=h_{\underline\kappa}$ we then obtain a chain homotopy between $\iota$ and $SSP \circ PSS$, that is $\iota - SSP \circ PSS = \rd \circ h + h \circ d$. 
\end{lem}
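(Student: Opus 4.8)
The proof has the same architecture as Lemma~\ref{lem:iota chain}: first assume the coherent data exists and derive the algebraic identity up to signs from the boundary formula in Corollary~\ref{cor:regularize}~(v); then construct the coherent ep-groupoids, sections, and multisections; and finally pin down the orientation signs. The plan is to apply Corollary~\ref{cor:regularize}~(v) to each section $S_{p_-,p_+,A}$ with Fredholm index $I(p_-,p_+;A)=1$. By \eqref{eq:homotopy bdy} its perturbed zero set $Z^{\ul\kappa}(p_-,p_+;A)$ is a $1$-dimensional weighted branched orbifold whose boundary decomposes into four families of faces: the $\partial_0\cX^\iota$-face contributing $\pm\#Z^{\ul\kappa^\iota}(p_-,p_+;A)$; the $\partial_0\cX^+\times\partial_0\cX^-$-faces contributing $\pm\#Z^{\ul\kappa^+}(p_-,\g;A_+)\cdot\#Z^{\ul\kappa^-}(\g,p_+;A_-)$ summed over $\g\in\cP(H)$ and $A_-+A_+=A$; and the two Morse-trajectory faces $\cM(p_-,q)\times Z^{\ul\kappa}(q,p_+;A)$ and $Z^{\ul\kappa}(p_-,q;A)\times\cM(q,p_+)$ contributing the two halves of $h_{\ul\kappa}\circ\rd+\rd\circ h_{\ul\kappa}$. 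Setting the total weighted count of $\partial Z^{\ul\kappa}(p_-,p_+;A)$ to zero and summing over all $A$ with $\omega(A)=r$ gives, coefficient by coefficient in $T^r$, exactly the claimed identity after the sign bookkeeping. The index/regularity argument that restricts the Morse faces to $|q|=|p_-|-1$ resp.\ $|q|=|p_+|+1$, and the PSS/SSP faces to $I(p_-,\g;A_+)=I(\g,p_+;A_-)=0$, is identical to the one in Lemma~\ref{lem:iota chain}, using $I(p_-,q;A)+I(q,p_+;A)\ \text{-type additivity}$ together with Corollary~\ref{cor:regularize}~(iii),(iv).

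\textbf{Construction of coherent data.} For the coherent perturbations I would work on the level of the SFT sections and pull back, as in Lemma~\ref{lem:iota chain}. First choose admissible multisections $\ul\kappa^+,\ul\kappa^-$ of the PSS/SSP sections $\s^\pm_{\rm SFT}$ via Theorem~\ref{thm:transversality}, arranging general position and transversality of $\ev^\pm$ on the perturbed zero sets to $\{0\}\times W^\mp_p$; this fixes $PSS_{\ul\kappa^+}$ and $SSP_{\ul\kappa^-}$. Next choose $\ul\kappa^\iota$ as in Lemmas~\ref{lem:iota chain} and \ref{lem:iota triangular}, pulled back from Gromov--Witten multisections $\l^{\scriptscriptstyle\rm GW}_A$; this fixes $\iota_{\ul\kappa^\iota}$ with the chain-map and isomorphism properties already proven. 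Now one must choose the multisections $\l_A$ of the neck-stretching sections $\s_{\rm SFT}:\cB_{\rm SFT}(A)\to\cE_{\rm SFT}(A)$ that restrict \emph{on the already-prescribed boundary faces} to the products $\l^+_{p_-,\g,A_+}\cdot\l^-_{\g,p_+,A_-}$ (using the immersions $l_{\g,A_\pm}$ of Assumption~\ref{ass:iso} and Lemma~\ref{lem:Cartesian}), to $\l^{\scriptscriptstyle\rm GW}_A$ on $\{0\}\times\cB_{\rm GW}(A)$ (Assumption~\ref{ass:iso}~(iii)(b)), and to the coherent Morse-face pullbacks $\l_{q,p_+,A}$, $\l_{p_-,q,A}$. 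This is an inductive extension of a multisection from a neighborhood of the boundary of $\cB_{\rm SFT}(A)$ to the whole polyfold, where the induction runs over the finitely many homology classes $A$ with $\omega(A)\le C$ (Assumption~\ref{ass:iota,h}~(ii)); at each stage one invokes the extension-of-admissible-multisections and general-position results of \cite[\S13--15]{HWZbook} and Theorem~\ref{thm:transversality} on the interior. The pullback under the fiber-product projection $\pr_{p_-,p_+,A}:\cX_{p_-,p_+,A}\to\cX_{\rm SFT}(A)$ then yields $\ul\kappa$ with the coherence properties (ii),(iii), exactly as the $\pr_\alpha|_\cF=\pr_{\alpha'}\circ\pr_\cF$ computation in Lemma~\ref{lem:iota chain}.

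\textbf{Orientations.} The sign computation is the delicate part. Each face of $\partial Z^{\ul\kappa}(p_-,p_+;A)$ carries two orientations: the boundary orientation induced by the ambient $1$-manifold $Z^{\ul\kappa}(p_-,p_+;A)$ (via exterior normal first), and the product orientation of its two factors; their ratio is a universal sign which must be computed. For the Morse-trajectory faces these are exactly the signs $o(M,q,p_+)=(-1)^{|p_+|+1}$ and $o(p_-,q,M)=+1$ recorded in Remark~\ref{rmk:MorseOrient}, combined with the Cartesian-product sign $(-1)^{\dim\bM(p_-,M)}=(-1)^{|p_-|}$ as in \eqref{eq:orient-iota}. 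For the $\partial_0\cX^\iota$-face one uses Assumption~\ref{ass:iso}~(iii)(b), which says the identification $\cE_{\rm SFT}(A)|_{\{0\}\times\cB_{\rm GW}}\cong\cE_{\rm GW}(A)$ \emph{reverses} orientations of sections — this is precisely the source of the relative sign between $\iota$ and $SSP\circ PSS$ in Theorem~\ref{thm:main}~(iii). For the $\partial_0\cX^+\times\partial_0\cX^-$-faces one uses Assumption~\ref{ass:iso}~(iii)(c), which says the gluing identification \emph{preserves} orientations, together with the fiber-product orientation convention over $\C^\pm\times M$ fixed in Remarks~\ref{rmk:PSS orient},~\ref{rmk:iota,h orient}. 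Carrying out this bookkeeping at the level of the finite-dimensional tangent spaces at solutions (where the evaluation maps are scale-smooth) produces the overall identity
$(-1)^{|p|}\iota_{\ul\kappa^\iota}\la p\ra - (-1)^{|p|}SSP_{\ul\kappa^-}(PSS_{\ul\kappa^+}\la p\ra) = h_{\ul\kappa}(\rd\la p\ra)+\rd(h_{\ul\kappa}\la p\ra)$,
and the degree-shift sign adjustment $\iota\la p\ra:=(-1)^{|p|}\iota_{\ul\kappa^\iota}\la p\ra$, $PSS\la p\ra:=(-1)^{|p|}PSS_{\ul\kappa^+}\la p\ra$, as in Lemma~\ref{lem:iota chain}, converts this to $\iota-SSP\circ PSS=\rd\circ h+h\circ\rd$ by $\Lambda$-linearity. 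I expect the orientation bookkeeping across the three different gluing identifications of Assumption~\ref{ass:iso}~(iii), and in particular keeping the fiber-product sign conventions consistent between the PSS moduli spaces (one Morse factor) and the $h$ moduli spaces (two Morse factors), to be the main obstacle; the existence of the coherent extension is routine given Assumptions~\ref{ass:iota,h},~\ref{ass:iso} and the finiteness in Assumption~\ref{ass:iota,h}~(ii).
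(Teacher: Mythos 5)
Your overall architecture matches the paper's: apply Corollary~\ref{cor:regularize}~(v) to the $1$-dimensional perturbed zero sets $Z^{\ul\kappa}(p_-,p_+;A)$ with $I(p_-,p_+;A)=1$, decompose the boundary into the four face types of \eqref{eq:homotopy bdy}, restrict the Morse and PSS/SSP faces by the index/regularity argument, and do the orientation bookkeeping with the sign reversal of Assumption~\ref{ass:iso}~(iii)(b) accounting for the relative sign between $\iota$ and $SSP\circ PSS$. That part of your proposal is essentially the paper's argument.

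There is, however, a genuine gap in your construction of the coherent perturbations, at the step you dismiss as ``routine.'' You propose to first choose $\ul\kappa^\pm$ freely via Theorem~\ref{thm:transversality} and then extend the prescribed boundary datum to a multisection $\l_A$ on $\cB_{\rm SFT}(A)$. But the boundary datum on $\partial^{R=\infty}\cB^{+,-}_{\rm SFT}(A)$ is only well defined if the products $\l^+_{\g,A_+}\cdot\l^-_{\g,A_-}$ agree wherever the images of the immersions $l_{\g,A_\pm}$ overlap -- and they do overlap: a configuration in which Floer cylinders break off between the two planes lies simultaneously in $\im l_{\g^j,A^j_\pm}$ for several choices of $(\g^j, A^j_\pm)$ with $A^j_++A^j_-=A$, corresponding to the different ways of assigning the broken Floer pieces to the positive or negative plane. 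Independently chosen $\l^\pm_{\g,A}$ will not satisfy $\l^+_{\g^j,A^j_+}\cdot\l^-_{\g^j,A^j_-}=\l^+_{\g^{j'},A^{j'}_+}\cdot\l^-_{\g^{j'},A^{j'}_-}$ on these intersections, so your ``extension from a neighborhood of the boundary'' is not well posed. The paper resolves this by forcing the $\l^\pm_{\g,A}$ themselves to have product structure on their own boundary strata \eqref{eq:Xpmbdy}, which requires introducing an auxiliary coherent system of Floer-trajectory multisections $\l^{\scriptscriptstyle\rm Fl}_{\g^-,\g^+,B}$ constructed first by induction over the maximal degeneracy of unperturbed solutions, and then constructing the $\l^\pm_{\g,A}$ by a second induction over degeneracy (not over homology classes, as you suggest) with these Floer products as prescribed boundary values; a further technicality is that the evaluation maps are only sc$^\infty$ on shrinking open subsets $(\ov\ev^\pm)^{-1}(\D^\pm_{r}\times M)$, which forces a cutoff/shrinking of radii along the induction. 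In short, the identity and orientation parts of your proposal are sound, but the existence of the coherent family $(\ul\kappa^+,\ul\kappa^-,\ul\kappa^\iota,\ul\kappa)$ -- which is the actual content of parts (i)--(iii) -- requires the Floer-breaking compatibility that your outline omits.
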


\begin{proof}
This proof is similar to Lemma~\ref{lem:iota chain}, with more complicated combinatorics of the boundary faces due to the boundary of $\cB_{\rm SFT}$ described in Assumption~\ref{ass:iso}, and presented in different order: We will first make the coherent constructions and then deduce the algebraic consequences.

\medskip
\noindent
{\bf Coherent ep-groupoids and sections:}
To construct coherent representatives $S_\alpha^{\cdots}: \cX_\alpha^{\cdots} \to \cW_\alpha^{\cdots}$ for $\alpha\in\cI^+\sqcup\cI^-\sqcup\cI^\iota\sqcup\cI$ as claimed in (ii) recall that the fiber product construction in Lemma~\ref{lem:iota,h poly} defines each bundle $\cW_\alpha=\pr_\alpha^*\cW^{\scriptscriptstyle \rm SFT}_A$ for $\alpha=(p_-,p_+,A)\in\cI$ as the pullback of a strong bundle $P_A:\cW^{\scriptscriptstyle \rm SFT}_A\to\cX^{\scriptscriptstyle \rm SFT}_A$ under the natural projection of ep-groupoids 
$$
\pr_{p_-,p_+,A}\, : \;  
\cX_{p_-,p_+,A} \;=\; \bM(p_-,M) \; \leftsub{\ev^+_0}{\times}_{\ev^+} \; \cX^{\scriptscriptstyle \rm SFT}_A \; \leftsub{\ev^-}{\times}_{\ev^-_0} \; \bM(M,p_+) \;\;\longrightarrow\;\;\cX^{\scriptscriptstyle \rm SFT}_A  .
$$ 
Here $\ev_0^\pm: \bM(\ldots) \to \C^\pm\times M, \ul\tau\mapsto (0, \ev(\ul\tau) )$ arise from Morse evaluation \eqref{eval}.
The ep-groupoid $\cX^{\scriptscriptstyle \rm SFT}_A\subset\Ti\cX^{\scriptscriptstyle \rm SFT}_A$ is a  full subcategory -- determined by the open subset $\cB^{+,-}_{\rm SFT}(A)=(\ov\ev^+)^{-1}(\C^+\times M) \cap (\ov\ev^-)^{-1}(\C^-\times M) \subset\cB_{\rm SFT}(A)$ --
of an ep-groupoid $\Ti\cX^{\scriptscriptstyle \rm SFT}_A$ from Assumption~\ref{ass:iota,h} that represents $\cB_{\rm SFT}(A)$ and thus contains the compactified SFT neck stretching moduli space $\bM_{\rm SFT}(A)=|(S^{\scriptscriptstyle \rm SFT}_A)^{-1}(0)|$ as zero set of a sc-Fredholm section $S^{\scriptscriptstyle \rm SFT}_A:\Ti\cX^{\scriptscriptstyle\rm SFT}_A\to \Ti\cW^{\scriptscriptstyle \rm SFT}_A$.
We will work with both groupoids: Multisection perturbations are constructed over $\Ti\cX^{\scriptscriptstyle \rm SFT}_A$ since we need a compact zero set to specify the admissibility that guarantees preservation of compactness under perturbations -- both for $S^{\scriptscriptstyle \rm SFT}_A$ and its fiber product restrictions $S_\alpha$. On the other hand, $|\Ti\cX^{\scriptscriptstyle \rm SFT}_A|=\cB_{\rm SFT}(A)$ has more complicated boundary than $\cB^{+,-}_{\rm SFT}(A)$ -- due to the distribution of marked points into building levels -- and does not support a sc$^\infty$ evaluation map. Thus we discuss coherence only over subgroupoids $\cX^{\scriptscriptstyle \rm SFT}_A\subset\Ti\cX^{\scriptscriptstyle \rm SFT}_A$ with the boundary stratification of $\cB^{+,-}_{\rm SFT}(A)$, and which support sc$^\infty$ functors $\ev^\pm: \cX^{\scriptscriptstyle \rm SFT}_A\to\C^\pm\times M$ representing the evaluation maps \eqref{eq:SFTevalB}. 
Here we may even use subgroupoids $\cX^{\scriptscriptstyle \rm SFT}_A$ representing a smaller open subset $(\ov\ev^+)^{-1}(\D_r^+\times M) \cap (\ov\ev^-)^{-1}(\D_r^-\times M) \subset\cB_{\rm SFT}(A)$ of preimages of the disks $\D_r^\pm:=\{z\in\C^\pm\,|\, |z|<r\}\subset\C^\pm$, which contain the standard marked points $z_0^\pm\cong 0\in\C^\pm$.\footnote{These disks should not be confused with the closed disks $D_\pm$ in the construction of $\CP^1_R$, as e.g.\ $\D^+\subset \C^+ \cong (D_+\sqcup [-R,0)\times S^1)/\sim_R$ is a precompact subset of the first hemisphere in $\CP^1_R\cong \C^+\cup S^1 \cup \C^-$ for any $R\geq0$.} 
The polyfold structure on the fiber products $\cX_\alpha$ in Lemma~\ref{lem:iota,h poly} is independent of the choice of open neighbourhood in $\cB_{\rm SFT}(A)$ of the subset satisfying the fiber product condition. 
After obtaining the subgroupoid $\cX^{\scriptscriptstyle \rm SFT}_A\subset\Ti\cX^{\scriptscriptstyle \rm SFT}_A$ from such an open subset, we obtain the bundle $\cW^{\scriptscriptstyle \rm SFT}_A=\Ti \cW^{\scriptscriptstyle \rm SFT}_A|_{\cX^{\scriptscriptstyle \rm SFT}_A}$ and section $S^{\scriptscriptstyle \rm SFT}_A|_{\cX^{\scriptscriptstyle \rm SFT}_A}:\cX^{\scriptscriptstyle \rm SFT}_A\to \cW^{\scriptscriptstyle \rm SFT}_A$ by restriction. 
Finally, each section $S_\alpha= S^{\scriptscriptstyle \rm SFT}_A\circ \pr_\alpha$ is induced by the above projection $\pr_\alpha:\cX_\alpha\to \cX^{\scriptscriptstyle \rm SFT}_A\subset \Ti\cX^{\scriptscriptstyle \rm SFT}_A$. 

Next, restriction to the boundary faces given in Assumption~\ref{ass:iso}~(i) induces representatives $S^{\scriptscriptstyle \rm GW}_A : \Ti \cX^{\scriptscriptstyle \rm GW}_A\to \Ti \cW^{\scriptscriptstyle \rm GW}_A$ resp.\ $S^\pm_{\g,A_\pm}: \Ti \cX^\pm_{\g,A_\pm}\to \Ti \cW^\pm_{\g,A_\pm}$ of the sections $\s_{\rm GW}: \cB_{\rm GW}(A)\to \cE_{\rm GW}(A)$ resp.\ $\s_{\rm SFT}: \cB^\pm_{\rm SFT}(\g; A_{\pm})\to \cE^\pm_{\rm SFT}(\g;A_{\pm})$ from Assumption~\ref{ass:pss} resp.\ \ref{ass:iota,h}. 
Moreover, the boundary of the open subset $(\ov\ev^+\times \ov\ev^-)^{-1}(\D_r^+\times M \times \D_r^-\times M)$ for $0<r\leq \infty$ (with $\D^\pm_\infty:=\C^\pm$) yields subgroupoids $\cX^{\scriptscriptstyle \rm GW}_A \subset \Ti \cX^{\scriptscriptstyle \rm GW}_A$ representing $(\ov\ev^+\times \ov\ev^-)^{-1}(\D_r^+\times M \times \D_r^-\times M)\subset \cB_{\rm GW}(A)$ resp.\ $\cX^\pm_{\g,A_\pm}\subset\Ti \cX^\pm_{\g,A_\pm}$ 
representing $(\ov\ev^\pm)^{-1}(\D_r^\pm\times M)\subset \cB^\pm_{\rm SFT}(A)$, along with restricted sections $S^{\scriptscriptstyle \rm GW}_A:\cX^{\scriptscriptstyle \rm GW}_A\to \cW^{\scriptscriptstyle \rm GW}_A=\Ti\cW^{\scriptscriptstyle \rm GW}_A|_{\cX^{\scriptscriptstyle \rm GW}_A}$ resp.\ $S^\pm_{\g,A_\pm}: \cX^\pm_{\g,A_\pm}\to \cW^\pm_{\g,A_\pm}=\Ti \cW^\pm_{\g,A_\pm}|_{\cX^\pm_{\g,A_\pm}}$. 
Then the evaluation maps restrict to sc$^\infty$ functors $\ev^\pm: \cX^{\scriptscriptstyle \rm GW}_A\to\D_r^\pm\times M$ resp.\ $\ev^\pm: \cX^\pm_{\g,A}\to\D_r^\pm\times M$, which yield -- again independent of $r>0$ -- the fiber product construction of $\cB^\pm(\alpha)$ in Lemma~\ref{lem:PSS poly}, and of $\cB^\iota(\alpha)$ in Lemma~\ref{lem:iota,h poly}.

Now the identification of the top boundary strata $\partial_1\cX^\infty_{p_-,p_+,A}$ will proceed similar to the proof of Lemma~\ref{lem:iota boundary} with $\cB_{\rm GW}(A)$ replaced by $\cB^{+,-}_{\rm SFT}(A)$, apart from the fact that the SFT polyfold has boundary. 
This boundary is identified in Assumption~\ref{ass:iso}~(ii) as
\begin{equation}\label{eq:SFTfaces} \textstyle
\partial_1\cX^{\scriptscriptstyle \rm SFT}_A \;\cong\; \cX^{\scriptscriptstyle \rm GW}_A  \;\; \sqcup \;\;
\bigsqcup_{\begin{smallmatrix}
\scriptscriptstyle \g \in \cP(H)\\
\scriptscriptstyle A_- + A_+ = A\\
  \end{smallmatrix}} \partial_0\cX^+_{\g,A_+} \times \partial_0\cX^-_{\g, A_-} .
\end{equation}
By the fiber product construction \cite[Cor.7.3]{Ben-fiber} of $\cB(p_-,p_+;A)$ in Lemma~\ref{lem:iota,h poly}, the degeneracy index satisfies $d_{\cB(p_-,p_+;A)}(\ul{\t}_-,\ul{u},\ul{t}_+) = d_{\overline{\cM}(p_-,M)}(\ul{\t}_-) + d_{\cB_{\rm SFT}(A}(\ul{u}) + d_{\overline{\cM}(M,p_+)}(\ul{\t}_+)$. Hence we have $d_{\cB(p_-,p_+;A)}(\ul{\t}_-,\ul{u},\ul{\t}_+) = 1$ if and only if the degeneracy index of exactly one of the three arguments $\ul{\t}_-,\ul{u},\ul{\t}_+$ is $1$ and the other two are $0$. This identifies $|\partial_1\cX_{p_-,p_+,A}|=\partial_1\cB(p_-,p_+; A)$ as in the first line of the displayed equation below.  
Then the subsequent identifications result by comparing the resulting expressions with the interiors in Lemma~\ref{lem:PSS poly}, \ref{lem:iota,h poly}. We obtain an identification that throughout is to be interpreted on the smooth level (as fiber product constructions drop some non-smooth points)
\begin{align*}
&\partial_1\cX_{p_-,p_+, A} \;\cong\;  \phantom{\sqcup\;\;}
\partial_0\bM(p_-,M) \; \leftsub{\ev_0^+}{\times}_{\ev^+} \; \partial_1\cX^{\scriptscriptstyle \rm SFT}_A \; \leftsub{\ev^-}{\times}_{\ev_0^-} \; \partial_0\bM(M,p_+)  \\
&\phantom{\partial_1\cX^{\scriptscriptstyle \rm SFT}_{p_-,p_+, A} \;\cong\; } \sqcup\;\;
\partial_1\bM(p_-,M) \; \leftsub{\ev_0^+}{\times}_{\ev^+} \; \partial_0\cX^{\scriptscriptstyle \rm SFT}_A \; \leftsub{\ev^-}{\times}_{\ev_0^-} \; \partial_0\bM(M,p_+) 
 \\
&\phantom{\partial_1\cX_{p_-,p_+, A} \;\cong\; } \sqcup\;\;
\partial_0\bM(p_-,M) \; \leftsub{\ev_0^+}{\times}_{\ev^+} \; \partial_0\cX^{\scriptscriptstyle \rm SFT}_A \; \leftsub{\ev^-}{\times}_{\ev_0^-} \; \partial_1\bM(M,p_+)  \\
&\quad\;=\;
\cM(p_-,M) \; \leftsub{\ev_0^+}{\times}_{\ev^+} \; \cX^{\scriptscriptstyle \rm GW}_A \; \leftsub{\ev^-}{\times}_{\ev_0^-} \; \cM(M,p_+)  \\
&\quad\qquad\sqcup\;\; \textstyle\bigcup_{\begin{smallmatrix}
\scriptscriptstyle \g \in \cP(H)\\
\scriptscriptstyle A_- + A_+ = A\\
  \end{smallmatrix}} \hspace{-2mm}
\cM(p_-,M) \; \leftsub{\ev_0^+}{\times}_{\ev^+} \;  \partial_0\cX^+_{\g,A_+} \times \partial_0\cX^-_{\g,A_-} \; \leftsub{\ev^-}{\times}_{\ev_0^-} \; \cM(M,p_+)  \\
&\quad\qquad \sqcup\;\;
\textstyle\bigcup_{q\in{\rm Crit}(f)} \cM(p_-,q) \times \cM(q,M) \; \leftsub{\ev_0^+}{\times}_{\ev^+} \; \partial_0\cX^{\scriptscriptstyle \rm SFT}_A \; \leftsub{\ev^-}{\times}_{\ev_0^-} \; \partial_0\bM(M,p_+) 
 \\
&\quad\qquad\sqcup\;\; \textstyle\bigcup_{q\in{\rm Crit}(f)} 
\cM(p_-,M) \; \leftsub{\ev_0^+}{\times}_{\ev^+} \; \partial_0\cX^{\scriptscriptstyle \rm SFT}_A \; \leftsub{\ev^-}{\times}_{\ev_0^-} \; \cM(M,q) \times\cM(q,p_+)  \\
&\quad\;=\; 
 \partial_0\cX^\iota_{p_-,p_+,A}
\;\;\sqcup\;\; \textstyle\bigcup_{\begin{smallmatrix}
\scriptscriptstyle \g \in \cP(H)\\
\scriptscriptstyle A_- + A_+ = A\\
  \end{smallmatrix}}
\partial_0\cX^+_{p_-,\g,A_+} \times \partial_0\cX^-_{\g,p_+, A_-} \\
&\quad\qquad\sqcup\;\;
\textstyle\bigcup_{q\in{\rm Crit}(f)} \cM(p_-,q) \times \partial_0\cX_{q,p_+, A} 
\quad
\sqcup\;\;  \textstyle\bigcup_{q\in{\rm Crit}(f)} \partial_0\cX_{p_-,q, A} \times \cM(q,p_+). 
\end{align*}
Here we also used the identification of evaluation maps in Assumption~\ref{ass:iso}~(iii)(a).
Then compatibility in (ii) of the oriented section functors $S_\alpha^{\cdots}$ with the identification of these (smooth levels of) faces $\cF^\infty\subset\partial_1\cX^\infty_{p_-,p_+,A}$ follows from compatibility of $\pr_{p_-,p_+,A}: \cX_{p_-,p_+,A}\to \cX^{\scriptscriptstyle \rm SFT}_A$ with the projections $\pr^\pm_\alpha: \cX^\pm_\alpha \to \cX^\pm_{\gamma,A_\pm}$ for $\alpha\in\cI^\pm$ used in Lemma~\ref{lem:PSS poly} and $\pr^\iota_\alpha: \cX^\iota_\alpha \to \cX^{\scriptscriptstyle \rm GW}_A$ used in Lemma~\ref{lem:iota,h poly}. More precisely, $S_{p_-,p_+,A}|_{\cF^\infty}=\pr_\cF^*S_\cF$ follows from compatibility of the sections in Assumption~\ref{ass:iso}~(iii) and
$$
\pr_{p_-,p_+,A}|_{\cF^\infty} \;=\; 
\begin{cases}
\pr^\iota_{p_-,p_+, A} \circ \pr_\cF
&\qquad\text{for}\;\; \cF=\partial_0\cX^\iota_{p_-,p_+, A}  , \\
(\pr^+_{p_-,\g, A_-}\times \pr^-_{\g,p_+, A_+}) \circ \pr_\cF  
&\qquad\text{for}\;\; \cF=\partial_0\cX^+_{p_-,\g, A_+} \times  \partial_0\cX^-_{\g,p_+, A_-} , \\
\pr_{q,p_+, A} \circ \pr_\cF
&\qquad\text{for}\;\; \cF=\cM(p_-,q) \times \partial_0\cX_{q,p_+, A} , \\
\pr_{p_-,q, A} \circ \pr_\cF
&\qquad\text{for}\;\; \cF=\partial_0\cX_{p_-,q, A} \times \cM(q,p_+) .
\end{cases}
$$

\medskip
\noindent
{\bf Construction of coherent perturbations:}
Next, we construct admissible sc$^+$-multisections $\kappa_{\alpha}^{\cdots}:\cW_{\alpha}^{\cdots}\to\Q^+$ for $\alpha\in\cI^+\cup\cI^-\cup\cI^\iota\cup\cI$ as claimed in (i), i.e.\ in general position to the respective sections $S_{\alpha}^{\cdots}:\cX_{\alpha}^{\cdots}\to:\cW_{\alpha}^{\cdots}$, while also coherent as claimed in (iii).
The existence of such coherent transverse perturbations will ultimately be guaranteed by an abstract perturbation theorem for coherent systems of sc-Fredholm sections. Since the SFT perturbation package \cite[\S 14]{fh-sft} has not yet been described for neck stretching, 
we give a detailed construction of the perturbations for our purposes. 
We proceed as in Lemma~\ref{lem:iota chain} and construct them all as pullbacks  
$\kappa^{\cdots}_\alpha:= \lambda^{\cdots}_A \circ (\pr^{\cdots}_\alpha)^*$ of a collection of sc$^+$-multisections on the SFT resp.\ Gromov-Witten polyfold bundles -- without Morse trajectories --
$$
\ul\lambda \;=\; \left(  \;\;
\begin{aligned}
&\bigl( \lambda^+_{\g,A} : \Ti\cW^+_{\g,A}\to\Q^+ \bigr)_{\g\in\cP(H),A\in H_2(M)} \qquad
&&\bigl( \lambda^{\scriptscriptstyle \rm GW}_A : \Ti\cW^{\scriptscriptstyle \rm GW}_A\to\Q^+ \bigr)_{A\in H_2(M)}\\
&\bigl( \lambda^-_{\g,A} : \Ti\cW^-_{\g,A}\to\Q^+ \bigr)_{\g\in\cP(H),A\in H_2(M)} \qquad
&&\bigl( \lambda^{\scriptscriptstyle \rm SFT}_A : \Ti\cW^{\scriptscriptstyle \rm SFT}_A\to\Q^+ \bigr)_{A\in H_2(M)}
\end{aligned}
\;\;
\right).
$$
For this to induce a coherent collection of sc$^+$-multisections as required in (iii),
\begin{align*}
&\bigl( \kappa^+_{p,\g,A}:=\lambda^+_{\g,A} \circ (\pr^+_{p,\g,A})^*  \bigr)_{(p,\g,A)\in\cI^+}, \qquad
&&\bigl( \kappa^\iota_{p_-,p_+,A}:=\lambda^{\scriptscriptstyle\rm GW}_{A} \circ (\pr^\iota_{p_-,p_+,A})^*  \bigr)_{(p_-,p_+,A)\in\cI^\iota} ,\\
&\bigl( \kappa^-_{\g,p,A}:=\lambda^-_{\g,A} \circ (\pr^-_{\g,p,A})^*  \bigr)_{(\g,p,A)\in\cI^-}, \qquad
&& \bigl( \kappa_{p_-,p_+,A}:=\lambda^{\scriptscriptstyle\rm SFT}_{A} \circ (\pr_{p_-,p_+,A})^*  \bigr)_{(p_-,p_+,A)\in\cI},
\end{align*}
it suffices to pick $\ul\lambda$ compatible with respect to the faces of the SFT neck stretching polyfolds $\cX^{\scriptscriptstyle\rm SFT}_{A}$ in \eqref{eq:SFTfaces}. More precisely, using the natural identifications of bundles from Assumption~\ref{ass:iso}~(iii), we will construct $\ul\l$ coherent in the sense that -- for some choice of $r>0$ in the construction of 
$|\cX^\pm_{\g,A}| = (\ov\ev^\pm)^{-1}(\D_r^\pm\times M)\subset \cB^\pm_{\rm SFT}(\g;A)$ and $\cW^\pm_{\g,A}=\Ti\cW^\pm_{\g,A}|_{\cX^\pm_{\g,A}}$ -- we have
\begin{align} \label{eq:coherent1}
 \lambda^{\scriptscriptstyle \rm SFT}_A(w) &\;=\; \lambda^{\scriptscriptstyle \rm GW}_A(w) 
\quad&& \forall \;
w\in \Ti\cW^{\scriptscriptstyle \rm GW}_A , \\
\lambda^{\scriptscriptstyle \rm SFT}_A ( (l_{\g,A_\pm})_*(w^+,w^-)) &\;=\;  \lambda^+_{\g,A_+}(w^+) \cdot \lambda^-_{\g,A_-}(w^-)
\quad&& \forall \;
(w^+,w^-) \in \cW^+_{\g, A_+} \times \cW^-_{\g, A_-},
\label{eq:coherent2}
\end{align}
where $l_{\g,A_\pm}$ is the map defined in Assumption~\ref{ass:iso}(i).
So to finish this proof it remains to choose the sc$^+$-multisections $\ul\lambda$ so that each induced sc$^+$-multisection in the induced coherent collection for $(\kappa^{\cdots}_\alpha)_{\alpha\in\cI^+\cup\cI^-\cup\cI^\iota\cup\cI}$ is admissible and in general position, while also satisfying the coherence requirements \eqref{eq:coherent1}, \eqref{eq:coherent2} and the 
requirements on $\ul\kappa^\iota$ in the proofs of Lemma~\ref{lem:iota chain} and \ref{lem:iota triangular}.
The construction of coherent perturbations for the SFT polyfolds analogously to \cite[\S 14]{fh-sft} proceeds by first choosing coherent compactness controlling data, i.e.\ pairs $(N,\cU)$ 
of auxiliary norms on all the bundles and saturated neighbourhoods of the compact zero sets in all the ep-groupoids $\Ti\cX^\pm_{\g,A} , \Ti\cX^{\scriptscriptstyle\rm GW}_A, \Ti\cX^{\scriptscriptstyle\rm SFT}_A$  (c.f.\ Definition~\ref{def:control}), which are compatible with the immersions to boundary faces in \eqref{eq:SFTfaces}. 
Then it constructs the perturbations $\l^{\scriptscriptstyle\rm GW}_A$ as in Lemma~\ref{lem:iota triangular} and also $\l^\pm_{\g,A_\pm}$ to be in general position, admissible w.r.t.\ the coherent data $(2 N, \cU)$, and coherent in the sense that continuous extension of \eqref{eq:coherent1}--\eqref{eq:coherent2} induces a well defined multisection ${\lambda^\partial_A: \Ti\cW^{\scriptscriptstyle\rm SFT}_A|_{\partial\cX^{\scriptscriptstyle\rm SFT}_A}\to \Q^+}$. 
Here coherence of the perturbations on the intersection of faces (see Remark~\ref{rmk:face}) is required to guarantee existence of scale-smooth extensions of $\lambda^\partial_A$ to multisections $\lambda^{\scriptscriptstyle\rm SFT}_A: \Ti\cW^{\scriptscriptstyle\rm SFT}_A\to \Q^+$. 
Coherence of the compactness controlling pairs guarantees that the multisection $\lambda^\partial_A$ over $\partial\cX^{\scriptscriptstyle\rm SFT}_A\subset \Ti\cX^{\scriptscriptstyle\rm SFT}_A$ satisfies the auxiliary norm bounds  $N(\lambda^\partial_A)\leq\frac 12$ 
and support requirements that guarantee compactness for extensions $\lambda^{\scriptscriptstyle\rm SFT}_A$ of $\lambda^\partial_A$ with $N(\lambda^{\scriptscriptstyle\rm SFT}_A)\leq 1$ and appropriate support requirements. 
Moreover, we may choose each of the extensions $\lambda^{\scriptscriptstyle\rm SFT}_A$ using Theorem~\ref{thm:transversality} to ensure -- as in Lemma~\ref{lem:iota chain} -- that the induced multisections $\kappa^{\cdots}_\alpha$ are in general position as well. The latter will automatically be admissible with respect to pullback of the pair controlling compactness.  
In more detail (but without specifying the auxiliary norm bounds) the inductive construction of perturbations in \cite{fh-sft-full} -- simplified to the subset of SFT moduli spaces considered here -- proceeds as follows:

\medskip\noindent{\bf Construction of $\boldsymbol{\lambda^{\scriptscriptstyle\rm GW}_A}$ and $\boldsymbol{\underline\kappa^\iota}$:}
Since the Gromov-Witten ep-groupoids $\Ti\cX_A^{\scriptscriptstyle\rm GW}$ are boundaryless by Assumption~\ref{ass:iota,h}~(iii), the sc$^+$-multisections $\l^{\scriptscriptstyle\rm GW}_A$ can be chosen independently of all other multisections. 
So we construct $\l^{\scriptscriptstyle\rm GW}_A$ as in the proofs of Lemma~\ref{lem:iota chain} and \ref{lem:iota triangular}, to ensure that the conclusions in these lemmas hold, as required by (i). 
This prescribes \eqref{eq:coherent1} on the boundary face $\Ti\cX^{\scriptscriptstyle\rm GW}_A\subset \partial \Ti\cX^{\scriptscriptstyle\rm SFT}_A$. 

Moreover, recall that $\l^{\scriptscriptstyle\rm GW}_A$ is obtained by applying Theorem~\ref{thm:transversality} to the sc-Fredholm section functors $S_A^{\scriptscriptstyle\rm GW}$, 
the sc$^\infty$ submersion $\ov\ev^+ \times \ov\ev^- : \Ti\cX^{\scriptscriptstyle\rm GW}_A \rightarrow \CP^1\times M \times \CP^1\times M$, and the collection of Cartesian products of stable and unstable manifolds $\{z_0^+\}\times W_{p_-}^- \times \{z_0^-\}\times W_{p_+}^+$.
As in the proof of Lemma~\ref{lem:iota chain} this ensures that the pullbacks $\ul\k^\i =(\k_{\alpha}^{\i}=\lambda^{\scriptscriptstyle\rm GW}_{A} \circ (\pr^\iota_\alpha)^*)_{\alpha \in \cI^{\i}}$ are in general position. Moreover, these pullbacks are admissible w.r.t.\ the pairs controlling compactness on $\cW^{\i}_{\alpha} \rightarrow \cX^{\i}_{\alpha}$ that result by pullback from the coherent compactness controlling pair on $\Ti\cW^{\scriptscriptstyle\rm GW}_A \rightarrow\Ti\cX^{\scriptscriptstyle\rm GW}_A$, which is constructed in a preliminary step as in \cite[\S 13]{fh-sft}.

\medskip\noindent{\bf Coherence for $\boldsymbol{\lambda^\pm_{\gamma,A}}$:}
The next step is to construct sc$^+$-multisections $\lambda^\pm_{\gamma,A}:\Ti\cW^\pm_{\g, A}\to\Q^+$ over the SFT ep-groupoids $\Ti\cX^\pm_{\g,A}$ of planes with limit orbit $\g\in\cP(H)$ from Assumption~\ref{ass:pss}, which then induce the perturbations $\underline\kappa^\pm$ for the $PSS/SSP$ moduli spaces. These constructions are independent of the choice of $\l^{\scriptscriptstyle\rm GW}_A$ since the corresponding boundary faces of $\Ti\cX^{\scriptscriptstyle\rm SFT}_A$ do not intersect by Assumption~\ref{ass:iso}~(ii). However, to enable the subsequent construction of $\lambda^{\scriptscriptstyle\rm SFT}_A$ as extension of the boundary values prescribed in \eqref{eq:coherent1} and \eqref{eq:coherent2}, we need to make sure that each sc$^+$-multisection $(\lambda^+_{\gamma,A_+}\cdot \lambda^-_{\gamma,A_-})\circ (l_{\g,A_\pm})^{-1}_*$ 
is well defined on the (open subset of) face $\cF_{\g,A_\pm}:= l_{\g,A_\pm}(\cX^+_{\g,A_+}\times \cX^-_{\g,A_-})\subset\partial\Ti\cX^{\scriptscriptstyle\rm SFT}_A$ and
coincides with the other sc$^+$-multisections $(\lambda^+_{\gamma',A'_+}\cdot \lambda^-_{\gamma',A'_-})\circ (l_{\g',A'_\pm})^{-1}_*$ on their intersection 
$\cF_{\g,A_\pm}\cap \cF_{\g',A'_\pm}$. 
Then this yields a well defined sc$^+$-multisection on $\bigcup \cF_{\g,A_\pm} = \partial \cX^{\scriptscriptstyle\rm SFT}_A\subset\partial\Ti\cX^{\scriptscriptstyle\rm SFT}_A$. 
To describe these intersections we note that \cite{fh-sft} constructs the ep-groupoids $\Ti\cX^\pm_{\g,A_\pm}$ with coherent boundaries -- involving ep-groupoids 
$(\cX^{\scriptscriptstyle\rm Fl}_{\g^-,\g^+,B})_{\g^\pm\in\cP(H),B\in H_2(M)}$ which contain the moduli spaces of Floer trajectories between periodic orbits $\g^\pm$, as well as further ep-groupoids for Floer trajectories carrying a marked point. 
We will avoid dealing with the latter by specifying values $r<\infty$ when pulling back perturbations from the ep-groupoids $\cX^\pm_{\g,A}\subset\Ti\cX^\pm_{\g,A}$ given by $|\cX^\pm_{\g,A}| = (\ov\ev^\pm)^{-1}(\D_r^\pm\times M)\subset \cB^\pm_{\rm SFT}(\g;A)$, as this will prevent the appearance of marked Floer trajectories even in the closure. 
For any fixed value $0<r\leq \infty$, the $j$-th boundary stratum is given by $j$ Floer trajectories breaking off, 
\begin{align} \label{eq:Xpmbdy}
\partial_j \cX^+_{\g,A} &\;= \textstyle\bigsqcup_{\begin{smallmatrix}
\scriptscriptstyle \g^0,\ldots,\g^j=\g \in \cP(H)\\
\scriptscriptstyle A_+ + B_1+\ldots + B_j = A
  \end{smallmatrix}} 
\partial_0\cX^+_{\g^0,A_+} \times \partial_0\cX^{\scriptscriptstyle\rm Fl}_{\g^0,\g^1,B_1} \times \ldots \times \partial_0\cX^{\scriptscriptstyle\rm Fl}_{\g^{j-1},\g^j,B_j} , \\
\partial_{k-j} \cX^-_{\g,A} &\;= \textstyle\bigsqcup_{\begin{smallmatrix}
\scriptscriptstyle \g=\g^j,\ldots,\g^k \in \cP(H)\\
\nonumber
\scriptscriptstyle B_{j+1}+\ldots+B_k + A_- = A
\end{smallmatrix}}
\partial_0\cX^{\scriptscriptstyle\rm Fl}_{\g^j,\g^{j+1},B_{j+1}} \times \ldots \times \partial_0\cX^{\scriptscriptstyle\rm Fl}_{\g^{k-1},\g^k,B_k} \times 
\partial_0\cX^-_{\g^k,A_-} .
\end{align}
Now, for example, 
$\partial_0 \cX^+_{\g^0,A_+}\times \partial_0\cX^{\scriptscriptstyle\rm Fl}_{\g^0,\g^1,B}
\times\partial_0 \cX^-_{\g^1,A_-}$ is both a subset of 
$\partial_0 \cX^+_{\g^0,A_+} \times\partial_1 \cX^-_{\g^0,A_-+B} \subset \partial \bigl(\cX^+_{\g^0,A_+} \times \cX^-_{\g^0,A_-+B} \bigr)$
and of $\partial_1 \cX^+_{\g^1,A_++B} \times\partial_0 \cX^-_{\g^1,A_-}\subset \partial \bigl(\cX^+_{\g^1,A_++B} \times \cX^-_{\g^1,A_-} \bigr)$, and the embeddings $l_{\g^0,A^0_\pm}$ and $\l_{\g^1,A^1_\pm}$ for the two splittings 
$A_+ + (A_-+B) = A^0_++A^0_- =A=A^1_++A^1_- =(A_++B)+A_-$ coincide under this identification. 
Generally, the boundary of the Floer ep-groupoids is given by broken trajectories, and this yields a disjoint cover of $\partial^{R=\infty}\cX^{\scriptscriptstyle\rm SFT}_A \subset \partial \cX^{\scriptscriptstyle\rm SFT}_A$,
\begin{align*}
\partial^{R=\infty}\cX^{\scriptscriptstyle\rm SFT}_A  
\;=\; \hspace{-2mm}\bigsqcup_{\begin{smallmatrix}
\scriptscriptstyle \g^0,\ldots,\g^k \in \cP(H)\\
\scriptscriptstyle A_+ + B_1 + \ldots + B_k + A_- = A
  \end{smallmatrix}} \hspace{-2mm}
l_{\ul\g, A_\pm, \ul B}\bigl( 
\partial_0 \cX^+_{\g^0,A_+}\times \partial_0\cX^{\scriptscriptstyle\rm Fl}_{\g^0,\g^1,B_1}
\times \ldots \times \partial_0\cX^{\scriptscriptstyle\rm Fl}_{\g^{k-1},\g^k,B_k}
\times
\partial_0 \cX^-_{\g^k,A_-} \bigr) ,
\end{align*}
in which the embeddings $l_{\ul\g, A_\pm, \ul B}$ coincide with each of the embeddings
$l_{\g^j,A^j_\pm}$ for $0\leq j\leq k$ and $A^j_+=A_++\sum_{i\leq j} B_i$, $A^j_-=A_- +\sum_{i>j} B_i$ -- when restricted to the subsets
$$
\partial_0 \cX^+_{\g^0,A_+}\times
\partial_0\cX^{\scriptscriptstyle\rm Fl}_{\g^0,\g^1,B_1}
\times \ldots \times \partial_0\cX^{\scriptscriptstyle\rm Fl}_{\g^{k-1},\g^k,B_k}
\times
\partial_0 \cX^-_{\g^k,A_-}
\;\subset\; 
\partial_j \cX^+_{\g^j,A^j_+}\times \partial_{k-j} \cX^-_{\g^j,A^j_-} . 
$$
Now on these subsets we require coherence
$\lambda^+_{\g^j,A^j_+} \cdot \lambda^-_{\g^j,A^j_-} = \lambda^+_{\g^{j'},A^{j'}_+} \cdot \lambda^-_{\g^{j'},A^{j'}_-}$ for all $0\leq j\neq j' \leq k$, as this is equivalent to \eqref{eq:coherent2} being well defined on $\im l_{\ul\g, A_\pm, \ul B} = \bigcap_{j=0}^k \cF_{\g^j,A^j_\pm}$. 
This will be achieved by constructing the sc$^+$-multisections $(\lambda^\pm_{\g,A_\pm})$ to have product structure on the boundary -- 
where the bundles $P_{\g,A}:\cW^\pm_{\g,A}\to \cX^\pm_{\g,A}$ are restricted to various faces of $\partial\cX^\pm_{\g,A}$ -- 
\begin{align} \label{eq:lambdapm}
\lambda^+_{\g^j,A^j_+}\big|_{P_{\g^j,A^j_+}^{-1}\bigl(
\cX^+_{\g^0,A_+}\times
\cX^{\scriptscriptstyle\rm Fl}_{\g^0,\g^1,B_1}
\times \ldots \times \cX^{\scriptscriptstyle\rm Fl}_{\g^{j-1},\g^j,B_j}
\bigr)} 
&=\; 
\lambda^+_{\g^0,A_+}\cdot
\lambda^{\scriptscriptstyle\rm Fl}_{\g^0,\g^1,B_1}
\cdot \ldots \cdot \lambda^{\scriptscriptstyle\rm Fl}_{\g^{j-1},\g^j,B_j} , \\ 
\nonumber
\lambda^-_{\g^j,A^j_-}\big|_{
P_{\g^j,A^j_-}^{-1}\bigl(
\cX^{\scriptscriptstyle\rm Fl}_{\g^j,\g^{j+1},B_{j+1}}
 \times \ldots \times \cX^{\scriptscriptstyle\rm Fl}_{\g^{k-1},\g^k,B_k}
\times
\cX^-_{\g^k,A_-}
\bigr)}
&=\; 
\lambda^{\scriptscriptstyle\rm Fl}_{\g^j,\g^{j+1},B_{j+1}} \cdot
 \ldots \cdot\lambda^{\scriptscriptstyle\rm Fl}_{\g^{k-1},\g^k,B_k}
\cdot
\lambda^-_{\g^k,A_-}, 
\end{align}
for a collection of sc$^+$-multisections $\lambda^{\scriptscriptstyle\rm Fl}_{\g^-,\g^+,B}: \cW^{\scriptscriptstyle \rm Fl}_{\g^-,\g^+,B} \to \Q$ over the Floer ep-groupoids $\cX^{\scriptscriptstyle\rm Fl}_{\g^-,\g^+,B}$. 
While this guarantees coherence on each overlap of embeddings $\im l_{\ul\g, A_\pm, \ul B} \subset \cF_{\g^j,A^j_\pm} \cap\cF_{\g^{j'},A^{j'}_\pm}$, 
$$
\lambda^+_{\g^j,A^j_+} \cdot \lambda^-_{\g^j,A^j_-} 
\;=\;
\lambda^+_{\g^0,A_+}\cdot
\lambda^{\scriptscriptstyle\rm Fl}_{\g^0,\g^1,B_1}
\cdot \ldots \cdot \lambda^{\scriptscriptstyle\rm Fl}_{\g^{k-1},\g^k,B_j}
\cdot
\lambda^-_{\g^k,A_-}
\;=\; \lambda^+_{\g^{j'},A^{j'}_+} \cdot \lambda^-_{\g^{j'},A^{j'}_-} , 
$$
we are now faced with the challenge of satisfying the coherence conditions in \eqref{eq:lambdapm}. 
These conditions uniquely determine the boundary restrictions 
$\lambda^\pm_{\g,A_\pm}\big|_{P_{\g,A_\pm}^{-1}(\partial\cX^\pm_{\g,A_\pm})}$
via the identification of the boundaries with Cartesian products of interiors in \eqref{eq:Xpmbdy}. 
Thus \eqref{eq:Xpmbdy} on Cartesian products involving boundary strata poses coherence conditions on the choice of $\lambda^{\scriptscriptstyle\rm Fl}_\beta$ for $\beta\in\cI^{\scriptscriptstyle\rm Fl}:= \cP(H)\times\cP(H)\times H_2(M)$.

\medskip\noindent{\bf Construction of $\boldsymbol{\lambda^{\scriptscriptstyle\rm Fl}_{\gamma^-,\gamma^+,B}}$:}
To achieve the coherence in \eqref{eq:lambdapm}, \cite{fh-sft-full} first constructs the sc$^+$-multisections $(\lambda^{\scriptscriptstyle\rm Fl}_\beta)_{\beta\in\cI^{\scriptscriptstyle\rm Fl}}$ by iteration over the maximal degeneracy $k_\beta:=\max\{k\in\N_0\,|\, (S^{\scriptscriptstyle\rm Fl}_\beta)^{-1}(0) \cap \partial_k \cX^{\scriptscriptstyle\rm Fl}_\beta \neq \emptyset\}$ 
of unperturbed solutions (which is finite by Gromov compactness):
We first consider classes $\beta$ with $k_\beta=-\infty$. For these, 
the section $S^{\scriptscriptstyle\rm Fl}_\beta$ has no zeros so is already transverse, so that $\lambda^{\scriptscriptstyle\rm Fl}_\beta$ can be chosen as the trivial perturbation. (The trivial multivalued section functor $\l:\cW\to\Q^+$ is given by $\l(0)=1$ and ${\l(w\neq 0)=0}$.) 
Next, we consider $\beta$ with $k_\beta=0$. For these, 
the section $S^{\scriptscriptstyle\rm Fl}_\beta$ has all zeros in the interior, so that $\lambda^{\scriptscriptstyle\rm Fl}_\beta$ can be chosen admissible and trivial on the boundary -- by applying Corollary~\ref{cor:regularize}~(i) with a neighbourhood of the unperturbed zero set in the interior, $|(S^{\scriptscriptstyle\rm Fl}_\beta)^{-1}(0)|\subset \cV_\beta \subset |\partial_0\cX^{\scriptscriptstyle\rm Fl}_\beta|$. 
Once the iteration has constructed $\lambda^{\scriptscriptstyle\rm Fl}_\beta$ for all $\beta$ with $k_\beta \leq n$ for some $n\in\N_0$, we proceed to consider 
$\beta=(\g^-,\g^+,B)\in\cI^{\scriptscriptstyle\rm Fl}$ with
$k_\beta= n+1$.
For these, 
the restriction $\lambda^{\scriptscriptstyle\rm Fl}_\beta|_{P_\beta^{-1}(\partial\cX^{\scriptscriptstyle\rm Fl}_\beta)}$ to the boundary $\partial\cX^{\scriptscriptstyle\rm Fl}_\beta = \bigcup_{\g^-=\g^0,\g^1,\ldots,\g^{k-1},\g^k=\g^+,B=B_1+\ldots+B_k} \partial_0\cX^{\scriptscriptstyle\rm Fl}_{\g^0,\g^1,B_1}
\times \ldots \times \partial_0\cX^{\scriptscriptstyle\rm Fl}_{\g^{k-1},\g^k,B_k}$
is prescribed by the previous iteration steps
$\lambda^{\scriptscriptstyle\rm Fl}_\beta\big|_{P_\beta^{-1}(\cX^{\scriptscriptstyle\rm Fl}_{\g^0,\g^1,B_1}
\ldots \times \cX^{\scriptscriptstyle\rm Fl}_{\g^{k-1},\g^k,B_k})} :=
\lambda^{\scriptscriptstyle\rm Fl}_{\g^0,\g^1,B_1}
 \ldots \cdot \lambda^{\scriptscriptstyle\rm Fl}_{\g^{k-1},\g^k,B_k}$
on all boundary faces that contain unperturbed solutions in their closure. Indeed, existence of a solution in $\cX^{\scriptscriptstyle\rm Fl}_{\g^0,\g^1,B_1}\times \ldots \times\cX^{\scriptscriptstyle\rm Fl}_{\g^{k-1},\g^k,B_k}$ implies $k_{\g^{i-1},\g^i,B_i}\geq 0$ for $i=1,\ldots,k$, and the Cartesian product of solutions of maximal degeneracy yields $1 + k_{\g^0,\g^1,B_1} + \ldots + k_{\g^{k-1},\g^k,B_k} \leq k_\beta$. Thus these prescriptions are made for $0\leq k_{\g^{i-1},\g^i,B_i} \leq k_\beta - 1
=n$, and 
on boundary faces with no solutions in their closure we prescribe the trivial perturbation throughout.

This yields a well defined sc$^+$-multisection $\lambda^{\scriptscriptstyle\rm Fl}_\beta|_{P_\beta^{-1}(\partial\cX^{\scriptscriptstyle\rm Fl}_\beta)}$ by coherence in the prior iteration steps, so that $\lambda^{\scriptscriptstyle\rm Fl}_\beta$ can be constructed by applying the extension result \cite[Thm.15.5]{HWZbook} which provides general position and admissibility with respect to a pair controlling compactness that extends the pair which was chosen on the boundary in prior iteration steps.
    
\medskip\noindent{\bf Construction of $\boldsymbol{\lambda^\pm_{\gamma,A}}$ and $\boldsymbol{\underline\kappa^\pm}$:}
With the Floer perturbations in place, \cite{fh-sft-full} next constructs the collections of sc$^+$-multisections $(\lambda^\pm_{\gamma,A})_{\g\in\cP(H),A\in H_2(M)}$ to satisfy \eqref{eq:lambdapm} by iteration over degeneracy $k_{\gamma,A}:=\max\{k\in\N_0\,|\, (S^\pm_{\gamma,A})^{-1}(0) \cap \partial_k \Ti\cX^\pm_{\gamma,A} \neq \emptyset\}$. For $k_{\gamma,A}=-\infty$ one takes $\lambda^\pm_{\gamma,A}$ to be trivial. For $k_{\gamma,A}=0$ one applies Theorem~\ref{thm:transversality} to the sc-Fredholm section functor $S_{\g,A}^\pm:\Ti\cX_{\g,A}^\pm\to\Ti\cW_{\g,A}^\pm$, the map $\ov\ev^\pm : \Ti\cX^\pm_{\g,A} \to \ov{\C^\pm}\times M$, and the collection of stable resp.\ unstable manifolds $\{0\}\times W_p^\pm$ for all critical points $p \in \Crit(f)$. These satisfy the assumptions as the zero set $|(S_{\g,A}^\pm)^{-1}(0)|$ is compact and the preimages $(\ov\ev^\pm)^{-1}(\{0\}\times W_p^\pm)$ lie within the open subset $\cX^\pm_{\g,A}\subset\Ti\cX^\pm_{\g,A}$ on which $\ov\ev^\pm$ restricts to a sc$^\infty$ submersion $\ev^\pm : \cX^\pm_{\g,A} \to \C^\pm\times M$. 
We can moreover prescribe $\lambda^\pm_{\gamma,A}|_{P_{\g,A}^{-1}(\partial\Ti\cX^\pm_{\g,A})}$ to be trivial, since in the absence of solutions the trivial perturbation is in general position. 
Then Theorem~\ref{thm:transversality} provides $\lambda^\pm_{\gamma,A}$ that is supported in the interior and transverse to each submanifold $\{0\}\times W_p^\pm$ in the sense that these submanifolds are transverse to the evaluation from the perturbed zero set 
\begin{equation}\label{eq:transev}
\ev^\pm \,: \;  \bigl| \{x\in \cX^\pm_{\gamma,A} \,|\, \l^\pm_{\gamma,A}(S^\pm_{\gamma,A}(x))>0\} \bigr| \;\to\; \C^\pm\times M . 
\end{equation}
Now suppose that admissible $\lambda^\pm_{\gamma',A'}$ in general position have been constructed for $k_{\gamma',A'}\leq k\in\N_0$, and satisfy both the transversality in \eqref{eq:transev} and the coherence condition \eqref{eq:lambdapm} over the ep-groupoids $|\cX^\pm_{\g',A'}|=(\ov\ev^\pm)^{-1}(\D^\pm_{r_k}\times M)$ with $r_k:= 2+ 2^{-k}$. 
Then for $k_{\gamma,A}=k+1$ we will construct $\lambda^\pm_{\gamma,A}$ to satisfy \eqref{eq:lambdapm} over $(\ov\ev^\pm)^{-1}(\D^\pm_{r_{k+1}}\times M)$ by first noting that the previous iteration -- and requiring triviality on boundary faces without solutions -- determines a well defined sc$^+$-multisection $\lambda^\pm_{\gamma,A}|_{P_{\g,A}^{-1}(\partial\cX^\pm_{\gamma,A})}$ over the $r=r_k$ boundary $\partial\cX^\pm_{\gamma,A} \simeq \bigcup_{\g',A=A_\pm+B} \partial_0\cX^\pm_{\g',A_\pm} \times \cX^{\scriptscriptstyle\rm Fl}_{\g',\g,B}$. 
For faces (w.r.t.\ $\partial\cX^\pm_{\g,A}$) with solutions it is given by $\lambda^\pm_{\gamma,A}\big|_{P_{\g,A}^{-1}(\cX^\pm_{\g',A_\pm} \times \cX^{\scriptscriptstyle\rm Fl}_{\g',\g,B})} =
\lambda^\pm_{\g',A_\pm} \times \lambda^{\scriptscriptstyle\rm Fl}_{\g',\g,B}$ where
$k_{\gamma,A}\geq 1+ k_{\g',A_\pm} + k_{\g',\g,B}$. 
This is well defined at $(x^\pm,\ul x,\ul x')\in \partial_0\cX^\pm_{\g',A_\pm} \times \cX^{\scriptscriptstyle\rm Fl}_{\g',\g'',B'}\times \cX^{\scriptscriptstyle\rm Fl}_{\g'',\g,B-B'}$, which appears both as 
$(x^\pm, (\ul x,\ul x'))\in \partial_0\cX^\pm_{\g',A_\pm} \times \partial\cX^{\scriptscriptstyle\rm Fl}_{\g',\g,B}$ and
$((x^\pm,\ul x),\ul x')\in \partial\cX^\pm_{\g'',A_\pm+B'} \times \cX^{\scriptscriptstyle\rm Fl}_{\g'',\g,B-B'}$, 
by the coherence of the Floer multisections and the prior iteration: 
For vectors in the respective fibers 
$(w^\pm, w, w') \in P_{\g',A_\pm}^{-1}(x^\pm) \times P_{\g',\g'',B'}^{-1}(\ul x) \times P_{\g'',\g,B-B'}^{-1}(\ul x')$ we have 
\begin{align*}
\lambda^\pm_{\g',A_\pm}(w^\pm) \cdot \lambda^{\scriptscriptstyle\rm Fl}_{\g',\g,B}(w,w')
& \;=\; \lambda^\pm_{\g',A_\pm}(w^\pm) \cdot \lambda^{\scriptscriptstyle\rm Fl}_{\g',\g'',B'}(w) \cdot
\lambda^{\scriptscriptstyle\rm Fl}_{\g'',\g,B-B'}(w') \\
&\;=\; \lambda^\pm_{\g'',A_\pm+B'}(w^\pm,w) \cdot
\lambda^{\scriptscriptstyle\rm Fl}_{\g'',\g,B-B'}(w'). 
\end{align*}
Moreover, $\ev^\pm: |\{x\in \partial\cX^\pm_{\gamma,A} \,|\, \l^\pm_{\gamma,A}(S^\pm_{\gamma,A}(x))>0\}|\to \C^\pm\times M$ is transverse to the submanifolds $\{0\}\times W_p^\pm$. 
However, this defines an admissible sc$^+$-multisection in general position only over the open subset of the boundary $ \partial\cX^\pm_{\gamma,A} = (\ev^\pm)^{-1}(\D^\pm_{r_k}\times M)\cap \partial\Ti\cX^\pm_{\gamma,A}$.
We multiply the given data by a scale-smooth cutoff function -- guaranteed by the existence of partitions of unity for the open cover $|\Ti\cX^\pm_{\g,A}|=(\ov\ev^\pm)^{-1}(\D^\pm_{r_k}\times M) \cup (\ov\ev^\pm)^{-1}((\ov{\C^\pm}\less\D^\pm_{r_{k+\frac 12}})\times M)$; see Remark~\ref{rmk:partitions} --
to obtain an admissible sc$^+$-multisection $\l^\partial_{\g,A}: \Ti\cW^\pm_{\gamma,A}|_{\partial\Ti\cX^\pm_{\g,A}}\to\Q^+$ which coincides with the prescribed data -- thus in general position and with evaluation transverse to each $\{0\}\times W_p^\pm$ -- over the closed subset $(\ev^\pm)^{-1}(\ov{\D^\pm_{r_{k+1}}}\times M)\cap \partial \Ti\cX^\pm_{\gamma,A}$. 
Then $\lambda^\pm_{\gamma,A}:\Ti\cW^\pm_{\gamma,A}\to \Q^+$ is constructed with these given boundary values using Theorem~\ref{thm:transversality} to achieve not just general position but also transversality as in \eqref{eq:transev}.  
By admissibility of the prior iteration and coherence of the pairs controlling compactness, $\l^\pm_{\gamma,A}$ can moreover be chosen admissible. 

As required in the coherence discussion, this determines right hand sides of \eqref{eq:coherent2} which agree on overlaps of different immersions $l_{\g,A_\pm}(\cX^+_{\g,A_+}\times \cX^-_{\g,A_-})$ for $r=2$. 
Thus it constructs a well defined sc$^+$-multisection on $\partial^{R=\infty}\cX^{\scriptscriptstyle\rm SFT}_A =
\bigcup l_{\g,A_{\pm}}(\cX^+_{\g,A_+} \times \cX^-_{\g,A_-}) \subset \partial\Ti\cX^{\scriptscriptstyle\rm SFT}_A$ that is admissible and has evaluations transverse to the submanifolds $\{0\}\times W_{p_-}^-\times \{0\}\times W_{p_+}^+$ for all pairs $p_-,p_+\in\Crit(f)$. 

Moreover, for $\alpha \in \cI^\pm$ we obtain a pair controlling compactness by pullback of the coherent pairs constructed as in \cite[\S 13]{fh-sft} on the bundles $\cW^\pm_{\gamma,A}$. 
Then the pullback multisections $\ul\k^\pm=(\k_{\alpha}^\pm=\lambda^\pm_{\g,A} \circ (\pr^\iota_\alpha)^*)_{\alpha\in\cI^\pm}$ are sc$^+$, admissible w.r.t.\ the pullback pair, and in general position by the arguments in the proof of Lemma~\ref{lem:iota chain}.

\medskip\noindent{\bf Construction of $\boldsymbol{\lambda^{\scriptscriptstyle\rm SFT}_A}$ and $\boldsymbol{\underline\kappa}$:}
The above constructions determine the right hand sides in the coherence requirements $\l_A^{\scriptscriptstyle\rm SFT}|_{P_A^{-1}(\Ti\cX^{\scriptscriptstyle\rm GW}_A)}=\l_A^{\scriptscriptstyle\rm GW}$ over $\Ti\cX^{\scriptscriptstyle\rm GW}_A\subset \Ti\cX^{\scriptscriptstyle\rm SFT}_A$ in  \eqref{eq:coherent1}, as well as 
$\l_A^{\scriptscriptstyle\rm SFT}|_{P_A^{-1}(\cF_{\g,A_\pm}(2))}=(\lambda^+_{\gamma,A_+}\cdot \lambda^-_{\gamma,A_-})\circ (l_{\g,A_\pm})^{-1}_*$
 on $\bigcup_{\g\in\cP(H),A_-+A_+=A} \cF_{\g,A_\pm}(2) \subset\partial\Ti\cX^{\scriptscriptstyle\rm SFT}_A$ in \eqref{eq:coherent2}, where we denote by
$\cF_{\g,A_\pm}(r):= l_{\g,A_\pm}(\cX^+_{\g,A_+}\times \cX^-_{\g,A_-})\subset\partial\Ti\cX^{\scriptscriptstyle\rm SFT}_A$ the image of the immersion $l_{\g,A_\pm}$ on the ep-groupoids representing $|\cX^\pm_{\g,A_\pm}| =(\ov\ev^{\pm})^{-1}(\D^\pm_r\times M)\subset\cB^\pm_{\scriptscriptstyle\rm SFT}(\g;A)$. 
By admissibility in the prior steps and existence of scale-smooth partitions of unity (see Remark~\ref{rmk:partitions}) these induce for every $A\in H_2(M)$ an admissible sc$^+$-multisection $\lambda^\partial_A: \Ti\cW^\pm_A|_{\partial\Ti\cX_A}\to\Q^+$ which coincides with the prescribed data 
over $\Ti\cX^{\scriptscriptstyle\rm GW}_A \sqcup \bigcup_{\g, A_\pm} \overline{\cF_{\g,A_\pm}(1)} \subset\partial\Ti\cX^{\scriptscriptstyle\rm SFT}_A$. Thus on this closed subset we have general position and transversality of the evaluation map
\begin{equation} 
\ev^+\times \ev^- \,:\; \bigl| \{x\in \partial\cX^{\scriptscriptstyle\rm SFT}_A \,|\, \l^\partial_A(S^{\scriptscriptstyle\rm SFT}_A(x))>0\} \bigr| \;\to\; \C^+ \times M\times \C^-\times M
\end{equation} 
to $\{0\}\times W_{p_-}^-\times \{0\}\times W_{p_+}^+$ for any pair of critical points $p_-,p_+\in\Crit(f)$. 
Then the admissible sc$^+$-multisection $\lambda^{\scriptscriptstyle\rm SFT}_A:\Ti\cW^{\scriptscriptstyle\rm SFT}_A\to \Q^+$ is constructed with these given boundary values -- and auxiliary norm and support prescribed by the coherent pairs controlling compactness -- using Theorem~\ref{thm:transversality} to achieve general position on all of $\Ti\cX^{\scriptscriptstyle\rm SFT}_A$ and extend transversality of the evaluation $\ev^+\times\ev^-$ to $\{0\}\times W_{p_-}^-\times \{0\}\times W_{p_+}^+$ to the entire perturbed zero set $|\{x\in\cX^{\scriptscriptstyle\rm SFT}_A \,|\, \l^{\scriptscriptstyle\rm SFT}_A(S^{\scriptscriptstyle\rm SFT}_A(x))>0\}|$, 
where $|\cX^{\scriptscriptstyle\rm SFT}_A| = (\ov\ev^+)^{-1}(\D^+_1\times M)\cap (\ov\ev^-)^{-1}(\D^-_1\times M)\subset\cB_{\scriptscriptstyle\rm SFT}(A)$. 

As in the proof of Lemma~\ref{lem:iota chain}, the transversality of the evaluation maps implies that the pullbacks  $\ul\k=(\k_{\alpha}=\lambda_{A} \circ (\pr^\iota_\alpha)^*)_{\alpha\in\cI}$ are in general position. They are also admissible with respect to the pullback of pairs controlling compactness. This finishes the construction of the sc$^+$-multisections claimed in (i) with the boundary restrictions required in (iii).

\medskip
\noindent
{\bf Proof of identity:}
By $\L$-linearity of all maps involved, it suffices to fix two generators $p_-,p_+\in{\rm Crit}(f)$ of $CM$ and check that $\iota_{\ul\k^\iota}\la p_- \ra$ and $(SSP_{\ul\k^-} \circ PSS_{\ul\k^+})\la p_- \ra + (-1)^{|p_-|} (\rd \circ h_{\ul\k})\la p_- \ra + (-1)^{|p_-|} (h_{\ul\k} \circ d) \la p_- \ra$ have the same coefficient in $\Lambda$ on $\la p_+\ra$. That is, we claim
\begin{align*}
\hspace{-3mm} \sum_{\begin{smallmatrix}
 A\in H_2(M)  \\ \scriptscriptstyle I^\iota(p_-,p_+;A)=0  \end{smallmatrix} }  \hspace{-5mm}
\#Z^{\ul\k^\iota}(p_-,p_+; A) \cdot T^{\o(A)}
&\; =
\hspace{-5mm}\sum_{\begin{smallmatrix}
 \g\in\cP(H), A_-,A_+\in H_2(M)  \\ \scriptscriptstyle I(p_-,\g;A_+)=I(\g,p_+;A_-)=0
 \end{smallmatrix} }
\hspace{-10mm}
\#Z^{\ul\k^+}(p_-, \g ; A_+) \; \#Z^{\ul\k^-}(\g, p_+; A_-) \cdot T^{\o(A_-) + \o(A_+)} \\
&\qquad + (-1)^{|p_-|}
\sum_{\begin{smallmatrix}
q\in\Crit(f), A\in H_2(M) \\ \scriptscriptstyle I(p_-,q;A)= |q|-|p_+|-1= 0
 \end{smallmatrix} }
\hspace{-5mm}
\#Z^{\ul\k}(p_-,q; A)  \; \#\cM(q,p_+) \cdot T^{\o(A)} \\
&\qquad+ (-1)^{|p_-|}
\sum_{\begin{smallmatrix}
 q\in\Crit(f), A\in H_2(M) \\ \scriptscriptstyle |p_-|-|q|-1 = I(q,p_+;A)=0  
 \end{smallmatrix} }
\hspace{-5mm}
\#\cM(p_-,q)  \; \#Z^{\ul\k}(q,p_+; A) \cdot T^{\o(A)} .
\end{align*}
Here the sums on the right hand side are over counts of pairs of moduli spaces of index $0$. 
From \S\ref{sec:Morse} we have $\cM(q,p_+)=\emptyset$ for $|q|-|p_+|-1<0$ and $\cM(p_-,q)=\emptyset$ for $|p_-|-|q|-1<0$, and general position of the sc$^+$-multisections $\ul\k^{\cdots}$ as in Corollary~\ref{cor:regularize}~(iii) implies $Z^{\ul\k^{\cdots}}(\ldots)=\emptyset$ for $I(\ldots)<0$. 
Thus the right hand side can be rewritten as sum over pairs of moduli spaces with indices summing to zero, and by \eqref{PSS index}, \eqref{iota index}, \eqref{h index} this is moreover equivalent to
\begin{align*}
0 &\;=\; I{(p_-,\g;A_+)} + I{(\g,p_+;A_-)} \;=\; I^\iota(p_-,p_+;A_-+A_+) \;=\; I(p_-,p_+;A_-+A_+) - 1 , \\
0 &\;=\; I{(p_-,q;A)} + |q|-|p_+|-1 \;=\; I(p_-,p_+;A) - 1 , \\
0 &\;=\; |p_-|-|q|-1 + I{(q,p_+;A)} \;=\; I(p_-,p_+;A) - 1 .
\end{align*}
So all sums can be rewritten with the index condition $I(p_-,p_+;A) = 1$ for $A=A_-+A_+\in H_2(M)$, and since the symplectic area is additive $\o(A_-)+\o(A_+)=\o(A_-+A_+)$, it suffices to show the following identity for each $\alpha=(p_-,p_+; A) \in \cI$ with $I(p_-,p_+;A) = 1$, 
\begin{align} \nonumber
(-1)^{|p_-|} \#Z^{\ul\k^\iota}(p_-,p_+; A)
&\; =\; (-1)^{|p_-|} \hspace{-4mm}
\sum_{\begin{smallmatrix}
\scriptscriptstyle \g\in\cP(H) \\ \scriptscriptstyle A_-+A_+=A 
 \end{smallmatrix} } \hspace{-3mm}
\#Z^{\ul\k^+}(p_-, \g ; A_+) \; \#Z^{\ul\k^-}(\g, p_+; A_-) \\
&\quad +   \hspace{-2mm}
\sum_{q\in\Crit(f) } \hspace{-1mm}
\#Z^{\ul\k}(p_-,q; A)  \; \#\cM(q,p_+)  \; + \hspace{-2mm}
\sum_{q\in\Crit(f) } \hspace{-1mm} \#\cM(p_-,q)  \; \#Z^{\ul\k}(q,p_+; A)  .  \label{h claim}
\end{align}
This identity will follow from Corollary~\ref{cor:regularize}~(v) applied to the weighted branched $1$-dimensional orbifold $Z^{\ul\kappa}(\alpha)$ that arises from an admissible sc$^+$-multisection $\kappa_{\alpha}:\cW_{\alpha}\to\Q^+$.
The boundary $\partial Z^{\ul\kappa}(\alpha)$ is given by the intersection with the top boundary stratum $\partial_1\cB(\alpha)\cap \cV_{\alpha} = |\partial_1\cX_{\alpha}|$, and will be determined here -- with orientations computed in \eqref{eq:orient} below.
\begin{align*}
& \partial Z^{\ul\kappa}(\alpha) \;=\; Z^{\ul\kappa}(\alpha) \cap |\partial_1\cX_{\alpha} | \\
&\;=\;  
 Z^{\ul\kappa}(\alpha) \cap   |\partial_0\cX^\iota_{p_-,p_+; A}| 
\quad \sqcup \bigcup_{\begin{smallmatrix}
\scriptscriptstyle \g\in\cP(H) \\ \scriptscriptstyle A_-+A_+=A 
 \end{smallmatrix} }
Z^{\ul\kappa}(\alpha)  \cap  | \partial_0\cX^+_{p_-,\g; A_+} \times  \partial_0\cX^-_{\g,p_+; A_-} |  \\
&\qquad\sqcup
 \bigcup_{q\in{\rm Crit}(f)} Z^{\ul\kappa}(\alpha) \cap  \bigl( \cM(p_-,q) \times | \partial_0\cX_{q,p_+; A}| \bigr) \quad \sqcup\bigcup_{q\in{\rm Crit}(f)} Z^{\ul\kappa}(\alpha) \cap \bigl( |\partial_0\cX_{p_-,q; A}| \times \cM(q,p_+) \bigr)  \\
& \;=\;
Z^{\ul\kappa^\iota}(p_-,p_+;A) 
\quad \sqcup \bigcup_{\g\in\cP(H), A=A_-+A_+} 
 Z^{\ul\kappa^+}(p_-,\g;A_+) \times Z^{\ul\kappa^-}(\g,p_+;A_-) \\
&\qquad\sqcup
 \bigcup_{q\in{\rm Crit}(f)} \cM(p_-,q) \times Z^{\ul\kappa}(q,p_+;A) 
 \quad \sqcup\bigcup_{q\in{\rm Crit}(f)}  Z^{\ul\kappa}(p_-,q;A)  \times \cM(q,p_+).
\end{align*}
Here the second identity uses coherence of the ep-groupoid as in \eqref{eq:homotopy bdy}. 
The third identity follows from coherence of sections $S_\alpha^{\cdots}$ and sc$^+$multisections $\kappa_\alpha^{\cdots}$ stated in (ii), (iii), and the fact from Corollary~\ref{cor:regularize}~(iv) that perturbed zero sets $Z^{\ul\k^{\cdots}}(\alpha)\subset|\partial_0\cX^{\cdots}_\alpha|$ are contained in the interior of the polyfolds when the Fredholm index is $0$. 
For the second summand we moreover use Lemma~\ref{lem:Cartesian} which ensures that each restriction $\kappa_{\alpha}|_{P_\alpha^{-1}(\cF)}$ to a face 
$\cF= \partial_0\cX^+_{p_-,\g; A_+} \times  \partial_0\cX^-_{\g,p_+; A_-} \subset \partial_1\cX_{p_-,p_+; A}$, given by $\k^+_{p_-,\g; A_+} \cdot  \k^-_{\g,p_+; A_-}$, is in general position to the section $S^+_{p_-,\g; A_+} \times  S^-_{\g,p_+; A_-}$.
Then its perturbed zero set $Z^{\ul\kappa^\iota}(p_-,p_+;A)\cap |\cF|$ is contained in the interior
$ \partial_0 |\cX^+_{p_-,\g; A_+} \times \cX^-_{\g,p_+; A_-} | = | \partial_0\cX^+_{p_-,\g; A_+} \times  \partial_0\cX^-_{\g,p_+; A_-} |$ as the complement of the pairs of points $(x^+,x^-)$ with
\begin{align*}
0 \;=\; \kappa_{p_-,p_+;A}(S_{p_-,p_+;A}(x^+,x^-))
&\;=\; 
(\k^+_{p_-,\g; A_+} \cdot  \k^-_{\g,p_+; A_-})\bigl(
(S^+_{p_-,\g; A_+} \times  S^-_{\g,p_+; A_-})(x^+,x^-)\bigr) \\
&\;=\;
\k^+_{p_-,\g; A_+}( S^+_{p_-,\g; A_+}(x^+)) \cdot 
\k^-_{\g,p_+; A_-}(S^-_{\g,p_+; A_-}(x^-) ) . 
\end{align*}
Since a product in $\Q^+=\Q\cap[0,\infty)$ is nonzero exactly when both factors are nonzero, this identifies the objects of the perturbed zero set of $\kappa_{p_-,p_+;A}$ with the product of perturbed zero objects for $\kappa^\pm$,
\begin{align*}
& \bigl\{ (x^+,x^-) \in \cF \,\big|\, \kappa_{p_-,p_+;A}(S_{p_-,p_+;A}(x^+,x^-))>0 \bigr\}  \\
&\;=\; \bigl\{ x^+ \in \cX^+_{p_-,\g; A_+} \,\big|\,  \k^+_{p_-,\g; A_+}( S^+_{p_-,\g; A_+}(x_+))   >0 \bigr\} 
\times
 \bigl\{ x^- \in \cX^-_{\g,p_+; A_-}  \,\big|\, \k^-_{\g,p_+; A_-}(S^-_{\g,p_+; A_-}(x^-) )  >0 \bigr\} .
\end{align*}
And the realization of this set is precisely $Z^{\ul\kappa^+}(p_-,\g;A_+) \times Z^{\ul\kappa^-}(\g,p_+;A_-)$, as claimed above.

\medskip
\noindent
{\bf Computation of orientations:} 
 To prove the identity \eqref{h claim} it remains to compute the effect of the orientations in Remark~\ref{rmk:iota,h orient} on the algebraic identity in Corollary~\ref{cor:regularize}~(v) that arises from the boundary $\partial Z^{\ul\kappa}(\alpha)$ of the 1-dimensional weighted branched orbifolds arising from regularization of the moduli spaces with index $I(\alpha)=I(p_-,p_+;A)=1$. Here $Z^{\l_A^{\scriptscriptstyle\rm SFT}}$ is of odd dimension 
with oriented boundary determined by the orientation relations in Assumption~\ref{ass:iso}~(iii)(b) and (c) as
$$ 
\partial_1 Z^{\l_A^{\scriptscriptstyle\rm SFT}} \;=\; Z^{\l_A^{\scriptscriptstyle\rm SFT}} \cap \partial_1 \cB_{\scriptscriptstyle\rm SFT}(A) 
\;=\; (-1) Z^{\l_A^{\scriptscriptstyle\rm GW}} 
\;\sqcup\; \textstyle
\bigsqcup_{\begin{smallmatrix}
\scriptscriptstyle \g \in \cP(H)\\
\scriptscriptstyle A_- + A_+ = A\\
  \end{smallmatrix}}  Z^{\l_{\g,A_+}^+} \times Z^{\l_{\g,A_-}^-} . 
$$
Moreover, the index of $\s_{\scriptscriptstyle\rm SFT}$ is $I(\alpha)=|p_-|-|p_+| +2c_1(A)+1=1$, so we compute orientations in close analogy to \eqref{eq:orient-iota} -- while also giving an alternative identification of the boundary components -- 
\begin{align}
\partial Z^{\ul\kappa}(\alpha) &\;=\;
\partial_1\bM(p_-,M) \; \leftsub{\ev}{\times}_{\ev}\; Z^{\l_A^{\scriptscriptstyle\rm SFT}}\; \leftsub{\ev}{\times}_{\ev} \; \partial_0\bM(M,p_+)  \nonumber\\
&\quad\sqcup\;
(-1)^{\dim \bM(p_-,M)} \;
\partial_0\bM(p_-,M) \; \leftsub{\ev}{\times}_{\ev} \; \partial_1 Z^{\l_A^{\scriptscriptstyle\rm SFT}}\; \leftsub{\ev}{\times}_{\ev} \; \partial_0\bM(M,p_+) \nonumber \\
&\quad\sqcup\;
(-1)^{\dim \bM(p_-,M)+1} \;
\partial_0\bM(p_-,M) \; \leftsub{\ev}{\times}_{\ev} \; \partial_0 Z^{\l_A^{\scriptscriptstyle\rm SFT}}\; \leftsub{\ev}{\times}_{\ev} \; \partial_1\bM(M,p_+) \nonumber \\
&\;=\;
\bigl(\; \textstyle\bigsqcup_{q\in\Crit f} \cM(p_-,q) \times \cM(q,M) \;\bigr) \; \leftsub{\ev}{\times}_{\ev}\; Z^{\l_A^{\scriptscriptstyle\rm SFT}} \;\leftsub{\ev}{\times}_{\ev} \; \cM(M,p_+) \label{eq:orient} \\
&\quad\sqcup\;
(-1)^{|p_-| + |p_+|}
\cM(p_-,M) \; \leftsub{\ev}{\times}_{\ev} \; Z^{\l_A^{\scriptscriptstyle\rm SFT}}\; \leftsub{\ev}{\times}_{\ev} \; \bigl(\; \textstyle \bigsqcup_{q\in\Crit f} \cM(M,q) \times \cM(q,p_+) \;\bigr) \nonumber\\
&\quad\sqcup\;
(-1)^{|p_-|} \;
\cM(p_-,M) \; \leftsub{\ev}{\times}_{\ev} \; \bigl(\; \textstyle\bigsqcup_{\g\in\cP(H),A=A_-+A_+} 
Z^{\l_{\g,A_+}^+} \times Z^{\l_{\g,A_-}^-} \; \bigr) \; \leftsub{\ev}{\times}_{\ev} \; \cM(M,p_+) \nonumber \\
&\quad\sqcup\;
(-1)^{|p_-|+1} \;
\cM(p_-,M) \; \leftsub{\ev}{\times}_{\ev} \; Z^{\l_A^{\scriptscriptstyle\rm GW}}\; \leftsub{\ev}{\times}_{\ev} \; \cM(M,p_+) \nonumber \\
&\;=\;
\textstyle\bigsqcup_{q\in\Crit f} \cM(p_-,q) \times Z^{\ul\k}(q,p_+;A)
\;\;\sqcup\;\;  \bigsqcup_{q\in\Crit f} Z^{\ul\k}(p_-,q;A)\times \cM(q,p_+) \nonumber\\
&\quad
\;\sqcup\;
(-1)^{|p_-|} \; \textstyle\bigsqcup_{\g\in\cP(H),A=A_-+A_+} 
Z^{\ul\k^+}(p_-,\g;A_+) \times Z^{\ul\k^-}(\g,p_+;A_-) \nonumber \\
&\quad\sqcup\;
(-1)^{|p_-|+1} \; Z^{\ul\k^\iota}(p_-,p_+;A) .  \nonumber
\end{align}
This computation should be understood in a neighbourhood of a solution, so in particular with scale-smooth evaluation maps to $\C^\pm\times M$.
Based on this, Corollary~\ref{cor:regularize}~(v) implies -- as claimed -- 
\begin{align*}
0 &\;=\; h_{\underline\kappa} ( \rd \la p_- \ra ) + \rd ( h_{\underline\kappa} \la p_- \ra ) +   (-1)^{|p_-|} SSP_{\underline\kappa^-} \bigl( PSS_{\underline\kappa^+} \la p_- \ra \bigr) 
- (-1)^{|p_-|}\iota_{\underline\kappa^\iota}\la p_- \ra  \\
&\;=\; \bigl(\; h \circ \rd + \rd \circ h  +   SSP\circ PSS - \iota \;\bigr) \la p_- \ra  .
\end{align*}

\vspace{-5mm}

\end{proof}

\appendix

\section{Summary of Polyfold Theory} \label{sec:polyfold}

This section gives an overview of the main notions of polyfold theory that are used in this paper. 
The following language is used to describe settings with trivial isotropy.\footnote{Trivial isotropy would be guaranteed in our settings by an almost complex structure $J$ for which there are no nonconstant $J$-holomorphic spheres.}

\begin{rmk}  \rm \label{rmk:Mpolyfolds}
\begin{itemlist}
\item[(i)]
An {\bf M-polyfold without boundary} is analogous to the notion of a Banach manifold: While the latter are locally homeomorphic to open subsets of a Banach space, an M-polyfold is locally homeomorphic to the image $\cO=\im\rho$ of a retract $\rho:U\to U$ of an open subset $U\subset E$ of a Banach space $E$. While $\rho$ is generally not classically differentiable, it is required to be scale-smooth (sc$^\infty$) with respect to a scale structure on $E$, which is indicated by $\E$.  

\item[(i')]
An {\bf M-polyfold}, as defined in \cite[Def.2.8]{HWZbook}, is a paracompact Hausdorff space $X$ together with an atlas of charts $\phi_\iota: U_\iota \to \cO_\iota\subset [0,\infty)^{s_\iota}\times  \E^\iota$ (i.e.\ homeomorphisms between open sets $U_\iota\subset X$ and sc-retracts $\cO_\iota$ such that $\cup_\iota U_\iota = X$), whose transition maps are sc-smooth.

For $k\in\N_0$ the $k$-th boundary stratum $\partial_k X$ is the set of all $x\in X$ of degeneracy index $d(x)=k$ given\footnote{
The degeneracy index $d(x)\in\N_0$ in \cite[Def.2.13,Thm.2.3]{HWZbook} is a priori independent of the choice of chart $\phi_\iota$ only for points in a dense subset $X_\infty\subset X$ specified in Remark~\ref{rmk:levels}. With that $d(x):=\max\{\limsup d(x_i) \,|\, X_\infty\ni x_i \to x \} $ is well defined for all $x\in X$ and can also be computed in any fixed chart.
} 
by the number of components equal to $0$ for the point in a chart $\phi_\iota(x)\in [0,\infty)^{s_\iota}\times \E^\iota$.
In particular, $\partial_0 X$ is the interior of $X$.

\item[(ii)]
A {\bf strong bundle} over an M-polyfold $X$, as defined in \cite[Def.2.26]{HWZbook}, is a sc-smooth surjection $P:W\to X$ with linear structures on each fiber $W_x=P^{-1}(x)$ for $x\in X$, and an equivalence class of compatible strong bundle charts, which in particular encode a sc-smooth subbundle $W\supset W^1\to X$ whose fiber inclusions $W_x^1\hookrightarrow W_x$ are compact and dense. 

\item[(iii)]
The notion of {\bf sc-Fredholm} for a scale smooth section $S:X\to W$ of a strong bundle in \cite[Def.3.8]{HWZbook} encodes elliptic regularity and a nonlinear contraction property \cite[Def.3.6,3.7]{HWZbook}. The latter is a stronger condition than the classical notion of linearizations being Fredholm operators, and is crucial to ensure an implicit function theorem; see \cite{counterex}. 
\end{itemlist}
\end{rmk}

A more detailed survey of these trivial isotropy notions can be found in \cite{usersguide}.
Then the generalization to nontrivial isotropy is directly analogous to the notion of smooth sections of orbi-bundles, in which orbifolds are realizations of \'etale proper groupoids \cite{moer}.

\begin{rmk} \rm  \label{rmk:polyfolds}
A sc-Fredholm section $\sigma:\cB\to\cE$ of a strong polyfold bundle as introduced in \cite[Def.16.16,16.40]{HWZbook} is a map between topological spaces together with an equivalence class of sc-Fredholm section functors $s:\cX\to \cW$ of strong bundles $\cW$ over ep-groupoids $\cX$, whose realization $|s|:|\cX|\to|\cW|$ together with homeomorphisms $|\cX|:={\rm Obj}_{\cX} / {\rm Mor}_{\cX}\cong\cB$ and $|\cW|\cong\cE$ induces $\sigma$. 
To summarize these notions we use conventions of \cite{HWZbook} in denoting object and morphism spaces as ${\rm Obj}_\cX=X$ and ${\rm Mor}_\cX=\bX$. These will be equipped with M-polyfold structures, so that the $k$-th boundary stratum of a polyfold $\cB\cong|\cX|$ is given as $\partial_k\cB \cong \partial_k X / \bX \subset |\cX|$ for all $k\in\N_0$.

\begin{itemlist}
\item[(i)]
An {\bf ep-groupoid} as in \cite[Def.7.3]{HWZbook} is a groupoid $\cX=(X,\bX)$ equipped with M-polyfold structures on the object and morphism sets such that 
all structure maps are local sc-diffeomorphisms and every $x\in X$ has a neighbourhood $V(x)$ such that $t:s^{-1}\bigl( {\rm cl}_X(V(x))\bigr) \to X$ is proper.
As in \cite[\S7.4]{HWZbook} we require that the realization $|\cX|$ is paracompact and thus metrizable. 

\item[(ii)]
A {\bf strong bundle} as in \cite[Def.8.4]{HWZbook} over the ep-groupoid $\cX$ is a pair $(P,\mu)$ of a strong bundle $P:W\to X$ and a strong bundle map $\mu:\bX\leftsub{s}{\times}_P W\to W$ so that $P$ lifts to a functor $\cP:\cW\to\cX$ from an ep-groupoid $\cW=(W,\bW)$ induced by $(P,\mu)$.
Then $\cP$ restricts to a functor $\cW^1\to\cX$ on the full subcategory whose object space is the sc-smooth subbundle $W^1\subset W$. 

\item[(iii)]
A  {\bf sc-Fredholm section functor} of the strong bundle $\cP:\cW\to \cX$ 
as in \cite[Def.8.7]{HWZbook} is a functor $S:\cX\to \cW$ that is sc-smooth on object and morphism spaces, satisfies $\cP\circ S = \id_\cX$, and such that $S:X\to W$ is sc-Fredholm on the M-polyfold $X$.
\end{itemlist}
\end{rmk}

Now a polyfold description of a compact moduli space $\bM$ is a sc-Fredholm section $\s:\cB\to\cE$ of a strong polyfold bundle with zero set $\s^{-1}(0)\cong \bM$. 
The polyfold descriptions used in this paper are obtained as fiber products of existing polyfolds and sc-Fredholm sections over them. This requires a technical shift in levels described in the following remark, and a notion of submersion below. 

\begin{rmk}\label{rmk:levels} \rm
Polyfolds carry a level structure $\cB_\infty \subset \ldots \subset \cB_1 \subset \cB_0=\cB$ as follows: 
For any M-polyfold $X$, in particular the object space of the ep-groupoid representing $\cB=|\cX|$, 
a sequence of dense subsets $X_\infty \subset \ldots \subset X_1 \subset X_0= X$ is induced by the 
scale structures $\E^\iota=(E^\iota_m)_{m\in\N_0}$ of the charts, 
that is $X_m=\bigcup_{\iota} \phi_\iota^{-1}(\cO_\iota\cap \R^{s_\iota}\times E^\iota_m)$.
Then $\cB_m:=\qu{X_m}{{\rm Mor}_\cX}$ is well defined since morphisms of $\cX$ -- locally represented by scale-diffeomorphisms -- preserve the levels on ${\rm Obj}_\cX=X$. 

The restriction $\sigma|_{\cB_m}$ of a sc-Fredholm section $\s:\cB\to\cE$ is again sc-Fredholm with values in $\cE_m$, and the choice of such a shift in levels is irrelevant for applications since the zero set $\s^{-1}(0)\subset\cB_\infty$ -- as well as the perturbed zero set for any admissible perturbation -- is always contained in the so-called ``smooth part'' that is densely contained in each level $\cB_\infty\subset\cB_m$. 

For a finite dimensional manifold or orbifold $M$ -- such as the Morse trajectory spaces in \S\ref{ssec:Morse} -- viewed as polyfold, the level structure is trivial $M_\infty=\ldots = M_1=M_0=M$. 
\end{rmk}

\begin{dfn} \label{def:submersion} \cite[Def.5.9]{Ben-fiber}
A sc$^\infty$ functor $f:\cX \to M$ from an ep-groupoid $\cX = (X, {\bf X})$ to a finite dimensional manifold $M$ is a {\bf submersion} if for all $x \in X_{\infty}$ the tangent map $\rD_x f : \rT^R_xX \rightarrow \rT_{f(x)}M$ is surjective, where $\rT^R_xX$ is the reduced tangent space \cite[Def.2.15]{HWZbook}.

Consider in addition a sc-Fredholm section functor $S : \cX \rightarrow \cW$. 
Then the sc$^\infty$ functor $f : \cX \rightarrow M$ is {\bf $S$-compatibly submersive} if for all $x \in X_{\infty}$ there exists a sc-complement 
$L\subset \rT_x^RX$ of $\ker(\rD_xf) \cap \rT_x^RX$ 
and a tame sc-Fredholm chart for $S$ at $x$ \cite[Def.5.4]{Ben-fiber} 
in which the change of coordinates $\psi : \cO \to [0,\infty)^s \times \R^{k-s} \times \mathbb{W}$ that puts $S$ in basic germ form -- which by tameness has the form $\psi(v,e) = (v,\overline{\psi}(e))$ for $(v,e)\in \cO \subset [0,\infty)^s \times \mathbb{E}$ and a linear sc-isomorphism $\overline{\psi}$ -- moreover satisfies $\overline{\psi}(L) \subset \{0\}^{k-s} \times \mathbb{W}$,
where the chart identifies $L \subset \rT_x^RX\cong\rT_0^R\cO= \{0\} \times \mathbb{E}$.

More generally, given a smooth submanifold $N \subset M$, the sc$^{\infty}$ functor $f$ is {\bf transverse} to $N$ if for all $x \in f^{-1}(N) \cap X_{\infty}$ we have $\rD_xf(\rT_x^RX) + \rT_{f(x)}N = \rT_{f(x)}M$, and $f$ is {\bf $S$-compatibly transverse} to $N$ if there exists a sc-complement $L$ of $(\rD_xf)^{-1}(\rT_{f(x)}(N)) \cap \rT_x^RX$ satisfying the above condition.
\end{dfn}  

The purpose of giving a moduli space a polyfold description is to utilize the perturbation theory for sc-Fredholm sections over polyfolds, which allows to ``regularize'' the moduli space by associating to it a well defined cobordism class of weighted branched orbifolds. 
(For a technical statement see Corollary~\ref{cor:regularize} and the references therein.)
Since the ambient space $|\cX|$ is almost never locally compact, this requires ``admissible perturbations'' of the section to preserve compactness of the zero set. This admissibility is determined by the following data introduced in \cite[Def.12.2,15.4]{HWZbook}.

\begin{dfn} \label{def:control}
A {\bf saturated open subset} $\cU\subset \cX$ of an ep-groupoid $\cX=(X,\bX)$ is an open subset $\cU\subset X$ with $\pi^{-1}(\pi(\cU))=\cU$, where $\pi:X \to |\cX|=\qu{X}{\bX}$ is the projection to the realization. 

A {\bf pair controlling compactness} for a sc-Fredholm section $S:\cX\to\cW$ of a strong bundle $P:\cW\to\cX$ consists of an auxiliary norm $N:\cW[1]\to[0,\infty)$ (see \cite[Def.12.2]{HWZbook}) and a saturated open subset $\cU\subset \cX$ that contains the zero set $S^{-1}(0)\subset\cU$, such that $\bigl| \{ x\in \cU \,|\, N(S(x))\leq 1 \} \bigr| \subset |\cX|$ has compact closure.

Given such a pair, a section $s:\cX\to\cW$ is {\bf $\boldsymbol{(N,\mathcal U)}$-admissible} if $N(s(x))\leq 1$ and $\supp s \subset \cU$. 
\end{dfn}

The construction of perturbations moreover requires scale-smooth partitions of unity, which will be guaranteed by the following standing assumptions. 

\begin{rmk}\label{rmk:partitions} \rm
Throughout this paper we assume that the realizations $|\cX|$ of ep-groupoids are paracompact, and the Banach spaces $E$ in all M-polyfold charts are Hilbert spaces. 
This guarantees the existence of scale-smooth partitions of unity by \cite[\S5.5,\S7.5.2]{HWZbook}.
In order to guarantee the same on every level $\cB_m$ as discussed in Remark~\ref{rmk:levels}, we moreover assume that each scale structure $\E=(E_m)_{m\in\N_0}$ consists of Hilbert spaces $E_m$. 
These assumptions hold in applications, such as the ones cited \cite{hwz-gw,fh-sft}. 
Then paracompactness and thus existence of scale-smooth partitions of unity on every level is guaranteed by \cite[Prop.7.12]{HWZbook}. 
\end{rmk}

When discussing coherence of perturbations of a system of sc-Fredholm sections, the boundaries are described in terms of Cartesian products of polyfolds, bundles, and sections. So we will make use of Cartesian products of multivalued perturbations as follows, to obtain multisections over the boundary as summarized in the subsequent remark.

\begin{lem} \label{lem:Cartesian}
Let  $S_1:\cX_1\to\cW_1$ and $S_2:\cX_2\to\cW_2$ be sc-Fredholm section of strong bundles $P_i:\cW_i\to\cX_i$ over ep-groupoids. Then the Cartesian product
$\cX_1\times\cX_2$ is naturally an ep-groupoid and 
$(S_1\times S_2) : \cX_1\times\cX_2 \to \cW_1\times\cW_2$ is a sc-Fredholm section of the strong bundle $P_1\times P_2$. 

Moreover, if $\lambda_i:\cW_i\to\Q^+$ are sc$^+$-multisections for $i=1,2$, then there is a well defined sc$^+$-multisection $\lambda_1\cdot \lambda_2:\cW_1\times\cW_2\to\Q^+$ given by $(\lambda_1 \cdot \lambda_2)(w_1,w_2) = \lambda_1(w_1) \cdot \lambda_2(w_2)$. If, for $i = 1,2,$ the sections $\lambda_i$ are $(N_i,\cU_i)$-admissible for some fixed pair controlling compactness as in Definition~\ref{def:control}, then $\lambda_1 \cdot \lambda_2$ is $(\max(N_1,N_2),\cU_1 \times\cU_2)$-admissible.
Finally, if $\lambda_i$ is in general position to $S_i$ for $i=1,2$ then $\lambda_1\cdot \lambda_2$ is in general position to $S_1\times S_2$. 
\end{lem}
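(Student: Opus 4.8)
\textbf{Proof plan for Lemma~\ref{lem:Cartesian}.}

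The plan is to treat the three classes of claims --- the sc-Fredholm property of the product section, the well-definedness and admissibility of the product multisection, and its general position --- in turn, in each case reducing everything to the corresponding facts for the two factors via the local product structure of ep-groupoids and strong bundles.

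First I would establish that $\cX_1\times\cX_2$ is an ep-groupoid and $P_1\times P_2$ a strong bundle over it, and that $S_1\times S_2$ is sc-Fredholm. The ep-groupoid structure on the product is standard: object and morphism spaces are products of M-polyfolds, hence M-polyfolds (this is the content cited as \cite[Lem.7.1]{Ben-fiber} in the proof of Lemma~\ref{lem:PSS poly}), the structure maps are products of local sc-diffeomorphisms hence local sc-diffeomorphisms, and the properness condition is inherited since the product of proper maps is proper; paracompactness of $|\cX_1\times\cX_2|=|\cX_1|\times|\cX_2|$ follows from our standing assumptions in Remark~\ref{rmk:partitions}. The product $P_1\times P_2$ is a strong bundle because strong bundle charts multiply, and the subbundle $(W_1\times W_2)^1 = W_1^1\times W_2\cap W_1\times W_2^1$ (more precisely $W_1^1\hat\times W_2^1$ in the appropriate scale sense) has compact dense fiber inclusions since each factor does. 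For the sc-Fredholm property, one works in tame sc-Fredholm charts for $S_1$ and $S_2$ at a point $(x_1,x_2)$: the basic germ form in a product chart is the ``direct sum'' of the two basic germs, so the nonlinear contraction property (Remark~\ref{rmk:Mpolyfolds}(iii)) holds for the product with the maximum of the two contraction constants, and elliptic regularity is a pointwise-in-each-factor condition. This is essentially \cite[Lem.7.1]{Ben-fiber}, to which I would simply cite.

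Next I would handle the product multisection. A sc$^+$-multisection $\lambda_i:\cW_i\to\Q^+$ is locally represented by a finite collection of sc$^+$-sections with positive rational weights summing to $1$, compatibly with morphisms (and with the $^+$ meaning one level of extra regularity). Taking products of local branches --- a branch of $\lambda_1$ times a branch of $\lambda_2$, with weight the product of the weights --- gives a local representation of $\lambda_1\cdot\lambda_2$; the weights are positive rationals and sum to $(\sum_k w^1_k)(\sum_\ell w^2_\ell)=1$, and morphism-compatibility is inherited factorwise, so $\lambda_1\cdot\lambda_2$ is a well-defined sc$^+$-multisection. For admissibility: if $\lambda_i$ is $(N_i,\cU_i)$-admissible then its branches $s_k^i$ satisfy $N_i(s_k^i(x))\le 1$ and $\supp s_k^i\subset\cU_i$; I would check that $\max(N_1,N_2)$ is an auxiliary norm on $(\cW_1\times\cW_2)[1]$ (it is, being the max of two auxiliary norms, each of which restricts to a complete norm on the respective $W_i^1$-fiber and the max is a complete norm on the product fiber), that $\cU_1\times\cU_2$ is saturated and contains $(S_1\times S_2)^{-1}(0)=S_1^{-1}(0)\times S_2^{-1}(0)$, and that $\{(x_1,x_2)\in\cU_1\times\cU_2 \mid \max(N_1,N_2)((S_1\times S_2)(x_1,x_2))\le1\}$ has compact closure --- this last because it is contained in the product of the two individually relatively compact sets $\{x_i\in\cU_i\mid N_i(S_i(x_i))\le1\}$. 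Finally $N_i(s_k^i(x))\le1$ for both $i$ gives $\max(N_1,N_2)((s_k^1\times s_\ell^2)(x_1,x_2))\le1$, and the support of each product branch lies in $\cU_1\times\cU_2$.

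Finally, general position. I would recall that $\lambda_i$ being in general position to $S_i$ (in the sense of \cite[Def.15.6]{HWZbook}) means that on the ``$\ge m$-dimensional part'' of the perturbed solution set each branch $S_i+s_k^i$ is transverse to the zero section --- more precisely, the solution set is a weighted branched suborbifold cut out transversally, with the required regularity on each boundary stratum. For a branch $S_1\times S_2 + s_k^1\times s_\ell^2$ of the perturbed product section at a common zero $(x_1,x_2)$, the linearization at that point is the direct sum $D(S_1+s_k^1)_{x_1}\oplus D(S_2+s_\ell^2)_{x_2}$ acting on $\rT^R_{x_1}X_1\oplus\rT^R_{x_2}X_2$ with values in the fiber product; surjectivity of a direct sum of surjective sc-operators is immediate, and the degeneracy index on the product is the sum of the degeneracy indices, so the stratum-wise transversality conditions for the product follow from those for the factors. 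The main obstacle is bookkeeping rather than conceptual: one must verify that the technical definition of ``general position'' --- which in \cite{HWZbook} involves the structure of the perturbed zero set as a weighted branched orbifold with compatible boundary strata, not merely pointwise surjectivity --- behaves correctly under Cartesian product, i.e.\ that a product of weighted branched orbifolds with boundary is again one and its boundary strata are the expected products $\partial_j(\cZ_1)\times\partial_{k-j}(\cZ_2)$. I expect this to follow from the product structure of tame sc-Fredholm charts together with the degeneracy-index-additivity already used above, so the argument is a matter of assembling these pieces, but it is the step that requires the most care to state precisely. Throughout, the fact that our standing hypotheses (Remark~\ref{rmk:partitions}) guarantee Hilbert scales and paracompactness on the product ensures all the auxiliary constructions (partitions of unity, auxiliary norms) go through on $\cX_1\times\cX_2$.
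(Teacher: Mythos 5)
Your proposal is correct and follows essentially the same route as the paper's proof: represent $\lambda_1\cdot\lambda_2$ locally by products of local branches with product weights, verify admissibility factorwise (the paper phrases the domain-support condition via the sets $Z(S_i,p^i_j)=\{x\mid \exists t\in[-1,1]: S_i(x)=t\,p^i_j(x)\}\subset\cU_i$, whose product contains the corresponding set for the product branch), and reduce general position to surjectivity of the direct sum of reduced linearizations, citing \cite{Ben-fiber} for sc-Fredholmness of the product section. The paper's own proof is in fact terser than yours, deferring most of the bookkeeping (including the general-position step you flag as delicate) to the definitions in \cite{HWZbook}.
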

\begin{proof}
A detailed treatment of sc-Fredholmness of the product section $S_1 \times S_2$ can be found in \cite[Lemma~7.2]{Ben-fiber}. The remaining statements follow easily from the definitions in \cite{HWZbook} (as do the statements in the first paragraph).

Recall in particular from \cite[Def.13.4]{HWZbook} that a sc$^+$-multisection on a strong bundle $P:\cW\to\cX$ is a functor $\lambda:
\cW\to\Q^+$ that is locally of the form $\lambda(w) = \sum_{\{ j \,|\, w = p_j(P(w))\}} q_j$, represented by  sc$^+$-sections $p_1,\ldots,p_k:\cV\to P^{-1}(\cV)$ (i.e.\ sc$^\infty$ sections of $\cW^1$; see \cite[Def.2.27]{HWZbook}) and weights $q_1,\ldots,q_k\in\Q\cap[0,\infty)$ with $\sum_j q_j = 1$. 
Then for local sections $p_j^i$ and weights $q_j^i$ representing $\lambda_i$ for $i=1,2$, the multisection $\lambda_1\cdot \lambda_2$ is locally represented by the sections $(p_j^1, p_{j'}^2)$ with weights $q_j^1q_{j'}^2$, and all admissibility and general position arguments are made at the level of these local sections. 

In particular, the $(N_i,\cU_i)$-admissibility can be phrased as the existence of local representations by sections with $N_i(p^i_j(x))\leq 1$ and $Z(S_i,p^i_j) := \{x\in \cV_i \,|\, \exists\, t\in[-1,1]: S_i(x) = t p^i_j(x) \}\subset\cU_i$. Then $(\max(N_1,N_2),\cU_1 \times\cU_2)$-admissibility uses the observation\\
$\displaystyle \{(x_1,x_2) \,|\, \exists\, t\in[-1,1]: (S_1,S_2)(x_1,x_2) = t (p^1_j(x_1),p^2_{j'}(x_2)
 \} \subset Z(S_1,p^1_{j})  \times Z(S_2,p^2_{j'}) \subset\cU_1\times\cU_2 $.
\end{proof}

\begin{rmk}\rm \label{rmk:faces}
Let $P : \cW\to\cX$ be a strong bundle over a tame ep-groupoid $\cX=(X,\bX)$. Then for every $x\in X_\infty$ there is a chart $\phi: U_x \to \cO$ from a locally uniformizing\footnote{
A neighbourhood $U_x\subset X$ forms a local uniformizer as in \cite[Def.7.9]{HWZbook} if the morphisms between points in $U_x$ are given by a local action of the isotropy group $G_x$. 
} 
neighbourhood $U_x\subset X$ of $x$ to a sc-retract $\cO\subset[0,\infty)^n\times\E$, with $\phi(x)=0$ lying in the intersection of the $n$ local faces $\cF_k:=\phi^{-1}(\{ (\ul v,e) \in [0,\infty)^n\times\E\,|\, v_k=0 \})$ which cover the boundary $\partial X\cap U_x = \bigcup_{k=1}^n \cF_k$.

Now a {\bf sc$\boldsymbol{^+}$-multisection over the boundary} is a functor $\lambda^{\partial} : P^{-1}(\partial \cX) \rightarrow \mathbb{Q}^+$ whose restriction $\lambda^{\partial}|_{ P^{-1}(\cF_k)}$ to each local face is a sc$^+$-multisection of the strong bundle $P^{-1}(\cF_k)\to\cF_k$. 
In the presence of a sc-Fredholm section $S:\cX\to\cW$, such a sc$^+$-multisection is {\bf in general position over the boundary} if for each intersection of faces $\cF_K:= \bigcap_{k\in K} \cF_k\subset \partial X$ the restriction of the perturbed multi-section $\lambda^\partial\circ S|_{\cF_K}: P^{-1}(\cF_K)\to\Q^+$ has surjective linearizations at all solutions. 
If, moreover, $(N,\cU)$ is a pair controlling compactness, then $\lambda^{\partial}$ is $(N,\cU)$-admissible if each restriction $\lambda^{\partial}|_{ P^{-1}(\cF_k)}$ is admissible w.r.t.\ the pair $(N|_{P^{-1}(\cF_k)},\cU\cap \cF_k)$. 

In our applications, as described in Assumption~\ref{ass:iso}, the local faces $\cF_k$ are images of open subsets of global face immersions $l_\cF : \cF \to \partial\cX$, where each $\cF$ is a Cartesian product of two polyfolds, and the restriction to the interior $l_\cF|_{\partial_0\cF}$ is an embedding into the top boundary stratum $\partial_1\cX$. The bundles over each face are naturally identified with the pullbacks $l_\cF^*\cW$, and then the pushforwards of sc$^+$-multisections $\lambda_\cF:l_\cF^*\cW \to \Q^+$ form a sc$^+$-multisection over the boundary $\lambda^{\partial} : P^{-1}(\bigcup \im\lambda_\cF) \rightarrow \mathbb{Q}^+$ if they agree on overlaps and self-intersections of the immersions $l_\cF$, at the boundary $\partial\cF$ of the faces. 
In this setting, general position of $\lambda^\partial$ is equivalent to general position of the multisections $\lambda_\cF$. 
\end{rmk}

The following perturbation theorem allows us to refine the construction of coherent perturbations in  \cite{fh-sft} for the SFT moduli spaces such that moreover the evaluation maps from the perturbed solution sets are transverse to the unstable and stable manifolds in the symplectic manifold. 
This is a generalization of the polyfold perturbation theorem over ep-groupoids and the extension of transverse perturbations from the boundary \cite[Theorems~15.4,15.5]{HWZbook} (with norm bound given by $h\equiv 1$ for simplicity). 
Another version of this -- with the submanifolds representing cycles whose Gromov-Witten invariants are then obtained as counts -- also appears in \cite{wolfgang, wolfgangorbifold}. 
We are working under the assumptions made in this section -- e.g.\ paracompactness -- without further mention.
The limitation to finitely many submanifolds in the extension result seems to be of technical nature; we expect that joint work of the first author with Dusa McDuff
 -- on coherent finite dimensional reductions of polyfold Fredholm sections -- 
 will establish the result for countably many submanifolds.

\begin{thm} \label{thm:transversality}
Suppose $S:\cX\to\cW$ is a sc-Fredholm section functor of a strong bundle $P : \cW\to\cX$ over a tame ep-groupoid $\cX$ with compact solution set $|S^{-1}(0)| \subset |\cX|$, 
and let $(N,\cU)$ be a pair controlling compactness. 
Moreover, let $e:\cX\to M$ be a sc$^0$-map to a finite dimensional manifold $M$ which has a sc$^{\infty}$ submersive restriction $e|_{\cV} : \cV \rightarrow M$ on a saturated open set $\cV \subset \cX$. 

Then, for any countable collection of smooth submanifolds $(C_i \subset M)_{i\in I}$ with $e^{-1}\bigl( \,\overline{\cup_{i \in I}(C_i)} \,\bigr) \subset \cV$, there exists an $(N,\cU)$-admissible sc$^+$-multisection $\l:\cW\to\Q^+$ so that $(S,\lambda)$ is in general position (see \cite[Definition~15.6]{HWZbook}) and the restriction $e|_{Z^\lambda}:Z^\lambda \to M$ to the perturbed zero set $Z^\lambda=|\{x\in X \,|\, \l(S(x))>0\}|$ is in general position\footnote{
General position to $C_i$ requires transversality to $C_i$ of each restriction $e|_{Z^\lambda\cap \cF_K}$ to the perturbed solution set within an intersection of local faces $\cF_K=\bigcap_{k\in K} \cF_k$ as defined in Remark~\ref{rmk:faces}, including for $\cF_\emptyset:=Z^\lambda$. 
} to the submanifolds $C_i$ for all $i\in I$. 

Moreover, suppose $I$ is finite and $\lambda^{\partial} : P^{-1}(\partial \cX) \rightarrow \mathbb{Q}^+$ for some $0<\alpha<1$ is an $(\frac 1\alpha N,\cU)$-admissible structurable sc$^+$-multisection in general position over the boundary
such that the restriction $e|_{Z^\partial}: Z^\partial \to M$ to the perturbed zero set in the boundary $Z^{\partial} := |\{ x\in \partial\cX \,|\,  \lambda^{\partial}( S(x)) > 0 \}|$ is in general position\footnote{
This requires general position of each restriction $e|_{Z^\lambda\cap \cF_k}$ to a local face $\cF_k\subset \partial\cX$ as defined in Remark~\ref{rmk:faces}.
} to the submanifolds $C_i$ for all $i\in I$. 
Then $\lambda$ above can be chosen with $\lambda|_{P^{-1}(\partial \cX) } = \lambda^{\partial}$.
\end{thm}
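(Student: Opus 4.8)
The theorem is a combination of two things: (a) the standard polyfold perturbation theorem in the ep-groupoid setting \cite[Thm.15.4,15.5]{HWZbook}, which produces an $(N,\cU)$-admissible sc$^+$-multisection $\lambda$ with $(S,\lambda)$ in general position; and (b) an \emph{additional} transversality requirement for the sc$^0$-map $e$ on the perturbed zero set. The key idea is that the perturbation theorems in \cite{HWZbook} are proved by a local iterative scheme: one covers the compact set $|S^{-1}(0)|$ by finitely many local uniformizers, and on each one constructs a sc$^+$-section supported there that achieves transversality of $(S,\lambda)$ near that point; these are patched together by a scale-smooth partition of unity (Remark~\ref{rmk:partitions}), shrinking to keep the auxiliary-norm bound $N(\lambda(\cdot))\le 1$ and support in $\cU$. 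The plan is to run exactly this scheme, but on each local chart impose the \emph{joint} transversality condition: linearize not just $S+(\text{perturbation})$ but the pair $(S+(\text{perturbation}), e)$ (or rather the map whose zero set is $Z^\lambda$ together with $e$ restricted to it) transverse to the reference locus. Since $e$ is sc$^\infty$-submersive on the saturated open set $\cV$ containing $e^{-1}(\overline{\cup C_i})$, near any solution $x$ with $e(x)\in C_i$ the map $e$ is already a submersion; this is the crucial input that makes the joint transversality achievable by an arbitrarily small perturbation of $S$ alone, without touching $e$.

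\textbf{Key steps.} First I would reduce to a local statement: cover $|S^{-1}(0)|$ by finitely many locally uniformizing neighbourhoods $U_{x_j}$ with charts putting $S$ in basic germ form, as in \cite[\S15]{HWZbook}, refining so that each $U_{x_j}$ is either disjoint from $e^{-1}(\overline{\cup_i C_i})$ or contained in $\cV$ (possible since $e^{-1}(\overline{\cup_i C_i})\subset\cV$ and the latter is open and saturated). On a chart of the second type, the finite-dimensional reduction of the sc-Fredholm section gives a finite-dimensional model in which $S$ becomes a smooth map $f_j:\cO_j\to\R^{k}$ and $e$ restricts to a submersion; I would then invoke the classical parametric transversality / Sard–Smale argument to find small smooth perturbations $p$ (of the appropriate sc$^+$ regularity, built from the reduction) making $(f_j+p)$ transverse to $0$ \emph{and} $e$ restricted to $(f_j+p)^{-1}(0)$ transverse to each $C_i$ — using that the submersivity of $e$ means this is a codimension condition that is generic in the finite-dimensional slice. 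Since $I$ is countable I take a countable intersection of residual sets. The boundary case requires the same argument restricted to each local face $\cF_k$ and each intersection $\cF_K$, so the condition ``$e|_{Z^\lambda}$ in general position'' really is a finite (per chart) collection of transversality requirements on the strata; the tameness of $\cX$ and of the sc-Fredholm charts (needed so the faces are cut out by coordinate hyperplanes compatibly with the basic germ form) makes this stratumwise argument go through. Then patch via partition of unity, using that general position is an open condition on each chart so the patched $\lambda$ retains it on a neighbourhood of $|S^{-1}(0)|$, and that admissibility ($N(\lambda(\cdot))\le 1$, $\supp\lambda\subset\cU$) is preserved by scaling down — exactly as in \cite[Thm.15.4]{HWZbook}.

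\textbf{The extension statement (prescribed boundary values).} For the last paragraph of the theorem, with $I$ finite and a given boundary multisection $\lambda^\partial$ that is $(\tfrac1\alpha N,\cU)$-admissible, structurable, in general position over the boundary, and with $e|_{Z^\partial}$ in general position, the approach mirrors \cite[Thm.15.5]{HWZbook}: first extend $\lambda^\partial$ to a structurable sc$^+$-multisection $\tilde\lambda$ on a collar neighbourhood of $\partial\cX$ (using the collar structure and structurability to control the auxiliary norm; the factor $\alpha<1$ provides the room to keep $N(\tilde\lambda(\cdot))<1$ after the extension), cut off with a scale-smooth bump function that is $1$ near $\partial\cX$, and then on the interior $\partial_0\cX$ apply the already-proven (non-boundary) part of the theorem to $S+\tilde\lambda$ relative to $\cU\setminus(\text{collar})$, adding a further admissible correction supported in the interior that achieves joint general position there while leaving $\lambda|_{P^{-1}(\partial\cX)}=\lambda^\partial$ untouched. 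The total norm stays $\le 1$ because the collar and interior contributions have disjoint supports. One checks that $e|_{Z^\lambda}$ is in general position on \emph{all} strata $\cF_K$: on strata touching the boundary this is inherited from the hypothesis on $\lambda^\partial$ (by the collar identification), and on the open stratum $\cF_\emptyset=Z^\lambda$ and interior strata it is forced by the interior correction.

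\textbf{Main obstacle.} The delicate point is the boundary-extension step: ensuring that extending $\lambda^\partial$ off $\partial\cX$ does not destroy either admissibility (the auxiliary-norm bound — hence the explicit $\tfrac1\alpha$ slack in the hypothesis) or the general-position properties on the faces, while simultaneously being compatible with the basic-germ-form charts in which $S$ is tame. Concretely, one must verify that the collar extension commutes with the stratumwise transversality conditions — i.e. that ``in general position over the boundary'' genuinely propagates to a neighbourhood in which the interior perturbation can then be added freely. This is the place where the \emph{structurability} of $\lambda^\partial$ and the tameness of the ep-groupoid are essential, and where the argument differs most from the purely local interior construction. The countability of $I$ in the first half is harmless (countable intersection of residual sets), which is precisely why the finiteness restriction only appears in the extension statement, as the theorem already notes.
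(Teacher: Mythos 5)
Your overall strategy matches the paper's: refine the local stabilization scheme of \cite[Thm.15.4,15.5]{HWZbook} so that near points of $S^{-1}(0)\cap e^{-1}(\overline{\cup_i C_i})$ (which you correctly arrange to lie in $\cV$, where $e$ is sc$^\infty$ and submersive) the added finite-dimensional parameters also control transversality of $e$ to the $C_i$; use Sard plus a Baire intersection over the countably many conditions; and, for the extension statement, first extend $\lambda^\partial$ with controlled auxiliary norm and then perturb only in the interior, with finiteness of $I$ needed exactly where you say it is -- so that general position of $e|_{Z^\partial}$ to all the $C_i$ persists on a common open neighbourhood of each boundary solution. The paper implements the local refinement by demanding that the stabilizing vectors $w^1,\dots,w^l$ span a complement of $\rD_xS(K_x)$ for $K_x=\ker(\rD_xe|_{\rT^R_xX})$, which is the precise form of your ``joint transversality is achievable because $e$ is already submersive'' observation.

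There is, however, one genuine gap in your local-to-global step. You propose to choose, chart by chart, a perturbation that is already transverse (both for $S$ and for $e$ restricted to the local zero set), and then ``patch via partition of unity, using that general position is an open condition.'' That is not how \cite[Thm.15.4]{HWZbook} works, and as stated it fails: a partition-of-unity combination of two perturbations, each transverse on its own chart, is at an overlap point a new perturbation that is neither of the two, and openness of transversality (which concerns small deformations of a \emph{fixed} transverse section) does not apply to it. The correct procedure -- and the one the paper follows -- is to make \emph{no} transversality choice locally: the local data are only stabilizing \emph{families} $\Lambda^{x_i}_{t_i}$ covering the cokernels (of $\rT^R S$ and of $\rT^R S|_{K_x}$), which are summed into a single global family $\Lambda_t$, $t\in\R^{\tilde l}$; one then verifies that $\supp(\tilde\Lambda\circ\tilde S)$ is a weighted branched ep$^+$-subgroupoid on which $\tilde e$ is submersive near $e^{-1}(C)$, and applies Sard to the projection $(t,y)\mapsto t$ restricted to each local branch $M^{t_\beta,y_\beta}_j$, each intersection of local faces $\cF_K$, and each preimage $\tilde e^{-1}(C_i)$, over a countable cover. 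A single common regular value $t_0$ of all these countably many maps (dense by Baire) then yields $\lambda=\Lambda_{t_0}$ with all the required transversality simultaneously. Relatedly, your write-up never mentions the branching structure coming from nontrivial isotropy: general position must be checked for each local section $p_j$ in a representation of the multisection, i.e.\ branch by branch, which is why the Sard argument is indexed by branches and not just by charts. These are repairable defects -- you cite the right theorems -- but as written the gluing step would not produce a transverse $\lambda$.
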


\begin{proof}
Our proof follows the perturbation procedure of \cite[Theorem~15.4]{HWZbook}, which proves the special case when there is no condition on a map $e : \cX \rightarrow M$, i.e.\ when $M = \{pt\}$ and $C_i = \{pt\}$. To obtain the desired transversality of $e$ to the submanifolds $C_i \subset \cM$ we will go through the proof and indicate adjustments in three steps:
A local stabilization construction, which adds a finite dimensional parameter space to cover the cokernels near a point $x\in S^{-1}(0)$; a local-to-global argument which combines the local constructions into a global stabilization which covers the cokernels near $S^{-1}(0)$; and a global Sard argument which shows that regular values yield transverse perturbations. 
Within these arguments we need to consider restrictions to any intersection of faces to ensure general position to the boundary, use submersivity of $e$ to achieve transversality to the $C_i$, and work with multisections due to isotropy. 
The statement with prescribed boundary values $\lambda^{\partial}$ generalizes the extension result \cite[Theorem~15.5]{HWZbook}, which hinges on the fact that general position over the boundary persists in an open neighbourhood -- something that is generally guaranteed only for finitely many transversality conditions; see the end of this proof. The first step in any construction of perturbations is the existence of local stabilizations which cover the cokernels, as follows.

\medskip
\noindent
 {\bf Local stabilization constructions:}
For every zero $x \in S^{-1}(0)$ of the unperturbed sc-Fredholm section 
we construct a finite dimensional parameter space $\mathbb{R}^l$ 
for $l=l_x\in\N_0$ and sc$^+$-multisection
\begin{align*}
\tilde{\Lambda}^x \,:\; \mathbb{R}^l \times \cW &\rightarrow \mathbb{Q}^+
, \qquad (t,w) \mapsto \Lambda^x_t(w)
\end{align*}
such that 
$\L^x_0$ is the trivial multisection, i.e.\ $\Lambda^x_0(0)=1$, $\Lambda^x_0(w)=0$ for $w\in \cW_x\less\{0\}$. 
This multisection $\tilde{\Lambda}^x$ is viewed as local perturbation near 
$(0,x)$ of a sc-Fredholm section functor $\ti S^x$ of a bundle $\ti P^x$, 
\begin{align*}
\tilde{S}^x\,:\; \mathbb{R}^l \times \cX &\rightarrow \mathbb{R}^l \times \cW
\qquad & \tilde{P}^x \,: \; \mathbb{R}^l \times \cW &\rightarrow \mathbb{R}^l \times \cX \\
(t,y) &\mapsto (t,S(y))
& (t,w) &\mapsto (t,P(w)) .
\end{align*}
It is constructed in \cite{HWZbook} to be structurable in the sense of \cite[Def.13.17]{HWZbook}, in general position 
in the sense that
the linearization $\rT_{(\tilde{S}^x,\tilde{\Lambda}^x)}(0,x) : \rT_0\mathbb{R}^l \times \rT^R_x X \rightarrow W_x$ is surjective\footnote{
This is shorthand for $\tilde S^x + p_j$ having surjective linearization for every section $p_j$ in a local representation of $\ti\L^x$ with $\ti S^x(0,x) = 0 = p_j(0,x)$, and restricted to the reduced tangent space $\rT^R_x X$. 
}
and admissible in the sense that
the domain support of $\tilde{\Lambda}^x$ is contained in 
$\cU$ and the auxiliary norm is bounded linearly, $N(\Lambda)(t,y)\leq c_x |t|$ for some constant $c_x$.  
In case $x \in \cV\cap S^{-1}(0)$ we refine this construction to require surjectivity of the restrictions 
\begin{equation} \label{eq:transverseonkernel}
\rT_{(\tilde{S}^x,\tilde{\Lambda}^x)}(0,x)|_{\rT_0\mathbb{R}^l \times K_x} \,:\; \rT_0\mathbb{R}^l \times K_x \;\to\; W_x , 
\end{equation}
where $K_x := \ker (\rD_x e|_{\rT_x^R X} ) \subset \rT_x^R X$ is the kernel of the linearization 
$\rD_xe : \rT_x^R X \rightarrow \rT_{e(x)}M$ restricted to the reduced tangent space. 
For that purpose note that $e$ is sc$^\infty$ near $x$ by assumption, so has a well defined linearization, and since its codomain is finite dimensional, its kernel has finite codimension. Moreover $\im\rD_x S\subset W_x$ has finite codimension by the sc-Fredholm property of $S$, and the reduced tangent space $\rT_x^R X\subset\rT_x X$ has finite codimension by the definition of M-polyfolds with corners. 
Thus we can find finitely many vectors $w^1,\ldots,w^l \in W_x$ which together with $\rD_x S(K_x)$ span $W_x$. These vectors are extended to sc$^+$-sections of the form $p^j(t,y)=\sum t_j w^j(y)$, multiplied with sc$^\infty$ cutoff functions of sufficiently small support, and pulled back by local isotropy actions to construct the functor $\tilde{\Lambda}^x$ as in \cite[Thm.15.4]{HWZbook}.
We claim that this yields the following local properties with respect to the sc$^\infty$ functor
$$
\ti e^x \,:\; \mathbb{R}^{l} \times \cV \to M, \qquad (t,y) \mapsto e(y) . 
$$

\medskip
\noindent
\ul{Local stabilization properties:} 
{\it There exists $\epsilon_x > 0$ and a locally uniformizing
neighborhood $Q(x) \subset X$ of $x$ whose closure is contained in $\cU$, such that
\begin{equation} \label{eq:localorbifold}
\Theta^x \,:\; \{ t\in\R^l \, | \, |t| < \epsilon_x \} \times Q(x) \;\to\; \Q^+, \qquad (t,y) \;\mapsto\;\Lambda_t^x \bigl( S(y) \bigr) = \ti\L^x(\ti S^x(t,y))
\end{equation}
is a tame ep$^+$-subgroupoid, and for $(t,y)\in\supp \Theta^x=\{(t,y)\,|\, \Theta^x(t,y)>0 \}\subset\R^l\times X$ the reduced linearizations $\rT^R_{(\tilde{S}^x,\tilde{\Lambda}^x)}(t,y):= \rT_{(\tilde{S}^x,\tilde{\Lambda}^x)}(t,y)|_{\rT_t\R^l\times \rT_{y}^R X}$ are surjective.
Moreover, if $x \in \cV$ then we may choose $Q(x) \subset \cV$ such that for all $(t,y) \in \supp \Theta^x$ we have surjections\footnote{
As before, this is shorthand for surjectivity on each reduced tangent space 
$\ker \rD_{(t,y)}( \tilde{S}^x + p_j)|_{\rT_t\R^l\times \rT_{y}^R X}$.
}
\begin{equation*}
\rD_{(t,y)}\ti e^x |_{N^x_{t,y}} \;:\; N^x_{t,y} := \ker \rT^R_{(\tilde{S}^x,\tilde{\Lambda}^x)}(t,y) \; \to\; \rT_{e(y)}M . 
\end{equation*}
In particular, the realization $|\supp \Theta^x|$ is a weighted branched orbifold and $\ti e^x$ induces a submersion $|\supp\Theta^x| \rightarrow M$ in the sense of Definition~\ref{def:submersion}. 
Moreover, for all $y \in S^{-1}(0)\cap U_x$ we have $(0,y)\in\supp \Theta^x$ so that the reduced linearizations
$\rT^R_{(\tilde{S}^x,\tilde{\Lambda}^x)}(0,y)$ and the restriction to their kernel $\rD_{(0,y)}\ti e^x |_{N^x_{0,y}}$ are surjective. These properties persist for $y\in S^{-1}(0)$ with $|y|\in|Q(x)|$.}

\medskip
The structure of $\supp \Theta^x$ and surjectivity of linearizations $\rT^R_{(\tilde{S}^x,\tilde{\Lambda}^x)}$ follows from the local implicit function theorem \cite[Theorems~15.2,15.3]{HWZbook}. 
Then the kernels $N^x_{t,y} = \ker \rT^R_{(\tilde{S}^x,\tilde{\Lambda}^x)}(t,y)$ represent the reduced tangent spaces at $|(t,y)|$ to the weighted branched orbifold $|\supp\Theta^x|$. 
Surjectivity of $\rD_{(0,x)}\ti e^x |_{N^x_{0,x}}$ holds since $\rD_{(0,x)}\ti e^x$ is surjective by assumption, and the preimage of any given vector in $\rT_{e(x)}M$ can be adjusted by vectors in $\ker\rD_{(0,x)}\ti e^x$ to lie in $N^x_{0,x} = \ker \rT^R_{(\tilde{S}^x,\tilde{\Lambda}^x)}(0,x)$, because $\rT_{(\tilde{S}^x,\tilde{\Lambda}^x)}(t,y)|_{\ker\rD_{(0,x)}\ti e^x}$ is surjective by  \eqref{eq:transverseonkernel}. 
Then $\ti e^x$ restricts to a map $|\supp\Theta^x| \rightarrow M$ that is classically smooth on each (finite dimensional) branch of $\supp\Theta^x$, and thus surjectivity of $\rD_{(t,y)}\ti e^x |_{N^x_{t,y}}$ is an open condition along each branch. Since $\supp \Theta^x$ is locally compact -- in particular with finitely many branches near $x$ -- we can then choose $\epsilon_x$ and $Q(x)$ sufficiently small to guarantee that each $\rD_{(t,y)}\ti e^x |_{N^x_{t,y}}$ is surjective. This proves submersivitiy in the sense of Definition~\ref{def:submersion}.

\medskip
\noindent
{\bf From local to global stabilization:}
In this portion of the proof, we proceed almost verbatim to the corresponding portion of \cite[Thm.15.4]{HWZbook}, with extra considerations to deduce submersivity of \eqref{eq:bigevsubmersive}.
By assumption, $|S^{-1}(0)|$ is compact and $|e|: |\cX|\to M$ is continuous. Then $|S^{-1}(0)| \cap |e^{-1}(C)|$ is compact since $C:= \overline{\cup_{i \in I}(C_i)} \subset M$ is closed. We moreover have the identity $|S^{-1}(0) \cap e^{-1}(C)|= |S^{-1}(0)| \cap |e^{-1}(C)|$ since both sets are saturated. 
Thus we have an open covering $\bigl(|Q(x)|\bigr)_{x \in S^{-1}(0)\cap e^{-1}(C)}$ by the open subsets chosen above, and can pick finitely many points $x_1,\ldots,x_r  \in S^{-1}(0) \cap e^{-1}(C)$ to obtain a finite open cover $|S^{-1}(0) \cap e^{-1}(C)| \subset \bigcup_{i=1}^r |Q(x_i)|$. 
Then $|S^{-1}(0)| \less \bigcup_{i=1}^r |Q(x_i)|$ is compact, with open cover by $\bigl(|Q(x)|\bigr)_{x \in S^{-1}(0)}$, so we may pick further $x_{r+1},\ldots,x_k \in S^{-1}(0)$ to obtain the covers
\begin{align} \label{eq:cover}
&\textstyle
|S^{-1}(0)| \;\subset\; \bigcup_{i=1}^k |Q(x_i)|, 
 \qquad\qquad\qquad\qquad
|S^{-1}(0) \cap e^{-1}(C)| \;\subset\; \bigcup_{i=1}^r |Q(x_i)|, \\
&\textstyle \nonumber
S^{-1}(0) \;\subset\; \tilde{Q} := \pi^{-1}\bigl(  \bigcup_{i=1}^k |Q(x_i)| \bigr) \;\subset\; \cU.
\end{align}
For each $x=x_i$ we constructed above a family of sc$^+$-multisections $\bigl(\Lambda_t^{x_i}: \cW\to\Q^+\bigr)_{t\in\R^{l_{x_i}}}$. These are summed up, using \cite[Def.13.11]{HWZbook}, to a sc$^+$-multisection
\begin{align*}
\tilde{\Lambda} \,:\; \mathbb{R}^{\tilde{l}} \times \cW \;\to\; \mathbb{Q}^+, \qquad
\bigl(t=(t_1,\ldots,t_k) \,,\, w \bigr) \;\mapsto\; \Lambda_t(w) \,:=\; \big ( \Lambda^{x_1}_{t_1} \oplus \cdots \oplus \Lambda^{x_k}_{t_k} \big )(w)
\end{align*}
for $\tilde{l} := l_{x_1} + \cdots + l_{x_k}$. 
Here each $\Lambda_t : \cW \rightarrow \mathbb{Q}^+$ for $t\in\R^{\ti l}$ is a structurable sc$^+$-multisection by \cite[Prop.13.3]{HWZbook}. 
We view the multisection $\tilde{\Lambda}$ as global perturbation of a sc-Fredholm section functor $\ti S$ of a bundle $\ti P$, 
\begin{align*}
\tilde{S}\,:\; \mathbb{R}^{\ti l} \times X &\rightarrow \mathbb{R}^{\ti l} \times \cW =: \ti\cW
\qquad & \tilde{P} \,: \; \mathbb{R}^{\ti l} \times \cW &\rightarrow \mathbb{R}^{\ti l} \times \cX \\
(t,y) &\mapsto (t,S(y))
& (t,w) &\mapsto (t,P(w)) , 
\end{align*}
and claim that $e:\cX\to M$ induces a submersion on its perturbed solution set in the following sense. 

\medskip
\noindent
\ul{Global stabilization properties:} 
{\it There exists $\epsilon_0 > 0$ such that for every $0<\epsilon < \epsilon_0$  
\begin{equation} \label{eq:stabilizedorbifold}
\ti\Theta \,:\; \{ t\in\R^{\ti l} \, | \, |t| < \epsilon \} \times \cX \;\to\; \Q^+, 
\qquad (t,y) \;\mapsto\; \Lambda_t \bigl( S(y) \bigr) = \ti\L(\ti S(t,y))
\end{equation}
is a tame ep$^+$-subgroupoid with surjective reduced linearizations $\rT^R_{(\tilde{S},\tilde{\Lambda})}(t,y)$ for all $(t,y)\in\supp\ti \Theta$. In particular, the realization $|\supp \ti\Theta|$ is a weighted branched orbifold. Moreover, there is a neighbourhood $\cV'\subset\cX$ of $S^{-1}(0)\cap e^{-1}(C)$ such that  
\begin{equation} \label{eq:bigevsubmersive}
\ti e |_{\supp\ti\Theta} \,:\; \supp\ti\Theta  \;\to\; M , \qquad (t,y) \mapsto e(y)
\end{equation}
satisfies $(\ti e |_{\supp\ti\Theta})^{-1}(C)\subset \R^{\ti l}\times\cV'$, and its restriction to 
$\supp\ti\Theta \cap (\R^{\ti l}\times\cV')$ is classically smooth and submersive as in Definition~\ref{def:submersion}.}

\medskip
Note that the auxiliary norm $N$ on $\cW$ pulls back to an auxiliary norm $\ti N$ on $\ti\cW$, and compactness of $\ti S$ is controlled in the sense that for any compact subset $K\subset \R^{\ti l}$ we have compactness of
\begin{equation} \label{eq:controlcompfamily}
\bigl| \{(t,x)\in K \times \cU \,|\, \ti N(\ti S(t,x) ) \leq 1\} \bigr|
\;=\; K \times \bigl| \{(x \in \cU \,|\, N(S(x) ) \leq 1\} \bigr| \;\subset\; \R^{\ti l}\times |\cX|. 
\end{equation}
Next, the restriction of $\ti\Lambda$ to each $\R^{l_{x_i}}\times X \hookrightarrow \R^{\ti l}\times X$ is the local perturbation $\ti\Lambda^{x_i}$ of $\ti S^{x_i}$, since we identify $\R^{l_{x_i}}\cong\{(t_1,\ldots,t_k)\in\R^{\ti l} \,|\, t_j=0 \;\forall j\neq i\}$ and each $\Lambda^{x_j}_0$ is trivial. 
In particular, $\Lambda_0$ is the trivial multisection, with $N(\Lambda_0)=0$. 
Moreover, we have an estimate $N(\Lambda_t)\leq c|t|$ that results from the linear estimates on each $\Lambda^{x_i}_t$. Now for $\epsilon_0\leq \frac 1c$ we can deduce compactness of the stabilized solution set as closed subset of \eqref{eq:controlcompfamily}, 
\begin{equation} \label{eq:compactclosure}
 \ti Z  \,:= \; \bigl| \bigl\{ (t,x) \in \R^{\ti l} \times X \,\big| \, |t| \leq \epsilon_0,  \Lambda_t(S(x)) > 0 \bigr\} \bigr| .
\end{equation}
The next step is to argue that \eqref{eq:compactclosure} is smooth in a neighbourhood of $\ti Z \cap (\{0\} \times |\cX|) = \{0\} \times |S^{-1}(0)|$. Recall here that $\ti Q=\pi^{-1}(\bigcup_{i=1}^k |Q(x_i)|)\subset X$ is an open neighbourhood of $S^{-1}(0)$. 
So for any $x\in \tilde{Q}$ we can use the local properties of some $\ti\L^{x_i}$ with $|x|\in|Q(x_i)|$ to deduce surjectivity of $\rT^R_{(\tilde{S},\tilde{\Lambda})}(0,x)$. 
Then the local implicit function theorems \cite[Thms~15.2,15.3, Rmk.15.2]{HWZbook} yield an open neighbourhood $U(0,x) = \{ |t| < \epsilon'_x \} \times U(x)\subset\R^{\ti l} \times X$ of $(0,x)$ for some $0<\epsilon'_x<\epsilon_0$, and hence a saturated neighbourhood $\ti U(0,x) :=  \{ |t| < \epsilon'_x \} \times \pi^{-1}(|U(x)|) \subset\R^{\ti l} \times\cX$ such that $\ti\Theta|_{\ti U(0,x)} =\tilde{\Lambda} \circ \tilde{S}|_{\ti U(0,x)}$ is a tame branched ep$^+$-subgroupoid of $\tilde{U}(0,x)$. As a consequence, the orbit space of the support $\bigl|\supp \ti\Theta|_{\tilde{U}(0,x)}\bigr|$ is a weighted branched orbifold with boundary and corners.

For $x \in S^{-1}(0) \less e^{-1}(C)$ we can moreover choose $U(x) \cap e^{-1}(C) = \emptyset$, since $|e^{-1}(C)|\subset|\cX|$ is closed. 
For $x \in S^{-1}(0) \cap e^{-1}(C)\subset\cV$ the covering \eqref{eq:cover} guarantees $|x|\in|Q(x_i)|$ for some $1\leq i \leq r$ with $Q(x_i)\subset\cV$ and we choose $U(x)\subset Q(x_i)$. 
This guarantees that the restriction of $\tilde{e} : \mathbb{R}^{\tilde{l}} \times X \to M, (t,y) \mapsto e(y)$ to $\ti U(0,x)$ is sc$^\infty$, and surjectivity of $\rD_{(0,x)}\ti e^{x_i} |_{\ker \rT^R_{(\tilde{S}^{x_i},\tilde{\Lambda}^{x_i})}(0,x)}$ implies surjectivity of $\rD_{(0,x)}\tilde{e}|_{N_{0,x}} : N_{0,x} \rightarrow \rT_{e(x)}M$ on $N_{0,x} := \ker \rT^R_{(\tilde{S},\tilde{\Lambda})}(0,x)$. 
Here $N_{0,x}$ represents the reduced tangent space at $|(0,x)|$ to the weighted branched orbifold $|\supp \ti\Theta|_{\tilde{U}(0,x)} |$. 
Now $\ti e|_{\supp\ti\Theta \cap \tilde{U}(0,x)} : \supp\ti\Theta|_{\tilde{U}(0,x)} \rightarrow M$ is classically smooth since it is a restriction of an sc$^\infty$ map to finite dimensions, and we have shown it to be submersive at $(0,x)$. Hence, by openness of submersivity along each corner stratum, and local compactness of $\supp\ti\Theta|_{\tilde{U}(0,x)}\subset\ti Z$ it follows that $\ti U(0,x)\subset\R^{\ti l}\times\cV$ can be chosen sufficiently small to ensure that $\ti e|_{\supp\ti\Theta \cap \tilde{U}(0,x)}$ is submersive as in Definition~\ref{def:submersion}.
 
Now compactness of $|S^{-1}(0) \cap e^{-1}(C)|$ and $|S^{-1}(0)|$ again allows us to find finite covers
\begin{align*}
&\textstyle
|S^{-1}(0)| \;\subset\; \bigcup_{i=1}^{k'} |U(x'_i)|, 
 \qquad\qquad\qquad\qquad
|S^{-1}(0) \cap e^{-1}(C)| \;\subset\; \bigcup_{i=1}^{r'} |U(x'_i)|
\end{align*}
with $x'_i\in S^{-1}(0) \cap e^{-1}(C)$ for $i=1,\ldots,r'$ and $U(x'_i) \cap e^{-1}(C) = \emptyset$ for $r'<i\leq k'$. 
Then we have $\epsilon:=\min\{\epsilon'_{x'_1}, \ldots \epsilon'_{x'_{k'}}\}  > 0$, an open cover $S^{-1}(0) \subset \cA:= \pi^{-1}\bigl(\bigcup_{i=1}^{k'} |U(x'_i)|\bigr)$, and the functor $\{t \in \R^{\ti l} \, | \, |t| < \epsilon\} \times \cA \rightarrow \mathbb{Q}^+, (t,y) \mapsto \Lambda_t(S(y))$ is a tame branched ep$^+$-subgroupoid, since it is the restriction of $\ti\Theta=\tilde{\Lambda} \circ \tilde{S}$ to an open subset of $\bigcup_{i=1}^{k'}\tilde{U}(0,x'_i)$. 
Moreover, we claim that for a possibly smaller $0<\epsilon <\epsilon_0$ we have
\begin{equation}\label{eq:claimA}
(t,y) \in \{|t| < \epsilon\} \times X , \quad \ti\Theta(t,y) > 0 \qquad\Longrightarrow\qquad  y \in \cA . 
\end{equation}
By contradiction, consider a sequence $\R^{\ti l} \ni t_n\to 0$, $y_n\in X$ with $\ti\Theta(t_n,y_n)>0$ but $y_n\in X\less \cA$. Then compactness of \eqref{eq:compactclosure} guarantees a convergent subsequence $|(t_n,y_n)|\to |(0,y_\infty)|\in\ti Z$, and since $\ti Z\cap \{0\}\times |\cX| = \{0\}\times |\supp \Lambda_0\circ S | = \{0\}\times |S^{-1}(0)|$ this contradicts the fact that $|y_n|\in|\cX|\less|\cA|$, where $|\cA|=\bigcup_{i=1}^{k'} |U(x'_i)|\subset|\cX|$ is an open neighbourhood of $|S^{-1}(0)|$. 
Thus we have shown \eqref{eq:claimA} and can deduce that 
$\tilde{\Theta} =\ti\L\circ \ti S : \{t \in \R^{\ti l} \, | \, |t| < \epsilon \} \times \cX \to \Q^+$
is a tame branched ep$^+$-subgroupoid with $\supp \ti \Theta \subset \R^{\ti l} \times \cA$, and thus
$\bigl|\supp \ti\Theta\bigr|\subset \R^{\ti l} \times\bigcup_{i=1}^{k'} |U(x'_i)|$ is a weighted branched orbifold with boundary and corners, as claimed. 
 
Moreover, from the properties of $\ti e|_{\supp\ti\Theta \cap \tilde{U}(0,x'_i)}$ for $i=1,\ldots,r'$ we know that the restriction of $\tilde{e}$ to $\supp \tilde{\Theta} \cap (\R^{\ti l}\times\cV')$ for $\cV':= \pi^{-1}\bigl(\bigcup_{i=1}^{r'} U(x'_i) \bigr) \subset \cV$ is classically smooth and submersive. 
Here we have $e^{-1}(C) \cap \cA\subset \cV'$ since $U(x_i)$ for $i>r'$ was chosen disjoint from $e^{-1}(C)$, and hence we have
$$
\bigl(\tilde{e}|_{\supp \tilde{\Theta}}\bigr)^{-1}(C) \;=\; \supp \tilde{\Theta} \cap \bigl( \R^{\ti l} \times e^{-1}(C)\bigr) \;\subset\; \R^{\ti l} \times \bigl( e^{-1}(C) \cap \cA \bigr) 
\;\subset\; \R^{\ti l} \times \cV',
$$
and thus $\tilde{e}|_{ \supp \tilde{\Theta}} : \supp \tilde{\Theta} \rightarrow M$ is classically smooth and submersive (in the sense of Definition~\ref{def:submersion}) in the open neighborhood $\supp \tilde{\Theta} \cap (\R^{\ti l} \times \cV')$ of $\bigl(\tilde{e}|_{\supp \tilde{\Theta}}\bigr)^{-1}(C_i) \subset \supp \tilde{\Theta}$ for all $i \in I$.

\medskip
\noindent
{\bf Global transversality from regular values:} As we continue to follow the proof of \cite[Thm.15.4]{HWZbook}, we replace each application of the Sard theorem by countably many Sard arguments to obtain general position to the countably many submanifolds $C_i \subset M$ for $i\in I$.
For that purpose we will consider various restrictions of the projection
$$
\supp \tilde{\Theta} = \bigl\{ (t,y) \in \R^{\ti l} \times \cX \, \big | \, |t| < \epsilon,\, \Lambda_t(S(y)) > 0 \bigr\} \;\to\; \R^{\ti l}, \qquad (t,y) \mapsto t . 
$$
The global properties of $\ti\Theta$ imply that every $(t_0,y_0) \in \supp \tilde{\Theta}$ has a saturated open neighborhood 
$\ti U(t_0,y_0) = \{ t\in\R^{\ti l} \, | \, |t-t_0| < \delta\} \times \pi^{-1}(|U(y_0)|) \subset \R^{\ti l}\times \cX$
satisfying the following:
\begin{itemlist}
\item 
$U(y_0)\subset X$ admits the natural action of the isotropy group $G_{y_0}$; see \cite[Thm.7.1]{HWZbook}, satisfies the properness property \cite[Def.7.17]{HWZbook}, 
and has $d_{\cX}(y_0)$ local faces $\cF^{y_0}_1,\ldots,\cF^{y_0}_{d_{\cX}(y_0)}$ which contain $y_0$; see \cite[Def.2.21, Prop.2.14]{HWZbook}.
\item 
The branched ep$^+$-subgroupoid $\supp\ti\Theta\cap \tilde{U}(t_0,y_0)$ has a local branching structure
$$
\ti\Theta(t,y) \;=\; \Lambda_t(S(y)) \;=\; \tfrac{1}{|J|} \cdot \bigl| \bigl\{j \in J \, | \, (t,y) \in M^{t_0,y_0}_j \bigr\} \bigr|,
$$
given by finitely many properly embedded submanifolds with boundary and corners $M^{t_0,y_0}_j\subset \tilde{U}(t_0,y_0)$, which intersect any intersection of local faces in a manifold with boundary and corners. 
\item 
On each branch $M^{t_0,y_0}_j$, the reduced linearizations $\rT^R_{(\tilde{S},\tilde{\Lambda})}(t,y)$ are surjective for all $(t,y) \in M^{t_0,y_0}_j$, and the restriction of $\tilde{e}|_{\supp\tilde{\Theta}}$ is a submersion 
$M^{t_0,y_0}_j \cap (\mathbb{R}^{\tilde{l}} \times \cV') \rightarrow M$ in general position to the boundary in the sense of Definition~\ref{def:submersion}. 
That is, $\rD_{(t,y)}\tilde{e}|_{N_{t,y}} : N_{t,y} \rightarrow \rT_{e(y)}M$ is surjective on $N_{t,y} := \ker \rT^R_{(\tilde{S},\tilde{\Lambda})}(t,y)$ for all $(t,y) \in M^{t_0,y_0}_j \cap (\R^{\ti l} \times \cV')$.
\end{itemlist}

\noindent
There is a countable cover $\supp\tilde{\Theta}\subset \bigcup_{\beta\in\Z} \tilde{U}(t_{\beta},y_{\beta})$ indexed by $(t_{\beta},y_{\beta})_{\beta \in\Z} \subset \supp \tilde{\Theta}$, since $\R^{\ti l} \times X$ -- and hence its subspace $\supp\ti\Theta$ -- is second-countable, and every open cover of a second-countable space has a countable subcover. 
Moreover, for any given $\beta\in\Z$ there are finitely many choices $\ti\cF_K := \{ |t-t_0| < \delta\}  \times\bigcap_{k\in K}\cF^{y_{\beta}}_k \subset \tilde{U}(t_{\beta},y_{\beta})$ of intersections of finitely many local faces $K\subset\{1,\ldots,d_\cX(y_\beta)\}$, with $\ti\cF_\emptyset:= \tilde{U}(t_{\beta},y_{\beta})$. 
Finally, for each $\beta\in\Z$ and intersection of faces $\ti\cF_K$, there are finitely many smooth manifolds $\ti\cF_K \cap M^{t_\beta,y_\beta}_j$ indexed by $j \in J_\beta$. 
For each of these countably many choices, Sard's theorem asserts that
$\ti\cF_K \cap M^{t_\beta,y_\beta}_j \to \R^{\ti l}$, $(t,y) \mapsto t$
has an open and dense subset $R^\beta_{K,j} \subset\R^{\ti l}$ of regular values. 
Then, since $\R^{\ti l}$ is a Baire space, the set of common regular values $\cR_0:= \bigcap_{\beta\in\Z}\bigcap_{K,j} R^\beta_{K,j} \subset \mathbb{R}^{\tilde{l}}$ is still dense. 
For any $t_0 \in \cR_0$, the sc$^+$-multisection $\Lambda_{t_0} : \cW \rightarrow \mathbb{Q}^+$ is in general position by the usual linear algebra for each restriction of the linearized operators to intersections of faces: 
Consider $(t_0,x_0)\in\ti\cF_K \cap M^{t_\beta,y_\beta}_j \subset \supp\ti\Theta$ and a local section $S+p^j$ in the representation of $\ti\Theta=\ti\Lambda\circ\ti S$ with $M^{t_\beta,y_\beta}_j \subset (S+p^j)^{-1}(0)$. The surjective differential along this intersection of faces can be written as $\rD_{t_0,x_0}(S+p^j)|_{\ti\cF_K} = D \oplus L$, where $L$ is a bounded operator (arising from differentiating $p^j$ in the direction of $\R^{\ti l}$) and $D$ is the reduced linearization -- on the intersection of faces $\cF_K:= \bigcap_{k\in K}\cF^{y_{\beta}}_k \subset U(y_{\beta})\subset X$ -- of the section $S+p^j(t_0,\cdot)$ that is a part of the representation of $\Lambda_{t_0}\circ S$. Then regularity of $t_0$ implies surjectivity of the projection $\Pi:\ker(D\oplus L)\to \R^{\ti l}$, which in turn is equivalent to surjectivity of $D$; see e.g.\ \cite[Lemma~A.3.6]{MS}. 

Moreover, each $\Lambda_t$ for $|t| < \epsilon$ is $(N,\cU)$-admissible, thus any sufficiently small regular $t_0\in\cR_0$ yields an admissible  sc$^+$-multisection $\lambda := \Lambda_{t_0}$ in general position as in \cite[Thm.15.4]{HWZbook}.
To prove our theorem, we have to moreover choose $t_0 \in \cR_0$ so that the restriction $e|_{Z^{\lambda}} : Z^{\lambda} \rightarrow M$ to the solution set $Z^{\lambda} = |\supp \lambda \circ S |$ is in general position to $C_i \subset M$ for all $i \in I$.
For that purpose we consider the countably many projections 
\begin{equation}\label{eq:eCprojections}
 \tilde{e}^{-1}(C_i) \cap \ti\cF_K \cap M^{t_\beta,y_\beta}_j \;\to\; \mathbb{R}^{\tilde{l}}, 
 \qquad (t,x) \mapsto t 
\end{equation}
for any $i\in I$, index $\beta\in\Z$ of the countable cover, intersection of local faces $\ti\cF_K$, and smooth branch $M^{t_\beta,y_\beta}_j\subset \supp \tilde{\Theta}\cap \ti U(t_\beta,y_\beta)$. 
Here we have $\tilde{e}^{-1}(C_i) \cap M^{t_\beta,y_\beta}_j \subset (\tilde{e}|_{\supp\ti\Theta})^{-1}(C)$, so that the restriction 
$\tilde{e}|_{\ti\cF_K \cap M^{t_\beta,y_\beta}_j} : \ti\cF_K\cap M^{t_\beta,y_\beta}_j \to M$ 
is smooth and submersive in a neighborhood of $\tilde{e}^{-1}(C_i)$. 
In particular, it is transverse to $C_i$ so that there is a natural smooth structure on $\tilde{e}^{-1}(C_i) \cap \ti\cF_K \cap M^{t_\beta,y_\beta}_j$. 
Thus we can apply the Sard theorem to each \eqref{eq:eCprojections} to find open and dense subsets $T^{i,\beta}_{K,j} \subset\R^{\ti l}$ of regular values, and a dense set of common regular values $\cT_0:= \bigcap_{\beta\in\Z}\bigcap_{K,j} R^\beta_{K,j}\cap \bigcap_i T^{i,\beta}_{K,j} \subset \mathbb{R}^{\tilde{l}}$. 
Note that $\cT_0\subset \cR_0$, so sufficiently small $t_0\in\cT_0$ yield admissible  sc$^+$-multisections $\lambda:=\Lambda_{t_0}$ in general position. 
Moreover, general position of $e|_{Z^{\lambda}} : Z^{\lambda} \rightarrow M$ to $C_i$ at 
$x\in Z^\lambda \cap e^{-1}(C_i)$ means that the linearizations of $e|_{\cF_K\cap Z^\lambda}$ map onto $\qu{\rT_{e(x)} M}{\rT_{e(x)} C_i}$ for each intersection of local faces $\cF_K \subset U(y_{\beta})\subset X$ that contains $x$. Here the tangent spaces of $\cF_K\cap Z^{\lambda}$ at $x$ are given by those of $\ti\cF_K \cap M^{t_\beta,y_\beta}_j  \cap (\{t_0\}\times X)$ for each branch with $(t_0,x) \in M^{t_\beta,y_\beta}_j \subset\supp\ti\Theta$, so we need to ensure surjectivity of $\rD_{(t_0,x)}\ti e : \ker\Pi \to \qu{\rT_{e(x)} M}{\rT_{e(x)} C_i}$ on the kernel of the projection 
$\Pi: \rT_{(t_0,x)} \bigl( \ti\cF_K \cap M^{t_\beta,y_\beta}_j \bigr) \to \R^{\ti l}$. 
Here $\rD_{(t_0,x)}\ti e : \rT_{(t_0,x)} \bigl( \ti\cF_K \cap M^{t_\beta,y_\beta}_j \bigr) \to \rT_{e(x)} M$ is surjective (since $\ti e|_{\supp\ti\Theta}$ is submersive), 
and regularity $t_0\in T^{i,\beta}_{K,j}$ means that we have $\Pi \, (\rD_{(t_0,x)}\ti e)^{-1}(\rT_{e(x)} C_i) = \R^{\ti l}$, so for any $Y\in \rT_{e(x)} M$ we find $(T,X)\in \rT_{(t_0,x)} \bigl( \ti\cF_K \cap M^{t_\beta,y_\beta}_j \bigr)$ with $\rD_{(t_0,x)}\ti e (T,X) = Y$ and $(T,X')\in (\rD_{(t_0,x)}\ti e)^{-1}(\rT_{e(x)} C_i)$, so that $(0,X-X')\in\ker\Pi$ proves the required surjectivity $\rD_{(t_0,x)}\ti e (0,X-X') = Y - \rD_{(t_0,x)}\ti e (T,X') = [Y]\in \qu{\rT_{e(x)} M}{\rT_{e(x)} C_i}$.
Thus this choice of sufficiently small $t_0\in\cT_0$ also guarantees general position of $e|_{Z^{\lambda}}$ to each of the countably many submanifolds $C_i$, which finishes the proof of the theorem when no boundary values are prescribed.

\medskip
\noindent
{\bf Regular extension:}
To prove the last paragraph of the theorem we consider a given $(\frac 1\alpha N,\cU)$-admissible structurable sc$^+$-multisection $\lambda^{\partial} : P^{-1}(\partial \cX) \rightarrow \mathbb{Q}^+$ that is in general position over the boundary, and with $e|_{Z^\partial}: Z^\partial = \supp \lambda^{\partial} \circ S|_{\partial \cX} \to M$ in general position to finitely many submanifolds $C_i$.
Then we will adjust the above construction of $\lambda : \cW \rightarrow \mathbb{Q}^+$ to also satisfy $\lambda|_{P^{-1}(\partial \cX)} = \lambda^{\partial}$, by following the proof of the transversal extension theorem over ep-groupoids \cite[Thm.15.5]{HWZbook}.

Since $\lambda^\partial$ is supported in $\cU\cap\partial\cX$ with $N(\lambda^\partial)(x)<\alpha$ for all $x\in\partial\cX$ we can find a continuous functor $h : \cX \rightarrow [0,1)$ supported in $\cU$ with $N(\lambda^{\partial})(x) < h(x) < \frac 12 N(\lambda^{\partial})(x) + \frac 12$ for all $x \in \partial \cX$. Then \cite[Thm.14.2]{HWZbook} 
yields a sc$^+$-multisection $\Lambda' : \cW \rightarrow \mathbb{Q}^+$ with $\Lambda'|_{P^{-1}(\partial \cX)} = \lambda^{\partial}$, domain support in $\cU$, and $N(\Lambda')(x) \leq h(x) \leq \frac{\alpha+1}2$ for all $x \in \cX$.
This guarantees compactness of $|\supp\Lambda'\circ S|\subset|\cX|$ and regularity of $|\supp\Lambda'\circ S| \cap |\partial\cX| = |\supp\lambda^\partial\circ S|_{\partial\cX}|$. 
To obtain regularity in the interior we construct $\lambda = \Lambda'\oplus\Lambda_t$ by the above arguments with $S^{-1}(0)$ replaced by $\cS':=\supp\Lambda'\circ S \subset \cX$, noting that $|\cS'|\subset|\cX|$ is also compact. 
To achieve general position to the $C_i$ we need further adjustments. 

\medskip
\noindent
 {\bf Local constructions relative to boundary values:}
For interior points $x\in \cS'\cap\partial_0 \cX$ we construct $\ti\Lambda^x:\R^l \times\cW \to \Q^+$ with domain support in the interior $\R^l \times (\partial_0\cX \cap \cU)$ to cover the cokernels of $\rT^R_{(\tilde{S}^x,\tilde\Lambda')}$ for the stabilized multisection 
$\ti\Lambda':\R^l\times\cW\to\Q^+, (t,w) \mapsto \Lambda'(w)$. 
For $x\in \cS'\cap\partial\cX$ we need no stabilization by a $\R^l$ factor (i.e.\ take $l=0$) due to the general position of $\lambda^\partial$ at $x$. However, we only obtain general position to the $C_i$, rather than submersivity in the following claim. 

\medskip
\noindent
\ul{Local properties relative to boundary:} 
{\it For each $x\in\cS'$ there exists $l_x\in\N_0$ -- with $l_x=0$ for $x\in\cS'\cap\partial\cX$ -- and a locally uniformizing neighborhood $Q(x) \subset X$ of $x$ whose closure is contained in $\cU$, such that for some $\epsilon_x > 0$ we have a tame ep$^+$-subgroupoid 
$\Theta^x : \{ t\in\R^l \, | \, |t| < \epsilon_x \} \times Q(x) \to \Q^+$, $(t,y) \mapsto
\bigl( \Lambda' \oplus \Lambda_t^x\bigr) ( S(y))$
with surjective reduced linearizations, 
and thus a weighted branched orbifold $|\supp \Theta^x|$. 
Moreover, if $x \in \cS'\cap\cV$ then $\ti e^x$ induces a smooth map 
$|\supp\Theta^x| \rightarrow M$, which is in general position to $C_i$ for each $i\in I$. 
}

\medskip
The structure of $\Theta^x$ is established in \cite[Thm.15.5.]{HWZbook}, and the general position to each $C_i$ for $x\in\partial_0\cX$ follows from submersivity. 
To achieve general position to the $C_i$ for $x\in\partial\cX$, recall that $C= \overline{\cup_{i \in I}(C_i)} \subset M$ is closed, so for $x\notin e^{-1}(C)$ we can choose $Q(x)$ disjoint from $e^{-1}(C)$ so that general position to the $C_i\subset C$ is automatic.  For $x \in e^{-1}(C)\subset\cV$ we have $e : \supp\Theta^x \cap \partial\cX = \supp \lambda^\partial \circ S|_{\partial\cX} \to M$ in general position to each $C_i$ by assumption on $\lambda^\partial$. Moreover, we choose $Q(x)\subset\cV$ so that $e : Q(x) \cap \supp\Theta^x \to M$ is smooth, and thus general position to each $C_i$ extends to a neighbourhood $Q_i\subset\cX$ of $x$. Then $Q':=\bigcap_{i\in I} Q_i$ is a neighbourhood of $x$ since $I$ is finite, and we can replace $Q(x)$ by a uniformizing neighbourhood in $Q'$ to achieve general position to all $C_i$. 

\medskip
\noindent
{\bf From local to global relative to boundary:}
This portion of the proof is started by picking a finite cover $|\cS'|\cap|\partial\cX| = |\supp \lambda^\partial\circ S|_{\partial\cX}| = \bigcup_{i=-k_\partial}^0 |Q(x_i)| \subset |\cX|$ by the above neighbourhoods for $x_i\in \cS' \cap \partial\cX$. Next we cover $|\cS'|\setminus \bigcup_{i=-k_\partial}^0 |Q(x_i)| \subset \bigcup_{i=1}^k |Q(x_i)|$ with neighbourhoods of  interior points $x_i\in\cS'\cap \partial_0\cX$ whose associated multisections $\Lambda^{x_i}$ are supported in the interior, $\text{dom-supp}\,\Lambda^{x_i}\subset \R^{l_x}\cap \partial_0\cX$. 
Then we define $\tilde{\Lambda} : \R^{\ti l} \times \cW \to\mathbb{Q}^+$ by $\ti\Lambda (t, w) := \Lambda_t(w) :=  \big (\Lambda' \oplus \Lambda^{x_1}_{t_1} \oplus \cdots \oplus \Lambda^{x_k}_{t_k} \big )(w)$. This multisection is constructed so that $\Lambda_0=\Lambda'$ and $\Lambda_t|_{P^{-1}(\partial\cX)}=\lambda^\partial$ for any $t\in\R^{\ti l}$. Moreover, the estimate $N(\Lambda_t)\leq N(\Lambda') + c|t| \leq \frac{1+\alpha}{2} + c|t|$ allows us to guarantee admissibility $N(\Lambda_t)\leq 1$ by choosing $|t|\leq \frac{1-\alpha}{2c}$. Then compactness of $\ti Z$ in \eqref{eq:compactclosure} follows as above, and its smoothness is established using a covering
$|S^{-1}(0)| \subset \bigcup_{i=-k_\partial}^{k'} |U(x'_i)|$
where $|U(x'_i)|$ for $i\leq 0$ arise from $x'_i\in\cS'\cap\partial\cX$ and cover a neighbourhood of $|\partial\cX|$.  
Moreover, $U(x'_i)\subset \R^{\ti l}\times Q(x'_i)$ can be chosen as in the prior proof of the local properties such that $\ti e|_{\supp\Theta} : U(x'_i) \rightarrow M$ is in general position to $C_i$ for each $i\in I$. 
This establishes the following. 

\medskip
\noindent
\ul{Global stabilization properties with fixed boundary values:} 
{\it There exists $\epsilon_0 > 0$ such that $\ti\Theta:= \ti\Lambda \circ \ti S : \{ |t| < \epsilon \} \times \cX \to \Q^+$ is a tame ep$^+$-subgroupoid with surjective reduced linearizations for every $0<\epsilon < \epsilon_0$. 
In particular, $|\supp\ti\Theta|$ is a weighted branched orbifold. 
Moreover, there is a neighbourhood $\cV'\subset\cX$ of $S^{-1}(0)\cap e^{-1}(C)$ such that  
$\ti e |_{\supp\ti\Theta} : \supp\ti\Theta \to M$ satisfies $(\ti e |_{\supp\ti\Theta})^{-1}(C)\subset \R^{\ti l}\times\cV'$, and its restriction to $\supp\ti\Theta \cap (\R^{\ti l}\times\cV')$ is classically smooth and in general position to each $C_i$.
}

\medskip
\noindent
{\bf Global transversality relative to boundary:}
In this final step we use the fact that $\Lambda_t$ is $(N,\cU)$-admissible for $|t|\leq \frac{1-\alpha}{2c}$ and choose a common regular value of countably many projections as before. 
The only difference to the proof above is that the restriction of $\tilde{e}|_{\supp\tilde{\Theta}}$ to a branch $M^{t_0,y_0}_j \cap (\R^{\ti l}\times \cV') \rightarrow M$ is not necessarily submersive but still in general position to each of the $C_i$, that is 
$\rD_{(t,y)}\tilde{e}|_{N_{t,y}} : N_{t,y} \rightarrow \qu{\rT_{e(y)}M}{\rT_{e(y)}C_i}$ is surjective for each $i\in I$.
When considering the projections \eqref{eq:eCprojections}, this suffices to obtain smooth structures on $\tilde{e}^{-1}(C_i) \cap \ti\cF_K \cap M^{t_\beta,y_\beta}_j$ for each branch and intersection of faces $\ti\cF_K$. 
Then general position of $e|_{Z^{\lambda}} : Z^{\lambda} \rightarrow M$ to $C_i$ at $x\in Z^\lambda \cap e^{-1}(C_i)$ for $\lambda =\Lambda_{t_0}$ with a regular value $t_0\in\R^l$ again requires surjectivity of $\rD_{(t_0,x)}\ti e : \ker\Pi \to \qu{\rT_{e(x)} M}{\rT_{e(x)} C_i}$ on the kernel of the projection 
$\Pi: \rT_{(t_0,x)} \bigl( \ti\cF_K \cap M^{t_\beta,y_\beta}_j \bigr) \to \R^{\ti l}$.
To see that $[Y]\in\qu{\rT_{e(x)} M}{\rT_{e(x)} C_i}$ is in the image we use the above surjectivity of $\rD_{(t_0,x)}\tilde{e}|_{N_{t_0,x}}$ to find $(T,X)\in\rT_{(t_0,x)} \bigl( \ti\cF_K \cap M^{t_\beta,y_\beta}_j \bigr)$ with $\rD_{(t_0,x)}\tilde{e}(T,X) \in [Y]$. Then regularity of $t_0$ 
yields $(T,X')\in (\rD_{(t_0,x)}\ti e)^{-1}(\rT_{e(x)} C_i)$, so that $(0,X-X')\in\ker\Pi$ solves 
$[\rD_{(t_0,x)}\ti e (0,X-X') ] = [Y - \rD_{(t_0,x)}\ti e (T,X')] = [Y] \in \qu{\rT_{e(x)} M}{\rT_{e(x)} C_i}$.
This finishes the proof with prescribed boundary values.
\end{proof}

\bibliographystyle{alpha}

\end{document}